\newcommand{\point}{\mathbf{\cdot}}
\newcommand{\C}{{\mathbb{C}}}
\newcommand{\F}{{\mathcal{F}}}
\newcommand{\R}{{\mathbb{R}}}%
\newcommand{\Z}{{\mathbb{Z}}}
\let\CAL=\mathcal%
\def\mathcal#1{{\CAL#1}}%
\newcommand{\N}{\mathbb{N}}
\newcommand{\Ve}{{\mathsf{Ve}}}
\newcommand{\Ed}{{\mathsf{Ed}}}
\newcommand{\AG}{{\mathsf{A}}}
\newcommand{\K}{{\mathsf{K}}}
\newcommand{\ag}{\mathsf{a}}
\let\ge\eg
\let\GA\AG
\let\GV\VG
\let\GE\EG
\newcommand{\VSG}{\mathbf{VecG}}
\newcommand{\A}{{\mathsf{A}}}
\let\ltE=\ptE
\newcommand{\AR}{{\mathsf{R}}}
\newcommand{\Cex}{$\mc C^{\mr{ex}}$}
\def\ftrait{\;{{}^{\underline{\phantom{\longleftrightarrow}}}}\;}%
\def\bdf{{\leftarrow\!\mapsto}}%
\newcommand{\G}{\mathcal{G}}
\renewcommand{\emph}[1]{{\textbf{#1}}}
\newcommand{\un}[1]{{\underline{#1}}}
\newcommand{\mf}[1]{{\mathfrak{#1}}}
\newcommand{\mr}[1]{{{\mathrm{#1}}}}
\newcommand{\mc}[1]{{\mathcal{#1}}}
\newcommand{\mb}[1]{{\mathbb{#1}}}
\newcommand{\wt}[1]{{\widetilde{#1}}}
\newcommand{\mbf}[1]{\mathbf{#1}}
\let\msf\msl
\newcommand{\br}[1]{{\breve{#1}}}
\newcommand{\iso}{{\overset{\sim}{\longrightarrow}}}
\newcommand{\DiffQ}{{\mr{Diff}_Q(\C\times Q,(0,u_0))}}
\newcommand{\DiffQzero}{{\mr{Diff}_Q^0(\C\times Q,(0,u_0))}}
\let\DiffQz\DiffQzero
\newcommand{\DD}[1]{\frac{\partial\phantom{ #1}}{\partial #1}}
\newtheorem{teo}{Theorem}[section]
\newtheorem{lema}[teo]{Lemma}
\newtheorem{prop}[teo]{Proposition}
\newtheorem{defin}[teo]{Definition}
\newtheorem{cor}[teo]{Corollary}
\newtheorem{obs2}[teo]{Remark}
\newtheorem{recap2}[teo]{Recapitulation}
\newtheorem{ex2}[teo]{Example}
\newenvironment{obs}{\begin{obs2}\rm}{\hfill\qed\end{obs2}}
\newenvironment{dem2}[1]{\begin{proof}[Proof #1]}{\end{proof}}
\def\bibartp#1#2#3#4#5#6#7#8
\def\bibart#1#2#3#4#5#6
\def\bibliv#1#2#3#4#5
\def\bibaart#1#2#3#4
\definecolor{vert}{rgb}{0,0.46,0}
\newcommand{\ms}{\mathscr}
\title[Topological universal families of holomorphic foliations]{Topological moduli space for germs of holomorphic foliations II: universal deformations}
\date{\today}
\author{David Mar\'{\i}n, Jean-Fran\c{c}ois Mattei and \'{E}liane Salem}
\thanks{D. Mar\'{\i}n acknowledges financial support from the Spanish Ministry of Science, Innovation and Universities, through grants MTM2015-66165-P and PGC2018-095998-B-I00 and by the Agency for Management of University and Research Grants of Catalonia through the grant 2017SGR1725.}
\address{Departament de Matem\`{a}tiques\\ Universitat Aut\`{o}noma de Barcelona \\ E-08193 Cerdanyola del Vall\`es (Barcelona)\\ Spain\\ \newline \indent Centre de Recerca Matem\`atica, Campus de Bellaterra, E-08193 Cerdanyola del Vall\`es, Spain} \email{davidmp@mat.uab.es}
\address{Institut de Math\'{e}matiques de Toulouse\\ Universit\'{e} Paul Sabatier\\ 118, Route de Narbonne\\ F-31062 Toulouse Cedex 9, France} \email{jean-francois.mattei@math.univ-toulouse.fr}
\address{Sorbonne Universit\'e, Universit\'e de  Paris, CNRS,  Institut de Math\'ematiques de Jussieu - Paris Rive Gauche, F-75005 Paris, France}
\email{eliane.salem@imj-prg.fr}
\subjclass[2010]{Primary 37F75; Secondary 32M25, 32S50, 32S65, 34M} 
\keywords{Complex dynamical systems, complex ordinary differential equations, holomorphic vector fields, singular foliations, holonomy,   universal deformations, representable functors, non abelian cohomology}
\newcommand{\Def}{\mathrm{Def}}
\newcommand{\aut}{\mathrm{Aut}}
\newcommand{\fix}{\mathrm{Fix}}
\newcommand{\sym}{\mathrm{Sym}}
\newcommand{\T}{\mathcal{T}}
\newcommand{\fol}{\mathbf{Fol}}
\newcommand{\foltf}{\mathbf{Fol_{ft}}}
\newcommand{\grgr}{\mathbf{GrG}}
\begin{document}

%
%
%
%
%
%
%

\begin{abstract}
This work deals with the topological classification of singular foliation germs on $(\C^{2},0)$.  Working in a suitable class of foliations we fix the topological invariants given by the separatrix set, the Cama\-cho\--Sad indices and the projective holonomy representations and we prove the existence of a topological universal deformation through which every equisingular deformation uniquely factorizes up to topological conjugacy. 
This is done by representing the functor of  topological classes of equisingular deformations of a fixed foliation. We also describe the functorial dependence of this representation with respect to the foliation.
\end{abstract}

\maketitle
\tableofcontents

\section{Introduction}\label{introduction}

This work inserts in a series of three  papers  whose goal is to obtain a topological classification of singular foliation germs 
on $(\C^2,0)$ through  the construction of a topological moduli space, the description of its algebraic and topological properties and the construction of a family containing all topological types  with minimal redundancy.\\

In  the article \cite{MM3}, completed by \cite{Loic1} and \cite[Appendix]{MMS},  the authors give for  a generic  germ of foliation $\F$ on $(\C^2,0)$
a list of topological invariants:
\begin{enumerate}[a)]
\item the combinatorial  reduction of singularities of $\F$,  
\item  the Camacho-Sad indices of the singularities of the reduced foliation $\F^\sharp$,
\item the holonomies of $\F^\sharp$ along the invariant  components of the exceptional divisor $\mc E_\F$ of the reduction.
\end{enumerate}
We call this collection of invariants the
\emph{semi-local type} of $\F$ and $\mr{SL}(\F)$  will denote the set of foliations having same semi-local type as $\F$. 
In the present paper we are interested in ``germs of families'' in $\mr{SL}(\F)$ at $\F$, that we call equisingular deformations of $\F$. For any generic foliation we prove the existence of a ``topological universal deformation'' through which   any equisingular deformation of $\F$ uniquely factorizes. We also provide an infinitesimal criterion of universality.
These results will allow us to study in a forecoming paper \cite{MMS3} factorizing properties of the global family constructed in \cite{MMS} that contains all topological types in $\mr{SL}(\F)$.\\

Classically a   \emph{deformation of a foliation}   $\F$ over a germ  of manifold $P^\point=(P,t_0)$ is   a germ of foliation  $\F_{P^\point}$ on $(\C^2\times P, (0,t_0))$ defined by a germ of holomorphic vector field $X(x,y,t)$  that coincides on $\C^2\times\{t_0\}$ with a vector field   defining $\F$ and moreover is  tangent to the fibers of the canonical  projection $\mr{pr}_P : \C^2\times P\to P$.
%
%
If  $\lambda : (Q,u_0)\to P^\point$ is a germ of holomorphic map, the \emph{pull-back} of $\F_{P^\point}$ by $\lambda$ is the deformation $\lambda^\ast\F_{P^\point}$ of $\F$ over $(Q,u_0)$ defined by the vector field $X(x,y,\lambda(t))$. 
Two deformations $\F_{P^\point}$ and $\F'_{P^\point}$  are \emph{topologically conjugated} if there exists  a $\mc C^0$-automorphism $\Phi$ of $(\C^2\times P, (0,t_0))$  that sends the leaves of $\F_{P^\point}$ on that of $\F'_{P^\point}$,   
 and satisfies
 \[ 
 \mr{pr}_{P}\circ \Phi=\mr{pr}_P\,,\quad\Phi(x,y,t_0)=(x,y,t_0)\,.
  \]
As in 
 \cite{MMS} we say that the deformation  $\F_{P^\point}$ is \textit{equi\-singular}  if the foliations given by the vector fields $X_t(x,y):= X(x,y,t)$ on the fibers $\C^2\times\{t\}$ can  be ``simultaneous  reduced'' and  belong to $\mr{sl}(\F)$, see the precise definition \ref{SL-def}. 
 We will prove:\\

\noindent \textbf{Main Theorem.} \textit{Every finite type generalized curve\footnote{
i.e. a germ of foliation  $\F$ such that  the foliation $\F^\sharp$ obtained after reduction  is without  \textit{saddle-node}  (i.e. singularity given  by a vector field germ whose linear part has exactly one non-zero eigenvalue); however  $\F^\sharp$ may have  \emph{nodal}  singularities (i.e. defined by a vector field germ such that the ratio of the eigenvalues of its linear part is strictly positive) and the exceptional divisor of the reduction may have  irreducible components   non invariant by $\F^\sharp$. For more details we refer to \cite{CCD}.} foliation possesses 
a topological universal deformation.}\\

\noindent \emph{Topological universality} of a deformation $\F_{Q^\point}$  of $\F$ 
means that for any germ of manifold $P^\point$ and any equisingular deformation $\F_{P^\point}$ of $\F$ over $P^\point$,  there exists a unique holomorphic  map germ $\lambda:P^\point\to Q^\point$ such that $\F_{P^\point}$  is topologically conjugated to  $\lambda^\ast\F_{Q^\point}$. In fact we will prove the stronger result that the topological conjugacy between  $\F_{P^\point}$ and  $\lambda^\ast\F_{Q^\point}$ is realized by an \emph{excellent} (or \Cex) homeomorphism, i.e.  it lifts through the equireduction maps  of $\F_{P^\point}$ and   $\lambda^\ast\F_{Q^\point}$ and its lifting fulfills a regularity property, see Definition \ref{excellent}.\\

We obtain a universal deformation of $\F$  by representing the functor  $\Def_\F$ that associates to any germ of manifold $P^\point$, the  set $\Def_{\F}^{P^\point}$ of \Cex-conjugacy classes of deformations of $\F$ over $P^\point$.
To describe the dependence of this representation with respect to $\F$ we define,
up to excellent conjugacy, the pull-back of an equisingular deformation of $\F$ by a \Cex-conjugacy $\phi:\G\to\F$.
We thus get a contravariant \emph{deformation functor}
\[
\Def :\mbf{Man^\point}\times\fol\to \mbf{Set}^\point\,,
\quad
(P^\point, \F)\mapsto \Def_{\F}^{P^\point}\,,
\]
which associates to a  foliation $\F$ and a germ of manifold  $P^\point$, the set $\mr{Def}_\F^{P^\point}$. Here $\mbf{Man^\point}$ is the category of germs of complex manifolds, the morphism sets $\mc O(P^\point,Q^\point)$ consisting
of holomorphic map germs compatible with the pointing, and  $\mbf{Fol}$ is the category whose objects are the germs of foliations   which are generalized curves of finite type, the morphisms being \Cex-conjugacies. In fact, we will construct a suitable (pointed by $0$)  cohomogical $\C$-vector space $H^1(\A,\T_{\F})^\point$  associated to $\F$ and an isomorphism of functors
\begin{equation}\label{represIntro}
\mr{Def}\iso \big( (P^\point, \F)\mapsto \mc O(P^\point,H^1(\A,\T_{\F})^\point\big)\,.
\end{equation}
\bigskip

The paper is organized in the following way:\\

- In Chapter~\ref{secAlgTools} we  further develop the key notion of \emph{group-graph} already introduced in \cite{MMS}.  We define  the notion of \emph{regular group-graph} and we describe its cohomology (see Theorem \ref{teobasegeom}).\\

- The notion of equisingular deformation is introduced in Chapter 
\ref{SectEquising}. 
Its characteristic property, stated in Theorem 
 \ref{psiD}, is the triviality  along each irreducible component  of the exceptional  divisor of the equireduction.
 This allows (Theorem~\ref{famconsigma}) 
 to define for a \Cex-conjugacy $\phi:\G\to\F$, the pull-back map $\phi^\ast:\Def_\F^{P^\point}\to\Def_\G^{P^\point}$, and   the functor $\Def$.
 \\
 
 - In Chapter \ref{sectAutSymGrGr} we consider the group-graph $\mr{Aut}_{\F}^{P^\point}$,  over the dual graph $\A_\F$ of $\mc E_\F$, of excellent automorphisms  of the constant deformation of $\F$ over $P^\point$. For an equisingular deformation, the trivializing maps given by Theorem~\ref{psiD} provide a cocycle with values in this group-graph. In this way we obtain a natural transformation from the functor $\mr{Def}$ to the functor that associates to $\F$ and $P^\point$ the cohomology space 
 $H^1(\A_\F, \mr{Aut}_{\F}^{P^\point})$. This transformation is an isomorphism of functors (Theorem \ref{cocycle})
 \begin{equation}\label{isonat}
  \Def \;\iso\;\left(
\big(P^\point,\F) \mapsto H^1(\A_\F, \mr{Aut}_{\F}^{P^\point})
 \right)\,.
 \end{equation}
By taking the quotient of $\mr{Aut}_{\F}^{P^\point}$ by the normal subgroup-graph of automor\-phisms fixing each leaf, we obtain a simpler group-graph  $\mr{Sym}_{\F}^{P^\point}$ with same cohomology as $\mr{Aut}_\F^{P^\point}$ (Proposition~\ref{isoH1AutSym}).\\
 
 - The notion of finite type foliation is defined and  cohomologically characterized (Theorem~\ref{characterization}) in Chapter~\ref{sectFinTypeInfTrSym}. 
For such a foliation  the cohomology of the group-graph $\mr{Sym}_\F^{P^\point}$ over $\A_\F$ is completely given by restricting it to an appropriate subgraph $\AR_\F\subset\A_\F$ (Theorem~\ref{aut-Raut/fix}). 
The advantage of this restriction is that over $\AR_\F$ the group-graph $\mr{Sym}_\F^{P^\point}$ is isomorphic (via the ``exponential morphism'') to the abelian group-graph $\T_\F^{P^\point}$ of $\C$-vector spaces of \emph{infinitesimal transverse symmetries} of the constant deformation, see Definition~\ref{defInfSymTrans}. 
This study gives the natural isomorphisms
\begin{equation}\label{introiso}
 H^1(\A_\F, \mr{Aut}_{\F}^{P^\point})\iso H^1(\A_\F, \mr{Sym}_{\F}^{P^\point})\iso H^1(\AR_\F, \mr{Sym}_{\F}^{P^\point})\iso  H^1(\AR_\F, \T_{\F}^{P^\point})\,.
\end{equation} 
The structure of  $\T_\F^{P^\point}$ over $\AR_\F$ is the tensor product $\T_\F\otimes_\C\mf M_{P^\point}$ of the group-graph of infinitesimal symetries of $\F$ with the maximal ideal of $\mc O_{P^\point}$ (Lemma~\ref{multext}). Finally, using the results of Section~\ref{GGTensorproduct} we get:
\[ 
 H^1(\AR_\F, \T_{\F}^{P^\point})\iso H^1(\AR_\F,  \T_\F\otimes_\C\mf M_{P^\point})
\iso 
H^1(\AR_\F, \T_\F)\otimes_\C\mf M_{P^\point} \iso
\mc O(P^\point, H^1(\AR_\F, \T_\F)^\point)\,,  \]
that achieves, using (\ref{isonat}) and (\ref{introiso}), the construction of the natural isomorphism (\ref{represIntro}).\\

- In Chapter~\ref{secCexUnivDef}, using that the restriction of the group-graph $\T_\F$ to $\AR_\F$  is regular (Proposition \ref{RemquchTF}) and Theorem~\ref{teobasegeom}, we specify in Theorem~\ref{defuniv} the structure of the finite dimensional universal parameter space $ H^1(\AR_\F, \T_\F)^\point$.  We also construct  a  \emph{Kodaira-Spencer map}
\[ 
\left.\frac{\partial[\F_{P^\point}]}{\partial t}\right|_{t=t_0}\,:\,T_{t_0}P\longrightarrow H^1(\AR_\F, \T_\F)
 \] associated to an equisingular  deformation $\F_{P^\point}$, that  will provide in  Theorem~\ref{unicityuniv}   an infinitesimal criterion of universality.

\section{Group-graphs}\label{secAlgTools}

 We recall that a   \emph{graph} is the data of a pair $\A=(\Ve_\A, \Ed_\A)$ where $\Ve_\A$ is a set and $\Ed_\A\subset \mc P(\Ve_\A)$ is a collection of subsets of two distinct elements $v,v'$ of $\Ve_\A$, denoted by $\langle v,v'\rangle$. 
The elements of $\Ve_\A$ are called vertices of $\A$ and those of $\Ed_\A$ are called edges of $\A$.
We denote by
\begin{equation*}\label{orientededges}
I_\A:= \{ (v,e)\in \msf{Ve}_{\A}\times \msf{Ed}_\A \;|\; 
v\in e\}
\end{equation*}
the set of \emph{oriented edges} of $\A$.
A \emph{morphism of graphs} $\varphi:\A'\to\A$ is a map
$\varphi:\Ve_{\A'}\to\Ve_\A$  such that if $e=\langle v,v'\rangle\in\Ed_{\A'}$ either $\varphi(v)\neq\varphi(v')$ and $\varphi(e):=\langle\varphi(v),\varphi(v')\rangle\in\Ed_\A$, or $\varphi(v)=\varphi(v')$ and $\varphi(e):=\varphi(v)\in\Ve_\A$.

\subsection{Notion of group-graph} 
\begin{defin}  Let $\mbf C$ be a category. A \emph{$\mbf C$-graph over  $\GA$} is a collection  $G$ of objects of  $\mbf{C}$, denoted\footnote{The notation $G(v)$ and $G(e)$ is also used in the text.} by $G_{v}$ and $G_{e}$, for each vertex $v\in \GV$ and each edge $e\in \GE$, and of $\mbf{C}$-morphisms $\rho_{v}^{e}:G_{v}\to G_{e}$ for each $(v,e)\in I_{\GA}$, which are called \emph{restriction morphisms}. 
When $\mbf C$ is the category $\mbf{Gr}$ of groups we say that $G$ is a \emph{group-graph};
if  all groups $G_\star$, $\star\in \mathsf{Ve}_\A\cup\mathsf{Ed}_\A$, are abelian, we say that $G$ is abelian and when all groups $G_\star$, $\star\in \mathsf{Ve}_\A\cup\mathsf{Ed}_\A$ are trivial we say that $G$ is the trivial group-graph and we denote it by $0$ or $1$.
\end{defin}

The  \emph{category of $\mbf C$-graphs over $\A$} is the category denoted by $\mbf{C}^\A$,   whose objects are the $\mbf C$-graphs over $\A$ and whose morphisms  $\alpha:F\to G$  are the data of $\mbf C$-morphisms $\alpha_{v}:F_{v}\to G_{v}$ and $\alpha_{e}:F_{e}\to G_{e}$, $v\in \Ve_\A$, $e\in \Ed_\A$,  such that the following diagram 
$$\begin{array}{rcl}
F_{v} & \stackrel{\alpha_{v}}{\longrightarrow} & G_{v}\\
\xi_{v}^{e}\downarrow
& & \downarrow {\rho_{v}^{e}}\\
F_{e} & \stackrel{\alpha_{e}}{\longrightarrow} & G_{e}
\end{array}$$
commutes for each $(v,e)\in I_{\GA}$, $\xi^{e}_V$ and $\rho^{e}_v$ being the restriction maps of $F$ and $G$. \\

In all the sequel we suppose that $\mbf C$ is a subcategory of the category of groups. \\

A $\mbf C$-graph $H$ is a \emph{sub-$\mbf C$-graph} of a $\mbf C$-graph $G$ if  $H_{\star}$ is a subgroup of $G_{\ast}$ for any $\ast\in \Ve_\A\cup\Ed_\A$,  the inclusion map $H_\star\hookrightarrow  G_\star$ being $\mbf C$-morphisms,  and the   restriction maps  $H_v\to  H_e$ being given by  the restriction map $\rho_{v}^{e}$ of $G$, a fortiori $\rho_v^{e}(H_v)\subset H_e$.  When each group $H_\star$ is a normal subgroup of $G_\ast$  we say that $H$ is a \emph{normal sub-$\mbf C$-graph of $G$};  then the map $\rho_v^{e}$
factorizes as  a map $\overline\rho_v^{e}:G_v/H_v\to G_e/H_e$,  defining the \emph{quotient $\mbf C$-graph} $G/H$, with $(G/H)_{\star}=G_\star/H_\star$, the maps $\overline\rho_v^{e}$ being the restriction maps.\\

 If $G$ (resp. $G'$) is a $\mbf{C}$-graph over a graph $\A$ (resp. $\A'$), a \emph{morphism of $\mbf C$-graphs $\phi:G\to G'$ over a morphism of graphs  $\varphi:\A'\to\A$} is a collection of $\mbf{C}$-morphisms 
\[\phi_\star:G_{\varphi(\star)}\to G'_\star,\qquad \star\in\Ve_{\A'}\cup\Ed_{\A'}\]
such that, if $e=\langle v,v'\rangle$ then the following diagram commutes
\[\xymatrix{G_{\varphi(v)}\ar[r]^{\phi_v}\ar[d]_{\rho_{\varphi(v)}^{\varphi(e)}}&G'_v\ar[d]^{\rho{'}_v^e}\\ G_{\varphi(e)}\ar[r]^{\phi_e} & G'_e}\]
If $\varphi(e)=\varphi(v)$ then $\rho_{\varphi(v)}^{\varphi(e)}$ is the identity.
A consequence of the commutativity of this diagram is that
$\rho_v^e$ sends the kernel of $\phi_v$ into the kernel of $\phi_e$
and
$\rho{'}_v^e$ sends the image of $\phi_v$ into the image of $\phi_e$. This allows to define
 the
 \emph{$\mbf{C}$-graph kernel} $\ker\phi$ over $\A$ by $(\ker\phi)_\star=\ker (\phi_\star)$, which is a sub-$\mbf C$-graph of $G$
 and the
 \emph{$\mbf{C}$-graph image} $\phi(G)$ over $\A'$ by $\phi(G)_\star=\phi_\star(G_{\varphi(\star)})$, which is a sub-$\mbf C$-graph of $G'$.
We can thus consider exact sequences of $\mbf{C}$-graphs over a common graph.

If  $\varphi':\A''\to\A'$ is another graph morphism and $\phi':G'\to G''$ is a $\mbf{C}$-graph morphism over $\varphi'$, then the \emph{composition} defined by
\[ 
\phi'\circ\phi:=\{\phi'_\star \circ \phi_{\varphi'(\star)} : G_{\varphi(\varphi'(\star))} \mapsto G''_\star\;\;|\;\;\star\in\Ve_{\A''}\cup\Ed_{\A''}\} 
\]
 is a $\mbf C$-graph morphism $G\to G''$ over $\varphi\circ\varphi'$. 
Hence the 
collection of all the pairs $(\A,G)$ where $\A$ is a graph and $G$ is a $\mbf C$-graph over $\A$ together with the \emph{$\mbf{C}$-graphs morphisms} consisting of the pairs $(\varphi,\phi):(\A,G)\to(\A',G')$ with $\varphi:\A'\to\A$ and $\phi:G\to G'$ over $\varphi$, forms a category that we will denote by $\mbf{CG}$. 
A $\mbf{C}$-graph morphism $(\mr{id}_\A,\phi)$ over the identity of $\A$ is just a morphism of group-graphs over $\A$ as defined previously. Thus, $\mbf{C}^\A$ is a subcategory of $\mbf{CG}$.
\begin{defin}\label{pbgrgr}
The \emph{pull-back by a graph morphism} $\varphi:\A'\to\A$ of a $\mbf{C}$-graph $G$ over $\A$ is the $\mbf{C}$-graph over $\A'$ defined by
\[(\varphi^*G)_\star=G_{\varphi(\star)},\quad \star\in\Ve_{\A'}\cup\Ed_{\A'},\]
 the restriction morphism $(\varphi^*G)_v\to(\varphi^*G)_e$ for $e=\langle v,v'\rangle\in\Ed_{\A'}$  being the restriction morphism $G_{\varphi(v)}\to G_{\varphi(e)}$  when $\varphi(e)\in\Ed_{\A}$, and
 the identity map of $G_{\varphi(v)}$ otherwise.
 We call \emph{canonical morphism} the 
$\mbf{C}$-graph morphism $\imath_{\varphi } : G\to \varphi^\ast G$ over $\varphi$ defined by the identity maps
\[
\imath_{\varphi\,\star}:=\mr{id}_{G_{\varphi(\star)}} : 
G_{\varphi(\star)}\longrightarrow  (\varphi^\ast G)_\star \,,\quad
\star\in \Ve_{\A'}\cup\Ed_{\A'}\,.
\]  
\end{defin}
\noindent  In this way, the data of a morphism of $\mbf{C}$-graphs $\phi:G\to G'$ over a morphism of graphs $\varphi:\A'\to\A$ is just the data of a morphism of $\mbf{C}$-graphs $\br\phi : \varphi^*G\to G'$ over $\A'$.

\begin{obs}\label{factorization}
Let $F:G\to G'$ be a morphism of $\mbf C$-graphs over $f:\AR'\to\A$. 
Let $r:\AR\to\A$ be a morphism of graphs. If $f$ factorizes as $f=r\circ \bar f$ for some morphism of graphs $\bar f:\AR'\to\AR$ then $F$ factorizes as $F=\bar F\circ\imath_r$ where $\bar F:r^*G\to G'$ is a morphism of $\mbf C$-graphs over $\bar f$.
Indeed, if we define $\bar F_\star:=F_\star:(r^*G)_{\bar f(\star)}=G_{r(\bar f(\star))}=G_{f(\star)}\to G'_{\star}$ for each $\star\in\Ve_{\AR'}\cup\Ed_{\AR'}$ then $F=\bar F\circ\imath_r$.
\end{obs}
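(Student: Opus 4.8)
The plan is to make the author's candidate precise and then check the two things it must satisfy: that it composes with $\imath_r$ to give back $F$, and that it is genuinely a morphism of $\mbf{C}$-graphs over $\bar f$. First I would unwind the definitions. Since $F$ is a morphism over $f$, its components are $\mbf{C}$-morphisms $F_\star : G_{f(\star)}\to G'_\star$ for $\star\in\Ve_{\AR'}\cup\Ed_{\AR'}$. The hypothesis $f=r\circ\bar f$ gives $f(\star)=r(\bar f(\star))$, so by Definition~\ref{pbgrgr} the pull-back satisfies $(r^\ast G)_{\bar f(\star)}=G_{r(\bar f(\star))}=G_{f(\star)}$. Under this canonical identification each $F_\star$ reads as a $\mbf{C}$-morphism $(r^\ast G)_{\bar f(\star)}\to G'_\star$, and I would simply set $\bar F_\star:=F_\star$ (these are $\mbf{C}$-morphisms because the $F_\star$ are). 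The factorization is then immediate: the component of $\bar F\circ\imath_r$ at $\star$ is $\bar F_\star\circ\imath_{r,\bar f(\star)}$, and since $\imath_{r,\bar f(\star)}=\mr{id}_{G_{r(\bar f(\star))}}=\mr{id}_{G_{f(\star)}}$ by the definition of the canonical morphism, this equals $F_\star$. Hence $\bar F\circ\imath_r=F$ componentwise.

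The substantive point — and the only place any work is required — is to check that $\bar F$ respects the restriction morphisms, i.e. that for every edge $\bar e=\langle\bar v,\bar v'\rangle\in\Ed_{\AR'}$ the square relating $\bar F_{\bar v}$, $\bar F_{\bar e}$, the restriction morphism $\sigma_{\bar f(\bar v)}^{\bar f(\bar e)}$ of $r^\ast G$, and the restriction morphism $\rho{'}^{\bar e}_{\bar v}$ of $G'$ commutes. Under the identifications above, the two horizontal arrows and the right vertical arrow $\rho{'}^{\bar e}_{\bar v}$ are literally those of the square that $F$ satisfies at $\bar e$ (valid because $F$ is a morphism over $f$). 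So the whole verification reduces to showing that the left vertical arrow $\sigma_{\bar f(\bar v)}^{\bar f(\bar e)}$ of $r^\ast G$ agrees with the restriction morphism $\rho_{f(\bar v)}^{f(\bar e)}$ of $G$ appearing in $F$'s square. I expect this matching of restriction maps to be the main obstacle; it is purely a matter of reconciling the identity-map conventions in the pull-back definition and in the notion of a morphism over a graph map.

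I would settle it by the trichotomy on $\bar v,\bar v'$. If $f(\bar v)\neq f(\bar v')$, then a fortiori $\bar f(\bar v)\neq\bar f(\bar v')$, so $\bar f(\bar e)\in\Ed_{\AR}$ and $r(\bar f(\bar e))=f(\bar e)\in\Ed_{\A}$, whence Definition~\ref{pbgrgr} gives $\sigma_{\bar f(\bar v)}^{\bar f(\bar e)}=\rho_{r(\bar f(\bar v))}^{r(\bar f(\bar e))}=\rho_{f(\bar v)}^{f(\bar e)}$, as needed. If $f(\bar v)=f(\bar v')$ but $\bar f(\bar v)\neq\bar f(\bar v')$, then $\bar f(\bar e)\in\Ed_{\AR}$ while $r(\bar f(\bar e))=r(\bar f(\bar v))=f(\bar v)\in\Ve_{\A}$, so the pull-back prescribes $\sigma_{\bar f(\bar v)}^{\bar f(\bar e)}=\mr{id}$; on the other side $f(\bar e)=f(\bar v)\in\Ve_{\A}$ forces $\rho_{f(\bar v)}^{f(\bar e)}=\mr{id}$ by the convention for morphisms over $f$. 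Finally, if $\bar f(\bar v)=\bar f(\bar v')$, then $\bar f(\bar e)=\bar f(\bar v)\in\Ve_{\AR}$ makes the left arrow the identity by the convention for morphisms over $\bar f$, and again $f(\bar e)=f(\bar v)\in\Ve_{\A}$ makes $\rho_{f(\bar v)}^{f(\bar e)}$ the identity. In every case the two squares coincide, so $\bar F$ is a morphism of $\mbf{C}$-graphs over $\bar f$ and the factorization $F=\bar F\circ\imath_r$ holds.
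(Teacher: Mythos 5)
Your proposal is correct and follows the same route as the paper: define $\bar F_\star:=F_\star$ under the identification $(r^\ast G)_{\bar f(\star)}=G_{f(\star)}$ and observe that $\bar F\circ\imath_r=F$ since the components of $\imath_r$ are identities. The only difference is that you spell out, via the trichotomy on $f(\bar v),f(\bar v')$ and $\bar f(\bar v),\bar f(\bar v')$, the verification that the restriction maps of $r^\ast G$ match those of $G$ (so that $\bar F$ is indeed a morphism over $\bar f$) — a check the paper leaves implicit, and which you carry out correctly.
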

 
\begin{obs}\label{factgrgr}
If $j=1,2$, let $G_j$ be a group-graph over  $\A_j$ and $K_j$  a normal sub-group-graph of $G_j$, 
then any group-graph morphism $g:G_1\to G_2$ over a graph-morphism $\varphi:\A_2\to\A_1$ sending $ K_1$ to $ K_2$ factorizes as a morphism $\bar g$  between the quotient group-graphs:
\begin{equation*}
\xymatrix{
1\ar[r] & K_1 \ar[r]\ar[d] &  G_1\ar[r]\ar[d]_{g} & G_1/ K_1\ar[r]\ar@{-->}[d] _{\bar g}&1 \\
1\ar[r] & K_2 \ar[r] & G_2\ar[r] & G_2/ K_2\ar[r]& 1
}
\end{equation*}
We easily check this property when $\A_1=\A_2$ and $\varphi=\mr{id}$. Since, by definition $\varphi^*(G_1/K_1)=\varphi^*G_1/\varphi^*K_1$, the general case follows taking the pull-back by $\varphi$ in the first row.
\end{obs}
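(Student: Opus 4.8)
The plan is to reduce the statement to the case of a morphism over the identity, where the factorization is just the group-graph incarnation of the universal property of quotient groups, and then to transport the result along the pull-back functor of Definition~\ref{pbgrgr}.

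First I would treat $\A_1=\A_2=\A$ with $\varphi=\mr{id}$. Here $g$ is a collection of group homomorphisms $g_\star:(G_1)_\star\to(G_2)_\star$, $\star\in\Ve_\A\cup\Ed_\A$, each satisfying $g_\star\bigl((K_1)_\star\bigr)\subseteq(K_2)_\star$. By the universal property of quotient groups, every $g_\star$ descends to a unique homomorphism $\bar g_\star:(G_1)_\star/(K_1)_\star\to(G_2)_\star/(K_2)_\star$ compatible with the canonical projections. These are the candidate components of $\bar g$, and the only thing to check is that they commute with the quotient restriction maps $\overline\rho_v^e$, i.e. that $\bar g$ is genuinely a morphism of group-graphs. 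This is a diagram chase: one starts from the commuting square relating $g_v$, $g_e$ and the restriction maps $\rho_v^e$ of $G_1,G_2$, composes it with the canonical projections, and invokes that $\overline\rho_v^e$ is by construction the map induced by $\rho_v^e$ on the quotients; uniqueness in the universal property then forces the quotient square to commute. The commutativity of the two rows of the displayed diagram for $\mr{id}$ is part of this same verification.

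Next I would pass to the general $\varphi$ by pull-back. By the correspondence recorded after Definition~\ref{pbgrgr}, the datum of $g:G_1\to G_2$ over $\varphi:\A_2\to\A_1$ is the same as the datum of a morphism $\br g:\varphi^*G_1\to G_2$ over $\A_2$, with $\br g_\star=g_\star$ on $(\varphi^*G_1)_\star=(G_1)_{\varphi(\star)}$. Since pull-back is defined componentwise, $\varphi^*K_1$ is a normal sub-group-graph of $\varphi^*G_1$, and the hypothesis $g_\star\bigl((K_1)_{\varphi(\star)}\bigr)\subseteq(K_2)_\star$ says exactly that $\br g$ sends $\varphi^*K_1$ into $K_2$. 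Applying the case over $\A_2$ just established to $\br g$ yields a group-graph morphism $\overline{\br g}:(\varphi^*G_1)/(\varphi^*K_1)\to G_2/K_2$ over $\A_2$.

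Finally, using the identity $\varphi^*(G_1/K_1)=(\varphi^*G_1)/(\varphi^*K_1)$, the morphism $\overline{\br g}$ becomes a morphism $\varphi^*(G_1/K_1)\to G_2/K_2$ over $\A_2$, which by the same correspondence read in reverse is the datum of the desired morphism $\bar g:G_1/K_1\to G_2/K_2$ over $\varphi$; the commutativity of the two squares of the displayed diagram is then inherited from the commutativity already established over $\A_2$. I expect the only delicate point to be the diagram chase of the first step---the compatibility of the induced maps with the quotient restriction morphisms---while the reduction by pull-back is formal once $\varphi^*(G_1/K_1)=\varphi^*G_1/\varphi^*K_1$ is available.
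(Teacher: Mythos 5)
Your proposal is correct and follows exactly the paper's own route: the paper also first checks the case $\A_1=\A_2$, $\varphi=\mr{id}$ (the componentwise universal property of quotients plus compatibility with the restriction maps) and then reduces the general case by pulling back the first row along $\varphi$, using the identity $\varphi^*(G_1/K_1)=\varphi^*G_1/\varphi^*K_1$ and the correspondence between morphisms over $\varphi$ and morphisms $\varphi^*G_1\to G_2$ over $\A_2$. You have merely written out in full the details the paper leaves implicit.
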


\begin{obs}\label{graph-cat}
Every graph $\A$ can be seen as a category whose objects are the vertices and the edges of $\A$, and whose morphisms (other than the identities) are the inclusion maps $i^{b}_a: \{a\}\hookrightarrow b$ of a vertex in an edge.         
\begin{center}
\begin{tikzpicture}
\draw[thick] (-2.41,0) to (-1,0);
\fill (-2.41,0) circle [radius=2pt];
\fill (-1,0) circle [radius=2pt];
\draw[thick] (-1,0) to (0,1);
\fill (0,1) circle [radius=2pt];
\draw[thick] (-1,0) to (0,-1);
\fill (0,-1) circle [radius=2pt];
\fill (2,0) circle [radius=2pt];
\draw[thick,->] (2,0) to (2.9,0);
\fill (3,0) circle [radius=2pt];
\draw[thick,<-] (3.1,0) to (4,0);
\fill (4,0) circle [radius=2pt];
\draw[thick,->] (4,0) to (4.7,.7);
\fill (4.77,.77) circle [radius=2pt];
\draw[thick,<-] (4.84,0.84) to (5.5,1.5);
\fill (5.5,1.5) circle [radius=2pt];
\draw[thick,->] (4,0) to (4.7,-.7);
\fill (4.77,-.77) circle [radius=2pt];
\draw[thick,<-] (4.84,-0.84) to (5.5,-1.5);
\fill (5.5,-1.5) circle [radius=2pt];
\node at (-2.41,0.3) {$a$};
\node at (-1.67,0.2) {$b$};
\node at (2,0.3) {$a$};
\node at (3,0.3) {$b$};
\node at (2.5,-0.3) {$i_a^b$};
\end{tikzpicture}
\end{center}
A $\mbf C$-graph over $\A$ is just a covariant\footnote{The contravariant version leads to the dual notion of \emph{graph of $\mathbf{C}$}, for instance \emph{graph of groups} in the sense of Serre \cite{Serre}.} functor  $G : \A\to \mbf C$ and  morphisms of $\mbf C$-graphs are just morphisms (i.e. natural transformations) of functors. This explains the adopted notation  $\mbf C^\A=\{F:\A\to\mbf C \text{ covariant functor}\}$. 
Under this identification, a morphism of graphs $\varphi:\A'\to\A$ is a covariant functor between the corresponding categories. If $G\in\mbf C^\A$ then the pull-back
 $\varphi^*G\in\mbf C^{\A'}$ is the composition of functors $G\circ\varphi$
 and 
\[\varphi^*:\mbf{C}^{\A}\to\mbf{C}^{\A'},\quad G\mapsto\varphi^*G=G\circ\varphi\] 
becomes a contravariant functor defining the pull-back by $\varphi$  of a morphism of $\mbf{C}$-graphs   $\alpha:G_1\to G_2$ over $\A$ as the morphism  $\varphi^*\alpha:\varphi^*G_1\to\varphi^*G_2$  of $\mbf{C}$-graphs over $\A'$ given by
  $(\varphi^*\alpha)_\star=\alpha_{\varphi(\star)}$ for $\star\in\Ve_{\A'}\cup\Ed_{\A'}$.
\end{obs}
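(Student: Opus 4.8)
The plan is to read the assertion of the Remark as the statement that there is a canonical \emph{isomorphism of categories}
\[
\mbf C^{\A}\;\iso\;[\A,\mbf C]
\]
between the combinatorially defined category of $\mbf C$-graphs over $\A$ and the functor category of covariant functors from the small category attached to $\A$ into $\mbf C$, an isomorphism moreover intertwining the pull-back operation of Definition \ref{pbgrgr} with precomposition of functors. Everything reduces to matching data, and the single structural fact that makes this painless is that in the category attached to $\A$ there are no composable pairs of non-identity morphisms: every non-identity arrow is an inclusion $i_a^b:\{a\}\hookrightarrow b$ of a vertex into an edge, and an edge $b$ admits no outgoing non-identity arrow. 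Consequently all functoriality and naturality conditions involving a genuine composite are vacuous, and the only axioms with content are those concerning identities.

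First I would check that $\A$ is a well-defined category: one adjoins an identity to each vertex and each edge, the only other arrows being the $i_a^b$, and associativity holds trivially since no nontrivial composites exist. Then, giving a covariant functor $G:\A\to\mbf C$ amounts to choosing objects $G_v,G_e$ together with, for each $(v,e)$ with $v\in e$, a $\mbf C$-morphism $G(i_v^e):G_v\to G_e$; as functoriality only imposes $G(\id)=\id$, this is \emph{exactly} the datum of a $\mbf C$-graph $(G_\star,\rho_v^e)$ over $\A$ under the assignment $\rho_v^e:=G(i_v^e)$. Likewise a natural transformation $\alpha:F\Rightarrow G$ is a family $\alpha_\star$ whose only non-trivial naturality squares are those attached to the arrows $i_v^e$, and these squares are precisely the commuting diagrams defining a morphism of $\mbf C$-graphs. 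This gives the isomorphism $\mbf C^\A\iso[\A,\mbf C]$ on the nose.

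Next I would identify a graph morphism $\varphi:\A'\to\A$ with a functor between the associated categories: it sends vertices to vertices and each edge $e=\langle v,v'\rangle$ either to the edge $\langle\varphi(v),\varphi(v')\rangle$ or, in the degenerate case $\varphi(v)=\varphi(v')$, to the vertex $\varphi(v)$. On arrows it sends $i_v^e$ to $i_{\varphi(v)}^{\varphi(e)}$ when $\varphi(e)$ is an edge and to $\id_{\varphi(v)}$ when $e$ is collapsed, and functoriality is again automatic. With this reading, the pull-back $\varphi^*G$ is literally the composite $G\circ\varphi$: the two prescriptions for its restriction morphisms, namely $\rho_{\varphi(v)}^{\varphi(e)}$ when $\varphi(e)\in\Ed_\A$ and the identity otherwise, are exactly $G(\varphi(i_v^e))$ in the two cases. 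Similarly $(\varphi^*\alpha)_\star=\alpha_{\varphi(\star)}$ is the whiskering $\alpha\varphi$, so $\varphi^*$ is precomposition by $\varphi$, and the identity $(\varphi\circ\varphi')^*=(\varphi')^*\circ\varphi^*$ expresses its contravariance.

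There is no genuine obstacle here: the content is entirely bookkeeping, and the proof is a matter of checking the identifications case by case. The only point deserving real care is the degenerate situation in which a graph morphism collapses an edge to a vertex; there one must confirm that the induced arrow is forced to be the identity (the sole endomorphism available in $\A$), and that this is consistent with the ``identity otherwise'' clauses appearing in the definitions of restriction and of pull-back. Once this case is handled, the functor axioms hold vacuously and the stated correspondence is an isomorphism of categories intertwining the two pull-back functors, which is exactly the claim.
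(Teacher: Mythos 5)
Your proposal is correct and is essentially the verification the paper leaves implicit: the Remark is stated without proof, and the identifications you spell out (objects/arrows matching, vacuous functoriality since no two non-identity arrows are composable, naturality squares reducing to the defining commuting diagrams, and the collapsed-edge case forcing the identity) are exactly the bookkeeping the authors treat as evident, including their explicit convention that $\rho_{\varphi(v)}^{\varphi(e)}$ is the identity when $\varphi$ collapses an edge. Nothing in your argument deviates from or goes beyond what the paper intends, so it matches the paper's approach.
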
  
In fact, the natural context to consider these notions is that of 
abstract simplicial complexes:
\begin{obs} Recall that
an abstract simplicial complex $\Delta$ is a nonempty subset of $\mc P(S)$ whose elements are called faces, such that for each $F\in\Delta$,  $0<|F|<\infty$ and if $\emptyset\neq F'\subset F$ then $F'\in\Delta$. The dimension of $F\in\Delta$ is $\dim F=|F|-1$, the dimension of $\Delta$ is $\dim\Delta=\sup\{\dim F\,:\,F\in\Delta\}$. A simplicial complex of dimension $\le 1$ is just a graph.
The $k$-skeleton $\Delta_k$ of a simplicial complex $\Delta$ is the subcomplex of $\Delta$ consisting of all faces of dimension at most $k$.
We will identify $\Delta_0$ with the set of vertices $\bigcup\limits_{F\in\Delta}F\subset S$  of~$\Delta$.
Each simplicial complex $\Delta$ can be thought of as a small category  whose objects are the elements of $\Delta$ and whose morphisms are the inclusions, i.e. if $F\subset F'\in\Delta$ then $\mr{Hom}_\Delta(F,F')=\{i_{FF'}:F\hookrightarrow F'\}$.

A simplicial map between (abstract) simplicial complexes $f:\Delta\to\Gamma$ is defined by a map $f_0:\Delta_0\to\Gamma_0$ such that $f(F):=f_0(F)\in\Gamma$ for all $F\in\Delta$. Any simplicial map $f:\Delta\to\Gamma$ can be thought of as a functor. 

The category $\mathbf{SC}$  of simplicial complexes and simplicial maps contains the full subcategory  $\mathbf{SC}_k$ of simplicial complexes of dimension $\le k$. In particular $\mathbf G:=\mathbf{SC}_1$ is the category of graphs. If  $\Delta$ is a graph then  $\Delta=\Delta_1$ and $\Delta_1\setminus\Delta_0$ is the set of edges.
Passing to the $k$-skeleton defines a functor $\mathbf{SC}\to\mathbf{SC}_k$.
For every category $\mathbf C$ we consider the collection $\mbf{CSC}$ of $\mathbf C$-simplicial complexes which are  pairs $(\Delta,G)$ with $\Delta$  a simplicial complex and 
$G\in\mathbf C^\Delta:=\{\Delta\to \mathbf C \text{ covariant functor}\}$,
i.e. $G$ is an assignment $\Delta\ni F\mapsto G(F)$ jointly with a  $\mathbf C$-morphism $\rho^G_{FF'}:G(F)\to G(F')$, that we call restriction, if $F\subset F'\in\Delta$.  We will say that $G$ is a $\mathbf{C}$-simplicial complex over $\Delta$. There is a natural definition of morphism of $\mbf C$-simplicial complexes over a map of simplicial complexes completely analogous to the one considered for $\mbf C$-graphs which makes  $\mbf{CSC}$ a category.
\end{obs}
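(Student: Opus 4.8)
The plan is to establish that $\mbf{CSC}$ is a category by transporting, essentially verbatim, the verification already carried out for $\mbf{CG}$, the only conceptual input being the two facts just recalled in this Remark: that every simplicial complex is a small category (objects $=$ faces, morphisms $=$ inclusions) and every simplicial map is a functor. Concretely, I would first fix the data of a morphism. Given objects $(\Delta,G)$ and $(\Delta',G')$, a morphism $(\Delta,G)\to(\Delta',G')$ is a pair $(\varphi,\phi)$ with $\varphi:\Delta'\to\Delta$ a simplicial map and $\phi$ a family of $\mbf C$-morphisms $\phi_F:G(\varphi(F))\to G'(F)$, $F\in\Delta'$, such that for every inclusion $F\subset F'$ in $\Delta'$ the square formed by $\phi_F$, $\phi_{F'}$ and the two restriction morphisms $\rho^G_{\varphi(F)\varphi(F')}$, $\rho^{G'}_{FF'}$ commutes. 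As in the graph case this is the same datum as a morphism $\varphi^*G\to G'$ inside the functor category $\mbf C^{\Delta'}$, once one notes (exactly as in Definition~\ref{pbgrgr}) that $\varphi^*G=G\circ\varphi$.

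Next I would define composition by the same formula used for $\mbf{CG}$: for composable morphisms $(\Delta,G)\xrightarrow{(\varphi,\phi)}(\Delta',G')\xrightarrow{(\varphi',\phi')}(\Delta'',G'')$, set
\[
(\varphi',\phi')\circ(\varphi,\phi):=\Big(\varphi\circ\varphi'\,,\ \{\phi'_F\circ\phi_{\varphi'(F)}:G(\varphi(\varphi'(F)))\to G''(F)\mid F\in\Delta''\}\Big).
\]
The verification then splits into three routine checks: that this composite is again a morphism of $\mbf C$-simplicial complexes (the commuting-square condition over each inclusion $F\subset F'$ in $\Delta''$ follows by pasting the two commuting squares of $\phi$ and $\phi'$, exactly as for edges in $\mbf{CG}$); that composition is associative (this reduces to associativity of composition in $\mbf C$ together with the strict identity $(\varphi\circ\varphi')^*=\varphi'^*\circ\varphi^*$, which holds on the nose because pullback is precomposition of functors); and that $(\mr{id}_\Delta,\mr{id}_G)$ is a two-sided unit.

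I would also record the purely conceptual reformulation, which makes the statement transparent: the assignment $\Delta\mapsto\mbf C^{\Delta}$, $\varphi\mapsto\varphi^*=(-)\circ\varphi$, is a strict contravariant functor from $\mathbf{SC}$ to $\mathbf{Cat}$, and $\mbf{CSC}$ is exactly its Grothendieck (total) category; such a total category is automatically a category, and its restriction to the objects of dimension $\le 1$, i.e. to $\mathbf{SC}_1=\mathbf G$, recovers $\mbf{CG}$ as a full subcategory. The main point requiring care — rather than a genuine obstacle — is that nothing in the $\mbf{CG}$ argument used $\dim\le1$: the only structural facts invoked were that the source and target are small categories and that $\varphi$ is a functor. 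Since the Remark has just established precisely these facts for simplicial complexes and simplicial maps, the graph argument applies unchanged, the face-inclusion morphisms playing the role previously played by the vertex-in-edge inclusions. Hence $\mbf{CSC}$ is a category.
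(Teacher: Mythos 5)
Your proposal is correct and follows exactly the route the paper intends: the Remark gives no proof beyond asserting that the definition is ``completely analogous'' to the $\mbf{CG}$ case, and you carry out precisely that analogy, with the same morphism data, the same composition formula, and the same routine checks (pasting of squares, strictness of $(\varphi\circ\varphi')^*=\varphi'^*\circ\varphi^*$, identities). The Grothendieck-construction reformulation is a nice conceptual addition but does not change the substance of the argument.
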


\subsection{Group-graph associated to a sheaf}\label{gg-sh}

Let $\un{\mc S}$ be a $\mbf C$-sheaf on a topological space $\mc D$ and $\mc C$  a collection of sets of $\mc D$. Consider the following  graph  $\A$ (not necessarily  finite): its vertices are the elements  of $\mc C$ and its edges are all the sets $\langle D,D'\rangle$ formed by two distinct  elements of $\mc C$, such that $D\cap D'\neq \emptyset$. 
For any $W\subset\mc D$ (not necessarily open) we recall that the group of continuous sections of $\un{\mc S}$ over $W$ is
$\un{\mc S}(W):=\lim\limits_{\stackrel{\longrightarrow}{U\in\mathscr U_W}}\un{\mc S}(U)$, 
where $\mathscr U_W$ is the set of open neighborhoods of $W$. In the case that $W=\{p\}$, $\un{\mc S}(\{p\})$ is just the stalk $\un{\mc S}(p)$ of $\un{\mc S}$ at $p\in\mc D$. If $W'\subset W$ then $\mathscr U_{W}\subset\mathscr U_{W'}$ and the inductive limit of the restriction morphisms of $\un{\mc S}$ define a \emph{restriction morphism} $\un{\mc S}(W)\to\un{\mc S}(W')$.

We define the  \emph{$\mbf C$-graph $\mc S$ over $\A$ associated to  $\underline{\mathcal S}$}  in the following way: 
\begin{itemize}
\item ${\mc S}_D:=\un{\mc S}(D)$  for $D\in \mathsf{Ve}_{\A}$, 
\item[$\bullet$]  $\mc S_{\langle D,D'\rangle}:=\un{\mc S}(D\cap D')$  for $\langle D,D'\rangle\in \msf{Ed}_\A$,
\item the restriction maps $\rho_D^{\langle D,D'\rangle}$ are the restriction morphisms considered before.
\end{itemize}
Any morphism of $\mbf C$-sheaves over $\mc D$ induces a morphism of $\mbf C$-graphs over $\A$, defining  a covariant functor:
\begin{equation*}
\mbf {C Sh}_{\mc D}\longrightarrow \mbf {C}^\A\,,\quad
\un{\mc S}\mapsto \mc S\,,
\end{equation*}
from the category of $\mbf{C}$-sheaves over $\mc D$ to the category of $\mbf{C}$-graphs over $\A$.
We highlight that this functor is not exact in general.

Let $\mc D'$ be another topological space with a collection 
$\mc C'$ of subsets of $\mc D'$ and let $\un{\mc S}'$ be a $\mbf{C}$-sheaf over $\mc D'$.
Let $\phi:\mc D'\to\mc D$ be a homeomorphism such that 
$\phi(\mc C)=\mc C'$.
If $D\in\Ve_{\A'}$ and $\langle D,D'\rangle\in\Ed_{\A'}$ then
$\phi(D)\in\Ve_{\A}$, $\phi(D\cap D')=\phi(D)\cap\phi(D')$ and
$\phi$ induces a graph morphism 
\begin{equation*}\label{inducedgrmor}
\A_\phi:\A'\to \A
 \,,\quad \star\mapsto \phi(\star)\;;\quad \star\in\Ve_{\A'}\cup\Ed_{\A'}\,.
\end{equation*}
Given a morphism of $\mbf C$-sheaves $\un{\mc S}\to\un{\mc S}'$ over $\phi:\mc D\to\mc D'$, i.e. a morphism 
\[\un g:\phi^{-1}\un{\mc S}\to\un{\mc S}'\]
of $\mbf C$-sheaves over $\mc D'$,
we have $\mbf C$-morphisms
  \[
\un g_{D}:(\phi^{-1}\un{\mc S})(D)=\un{\mc S}(\phi(D))\to \un{\mc S}'(D)\,,
   \]
  \[ 
 \un g_{D\cap D'} : (\phi^{-1}\un{\mc S})(D\cap D')=\un{\mc S}(\phi(D\cap D'))=\un{\mc S}(\phi(D)\cap\phi(D'))\to \un{\mc S}'(D\cap D')\,,
   \]
for $D\in\Ve_{\A'}$ and $\langle D,D'\rangle\in\Ed_{\A'}$.
Since
\[(\A_\phi^*\mc S)(\langle D,D'\rangle)=\mc S(\langle \phi(D),\phi(D')\rangle)=\un{\mc S}(\phi(D)\cap \phi(D'))\]
we obtain a $\mbf C$-graph morphism \emph{associated to the sheaf morphism $\un g$}
\[g:\A_\phi^*\mc S\to \mc S'\,.\]
Notice that $\A_\phi^*\mc S$ coincides with the $\mbf C$-graph associated to the sheaf $\phi^{-1}\un{\mc S}$ over $\mc D'$, and $g$ can be seen as the $\mbf C$-graph morphism associated to the morphism of sheaves $\un g:\phi^{-1}\un{\mc S}\to\un{\mc S}'$.

The situation we will deal with in the sequel is the following: $\mc D$ is an analytic set (and more specifically a hypersurface in a complex manifold), $\mc C$ is the collection of irreducible components of~$\mc D$.
The graph $\A$ is called  the \emph{dual graph} of $\mc D$.
In this way we have a functor
\[\mbf{CSh}_\mr{an}\to\mbf{CG},\quad \un{\mc S}\mapsto\mc S\,,\]
where $\mbf{CSh}_\mr{an}$ is the subcategory of the category of $\mbf C$-sheaves over analytic sets whose morphisms are over homeomorphisms.

\subsection{Cohomology of a group-graph}\label{cohomology} This notion was introduced in \cite{MMS}. For group-graphs associated to sheaves considered in 
subsection \ref{gg-sh}, with $\mc C$ a locally finite open covering $\mc U$ of $\mc D$ and $\un{\mc S}$ abelian,
this notion will coincide with the \v Cech cohomology groups $\check{H}^i(\mc U,\un{\mc S})$, $i=0,1$.
\\

Let $G$ be a group-graph over a graph $\A$.
The $0$-cohomology set is the subgroup  $H^{0}(\GA,G)$ of $C^{0}(\GA,G):=\prod_{v\in \mathsf{Ve}_\A}G_v$ whose elements are the families  $(g_{v})$ satisfying the relations
$\rho_{v}^{e}(g_{v})=\rho_{v'}^{e}(g_{v'})$ whenever $e=\langle v,v'\rangle$.\\

In order to define the $1$-cohomology set $H^1(\A, G)$ of a group-graph $(G, (\rho^e_v)_{(e,v)\in I_\A})$ we first define the set of cocycles $Z^{1}(\GA,G)$  as the set of families 
\[
(g_{v,e})\in\prod_{(v,e)\in I_{\GA}}G_{v,e}\,,\quad \hbox{ with }\quad G_{v,e}:=G_e\,,
\]
such that $ g_{v,e}g_{v',e}=1$ whenever $e=\langle v,v'\rangle$. 
Then $H^1(\A, G)$ is the quotient set of $Z^1(\A, G)$ by the following action of $C^0(\A, G)$:
\[(g_{v})\,\star_G\,(g_{v,e}):=\left(\rho_{v}^{e}(g_{v})^{-1}\,g_{v,e}\,\rho_{v'}^{e}(g_{v'})\right)\,.\]
The set $H^1(\A,G)$ contains the privileged element $1$ defined by $g_{v,e}=1$. In this way, from now on $H^1(\A,G)$ will be consider as a pointed set.
\begin{obs}\label{abelian-case}
When $G$ is an abelian group-graph,  then  $H^1(\A, G)$ is an abelian group. Specifically, we have in this case an exact sequence of groups (with additive notations)
\begin{equation*}\label{exact-sequ-abelian}
C^0(\A,G)\stackrel{\partial^0}{\longrightarrow} Z^1(\A,G)\to H^1(\A,G)\to 0\,,
\end{equation*}
\[\partial^0((g_v))=(g_{v,e})\,,\quad g_{v,e}:= g_{v'} - g_v\,,\quad e=\langle v,v'\rangle\,.\]
More formally, $H^i(\A,G)$ is the $i$-th cohomology group of the cochain complex of abelian groups 
\[C^*(\A,G):\quad C^0(\A,G)\stackrel{\partial^0}{\to} C^1(\A,G)\stackrel{\partial^1}{\to} C^2(\A,G):=\prod_{e\in\Ed_\A}G_e\;,\]
with: $\partial^1((g_{v,e}))=(g_{v,e}+g_{v',e})$, if $e=\langle v,v'\rangle$.
\end{obs}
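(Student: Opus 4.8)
The plan is to reduce the pointed-set definitions of $H^0(\A,G)$ and $H^1(\A,G)$ to the ordinary homological algebra of the cochain complex $C^*(\A,G)$, which becomes available precisely because $G$ is abelian. The starting observation is that, writing each group law additively, the cocycle relation $g_{v,e}g_{v',e}=1$ in the definition of $Z^1(\A,G)$ reads $g_{v,e}+g_{v',e}=0$, that is $(g_{v,e})\in\ker\partial^1$. Thus I would first record the identification $Z^1(\A,G)=\ker\partial^1$ as subgroups of $C^1(\A,G)=\prod_{(v,e)\in I_\A}G_e$, noting that $\partial^1$ is well defined because its value on an edge is a symmetric sum of the two oriented contributions.

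Next I would confirm that $C^*(\A,G)$ is a complex, i.e. $\partial^1\circ\partial^0=0$. This is immediate: for $(g_v)\in C^0(\A,G)$ and $e=\langle v,v'\rangle$ the convention $g_{v,e}:=g_{v'}-g_v$ means $(\partial^0 g)_{v,e}=\rho_{v'}^e(g_{v'})-\rho_v^e(g_v)$ and $(\partial^0 g)_{v',e}=\rho_v^e(g_v)-\rho_{v'}^e(g_{v'})$, whose sum vanishes; hence $\partial^0$ lands in $Z^1=\ker\partial^1$. In particular $\ker\partial^0$ is exactly the subgroup of $(g_v)$ satisfying $\rho_v^e(g_v)=\rho_{v'}^e(g_{v'})$ for all edges, which is the group $H^0(\A,G)$ defined above.

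The essential point is that, in the abelian case, the action $\star_G$ defining $H^1(\A,G)$ is nothing but translation inside $Z^1(\A,G)$ by the subgroup $\mr{im}\,\partial^0$. I would verify this directly from the formula for $\star_G$: for $(h_v)\in C^0(\A,G)$ one computes $(h_v)\star_G(g_{v,e})=(g_{v,e}+\rho_{v'}^e(h_{v'})-\rho_v^e(h_v))=(g_{v,e})+\partial^0((h_v))$, the reordering of the three terms being the only place where commutativity of $G_e$ is used. Consequently the orbits of $\star_G$ are exactly the cosets of $\mr{im}\,\partial^0$ in the abelian group $Z^1(\A,G)$, so $H^1(\A,G)=Z^1(\A,G)/\mr{im}\,\partial^0=\ker\partial^1/\mr{im}\,\partial^0$ inherits a canonical abelian group structure in which the privileged element $1$ is the neutral element. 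This gives at once the exact sequence $C^0(\A,G)\xrightarrow{\partial^0}Z^1(\A,G)\to H^1(\A,G)\to0$ and the identification of $H^0,H^1$ with the cohomology of $C^*(\A,G)$.

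No genuine difficulty arises; the only thing requiring care is the bookkeeping of orientations, that is the distinction between $C^1(\A,G)=\prod_{(v,e)\in I_\A}G_e$ indexed by oriented edges and $C^2(\A,G)=\prod_{e\in\Ed_\A}G_e$ indexed by unoriented edges, together with the sign conventions that simultaneously force $\partial^0$ to take values in the cocycles and make $\star_G$ coincide with addition of $\partial^0(h)$. Keeping these conventions coherent is what upgrades the quotient of $Z^1(\A,G)$ by $\star_G$, a priori only a pointed set, to the cokernel of $\partial^0:C^0(\A,G)\to Z^1(\A,G)$.
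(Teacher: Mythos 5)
Your proof is correct and is exactly the routine verification the paper intends: the statement appears as a remark without proof, and your argument—identifying $Z^1(\A,G)=\ker\partial^1$ in additive notation, checking $\partial^1\circ\partial^0=0$, and observing that the action $\star_G$ becomes translation by $\mathrm{im}\,\partial^0$ so that the orbit space is the cokernel $\ker\partial^1/\mathrm{im}\,\partial^0$—is precisely the check being left to the reader. One minor imprecision: commutativity of the groups $G_e$ is used not only in reordering the three terms of $(h_v)\star_G(g_{v,e})$ but also to make $Z^1(\A,G)$ a subgroup of $C^1(\A,G)$ and $\partial^0$ a group homomorphism (so that $\mathrm{im}\,\partial^0$ is a subgroup), though these verifications are equally immediate.
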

Every morphism $\phi:G\to G'$ of $\mbf{C}$-graphs over a graph morphism $\varphi:\A'\to\A$ induces maps
\[\phi_0:C^0(\A,G)\to C^0(\A',G'),\quad \phi_0((g_v)_v)= (\phi_{v'}(g_{\varphi(v')}))_{v'}\,,\]
\[\phi_1:C^1(\A,G)\to C^1(\A',G'),\quad \phi_1((g_{v,e}))=(g'_{v',e'}),\]
where
\[g'_{v',e'}=\left\{
\begin{array}{ll}
\phi_{e'}(g_{\varphi(v'),\varphi(e')}) & \text{if $\varphi(e')$ is an edge of $\A$,}\\
1 & \text{otherwise.}
\end{array}\right.\]
The image of the restriction $H^0(\phi)$ of the group morphism $\phi_0$ to the subgroup  $H^0(\A,G)$ is contained in $H^0(\A',G')$.
Moreover,  $\phi_1$ sends $Z^1(\A,G)$ into $Z^1(\A',G')$, the following  
 diagram is commutative
\[\xymatrix{C^0(\A,G)\times Z^1(\A,G)\ar@<-5ex>[d]^{\phi_0}\ar@<5ex>[d]^{\phi_1}
\ar[r]^{\hspace{9mm}\star_G}&Z^1(\A,G)\ar[d]^{\phi_1}\\ C^0(\A',G')\times Z^1(\A',G')\ar[r]^{\hspace{10mm}\star_{G'}}  & Z^1(\A',G')}\] 
inducing a map
\begin{equation}\label{H1phi}
H^1(\phi):H^1(\A,G)\to H^1(\A',G')\,.
\end{equation}
In this way one can check that the correspondences $(\A,G)\mapsto H^i(\A, G)$ and $(\varphi,\phi)\mapsto H^i(\phi)$ define covariant functors 
\begin{equation}\label{defH1}
H^{i} : 
\mbf{CG}\to \mbf{Set}^\point\,,\quad  i=0,1\,,
\end{equation}
from the category of $\mbf{C}$-graphs to the category of pointed sets.
Moreover when 
$\mbf C$ is one of the following sub-categories of $\mbf{Gr}$:
\begin{itemize}
\item the category $\mbf{Ab}$ of abelian groups,
\item the category $\mbf{Vec}$ of $\C$-vector spaces, and linear maps,
\end{itemize}
we obtain covariant functors with values in the same category pointed by $0$:
\begin{equation*}
H^{i}: 
\mbf{CG}\to \mbf{C}^\point\,, \quad i=0,1\,.
\end{equation*}
In particular, $H^i(\phi)$, $i=0,1$, are $\mbf C$-morphisms.

\begin{obs}
The canonical morphism $\imath_{\varphi}: G\to\varphi^\ast G$ induces  maps $H^{i}(\imath_{\varphi}):H^{i}(\A,G)\to H^{i}(\A',\varphi^\ast G)$ and we have
\[ 
H^{i}(\phi)=H^{i}(\br\phi)\circ H^{i}(\imath_{\varphi})\,,\quad  i=0,1,
 \]
 where $\br\phi:\varphi^\ast G\to G'$ is the  $\mbf{C}$-graph morphism over $\A'$ associated to $\phi$ and $\imath_\varphi : G\to \varphi^\ast G$ is the canonical morphism.
\end{obs}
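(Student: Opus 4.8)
The plan is to obtain the identity as a direct instance of the functoriality of $H^i$ recorded in \eqref{defH1}, applied to a canonical factorization of $\phi$. The first step is to observe that $\phi$ itself factors as $\phi=\br\phi\circ\imath_\varphi$ in the category $\mbf{CG}$. This was announced in the paragraph following Definition~\ref{pbgrgr}, and I would make it explicit by comparing components: the canonical morphism has $(\imath_\varphi)_\star=\mr{id}_{G_{\varphi(\star)}}$ for every $\star\in\Ve_{\A'}\cup\Ed_{\A'}$, while the associated morphism $\br\phi:\varphi^\ast G\to G'$ over $\mr{id}_{\A'}$ has components $\br\phi_\star=\phi_\star$. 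Feeding these into the composition formula for $\mbf{C}$-graph morphisms gives
\[
(\br\phi\circ\imath_\varphi)_\star=\br\phi_\star\circ(\imath_\varphi)_\star=\phi_\star\circ\mr{id}_{G_{\varphi(\star)}}=\phi_\star\,,
\]
and, since $\imath_\varphi$ lies over $\varphi$ and $\br\phi$ over $\mr{id}_{\A'}$, the composite lies over $\mr{id}_{\A'}\circ\varphi=\varphi$; hence $\phi=\br\phi\circ\imath_\varphi$ as morphisms in $\mbf{CG}$.

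With the factorization in hand, the second step is immediate: applying the covariant functor $H^i:\mbf{CG}\to\mbf{Set}^\point$ of \eqref{defH1} to the equality $\phi=\br\phi\circ\imath_\varphi$ yields
\[
H^i(\phi)=H^i(\br\phi\circ\imath_\varphi)=H^i(\br\phi)\circ H^i(\imath_\varphi)\,,\qquad i=0,1\,,
\]
which is exactly the claimed identity, the map $H^i(\imath_\varphi):H^i(\A,G)\to H^i(\A',\varphi^\ast G)$ being the one produced by the general construction \eqref{H1phi}.

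For a self-contained check not relying on functoriality as a black box, I would alternatively verify the factorization at the cochain level. On $0$-cochains, $(\imath_\varphi)_0$ merely reindexes $(g_v)_v$ to $(g_{\varphi(v')})_{v'}$ and $(\br\phi)_0$ applies the $\phi_{v'}$ componentwise, so their composite is the defining formula for $\phi_0$; restricting to $H^0$ gives $i=0$. On $1$-cochains the same computation holds, with $(\imath_\varphi)_1$ sending a cocycle $(g_{v,e})$ to $g_{\varphi(v'),\varphi(e')}$ when $\varphi(e')\in\Ed_\A$ and to $1$ otherwise, after which $\phi_{e'}$ (applied by $(\br\phi)_1$, killing the value $1$ since $\phi_{e'}(1)=1$) reproduces $\phi_1$; compatibility with the actions $\star_G$, $\star_{G'}$ is the commuting square preceding \eqref{H1phi}, so the equality descends to $H^1$. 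There is no real obstacle in either route; the only point demanding attention is the bookkeeping of which graph morphism each arrow sits over, ensuring that $\phi=\br\phi\circ\imath_\varphi$ is an equality of morphisms over the single graph morphism $\varphi$ before $H^i$ is applied.
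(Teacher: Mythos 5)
Your proof is correct and is exactly the argument the paper intends: the remark is stated without proof precisely because it follows from the factorization $\phi=\br\phi\circ\imath_\varphi$ (noted right after Definition~\ref{pbgrgr}) together with the functoriality of $H^i$ established in (\ref{defH1}). Your cochain-level verification is a sound, more explicit rendering of the same computation.
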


\begin{prop}\label{quoti}
Let $1\to G'\stackrel{i}{\hookrightarrow} G \stackrel{p}{\to} G''\to 1$ be a short exact sequence of group-graphs over a \emph{tree} $\A$ and suppose that all  restriction maps $\rho'{}^e_v :G'_v\twoheadrightarrow G_e'$ are  surjective.  Then the induced morphism $H^1(p):H^1(\A,G)\to H^1(\A,G'')$ is an isomorphism.
\end{prop}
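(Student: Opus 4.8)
The plan is to show that the morphism of pointed sets $H^1(p)$ is a bijection, handling surjectivity and injectivity separately. The simplification that makes everything tractable is that over a graph a $1$-cocycle carries no closedness condition beyond the antisymmetry $g_{v,e}g_{v',e}=1$ for $e=\langle v,v'\rangle$; thus, after orienting each edge, giving a cocycle amounts to choosing \emph{freely} one element $g_{v,e}\in G_e$ per edge. \emph{Surjectivity} of $H^1(p)$ is then immediate and uses neither the tree hypothesis nor the surjectivity of the $\rho'{}^e_v$: given $(g''_{v,e})\in Z^1(\A,G'')$, I would lift each oriented-edge value through the surjection $p_e:G_e\to G''_e$ (surjective since the sequence is exact at $G''$) and declare the opposite orientation to be its inverse. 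The antisymmetry of $(g''_{v,e})$ guarantees this is compatible, producing a cocycle in $Z^1(\A,G)$ whose image is $(g''_{v,e})$, so already $Z^1(p)$ is onto.

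For \emph{injectivity}, suppose $[g]$ and $[\tilde g]$ in $H^1(\A,G)$ have the same image, i.e. $[p_\ast g]=[p_\ast\tilde g]$ in $H^1(\A,G'')$, where $p_\ast$ denotes applying the $p_\star$ entrywise. First I would normalize by a twist: choosing a $0$-cochain $(h''_v)\in C^0(\A,G'')$ realizing $p_\ast\tilde g=(h''_v)\star_{G''}p_\ast g$ and lifting it vertex by vertex through the surjections $p_v:G_v\to G''_v$ to some $(h_v)\in C^0(\A,G)$, I replace $g$ by the cohomologous cocycle $(h_v)\star_G g$. Since $p_\ast\big((h_v)\star_G g\big)=(h''_v)\star_{G''}p_\ast g=p_\ast\tilde g$, this reduces the problem to the case $p_\ast g=p_\ast\tilde g$, equivalently $k_{v,e}:=g_{v,e}^{-1}\tilde g_{v,e}\in\ker p_e=G'_e$ for every oriented edge.

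It remains to produce a $0$-cochain with values in $G'$ whose coboundary carries $g$ to $\tilde g$, and this is the main obstacle, where both hypotheses enter. Searching for $(h_v)$ with $h_v\in G'_v$, the desired identity $\tilde g_{v,e}=\rho_v^e(h_v)^{-1}g_{v,e}\rho_{v'}^e(h_{v'})$ rewrites, for the orientation $v\to v'$ of $e=\langle v,v'\rangle$, as
\[
\rho'{}^e_{v'}(h_{v'})=g_{v,e}^{-1}\,\rho'{}^e_v(h_v)\,g_{v,e}\,k_{v,e}\,,
\]
whose right-hand side lies in $G'_e$ because $G'_e=\ker p_e$ is normal in $G_e$. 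I would then root the tree $\A$ at a vertex $v_0$, set $h_{v_0}=1$, and define the $h_v$ by induction on the distance to $v_0$: at the edge joining a parent $v$ to its unique child $v'$ the displayed equation determines $\rho'{}^e_{v'}(h_{v'})\in G'_e$, and one solves for $h_{v'}\in G'_{v'}$ precisely because $\rho'{}^e_{v'}$ is surjective. Acyclicity of the tree is what makes this unobstructed — every non-root vertex has a single parent, so no compatibility is ever forced around a cycle — while the antisymmetry of the coboundary relation shows the identity then holds for both orientations of each edge. The resulting $(h_v)\in C^0(\A,G')\subset C^0(\A,G)$ satisfies $\tilde g=(h_v)\star_G g$, whence $[g]=[\tilde g]$. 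As $H^1(p)$ preserves base points by functoriality, the bijection thus obtained is an isomorphism of pointed sets.
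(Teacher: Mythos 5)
Your proof is correct and follows essentially the same route as the paper's: surjectivity by lifting edge values through the surjections $p_e$, and injectivity by rooting the tree and inducting on the distance to the root, solving at each new vertex via surjectivity of $\rho'{}^e_{v}$ combined with normality of $\ker p_e$ (the conjugation by $g_{v,e}$). The only difference is organizational: you first twist by a lift of the $G''$-valued $0$-cochain to reduce to the case $p_\ast g=p_\ast\tilde g$ and then build a purely $G'$-valued cochain, whereas the paper carries that lift through the induction by writing $k_v=g_v\, i_v(f'_v)$ — the same computation, packaged slightly more cleanly in your version.
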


\begin{proof}
First we define an \emph{orientation} $\prec$  of each edge of $\A$ in the following way: we choose a vertex $v_0\in \mathsf{Ve}_\A$; as $\A$ is a tree, for each vertex $v\in \mathsf{Ve}_\A$ there is a unique \emph{geodesic in $\mathsf{Ve}_\A$ joining $v$ to $v_0$}, i.e. a unique minimal sequence of vertices $v_0,\ldots, v_\ell$, such that $v_\ell=v$ and $\langle v_{i-1}, v_i\rangle$, $i=1,\ldots \ell$, are edges of $\A$; then we set $v_{i-1}\prec v_i$. Notice that for any vertex $v\neq v_0$ there is only one edge $\langle v', v\rangle$ such that $v'\prec v$.

The surjectivity of $p_\ast:=H^1(p)$ follows from that of $p$. Indeed 
for any $(h_{v,e})\in Z^1(\A,G'')$ and each edge $e=\langle v',v''\rangle$ with $v'\prec v''$, we can choose an element $g_{v',e}\in G_{v',e}=G_e$ such that $p_e(g_{v',e})=h_{v',e}$. Setting $g_{v'',e}:=g_{v',e}^{-1}$ we obtain an  element $(g_{v,e})$ of $Z^1(\A,G)$ satisfying   $p_\ast([(g_{v,e})])=[(h_{v,e})])$.

To prove the injectivity of $p_\ast$ let us  consider two cohomological classes $
[(g_{v,e})]$ and $[(h_{v,e})] \in H^{1}({\A},G)
$
 such that $p_\ast([(g_{v,e})])=p_\ast( [(h_{v,e})] )$. The cocycles 
 $(p_{e}(g_{v,e}))$ and $(p_{e}(h_{v,e}))$ being cohomologous, there exists $(g''_v)\in C^0(\A, G'') = \prod_{v\in \mathsf{Ve}_\A} G''_v$ satisfying the following equalities in $G_e''$, for any $e=\langle v, w\rangle\in \mathsf{Ed}_\A$, $v\prec w$: 
 \begin{equation*}    
\rho''^e_v (g''_{v})^{-1}\, p_e(g_{v,e}) \,\rho''^e_w(g''_w) =p_e(h_{v,e})\,.
 \end{equation*}
 By surjectivity of $p_v: G_v\to G''_v$, $v\in \mathsf{Ve}_\A$, there are $g_v\in G_{v}$ such that $g''_v=p_v(g_v)$ and, thanks to the commutative  diagrams
\begin{equation}\label{comdiag}
\xymatrix{G'_v\ar@{->>}[d]_{\rho'^e_v}\ar@{^{(}->}[r]^{i_v}&G_v\ar[d]_{\rho_v^e}\ar[r]^{p_v}&G_v''\ar[d]^{\rho_{v}''^e}\\ G_e'\ar@{^{(}->}[r]^{i_e}&G_e\ar[r]^{p_e}&G_e''}
\end{equation}
for any $e=\langle v,w\rangle$, 
we obtain the equalities in $G_e$
 \begin{equation*}
p_e\left(\rho^e_v (g_{v})^{-1}g_{v,e} \rho^e_w(g_w) \right) =p_e(h_{v,e})\,.
 \end{equation*}
 Therefore there exists $g'_e\in G'_e$  such that 
 \begin{equation*}
  (\star_e)\qquad
 \rho^e_v (g_{v})^{-1}g_{v,e} \rho^e_w(g_w) i_e(g'_e) = h_{v,e}\,.
 \phantom{ (\star_e)\qquad}
 \end{equation*}
 We will  construct a cocycle $(k_{v})\in \prod_{v\in \mathsf{Ve}_\A} G_v$ that satisfies the equality
 \[
 (\star\star_e)\qquad\quad
  \rho^e_v (k_{v})^{-1}g_{v,e} \rho^e_w(k_w)  = h_{v,e}\,.\phantom{ (\star\star_e)\qquad}
 \]
 for each edge $e=\langle v,w\rangle$, $v\prec w$, of $\A$, using an induction process indexed by the lengths $\ell$ of the geodesics $v_0,\ldots,v_\ell=v$  joining in $\A$ any vertex  $v\in \mathsf{Ve}_\A$ to the previously chosen vertex $v_0$. One call $\ell$ the \emph{distance of $v$ to $v_0$} 
 and we denote $\ell=d_{\A}(v,v_0)$. 
 Consider the following assertion:
 \begin{enumerate}
 \item[$(H_n)$ ] there exists $(k_{v})\in \prod_{v\in \mathsf{Ve}_A, d_\A(v,v_0)\leq n}G_v$ such that:
 \begin{enumerate}
 \item[$(\alpha_n)$] the relations $(\star\star_e)$ are fulfilled for every edge $e=\langle v,w \rangle$, $v\prec w$,  with 
 $d_\A(v,v_0)$ and  $d_\A(w, v_0)\leq n$,
 \item[$(\beta_n)$]  for every $v\in \mathsf{Ve}_\A$, $1\leq d_\A(v,v_0)\leq n$,  there exists $f'_v\in G'_v$ such that $k_v=g_vi_v(f'_v)$.
 \end{enumerate}
 \end{enumerate}
 We will prove in a) that assertion $H_1$ is true, and in b) that assertion $H_{n+1}$ is true as soon as  assertion $H_{n}$ is satisfied.
 
 \indent - a) Let us consider the relation $(\star_e)$  for each edge $e=\langle v,w\rangle$,  with $v=v_0$. The restriction maps ${\rho'}^{e}_w:G'_w\to G'_e$ being surjective, we choose $g'_w\in G'_w$ such that $g'_e=\rho'{}^e_w(g'_w)$. Using again the commutativity of all diagrams (\ref{comdiag}) we deduce the equality
 \[
 \rho^{e}_{v_0} (g_{v_0})^{-1}g_{v_0,e} \rho^e_w(g_w i_w(g'_w)) = h_{v_0,e}\,.
 \]
 Setting  
 $k_{v_0}=  g_{v_0}$, 
 $k_w=g_wi_w(g'_w)$ and $f'_w=g'_w$,  
 we obtain the assertion $H_1$.
 
 \indent - b)  Now let us suppose $H_n$ satisfied, we will prove $H_{n+1}$. Let us fix families
 \[
 (g_v)\in \prod_{v\in \mathsf{Ve}_\A}G_v\quad \hbox{ and } \quad (g'_e)\in \prod_{e\in \mathsf{Ed}_\A} G'_e
 \]
fulfilling the relation $(\star_e)$ for every $e\in \mathsf{Ed}_\A$. Let us fix also a collection
\[
(f'_v)\in  \prod_{v\in \mathsf{Ve}_\A,\, d_\A(v,v_0)\leq n}G'_v
\]
such that the elements  
\begin{equation}\label{kvgv}
k_v:=g_v i_v(f'_v)\in G_v\,,\quad v\in \mathsf{Ve}_\A\,,\quad d_\A(v,v_0)\leq n\,,
\end{equation}
satisfy the relation $(\star\star_e)$ for every edge $e$ of $\A$ whose  vertices are at  distances to $v_0$ at most $n$.   
Let $w$ be a  vertex of $\A$ such that $d_{\A}(w,v_0)=n+1$. 
As noticed above, there is a unique  edge  $e_w=\langle v_w,w\rangle$ of $\A$ with $v_w\prec w$. Therefore $v_w$ is the unique vertex of $\A$ such that $d_\A(v_w, w)=n$ and $\langle v_w,w\rangle$ is an edge of $\A$.
The relations $(\star_{e_w})$ and (\ref{kvgv}) give the equality: 
\[
\rho^{e_w}_{v_w}(i_{v_w}(f'_{v_w}))\, \rho^{e_w}_{v_w}(k_{v_w})^{-1}\, g_{v_w, e_w}\, \rho^{e_w}_{w}(g_w)\, i_{e_w}(g'_{e_w})\;
 =\;
  h_{v_{w}, e_{w}}\,.
\] 
As in step a), let $g'_w\in G'_w$ such that $\rho'{}^{e_w}_w(g'_w)=g'_{e_w}$. We have: 
\[
 i_{e_w}(g'_{e_w}) = i_{e_w}(\rho'{}^{e_w}_w(g'_w))= \rho^{e_w}_w(i_{w}(g'_w))\,,
\]
thus
\[
\rho^{e_w}_{v_w}(i_{v_w}(f'_{v_w}))\, \rho^{e_w}_{v_w}(k_{v_w})^{-1}\, g_{v_w, e_w}\, \rho^{e_w}_{w}(g_w i_{w}(g'_w))= h_{v_w,e_w}\,.
\;
\]
On the other hand the element $\rho^{e_w}_{v_w}(i_{v_w}(f'_{v_w}))=i_{e_w}(\rho'{}^{e_w}_{v_w}(f'_{v_w}))\in G_{e_w}$ belongs to the normal subgroup of $G_{e_w}$
\[
\ker(p_{e_w})= i_{e_w}(G'_{e_w})=  i_{e_w}(\rho'{}^{e_w}_{w} (G'_{w}))=\rho^{e_w}_{w}(i_{w}(G'_{w})) \,.
\]
The following element of $G_{e_w}$:
\[
\wt g_{e_w}:= g^{-1}\, \rho^{e_w}_{v_w}(i_{v_w}(f'_{v_w}))\, g\,,
 \quad
g:= \rho^{e_w}_{v_w}(k_{v_w})^{-1}\, g_{v_w, e_w}\, \rho^{e_w}_{w}(g_w i_{w}(g'_w))\,,
\]  
is also an element of $\ker(p_{e_w})$. There exists $\wt g'_{w}\in G'_{w}$ such that 
\[\wt g_{e_w}=\rho^{e_w}_{w}(i_{w}(\wt g'_{w}))\,.\]
We finally obtain:
\[
 \rho^{e_w}_{v_w}(k_{v_w})^{-1}\, g_{v_w, e_w}\, \rho^{e_w}_{w}(g_w i_{w}(g'_w))\,
 \rho^{e_w}_{w}(i_{w}(\wt g'_{w})) = h_{e_w, v_w}\,,
\]
and
\[
 \rho^{e_w}_{v_w}(k_{v_w})^{-1}\, g_{v_w, e_w}\, \rho^{e_w}_{w}(g_w i_{w}(g'_w\,\wt g'_{w})) = h_{e_w, v_w}\,. 
\]
We set 
\[
k_w:=g_w i_{w}(g'_w\,\wt g'_{w})\in G_w\,,\quad f'_{w}:= g'_w\,\wt g'_{w}
\]
and we repeat this construction for each vertex whose distance to $v_0$ is $n+1$. The   family $(k_v)$, $v\in \mathsf{Ve}_\A$, $d_{\A}(v,v_0)\leq n+1$, that we obtain  satisfies assertion~$H_{n+1}$.
\end{proof}

\subsection{Pruning}\label{Subsecfunctpruning}

A \emph{path in a tree $\A$ with origin $c_0$ and extremity $c_\ell$} is a sequence $L=(c_0,\ldots,c_\ell)$, $c_j\in \Ve_\A\cup\Ed_\A$ such that: 
\begin{itemize}
\item
if $c_j$, $j<\ell$, is a vertex, then $c_{j+1}$ is 
an edge and $c_j\in c_{j+1}$, 
\item if $c_j$, $j<\ell$,  is an edge,  then $c_{j+1}$ is 
a vertex and $c_j\ni c_{j+1}$. 
\end{itemize}
If $\AR$ is a sub-tree of a $\A$ we can define for any vertex $v$ of $\A\setminus\AR$ the notion of  \emph{geodesic in $\A$ from $v$ to $\AR$}, as the unique minimal path $L_v=(c_0,\ldots, c_\ell)$  in $\mathsf{Ve}_\A\cup\mathsf{Ed}_\A$ such that $c_0=v$, $c_\ell\in\mathsf{Ve}_{\AR}$  and $c_{\ell-1},\ldots, c_0\notin \mathsf{Ve}_{\AR}\cup\mathsf{Ed}_{\AR}$. When $v$ is a vertex of $\AR$, the geodesic  $L_v$ is reduced to the single element $v$. We define a \emph{partial order relation} on $\mathsf{Ve}_\A$ by setting $v\prec_{\AR} w$ if and only if the geodesic $L_v$ is contained in the geodesic $L_w$.
We will say that $\AR$ is \emph{repulsive for a group-graph $G$} over $\A$, if for every edge $e=\langle v,v'\rangle\in\Ed_\A$ with $v\prec_{\AR} v'$, the restriction map $\rho_{v'}^e:G_{v'}\to G_e$ is surjective. From \cite[Theorem~3.11 and Remark~3.12]{MMS} we have:

\begin{teo}\label{pruning}
Let $\AR$ be a subtree of a tree $\A$ that is  repulsive for a $\mbf{C}$-graph  $G$ over~$\A$. Then the map 
\[ 
 H^1(\imath_{r})\,:\, H^1(\A, G)\to H^1(\AR,r^\ast G)\,,\quad
(g_{v,e})_{v\in e\in\Ed_\A}\mapsto (g_{v,e})_{v\in e\in\Ed_{\AR}}
 \]
induced by the canonical $\mbf C$-graph morphism $\imath_r : G\to r^\ast G$ over the inclusion  graph morphism $r:\AR\hookrightarrow \A$, is a bijection of pointed sets.
Moreover, if $\mbf C=\mbf{Ab}$ or $\mbf C=\mbf{Vec}$ then $H^1(\imath_r)$ is a $\mbf C$-isomorphism.
\end{teo}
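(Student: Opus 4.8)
The plan is to prove that the cochain-restriction map underlying $H^1(\imath_r)$ is already a bijection modulo coboundaries, with surjectivity being formal and injectivity being the point where repulsiveness enters. Since $r:\AR\hookrightarrow\A$ is an inclusion, I identify $r^\ast G$ with the restriction of $G$ to $\AR$, so that $Z^1(\AR,r^\ast G)$ consists of the families indexed by the oriented edges of $\AR$ and $H^1(\imath_r)$ is induced by forgetting the components sitting over the edges of $\A\setminus\AR$. For surjectivity, given a cocycle $(g_{v,e})\in Z^1(\AR,r^\ast G)$ I extend it to $\A$ by setting $g_{v,e}:=1$ for every oriented edge $(v,e)$ with $e\in\Ed_\A\setminus\Ed_{\AR}$; the relation $g_{v,e}g_{v',e}=1$ is preserved, so this trivial extension lies in $Z^1(\A,G)$ and restricts to the given cocycle. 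Hence $H^1(\imath_r)$ is onto, and in fact already $Z^1(\A,G)\to Z^1(\AR,r^\ast G)$ is onto.

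The heart is injectivity, and here I would imitate the inductive construction used in the proof of Proposition~\ref{quoti}, replacing the base vertex $v_0$ by the subtree $\AR$. Since $\A$ is a tree, each vertex $w\notin\AR$ has a unique neighbour $v_w$ with $v_w\prec_{\AR}w$ (the second vertex of the geodesic $L_w$) and a unique incident edge $e_w=\langle v_w,w\rangle$ pointing towards $\AR$; the assignment $w\mapsto e_w$ is a bijection between $\Ve_\A\setminus\Ve_{\AR}$ and $\Ed_\A\setminus\Ed_{\AR}$, so every edge outside $\AR$ carries a preferred orientation away from $\AR$. Let now $(g_{v,e}),(h_{v,e})\in Z^1(\A,G)$ restrict to cohomologous cocycles on $\AR$, witnessed by a $0$-cochain $(k_v)_{v\in\Ve_{\AR}}$ with $\rho^e_v(k_v)^{-1}g_{v,e}\rho^e_{v'}(k_{v'})=h_{v,e}$ for all $e=\langle v,v'\rangle\in\Ed_{\AR}$. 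I extend $(k_v)$ to all of $\Ve_\A$ by induction on the distance $d_{\AR}(w)$ of $w$ to $\AR$, namely the length of $L_w$: assuming $k_{v_w}$ already defined, with $d_{\AR}(v_w)=d_{\AR}(w)-1$, I must produce $k_w\in G_w$ solving
\[ \rho^{e_w}_w(k_w)=g_{v_w,e_w}^{-1}\,\rho^{e_w}_{v_w}(k_{v_w})\,h_{v_w,e_w}\in G_{e_w}. \]
Repulsiveness of $\AR$ asserts precisely that $\rho^{e_w}_w:G_w\to G_{e_w}$ is surjective for this away-from-$\AR$ orientation, so such a $k_w$ exists. Any choice is admissible, because $k_w$ is constrained only by the single relation over $e_w$ and then itself serves as the boundary datum for the edges leading to the descendants of $w$, which are handled at later stages.

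It remains to check that the resulting $(k_v)_{v\in\Ve_\A}$ realizes the coboundary relation on every edge of $\A$: on the edges of $\AR$ this holds by hypothesis, and on each $e_w$ by construction; moreover, by the elementary computation that for a single edge the transformation relation in one orientation forces the opposite one (using $g_{v',e}=g_{v,e}^{-1}$ and $h_{v',e}=h_{v,e}^{-1}$), it is enough to impose the relation for the chosen orientation of each edge. Thus $(g_{v,e})$ and $(h_{v,e})$ are cohomologous on $\A$, proving injectivity. Finally, when $\mbf C=\mbf{Ab}$ or $\mbf C=\mbf{Vec}$ the map $H^1(\imath_r)$ is a $\mbf C$-morphism, as established in Subsection~\ref{cohomology}, and a bijective homomorphism (resp.\ linear map) is an isomorphism, giving the last assertion. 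I expect the only genuine obstacle to be the global consistency of the inductive solving of the displayed equation: this is guaranteed by the tree structure, which makes $w\mapsto e_w$ a bijection and places each vertex at a finite, well-defined distance from $\AR$, so that the construction terminates level by level even when $\A$ is infinite. As this is the content of \cite[Theorem~3.11 and Remark~3.12]{MMS}, one may alternatively invoke that reference directly.
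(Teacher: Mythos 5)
Your proof is correct. Note that the paper itself gives no internal proof of Theorem~\ref{pruning}: it is imported from \cite{MMS} (Theorem~3.11 and Remark~3.12), so your self-contained argument fills in what the paper only cites. Both halves check out: surjectivity is formal (extending a cocycle on $\AR$ by the identity over $\Ed_\A\setminus\Ed_{\AR}$ preserves the cocycle condition), and injectivity is exactly where repulsiveness enters — your equation $\rho^{e_w}_w(k_w)=g_{v_w,e_w}^{-1}\,\rho^{e_w}_{v_w}(k_{v_w})\,h_{v_w,e_w}$ is solvable precisely because, for $e_w=\langle v_w,w\rangle$ with $v_w\prec_{\AR}w$, the definition of repulsiveness makes $\rho^{e_w}_w$ surjective. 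The structural facts you invoke are all valid in a tree: $w\mapsto e_w$ is a bijection from $\Ve_\A\setminus\Ve_{\AR}$ onto $\Ed_\A\setminus\Ed_{\AR}$, every vertex lies at finite distance from $\AR$ so the level-by-level induction is well founded (each new $k_w$ is constrained only by the single edge $e_w$, the remaining edges at $w$ being handled at the next level), and the coboundary relation in one orientation of an edge forces it in the opposite orientation via $g_{v',e}=g_{v,e}^{-1}$, $h_{v',e}=h_{v,e}^{-1}$. Methodologically your argument is the same mechanism the paper deploys in its proof of Proposition~\ref{quoti} — induction on distance, solving one restriction equation per new vertex — with the base vertex $v_0$ replaced by the subtree $\AR$ and the surjectivity of the maps $\rho'^{e}_v$ replaced by repulsiveness; what this buys is a proof readable entirely within the paper, at the cost of repeating an induction that the citation to \cite{MMS} was meant to encapsulate.
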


\subsection{Direct image of a $\mathbf C$-graph}
Let $\varphi:\A\to\A'$ be a morphism of graphs and let $G$ be a $\mathbf C$-graph over $\A$. We define the direct image of $G$ by $\varphi$ as the $\mathbf C$-graph $\varphi_*G$ over $\A'$ given for $v'\in\Ve_{\A'}$ and $e'\in\Ed_{\A'}$ by
\[(\varphi_*G)_{v'}:=H^0(\varphi^{-1}(v'),G)\subset\prod_{\varphi(v)=v'} G_v,\quad (\varphi_*G)_{e'}:=\prod_{\varphi(e)=e'}G_e\]
and $(\varphi_*\rho)_{v'}^{e'}((g_v)_v):=(\rho_v^e(g_v))_e$,
where $v\in\Ve_{\A}$ and $e\in \Ed_\A$. It is implicitely understood that the product over the empty set is the trivial group.

There is a canonical morphism  $j_\varphi:\varphi_*G\to G$ of $\mathbf C$-graphs over $\varphi$ defined by the natural projections $(j_\varphi)_\star:(\varphi_*G)_{\varphi(\star)}\subset\prod\limits_{\varphi(\bullet)=\varphi(\star)}
G_\bullet\to G_\star$ for every $\star\in\Ve_\A\cup\Ed_\A$. It can be checked that if $G'$ is a $\mathbf C$-graph over $\A'$ then the maps 
\[\mr{Hom}_\A(G,\varphi_*G')\stackrel{a}{\longrightarrow}\mr{Hom}_\varphi(G,G')\stackrel{b}{\longleftarrow}\mr{Hom}_{\A'}(\varphi^*G,G')\]
given by $a(\phi)=j_\varphi\circ \phi$ and $b(\br\phi)=\br\phi\circ\iota_\varphi$ are bijective.\\ 

\noindent The preimage $\varphi^{-1}(v')$ of a vertex $v'\in\Ve_{\A'}$ by a graph morphism $\varphi:\A\to\A'$ is always a subgraph of $\A$. If $G$ is a group-graph over $\A$ we will denote by $H^1(\varphi^{-1}(v'),G)$ the 1-cohomology set of the pull-back of $G$ by the inclusion map $\varphi^{-1}(v')\hookrightarrow\A$.
\begin{lema}\label{bij} 
Let $\varphi:\A\to \A'$ be a morphism of graphs, let $G$ be a group-graph over $\A$ and consider the map $H^1(j_\varphi):H^1(\A',\varphi_*G)\to H^1(\A,G)$
defined in (\ref{H1phi}).
\begin{enumerate}[(a)]
\item The image of $H^1(j_{\varphi})$ is the set of cohomology classes of $1$-cocycles  $(h_e)_e\in Z^1(\A,G)$
with $h_e=1$ if $\varphi(e)\in\Ve_{\A'}$.
\item
$H^1(j_\varphi):H^1(\A',\varphi_*G)\to H^1(\A,G)$ is always  injective. 
\item
If $H^1(\varphi^{-1}(v'),G)=1$ for all $v'\in\Ve_{\A'}$, then $H^1(j_\varphi):H^1(\A',\varphi_*G)\to H^1(\A,G)$ is surjective.
\end{enumerate}
\end{lema}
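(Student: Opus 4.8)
The plan is to argue directly on cocycles, leaning on two structural observations. First, by the formula for the induced map $\phi_1$ recalled just before (\ref{H1phi}), applied to $j_\varphi$ which is a morphism over $\varphi:\A\to\A'$, the image cocycle $(j_\varphi)_1(a)$ of a cocycle $a\in Z^1(\A',\varphi_*G)$ has $(v,e)$-entry equal to the $e$-component of $a_{\varphi(v),\varphi(e)}$ when $\varphi(e)\in\Ed_{\A'}$, and equal to $1$ whenever $\varphi(e)\in\Ve_{\A'}$. Second, the condition defining $(\varphi_*G)_{v'}=H^0(\varphi^{-1}(v'),G)$ is precisely the cohomology relation restricted to the fibre $\varphi^{-1}(v')$: a family $(g_v)_{\varphi(v)=v'}$ lies in it iff $\rho_v^{\tilde e}(g_v)=\rho_w^{\tilde e}(g_w)$ for every edge $\tilde e=\langle v,w\rangle$ of $\A$ with $\varphi(\tilde e)=v'$. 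These two facts are what make the fibres of $\varphi$ interact correctly with $\star_{\varphi_*G}$.

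For part (a), the inclusion of the image in the stated set is immediate from the first observation. For the reverse inclusion I would, starting from a cocycle $(h_{v,e})\in Z^1(\A,G)$ with $h_{v,e}=1$ whenever $\varphi(e)\in\Ve_{\A'}$, define a candidate preimage $a\in Z^1(\A',\varphi_*G)$ by setting, for each oriented edge $(v',e')\in I_{\A'}$, the $e$-component of $a_{v',e'}\in\prod_{\varphi(e)=e'}G_e$ to be $h_{v,e}$, where $v$ is the unique endpoint of $e$ mapping to $v'$. The cocycle relation for $a$ holds factor by factor, since in each factor it reads $h_{v,e}h_{w,e}=1$; and one checks $(j_\varphi)_1(a)=(h_{v,e})$ on the nose, the entries over collapsed edges matching because there $h_{v,e}=1$.

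For part (b), I would assume $(j_\varphi)_1(a)$ and $(j_\varphi)_1(b)$ are cohomologous via some $(g_v)\in C^0(\A,G)$ and set $c_{v'}:=(g_v)_{\varphi(v)=v'}$. The heart of the argument is to check $c_{v'}\in(\varphi_*G)_{v'}$: reading the cohomology relation on an edge $\tilde e$ with $\varphi(\tilde e)=v'$, both relevant cocycle entries are $1$ (being $(j_\varphi)_1$-values over a collapsed edge), which forces $\rho_v^{\tilde e}(g_v)=\rho_w^{\tilde e}(g_w)$, i.e. exactly the $H^0$-condition of the second observation. Granting this, $(c_{v'})\in C^0(\A',\varphi_*G)$, and I would verify $a=(c_{v'})\star_{\varphi_*G}b$ factorwise: over each non-collapsed edge the required equality is literally the cohomology relation between $(j_\varphi)_1(a)$ and $(j_\varphi)_1(b)$ in the group $G_e$. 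As all computations are componentwise, no commutativity is used. This gives injectivity.

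For part (c), I would invoke part (a) to reduce surjectivity to the claim that every class in $H^1(\A,G)$ admits a representative with trivial entry on every edge collapsed by $\varphi$. Given $(g_{v,e})\in Z^1(\A,G)$, I restrict it to each fibre $\varphi^{-1}(v')$, getting a cocycle of the pull-back of $G$ to that subgraph; since $H^1(\varphi^{-1}(v'),G)=1$ it is a coboundary, producing a $0$-cochain $(k_v)_{\varphi(v)=v'}$ that trivializes it over the edges inside the fibre. Because the fibres $\varphi^{-1}(v')$ partition $\Ve_\A$, these glue into a single $(k_v)\in C^0(\A,G)$, and then $(k_v)\star_G(g_{v,e})$ vanishes on every collapsed edge, so its class lies in the image by (a). The one genuinely delicate point throughout is the $H^0$-membership verification in (b) (mirrored by the factorwise cocycle check in (a)); once one recognizes the defining condition of $(\varphi_*G)_{v'}$ as the cohomological data of the fibre $\varphi^{-1}(v')$, the remaining identities are routine manipulations in the groups $G_e$.
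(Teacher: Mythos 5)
Your proof is correct and follows essentially the same route as the paper's: part (a) via the explicit description of $j_\varphi^1$ on cocycles (triviality on collapsed edges plus an explicit preimage, which the paper encodes by identifying $Z^1$ with edge-indexed products after fixing orientations), part (b) by regrouping the trivializing $0$-cochain along the fibres $\varphi^{-1}(v')$ and checking the $H^0$-membership condition, and part (c) by trivializing the cocycle on each fibre and invoking (a). The key observations you isolate — that $(\varphi_*G)_{v'}=H^0(\varphi^{-1}(v'),G)$ is exactly the fibrewise cohomology relation, and that collapsed edges carry trivial entries in the image — are precisely the points the paper's proof rests on.
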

\begin{proof}
By fixing an orientation for each edge of $\A$ and $\A'$ we have bijections
\[Z^1(\A',\varphi_*G)\simeq\prod\limits_{e'\in\Ed_{\A'}}(\varphi_*G)_{e'}\quad\text{and}\quad Z^1(\A,G)\simeq\prod\limits_{e\in\Ed_\A}G_e.\] 
Under these identifications
the map $H^1(j_\varphi)$ is induced by 
\[j_\varphi^1:Z^1(\A',\varphi_*G)\simeq\prod_{e'\in\Ed_{\A'}}(\varphi_*G)_{e'}=\prod_{e'\in\Ed_{\A'}}\prod_{\varphi(e)=e'}G_e\to\prod_{e\in\Ed_\A} G_e\simeq Z^1(\A,G)\] 
which puts $1$ in the factor $G_e$ when $\varphi(e)\notin\Ed_{\A'}$, this proves assertion (a). 
To prove assertion
(b) let us fix $(g_{e'})_{e'},(h_{e'})_{e'}\in Z^1(\A',\varphi_*G)$ and $(k_v)_v\in C^0(\A,G)$ safisfying $(k_v)\star_G j_\varphi^1(g_{e'})=j^1_\varphi(h_{e'})$ in $Z^1(\A,G)$. For any $v'\in\Ve_{\A'}$ we check that 
 \[k_{v'}:=(k_v)_{v\in\varphi^{-1}(v')\cap\Ve_\A}\in H^0(\varphi^{-1}(v'),G).\]
 Then $k_{v'}\in (\varphi_*G)_{v'}$ and
$(k_{v'})_{v'}\in C^0(\A',\varphi_*G)$  satisfies $(k_{v'})\star_{\varphi_*G}(g_{e'})=(h_{e'})$ in $Z^1(\A',\varphi_*G)$.
To prove assertion (c),  let us fix a $1$-cocycle $(g_e)_e\in Z^1(\A,G)$.
Since $H^1(\varphi^{-1}(v'),G)=1$  for
 each $v'\in\Ve_{\A'}$ 
 there is $(k_v)_{v\in\varphi^{-1}(v')\cap\Ve_\A}\in C^0(\varphi^{-1}(v'),G)$ such that $(k_v)\star(g_e)=1$ in $Z^1(\varphi^{-1}(v'),G)$. 
Then $(k_v)_{v\in\Ve_\A}\in C^0(\A,G)$ satisfies $(k_v)\star (g_e)=(h_e)$ with $h_e=1$ if $\varphi(e)\in\Ve_{\A'}$. We conclude using assertion (a).
\end{proof}

\subsection{Regular group-graph}

The \emph{support} of a group-graph $G$ over a graph $\msf{A}$ is the set of vertices and edges where the corresponding group is non-trivial:
\begin{equation}\label{defsuppgrgr}
\mathrm{supp} (G)=\{\star\in \msf{Ve}_{\msf{A}}\cup\msf{Ed}_{\msf{A}}\;|\; G_\star\neq\{1\}\}\,,
\end{equation}
with $1$ denoting the identity element.

\begin{obs}\label{aretes-support}
Let $G$ be a group-graph over $\A$ and let $\A'$ be a subgraph of $\A$ obtained by removing some edges of $\A$ which are not in the support of $G$. Then the morphism $H^1(\imath_j): H^1(\A,G)\iso H^1(\A',j^*G)$ induced by the canonical morphism $\imath_j:G\to j^*G$ over the inclusion $j:\A'\hookrightarrow\A$ is an isomorphism.
\end{obs}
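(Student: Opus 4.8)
The plan is to argue directly at the level of cochains and cocycles, exploiting the fact that an edge $e$ with $G_e=\{1\}$ carries no data whatsoever. Since $j:\A'\hookrightarrow\A$ is an inclusion that only removes edges, we have $\Ve_{\A'}=\Ve_\A$ and $(j^*G)_\star=G_{j(\star)}=G_\star$ with the same restriction maps for every $\star\in\Ve_{\A'}\cup\Ed_{\A'}$. Consequently the $0$-cochain groups literally coincide, $C^0(\A',j^*G)=C^0(\A,G)$, the canonical morphism $\imath_j$ induces the identity on $C^0$, and on cocycles it induces the \emph{restriction} map $j_1:Z^1(\A,G)\to Z^1(\A',j^*G)$, $(g_{v,e})_{v\in e\in\Ed_\A}\mapsto(g_{v,e})_{v\in e\in\Ed_{\A'}}$, exactly as in Theorem~\ref{pruning}. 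Thus $H^1(\imath_j)$ is the map on quotient sets induced by $j_1$, and it suffices to analyse $j_1$.

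The key point is then immediate. Write $S=\Ed_\A\setminus\Ed_{\A'}$ for the set of removed edges, so that $G_e=\{1\}$ for every $e\in S$. For any cocycle $(g_{v,e})\in Z^1(\A,G)$ and any $e\in S$ one has $g_{v,e}=1$, as $G_e$ is trivial; conversely, given a cocycle on $\A'$, setting $g_{v,e}:=1$ for $e\in S$ yields a family on $I_\A$ that automatically satisfies $g_{v,e}\,g_{v',e}=1$ on every edge (on $S$ because $1\cdot 1=1$, and elsewhere by hypothesis). Hence $j_1$ is a bijection, its inverse being extension-by-$1$ over the removed edges.

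It remains to descend to cohomology. The action $\star_G$ of $C^0(\A,G)$ on $Z^1(\A,G)$ is computed edgewise by $\rho_v^e(g_v)^{-1}\,g_{v,e}\,\rho_{v'}^e(g_{v'})$; for $e\in S$ this value lies in $G_e=\{1\}$ and so equals $1$, i.e. the action ignores the removed edges. Therefore $j_1$ is equivariant for the common group $C^0(\A,G)=C^0(\A',j^*G)$ acting through $\star_G$ and $\star_{j^*G}$ (this is precisely the commutative $\star$-square attached to $\imath_j$ in the construction of $H^1(\imath_j)$), and an equivariant bijection carries orbits bijectively onto orbits. The descended map on orbit sets is exactly $H^1(\imath_j)$, proving that $H^1(\imath_j):H^1(\A,G)\iso H^1(\A',j^*G)$ is a bijection of pointed sets. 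When $\mbf C=\mbf{Ab}$ or $\mbf C=\mbf{Vec}$, the identity on $C^0$, the projection $j_1$ forgetting the trivial factors indexed by $S$, and the coboundary are all $\mbf C$-morphisms, so by Remark~\ref{abelian-case} the induced map is a $\mbf C$-isomorphism. There is no genuine obstacle here: the whole statement is bookkeeping, and the only substantive observation is that a trivial $G_e$ kills both the cocycle entries and the coboundary action on $e$; the mild care required is simply verifying that the cocycle bijection respects the $\star$-action so that it descends to $H^1$. In particular, and in contrast with Theorem~\ref{pruning}, no hypothesis that $\A$ be a tree is needed.
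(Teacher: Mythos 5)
Your proof is correct, and it is exactly the bookkeeping argument the paper implicitly relies on: the remark is stated without proof precisely because a trivial group $G_e$ on a removed edge forces the cocycle entry to be $1$ and kills the coboundary action there, so restriction of cocycles is a $C^0$-equivariant bijection descending to $H^1$. Your added observations (identification $\Ve_{\A'}=\Ve_\A$, extension-by-$1$ as inverse, and the fact that no tree hypothesis is needed, unlike in Theorem~\ref{pruning}) are all accurate.
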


\begin{defin}\label{regular}
We will say that a group-graph $G$ over $\A$ is \emph{regular} if 
 the restriction morphisms $\rho^e_v: G_v\to G_e$  are isomorphisms as soon as $v, e\in\mr{supp}(G)$. 
\end{defin}

Let $\A'$ be a subtree of a tree $\A$.
An edge $e=\langle v,v'\rangle\in\Ed_\A$ is \emph{adjacent} to $\A'$ if $v\in\Ve_\A\setminus\Ve_{\A'}$ and $v'\in\Ve_{\A'}$.
We define the \emph{contraction} $\A/\A'$ as the tree whose vertices are 
\[\Ve_{\A/\A'}=(\Ve_{\A}\setminus\Ve_{\A'})\sqcup\{v_{\A'}\}\] 
and whose edges are the edges of $\A$ which do not belong to $\A'$ and are not adjacent to $\A'$, jointly with an additional edge $\tilde{e}=\langle v,v_{\A'}\rangle$ for each adjacent edge $e=\langle v,v'\rangle$ to $\A'$ with $v\notin\Ve_{\A'}$,
\begin{equation}\label{bij-edge}
 \Ed_\A\setminus\Ed_{\A'}\iso\,\Ed_{\A/\A'}, \qquad e\mapsto e \text{ or } \tilde e.
 \end{equation}
There is a natural surjective graph morphism $c_{\A'}:\A\to \A/\A'$ given by $c_{\A'}(v)=v_{\A'}$ if $v\in\Ve_{\A'}$ and $c_{\A'}(v)=v$ otherwise.

If $\A''\subset\A'\subset\A$ are subtrees of a tree $\A$ then we have a natural isomorphism 
\begin{equation}
\label{contraction-iteree}
j:\A/\A'\iso(\A/\A'')/(\A'/\A'') \text{ such that } c_{\A'/\A''}\circ c_{\A''}=j\circ c_{\A'}.
\end{equation}

If $G$ is a $\mathbf C$-graph over $\A$ the direct image $\tilde G:=(c_{\A'})_*G$ over $\A/\A'$
satisfies $\tilde G_{v_{\A'}}=H^0(\A',G)$, $\tilde G_{\tilde{e}}=G_e$ if $e\in\Ed_{\A}$ is adjacent to $\A'$ and $\tilde G_\star=G_\star$ otherwise. 
\begin{lema}\label{reg}
Let $G$ be a regular $\mathbf C$-graph over a tree $\A$ and let $\A'$ be a subtree of $\A$  such that all its edges are contained in the support of $G$. Then
$(c_{\A'})_*G$ is a regular $\mathbf C$-graph over $\A/\A'$.
\end{lema}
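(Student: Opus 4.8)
The plan is to verify the defining property of regularity for $\tilde G := (c_{\A'})_*G$ over the contracted tree $\A/\A'$: namely, that whenever a vertex $\tilde v$ and an incident edge $\tilde e$ both lie in $\mathrm{supp}(\tilde G)$, the restriction morphism $(c_{\A'})_*\rho$ is an isomorphism. By the description recalled just before the statement, $\tilde G_\star = G_\star$ for all $\star$ away from the contracted vertex $v_{\A'}$, and the edges of $\A/\A'$ are in canonical bijection (\ref{bij-edge}) with $\Ed_\A\setminus\Ed_{\A'}$. So the restriction morphisms of $\tilde G$ attached to a vertex $v\neq v_{\A'}$ are literally restriction morphisms of $G$, and for those the regularity of $\tilde G$ is inherited immediately from that of $G$. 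Thus the only genuine case to treat is an edge $\tilde e=\langle v, v_{\A'}\rangle$ adjacent to the collapsed subtree, and the restriction morphism $(c_{\A'})_*\rho_{v_{\A'}}^{\tilde e}: \tilde G_{v_{\A'}} = H^0(\A',G)\to \tilde G_{\tilde e}=G_e$ coming out of the collapsed vertex.

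Next I would analyze exactly this map. By the formula $(c_{\A'})_*\rho)_{v'}^{e'}((g_v)_v)=(\rho_v^e(g_v))_e$, the map $H^0(\A',G)\to G_e$ sends a compatible family $(g_w)_{w\in\Ve_{\A'}}$ to $\rho_{v'}^{e}(g_{v'})$, where $v'\in\Ve_{\A'}$ is the endpoint of the adjacent edge $e=\langle v,v'\rangle$ lying in $\A'$. Assume $\tilde e$ and $v_{\A'}$ are both in $\mathrm{supp}(\tilde G)$; in particular $H^0(\A',G)\neq 1$ and $G_e\neq 1$. I want to show this composite is an isomorphism. The key input is the hypothesis that \emph{all edges of $\A'$ lie in $\mathrm{supp}(G)$}: combined with regularity of $G$, this forces every restriction morphism $\rho_w^{f}:G_w\to G_f$ internal to $\A'$ to be an isomorphism (since both $w,f\in\mathrm{supp}(G)$ — the vertices are in the support because an incident edge in $\A'$ is, and a vertex with a nontrivial incident edge restriction must itself be nontrivial by regularity forcing $G_w\cong G_f\neq 1$).

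With all internal restrictions of $\A'$ being isomorphisms over a connected tree, I would argue that the $0$-cohomology $H^0(\A',G)$ — the group of compatible families — is canonically isomorphic to any single vertex group $G_w$, $w\in\Ve_{\A'}$, via the projection: compatibility across each internal edge (an isomorphism) propagates a choice at one vertex to a unique choice at every other vertex, so the projection $H^0(\A',G)\to G_{v'}$ is a bijection. This is where connectedness of $\A'$ as a subtree is essential, and it is the main structural point of the argument. Composing this isomorphism $H^0(\A',G)\iso G_{v'}$ with $\rho_{v'}^{e}:G_{v'}\to G_e$ — which is itself an isomorphism by regularity of $G$, since $\tilde e\in\mathrm{supp}(\tilde G)$ means $e\in\mathrm{supp}(G)$ and $v'\in\mathrm{supp}(G)$ — exhibits $(c_{\A'})_*\rho_{v_{\A'}}^{\tilde e}$ as a composite of two isomorphisms, hence an isomorphism.

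The main obstacle, and the step deserving the most care, is the propagation argument identifying $H^0(\A',G)$ with $G_{v'}$: one must check that the support hypothesis on the edges of $\A'$ genuinely upgrades every \emph{internal} restriction of $\A'$ to an isomorphism (including verifying that every vertex of $\A'$ is in $\mathrm{supp}(G)$, so that regularity applies), and then that, over the connected tree $\A'$, a family determined compatibly along isomorphic restrictions is freely and uniquely specified by its value at $v'$. Everything else reduces to unwinding the explicit formula for the direct image and the bijection (\ref{bij-edge}) of edges, which is routine bookkeeping.
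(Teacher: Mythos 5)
Your overall strategy (a direct verification of regularity at the contracted vertex, after observing that everything away from $v_{\A'}$ is inherited verbatim from $G$) is legitimate and differs from the paper's proof, which instead proceeds by induction on the number of edges of $\A'$ via the iterated-contraction isomorphism (\ref{contraction-iteree}), with the one-edge case as the base. However, your execution has a genuine gap at exactly the step you flag as the crucial one. You claim that every vertex $w\in\Ve_{\A'}$ lies in $\mathrm{supp}(G)$ because an incident edge $f\in\Ed_{\A'}$ does, ``by regularity forcing $G_w\cong G_f\neq 1$''. Regularity cannot be used in that direction: by Definition~\ref{regular} it asserts that $\rho^f_w$ is an isomorphism only when \emph{both} $w$ and $f$ already lie in $\mathrm{supp}(G)$, and it imposes no condition at all when $G_w=1$ and $G_f\neq 1$. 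Such configurations are not only permitted by regularity, they are central to the paper: they are precisely the \emph{active edges} entering Theorem~\ref{teobasegeom}. Consequently your identification of $H^0(\A',G)$ with a vertex group by propagation fails in general; for instance, if $\A'$ is a single edge $f=\langle w,w'\rangle$ with $G_w\cong G_f\neq 1$ and $G_{w'}=1$, then compatibility forces $H^0(\A',G)=1\not\cong G_w$.

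The gap is repairable, and the repair explains why the lemma nevertheless holds: one needs the dichotomy that either all vertices of $\A'$ lie in $\mathrm{supp}(G)$, or $H^0(\A',G)=1$. Indeed, if some $w_0\in\Ve_{\A'}$ has $G_{w_0}=1$, take $(g_w)\in H^0(\A',G)$ and propagate triviality along any path $w_0,f_1,w_1,f_2,\dots$ in the connected tree $\A'$: each $f_i$ lies in $\mathrm{supp}(G)$ by hypothesis, compatibility gives $\rho^{f_{i}}_{w_{i}}(g_{w_{i}})=\rho^{f_{i}}_{w_{i-1}}(g_{w_{i-1}})=1$, and then $g_{w_{i}}=1$ either trivially (if $G_{w_{i}}=1$) or by injectivity of $\rho^{f_i}_{w_i}$ (regularity, both in the support). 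In that degenerate case $v_{\A'}\notin\mathrm{supp}((c_{\A'})_*G)$, so regularity at the contracted vertex is vacuous; in the remaining case, where every vertex of $\A'$ is in the support, your propagation argument identifying $H^0(\A',G)\iso G_{v'}$ and composing with the isomorphism $\rho^e_{v'}$ is correct. With this case distinction added, your direct proof becomes a valid alternative to the paper's induction.
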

\begin{proof}
It is easy to check when $\A'$ has only one edge. In the general case, we proceed by induction  on the number of edges of $\A'$ using  isomorphisms~(\ref{contraction-iteree}).
\end{proof}

We call \emph{active edge} of a regular $\mathbf C$-graph $G$ over a tree $\A$ any edge $\msf a=\langle \msf v,\msf v'\rangle\in \Ed_\A$ such that 
$G_{\msf a}\neq \{0\}$ and $G_{\msf v'}=\{0\}$.  If $G_{\msf v}\neq 0$, the vertex $\msf v$ will be called \emph{active vertex associated to $\msf a$} and denoted by $\msf v_{\msf a}$. If $G_{\msf v}= G_{\msf v'}=\{0\}$ and $G_{\msf a}\neq \{0\}$, we select  one of the two vertices $\msf v$ or $\msf v'$ as  active vertex associated to $\msf a$. 
Let $(S_\alpha)_{\alpha\in I}$ be the collection of  \emph{path connected components} of $\mr{supp}(G)$, i.e. the maximal subsets of $\mr{supp}(G)$ such that any two  elements can be joined by a path in $\mr{supp}(G)$. We say that $S_\alpha$ is an \emph{active component} if  it contains an active edge, or equivalently an active vertex. We denote by $I'$ the set of indices $\alpha\in I$ such that $S_\alpha$ is active and not reduced to a single edge.

Let $\ms A$ be the collection  of all active edges.
Now, let us choose one edge $\msf a_\alpha$ in each active component $S_\alpha$, $\alpha\in I'$, and  let us write
\[
\ms A':=\ms A\setminus \{\msf a_\alpha\;;\;\alpha\in I'\}\,.
\]
\begin{teo}\label{teobasegeom} Let $G$ be a regular $\mathbf C$-graph over a tree~$\A$.   
If $\mc A'=\emptyset$ then $H^1(\A,G)=1$, otherwise we
consider the map 
\[[\delta_{{}_G}]:\prod\limits_{\msf a\in\ms A'}G_{\msf a}\to H^1(\A,G)\] 
induced by $\delta_{{}_G}:\prod\limits_{\msf a\in\ms A'}G_{\msf a}\to Z^1(\A,G)$ defined by
$\delta_{{}_G}((g_{\msf a})_{\msf a})=(g_{\msf v,\msf e})$ with $g_{\msf v,\msf e}= 1$ if $\msf e\notin\ms A'$ and 
\[
g_{\msf v_{\ag},\msf a}= g_{\msf a}^{-1}\,,\quad g_{\msf v',\msf a}= g_{\msf a}\]
for $\msf a=\langle \msf v_{\msf a},\msf v'\rangle\in\ms A'$.
Then $[\delta_{{}_G}]$ is bijective and 
 if $\mathbf C=\mbf{Ab}$ or $\mathbf C=\mbf{Vec}$ then $[\delta_{{}_G}]$ is 
a $\mbf C$-isomorphism. Moreover,  
if $\mbf{C=Vec}$ and all the vector  spaces  $G_\star$,  $\star\in\mr{supp}(G)$, have the same dimension $d$ then
\[\dim H^1(\A,G) =(a- p) \cdot d\]
where $a$ is the number of active edges, $p$ is the number of active connected components of $\mr{supp}(G)$ not reduced to a single edge. 
\end{teo}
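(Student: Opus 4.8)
The plan is to reduce the computation of $H^1(\A,G)$ to a product over the path-connected components of $\mr{supp}(G)$ and then to determine the contribution of each component separately. First I would note that every edge outside $\mr{supp}(G)$ carries the trivial group and every vertex outside $\mr{supp}(G)$ acts trivially, so the cocycle set $Z^1$, the $0$-cochains $C^0$ and the coboundary action all split as products indexed by the components $S_\alpha$: each support edge lies in exactly one $S_\alpha$, and the action on the corresponding factor of $Z^1$ involves only the vertex groups $G_v$ with $v$ a support vertex incident to that edge, which then lies in the same $S_\alpha$. Writing $\A_\alpha$ for the subtree spanned by the edges of $S_\alpha$ together with their endpoints, and $G_\alpha:=G|_{\A_\alpha}$, this gives a natural identification $H^1(\A,G)\iso\prod_\alpha H^1(\A_\alpha,G_\alpha)$ of pointed sets (of objects of $\mbf C$ when $\mbf C=\mbf{Ab}$ or $\mbf{Vec}$), through which the map $[\delta_{{}_G}]$ becomes the product of its restrictions to the factors $\prod_{\msf a\in\ms A'\cap S_\alpha}G_{\msf a}$.

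Next I would treat the three types of component. If $S_\alpha$ contains no active edge, then $\A_\alpha=S_\alpha$ is a subtree all of whose vertices and edges lie in $\mr{supp}(G)$; by regularity every restriction is an isomorphism, so any single vertex is repulsive and Theorem~\ref{pruning} yields $H^1(\A_\alpha,G_\alpha)=1$. If $S_\alpha$ is reduced to a single edge $e$ (both endpoints outside the support) the coboundary action is trivial and $H^1(\A_\alpha,G_\alpha)=Z^1=G_e$, which is exactly the image of the factor $G_e$ under $\delta_{{}_G}$, since such an $e$ remains in $\ms A'$. The essential case is an active component with $\alpha\in I'$: here the active edges are the only edges of $S_\alpha$ meeting a vertex outside the support, so they are pendant, and the support vertices together with the internal edges (both endpoints in the support) span a nonempty subtree $T_\alpha$ all of whose edges lie in $\mr{supp}(G)$.

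For such a component the key step is to contract $T_\alpha$. By Lemma~\ref{reg} the direct image $\wt G:=(c_{T_\alpha})_\ast G_\alpha$ is again regular over $\A_\alpha/T_\alpha$, which is a star: a central vertex $v_{T_\alpha}$ carrying the group $H^0(T_\alpha,G)$, joined by the active edges to pendant vertices with trivial group, the central restrictions $H^0(T_\alpha,G)\to G_{\msf a}$ being isomorphisms (as they factor through an isomorphism $H^0(T_\alpha,G)\iso G_{\msf v_{\msf a}}$ followed by the regular restriction $\rho^{\msf a}_{\msf v_{\msf a}}$). Since the fibre $c_{T_\alpha}^{-1}(v_{T_\alpha})=T_\alpha$ is a regular subtree with $H^1=1$ and the remaining fibres are single vertices, Lemma~\ref{bij} shows that $H^1(j_{c_{T_\alpha}}):H^1(\A_\alpha/T_\alpha,\wt G)\to H^1(\A_\alpha,G_\alpha)$ is a bijection. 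It then remains to compute the cohomology of the star: $Z^1=\prod_{\msf a}G_{\msf a}$ and the central group acts diagonally by left translation through the restriction isomorphisms, so normalizing the coordinate of the selected edge $\msf a_\alpha$ to $1$ identifies the orbit set with $\prod_{\msf a\neq \msf a_\alpha}G_{\msf a}$; by Lemma~\ref{bij}(a) the corresponding class is represented by the cocycle vanishing on all internal edges and on $\msf a_\alpha$, i.e. precisely $\delta_{{}_G}$.

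Assembling the three cases produces a bijection $\prod_{\msf a\in\ms A'}G_{\msf a}\iso H^1(\A,G)$ that coincides with $[\delta_{{}_G}]$; when $\mbf C=\mbf{Ab}$ or $\mbf{Vec}$ all the intermediate identifications are $\mbf C$-morphisms, so $[\delta_{{}_G}]$ is a $\mbf C$-isomorphism. The case $\ms A'=\emptyset$ falls out, since then there are no single-edge components and every $\alpha\in I'$ has a single active edge (a one-legged star), all contributing trivial $H^1$. Finally $|\ms A'|=a-p$, because exactly one active edge is discarded in each of the $p$ components indexed by $I'$, whence $\dim H^1(\A,G)=(a-p)\,d$ when $\mbf C=\mbf{Vec}$ and all $G_\star$ have dimension $d$. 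The main obstacle I anticipate is the contraction step: verifying that $(c_{T_\alpha})_\ast G_\alpha$ is genuinely a regular star with the asserted restriction isomorphisms, and, in the non-abelian setting, checking that the abstract bijection obtained through this chain of reductions agrees on the nose with the explicitly defined $[\delta_{{}_G}]$ rather than merely sharing its source and target.
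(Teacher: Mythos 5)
Your proposal is correct, and it runs on exactly the same three ingredients as the paper's proof --- Lemma~\ref{reg} (regularity of the direct image under contraction), Theorem~\ref{pruning} (triviality of $H^1$ of a fully-supported regular subtree), and Lemma~\ref{bij} (bijectivity of $H^1(j_c)$ for the contraction) --- but it organizes them differently. The paper never splits $H^1(\A,G)$ as a product over the components of $\mr{supp}(G)$; it argues by induction on the number $n(\A,G)$ of components not reduced to a single vertex or edge, and, crucially, when it contracts an active component it includes the chosen active edge $\msf a_\alpha$ \emph{together with its outer vertex} in the contracted subtree $C_\alpha$. This forces $H^0(C_\alpha,G)=1$, so the contracted vertex drops out of the support, the component disappears from the count, and the commutative diagram relating $\delta_{{}_G}$ and $\delta_{{}_{\tilde G}}$ (using $\ms A'\simeq\tilde{\ms A}'$) reduces everything to the trivial base case $n=0$. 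Your version contracts only the fully-supported core $T_\alpha$ and then computes the cohomology of the resulting star by hand, normalizing the $\msf a_\alpha$-coordinate. What the paper's trick buys is precisely the removal of your two anticipated obstacles: no separate product-decomposition step is needed (the induction always stays on the ambient tree $\A$), and no explicit star computation or identification ``on the nose'' with $[\delta_{{}_G}]$ is required, since the inductive diagram transports $\delta$ automatically. What your route buys is transparency: one sees directly which factor $G_{\msf a}$ each active edge contributes and why exactly one active edge per component in $I'$ must be discarded. Both of your extra steps are sound, also in the non-abelian case: the splitting works because a support vertex and a support edge incident to it always lie in the same component, and the star orbits have unique normalized representatives because the central restrictions $H^0(T_\alpha,G)\to G_{\msf a}$ are isomorphisms. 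The only point I would ask you to make explicit is the pendant-ness claim: it holds because, $\A$ being a tree, two support edges sharing a non-support vertex can never lie in the same path-connected component of $\mr{supp}(G)$.
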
 

\begin{proof}
We reason by induction on the number $n(\A,G)$ of path connected components of $\mr{supp}(G)$ not reduced to a single vertex or a single edge. If $n(\A,G)=0$ the statement is clear. If $n(\A,G)>0$ we consider a path connected component $S_\alpha$ of $\mr{supp}(G)$ not reduced to a single edge nor a single vertex. It contains a nonempty maximal subgraph $C_\alpha'$.
If $S_\alpha$ is an active component
we consider the graph $C_\alpha$ given by $\Ed_{C_\alpha}=\Ed_{C_\alpha'}\cup\{\msf a_\alpha\}\subset\mr{supp}(G)$ and 
 $\Ve_{C_\alpha}=\Ve_{C_\alpha'}\cup\{\msf v_{\msf a_\alpha},\msf v'\}$ where $\msf a_\alpha=\langle \msf v_{\msf a_\alpha},\msf v'\rangle$ is the active edge previously chosen to define~$\ms A'$. 
 If $S_\alpha$ is not an active component then we set $C_\alpha:=C'_\alpha$.
 Let $c:\A\to\tilde\A=\A/C_\alpha$ be the contraction of the subtree $C_\alpha\subset\A$. By Lemma~\ref{reg} the $\mbf C$-graph $\tilde G=c_*G$ over $\tilde{\A}$ is regular and $\ms A'\simeq\tilde{\ms A}'$ under the bijection~(\ref{bij-edge}). 
Moreover we have the following commutative diagram
\[\xymatrix{\prod\limits_{\msf a\in\ms A'}G_{\msf a}\ar[r]^{\delta_{{}_G}}\ar@/^2pc/[rr]^{[\delta_{{}_G}]}\ar[d]& Z^1(\A,G)\ar[r] & H^1(\A,G)\\ \prod\limits_{\tilde{\msf a}\in\tilde{\ms A}'}\tilde G_{\tilde{\msf a}}\ar[r]^{\delta_{{}_{\tilde G}}}\ar@/_2pc/[rr]^{[\delta_{{}_{\tilde{G}}}]} & Z^1(\tilde\A,\tilde G)\ar[u]^{j^1_c}\ar[r] & H^1(\tilde\A,\tilde G)\ar[u]^{H^1(j_c)}}\]
where the left vertical arrow, induced by the bijection (\ref{bij-edge}) using that $\tilde G_{\tilde{\msf a}}=G_{\msf a}$, is the identity. It is clear that every vertex of $C_\alpha\cap\mr{supp}(G)$ is repulsive for the restriction of $G$ to $C_\alpha$.
By applying Theorem~\ref{pruning} we deduce that $H^1(C_\alpha,G)=1$ so that hypothesis (c) in Lemma~\ref{bij} is fulfilled for the contraction map $c:\A\to\tilde{\A}$. Consequently $H^1(j_c)$ is bijective (or a $\mbf C$-isomorphism when $\mbf C=\mbf{Ab}$ or $\mbf C=\mbf{Vec}$).
It is easy to see that if $S_\alpha$ is an active component then $\tilde v:=c(C_\alpha)\in\Ve_{\tilde{\A}}$ does not belong to the support of $\tilde G$, i.e. $\tilde{G}_{\tilde v}=H^0(C_\alpha,G)=1$. If $S_\alpha$ is not active then $\{\tilde v\}$ is a path connected component of $\mr{supp}(\tilde G)$. In both cases  $n(\tilde \A,\tilde G)=n(\A,G)-1$.
By the inductive hypothesis $[\delta_{{}_{\tilde{G}}}]$ is bijective (or a $\mbf C$-isomorphism). Therefore $[\delta_{{}_G}]$ is bijective (or a $\mbf C$-isomorphism). The last assertion is trivial.
\end{proof}

\subsection{Tensor product}\label{GGTensorproduct}
If $T$ is a $\mbf{Vec}$-graph over a graph $\A$  and $W$ is a $\C$-vector space we can define the $\mbf{Vec}$-graph $T\otimes_\C W$ in an obvious way and we obtain a functor
\[\otimes_\C:\VSG\times\mbf{Vec}\to\VSG\,,\]
$\VSG$ being the category of $\C$-vector space-graphs.
The commutative property  between tensor product and direct sum gives  an isomorphism between the functors 
\[(T,W)\mapsto C^*(\A,T\otimes_\C W)\quad\text{and}\quad (T,W)\mapsto C^*(\A,T)\otimes_\C W,\] 
from $\VSG\times\mbf{Vec}$ to the category of vector space complexes. It induces  an  isomorphism 
\begin{equation}\label{tensorprodfunctIso}
\left( (T,W)\mapsto H^1(\A,T\otimes_\C W)\right) 
\iso
\left( (T,W)\mapsto H^1(\A,T)\otimes_\C W\right)
\end{equation}
between functors from the category $\VSG\times\mbf{Vec}$ to $\mbf{Vec}$.

\bigskip

\section{Equisingular deformations of foliations} \label{SectEquising}

\subsection{Deformations of foliations} 
Consider a germ $\F$ of singular foliation at the origin of $\C^2$, given by a germ  $Z=a(x,y)\partial_x+b(x,y)\partial_y$ of holomorphic vector field with $\{a(x,y)=b(x,y)=0\}=\{0\}$.  Let $Q^\point=(Q,u_0)$ be a germ of manifold. A   \emph{deformation of $\F$ over  $Q^\point$} 
is a germ  of foliation $\F_{Q^\point}$ on $(\C^2\times Q,(0,u_0))$ defined by a germ of 
\emph{vertical} (tangent to the fibers of the canonical projection $\mr{pr}_Q : \C^2\times Q\to \C^2$) vector field $X=A(x,y,u)\partial_x+B(x,y,u)\partial_y$,  whose restriction to $\C^2\times \{u_0\}$ is equal to $\F$,
\[ 
D \,\mr{pr}_Q\cdot  X=0\,,\qquad
\iota^\ast\F_{Q^\point}=\F\,,\quad
 \iota:\C^2\hookrightarrow \C^2\times Q\,,\quad\iota(x,y):=(x,y,u_0)\,.
 \]
 The germ $Q^\point$ is called \emph{parameter space} of $\F_{Q^\point}$.
 If $\lambda$ is a germ of holomorphic map from a germ of manifold $P^\point=(P,t_0)$ to $Q$ satisfying $\lambda(t_0)=u_0$, the \emph{pull-back} of $\F_{Q^\point}$ by $\lambda$ is the deformation $\lambda^\ast\F_{Q^\point}$ of $\F$ over $P^\point$,   defined by the vector field $\lambda^\ast X:=A(x,y,\lambda(t))\partial_x+B(x,y,\lambda(t))\partial_y$. When $Q=\{u_0\}$, $\lambda$ is the constant map and $\lambda^\ast\F_{Q^\point}$ is called \emph{constant deformation over} $P^\point$ and is denoted by $\F_{P^\point}^{\mr{ct}}$. 
 
Two deformations   $\F_{Q^\point}$ and $\F'_{Q^\point}$ of $\F$ with same parameter space $Q^\point$  are \emph{topologically conjugated}, or \emph{$\mc C^0$-conjugated},
if there is a  germ of homeomorphism $\Phi$ that is a \emph{deformation of  $\mr{id}_{\C^2}$}, that sends the leaves of $\F_{Q^\point}$ on that of $\F'_{Q^\point}$
\[ 
\Phi:(\C^2\times Q,(0,u_0))\iso (\C^2\times Q,(0,u_0))\,,\quad  \mr{pr}_Q\circ\Phi=\mr{pr}_Q\,,\quad \Phi\circ\iota=\iota\,,\quad \Phi(\F_{Q^\point})=\F_{Q^\point}';
\]
we will say that $\Phi$ is a \emph{conjugacy of deformation} from $\F_{Q^\point}$ to $\F'_{Q^\point}$ and we will denote $\Phi: \F_{Q^\point}\to \F'_{Q^\point}$. We will say that a deformation  is  \emph{trivial} if it is conjugated to the constant deformation.\\

\begin{obs}\label{pullbackmorphism} (a) If $\Phi: \F_{Q^\point}\to \F'_{Q^\point}$,  the \emph{pull-back $\lambda^\ast\Phi$ of $\Phi$} by a map germ $\lambda:P^\point\to Q^\point$, defined by 
\[
\lambda^\ast\Phi : (\C^2\times P,(0,t_0))\iso (\C^2\times P,(0,t_0))\,,\quad
\lambda^\ast\Phi(x,y,t):=\Phi(x,y,\lambda(t))\,,
\]
is a conjugacy from the deformation $\lambda^\ast\F_{Q^\point}$ to $\lambda^\ast\F'_{Q^\point}$. (b) If $\mu : N^\point\to P^\point$ is a germ of holomorphic map,  we have the relation $(\lambda\circ\mu)^\ast \F_{Q^\point}=\mu^\ast\lambda^\ast  \F_{Q^\point}$.
\end{obs}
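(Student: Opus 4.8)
The plan is to read off both assertions directly from the definitions, the single structural input being that a conjugacy of deformations respects the fibration by the parameter. Since $\mr{pr}_Q\circ\Phi=\mr{pr}_Q$, the homeomorphism $\Phi$ preserves each fiber $\C^2\times\{u\}$, so it may be written as a continuous family $\Phi(x,y,u)=(\Phi_u(x,y),u)$ with each $\Phi_u:(\C^2,0)\to(\C^2,0)$ a germ of homeomorphism; the pointing $\Phi\circ\iota=\iota$ reads $\Phi_{u_0}=\mr{id}_{\C^2}$, and $\Phi(\F_{Q^\point})=\F'_{Q^\point}$ becomes the fiberwise relation $\Phi_u(\F_u)=\F'_u$, where $\F_u$ and $\F'_u$ denote the restrictions of $\F_{Q^\point}$ and $\F'_{Q^\point}$ to $\C^2\times\{u\}$. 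Under this convention the prescription $\lambda^\ast\Phi(x,y,t):=\Phi(x,y,\lambda(t))$ is to be read as $\lambda^\ast\Phi(x,y,t)=(\Phi_{\lambda(t)}(x,y),t)$, keeping $t$ as the new parameter coordinate.

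For part (a) I would verify the three defining properties of a conjugacy of deformations in turn. First, $\lambda^\ast\Phi$ is a germ of homeomorphism, its inverse being $\lambda^\ast(\Phi^{-1})$: over $t$ both restrict to $\Phi_{\lambda(t)}$ and $(\Phi_{\lambda(t)})^{-1}$, whose composite is the identity. Second, the pointing conditions hold: $\lambda^\ast\Phi$ fixes the last coordinate, so $\mr{pr}_P\circ\lambda^\ast\Phi=\mr{pr}_P$; and since $\lambda(t_0)=u_0$ and $\Phi_{u_0}=\mr{id}_{\C^2}$ we get $\lambda^\ast\Phi(x,y,t_0)=(\Phi_{u_0}(x,y),t_0)=(x,y,t_0)$, i.e. $\lambda^\ast\Phi\circ\iota_P=\iota_P$ with $\iota_P(x,y)=(x,y,t_0)$. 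Third, the leaf condition: by the very definition of the pulled-back vector field, the restriction of $\lambda^\ast\F_{Q^\point}$ to $\C^2\times\{t\}$ is $\F_{\lambda(t)}$, and likewise for $\F'$; since $\lambda^\ast\Phi$ preserves fibers and restricts over $t$ to $\Phi_{\lambda(t)}$, the instance $\Phi_{\lambda(t)}(\F_{\lambda(t)})=\F'_{\lambda(t)}$ of the relation $\Phi_u(\F_u)=\F'_u$ gives $\lambda^\ast\Phi(\lambda^\ast\F_{Q^\point})=\lambda^\ast\F'_{Q^\point}$.

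Part (b) is a bookkeeping check at the level of defining vector fields. If $X=A\partial_x+B\partial_y$ defines $\F_{Q^\point}$, then $\mu^\ast\lambda^\ast\F_{Q^\point}$ is obtained by substituting $u=\lambda(t)$ and then $t=\mu(s)$, hence is defined by $A(x,y,\lambda(\mu(s)))\partial_x+B(x,y,\lambda(\mu(s)))\partial_y$; this is verbatim the vector field $A(x,y,(\lambda\circ\mu)(s))\partial_x+B(x,y,(\lambda\circ\mu)(s))\partial_y$ defining $(\lambda\circ\mu)^\ast\F_{Q^\point}$, so the two deformations coincide.

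The only point needing a little care, and the closest thing to an obstacle, is the notational one isolated in the first paragraph: making precise the fiberwise description $\Phi(x,y,u)=(\Phi_u(x,y),u)$ and reading ``$\Phi(x,y,\lambda(t))$'' so that $t$, not $\lambda(t)$, is retained as the parameter. Once this is fixed, part (a) is an immediate consequence of the fiber-preserving structure of conjugacies and part (b) of the definition of pull-back as substitution in the parameter.
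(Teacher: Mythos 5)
Your proposal is correct: the paper states this Remark without proof, treating both parts as immediate consequences of the definitions, and your direct verification (including the careful reading of $\Phi(x,y,\lambda(t))$ as $(\Phi_{\lambda(t)}(x,y),t)$, which is indeed the intended meaning of the paper's slightly abusive notation) is precisely the routine check the authors leave to the reader.
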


Let us recall  that a deformation $\F_{Q^\point}$ is called \emph{equireducible}  if there exists a map germ  called \emph{equireduction map}

\begin{equation}\label{notequi}
 E_{\F_{Q^\point}} : (M_{\F_{Q^\point}}, \mc E_{u_0}) \to (\C^2\times Q, (0,u_0))
 \end{equation}
obtained by composition of proper holomorphic map germs \[
E_{\F_{Q^\point}}=E_1\circ\cdots\circ E_k\,,\quad
E_j: (M_{j}, K_{j}) \to(M_{j-1},K_{j-1})\,,
\]
 \[
(M_0,K_0)= (\C^2\times Q, (0,u_0))\,,\quad (M_k,K_k)=(M_{\F_{Q^\point}}, \mc E_{u_0}),
 \]
fulfilling the following properties (\ref{regsing})-(\ref{redparam}) below:  for $1\leq j\leq k$ let us write
  \[
 E^j:=  E_1\circ\cdots\circ E_{j}:(M_j,K_j)\to (\C^2\times Q, (0,u_0))\,,\quad 
 \pi_j:=\mr{pr}_Q\circ E^j : M^j\to Q,
  \]
 and let us denote by $ \F^j_{Q^\point}$ the foliation
$(E^j)^{-1}(\F_{Q^\point})$ on $M_j$, then  for  $j=1,\ldots,k$, we must have:
\begin{enumerate}[(i)]
\item \label{regsing}
on  an open neighborhood of $K_j$ in $M_j$  the singular locus   of $\F_{Q}^j$ is regular and  the restriction of $\pi_j$ to it  is a covering map over an open neighborhood of $u_0$ in~$Q$;
 \item\label{bl}
$E_j$ is a blow-up map germ with center a union $C_j$   of components of the singular locus of $\F^{j-1}_{Q^\point}$ 
and $K_j=E_j^{-1}(K_{j-1})$; moreover   $C_1$ is the singular locus $\mr{Sing}(\F_{Q^\point})$ of $\F_{Q^\point}$;
 \item\label{redparam} 
there is an open neighborhood $U\subset Q$ of $u_0$ such that for any $u\in U$ the restriction of $\F^k_{Q^\point}$ to $\pi_k^{-1}(u)$ is a reduced foliation at each of its singular points; moreover  the restriction of $E^k$ to $\pi_k^{-1}(u)$ is  the minimal reduction map of the germ at $\mr{pr}_Q^{-1}(u)\cap \mr{Sing}(\F_{Q^\point})$ of the restriction of  $\F_{Q}$ to $\mr{pr}_Q^{-1}(u)$.

 %
\end{enumerate}
We will write:
\begin{equation}\label{notequiexcep}
\mc E_{\F_{Q^\point}}:=E_{\F_{Q^\point}}^{-1}(C_1)\,, \quad \pi^\sharp:=\pi_k:(M_{\F_{Q^\point}}, \mc E_{u_0})\to Q^\point \,, \quad \F^\sharp_{Q^\point}:=\F_{Q^\point}^k\,;
\end{equation}
By induction on $j=1,\ldots,k$, we check that $\pi^\sharp$ is a submersion. 
The \emph{exceptionnal divisor} $\mc E_{\F_{Q^\point}}$ is an hypersurface with normal crossing  and the restriction of $\pi^\sharp$ to each of its irreducible components is a holomorphically trivial fibration with fiber $\mb P^1$. Its \emph{special fiber} 
\begin{equation}\label{specialfibers}
 \mc E_{u_0}=E^{-1}_{\F_{Q^\point}}(0,u_0)=\mc E_{\F_{Q^\point}}\cap \pi^{\sharp\,-1}(u_0)\,.
\end{equation}
 is a curve with  normal crossings and   irreducible components  biholomorphic to $\mb P^1$; the restriction of $E_{\F_{Q^\point}}$ to the \emph{special fiber} $M_{u_0}:=\pi^{\sharp\,-1}(u_0)$ of $M_{\F_{Q^\point}}$ is identified to the \emph{reduction map} $E_{\F}:(M_\F,\mc E_\F)\to\C^2$ of $\F$,
 \begin{equation}\label{identif}
 E_\F\simeq E_{\F_{Q^\point}|M_{u_0}} : (M_{u_0},\mc E_{u_0})\longrightarrow \C^2\times \{u_0\}\simeq\C^2\,,
\quad (M_{u_0},\mc E_{u_0})\simeq (M_\F,\mc E_\F)\,,
 \end{equation}
and the \emph{special fiber of $\F_{Q^\point}^\sharp$} ,
\begin{equation}\label{specialfiberfol}
\F^\sharp_{u_0}:=\F_{Q^\point | M_{u_0}}^{\sharp}\,,
\end{equation}
is identified to the reduced foliation $\F^\sharp:=E_\F^{-1}(\F)$ on $M_\F$.
 Notice that any constant deformation $\F_{Q^\point}^{\mr{ct}}$ is equireducible and its reduction map is  the product map of the reduction map of $\F$ with the identity map of $Q$: 
\[
E_{\F_{Q^\point}^{\mr{ct}}}=E_{\F}\times \mr{id}_Q: (M_{\F} \times Q,\mc E_{\F}\times\{u_0\})\to (\C^2\times Q, (0,u_0))\,,\quad (m,u)\mapsto (E_{\F}(m),u)\,;
\]
Using the fact that pull-back process induces biholomorphisms at the fibers level  one  checks the following property:

\begin{prop}
The pull-back $\mu^\ast\F_{Q^\point}$  of an equireducible deformation $\F_{Q^\point}$ over $Q^\point$ of a foliation $\F$ by a holomorphic map germ $\mu: P^\point\to Q^\point$, is an equireducible  deformation of $\F$   over $P^\point$ and its equireduction map is the pull-back $\mu^\ast E_{\F_{Q^\point}}$ of the equireduction map of  
$\F_{Q^\point}$.
\end{prop}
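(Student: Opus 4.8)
The plan is to produce a candidate equireduction map for $\mu^\ast\F_{Q^\point}$ by performing the base change $\mu$ on the entire tower of blow-ups defining $E_{\F_{Q^\point}}$, and then to verify the axioms (i)--(iii) fiberwise, exploiting that the pull-back is a fiberwise operation. Writing $E_{\F_{Q^\point}}=E_1\circ\cdots\circ E_k$ with intermediate germs $(M_j,K_j)$ and projections $\pi_j=\mr{pr}_Q\circ E^j$, each of which is a submersion onto $Q^\point$, I would form the fiber products $\mu^\ast M_j:=M_j\times_Q P$ along $\mu:P^\point\to Q^\point$. As every $E_j$ is a morphism over $Q$, it induces $\mu^\ast E_j:\mu^\ast M_j\to\mu^\ast M_{j-1}$; and since $\mu^\ast(\C^2\times Q)=\C^2\times P$, the composite $\mu^\ast E_{\F_{Q^\point}}:=\mu^\ast E^k$ is a map germ $\mu^\ast M_k\to(\C^2\times P,(0,t_0))$. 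This composite is exactly the pull-back of the equireduction map appearing in the statement.

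The technical heart is to show that each $\mu^\ast E_j$ is again a blow-up with the expected center. Here I would invoke condition (i): near $K_{j-1}$ the center $C_j$ is a smooth union of components of $\mr{Sing}(\F^{j-1}_{Q^\point})$ on which $\pi_{j-1}$ restricts to a covering over a neighbourhood of $u_0$. Consequently, locally along each component of $C_j$ one may choose coordinates $(z_1,z_2,u)$ on $M_{j-1}$ with $\pi_{j-1}(z_1,z_2,u)=u$ and $C_j=\{z_1=z_2=0\}$, so that $C_j$ is a local section of $\pi_{j-1}$ and $E_j$ is a fiberwise blow-up of a point. Fiberwise blow-up along a section commutes with base change, whence $\mu^\ast E_j$ is the blow-up of $\mu^\ast M_{j-1}$ along $\mu^\ast C_j=C_j\times_Q P$, a union of components of $\mr{Sing}(\mu^\ast\F^{j-1}_{Q^\point})$. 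I expect this to be the main obstacle, and the covering property in condition (i) is precisely what makes it work: without it one could not trivialize $C_j$ as a section, and commutation with the (possibly non-submersive) base change $\mu$ would fail. The verification should proceed by induction on $j$, so that at each stage $\mu^\ast C_j$ already sits inside the previously built $\mu^\ast M_{j-1}$.

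The remaining checks are fiberwise and rest on the principle, recorded just before the statement, that the pull-back induces biholomorphisms at the level of fibers: for every $t\in P$ one has $(\mu^\ast M_j)_t=(M_j)_{\mu(t)}$, and the restriction of $\mu^\ast\F^j_{Q^\point}$ to this fiber coincides with the restriction of $\F^j_{Q^\point}$ to the fiber over $\mu(t)$. From this, condition (i) for the pulled-back tower follows because $\mr{Sing}(\mu^\ast\F^j_{Q^\point})$ is the pull-back of $\mr{Sing}(\F^j_{Q^\point})$, hence smooth and a covering over $P$ (the pull-back of a covering is a covering). Condition (ii) is the content of the previous paragraph together with $C_1=\mr{Sing}(\mu^\ast\F_{Q^\point})$. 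For condition (iii), fix $t\in P$: the foliation induced by $\mu^\ast\F^k_{Q^\point}$ on the fiber over $t$ is biholomorphic to the one induced by $\F^k_{Q^\point}$ over $\mu(t)$, hence reduced; and the restriction of $\mu^\ast E^k$ to that fiber is biholomorphic to the minimal reduction map over $\mu(t)$, so it is itself minimal, minimality being a property read off on the fiber.

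Assembling these, $\mu^\ast E_{\F_{Q^\point}}$ satisfies (i)--(iii) relative to $\mu^\ast\F_{Q^\point}$, so it is an equireduction map for $\mu^\ast\F_{Q^\point}$. In particular $\mu^\ast\F_{Q^\point}$ is equireducible and its equireduction map is the pull-back $\mu^\ast E_{\F_{Q^\point}}$, as asserted.
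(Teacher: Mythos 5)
Your proposal is correct and takes essentially the same route as the paper, which disposes of this proposition with the single remark that the pull-back process induces biholomorphisms at the fibers level and leaves the verification to the reader. Your argument is precisely the detailed version of that remark: you build the pulled-back tower by fiber product over $Q$, observe that each blow-up commutes with the base change $\mu$ because condition (i) makes every center a local section of the submersion $\pi_{j-1}$, and then check conditions (i)--(iii) through the fiberwise identification $(\mu^\ast M_j)_t \simeq (M_j)_{\mu(t)}$, exactly as the paper intends.
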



For equireducible deformations we may consider a special class of $\mc C^0$-conjugacies:

\begin{defin}\label{excellent}
Let
$\F_{Q^\point}$ and $\F'_{Q^\point}$ be two deformations over $Q^\point=(Q,u_0)$ of a foliation $\F$ and let $F: (\C^2\times Q,(0,u_0))\iso (\C^2\times Q,(0,u_0))$, $\mr{pr}_Q\circ F=\mr{pr}_Q$,  be a homeomorphism   that sends the leaves of $\F_{Q^\point}$ to the leaves of $\F'_{Q^\point}$. We will say that~$F$ is \emph{excellent} or of class \Cex, if
\begin{enumerate}
\item\label{holsing} $F$ lifts through the reduction maps of these foliations 
\[E_{\F_{Q^\point}}:(M_{\F_{Q^\point}},\mc E_{u_0})\to\C^2\times Q\,,\quad
E_{\F'_{Q^\point}}:(M_{\F'_{Q^\point}},\mc E'_{u_0})\to\C^2\times Q\,,
\]
i.e. there is a (unique) germ of  homeomorphism 
$F^\sharp : (M_{\F_{Q^\point}},\mc E_{u_0})\to(M_{\F'_{Q^\point}},\mc E'_{u_0})$ satisfying $E_{\F'_{Q^\point}}\circ F^\sharp=F\circ E_{\F_{Q^\point}}$,
\item\label{anlocnonoeud}  $F^\sharp$ is holomorphic at each point of $\mr{Sing}(\mc E_{u_0})\cup\mr{Sing}(\F^\sharp_{u_0})\subset \mc E_{u_0}$, except perhaps at the singular points of $\mc E_{u_0}$ that are nodal singularities of the special fiber  $\F^\sharp_{u_0}$ of $\F^\sharp_{Q^\point}$, cf. (\ref{specialfiberfol}).
\end{enumerate}
\end{defin}

\begin{obs}\label{trhol}
According to Camacho-Sad index Theorem, there is a non-nodal singular point of $\F^{\sharp}_{Q^\point}$ in  each invariant  component of the special fiber $\mc E_{u_0}$ of the exceptional divisor of the reduction of $\F_{Q^\point}$; consequently the holomorphy property~(\ref{anlocnonoeud}) in Definition~\ref{excellent} induces the   transversal holomorphy  of $F^\sharp$ at any regular point of the foliation $\F^\sharp_{Q^\point}$.
\end{obs}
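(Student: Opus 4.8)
The plan is to treat the two assertions in turn: the existence of a non-nodal singular point on each invariant component is a direct consequence of the Camacho--Sad index theorem, while the transverse holomorphy of $F^\sharp$ is obtained by propagating, through holonomy, the local holomorphy granted near such a point by the excellence condition.

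For the first assertion, fix an invariant irreducible component $D$ of the special fiber $\mc E_{u_0}$. Being a component of the exceptional divisor of a reduction obtained by composing blow-ups, $D\simeq\mb P^1$ is a compact $\F^\sharp_{u_0}$-invariant curve with strictly negative self-intersection $D^2<0$, and the Camacho--Sad index theorem reads
\[\sum_{p\in D\cap\mr{Sing}(\F^\sharp_{u_0})}\mr{CS}(\F^\sharp_{u_0},D,p)=D^2<0\,.\]
At a nodal point $p\in D$ the curve $D$ is one of the two smooth invariant branches, so $\mr{CS}(\F^\sharp_{u_0},D,p)$ is the quotient of the transverse eigenvalue by the eigenvalue along $D$; since the eigenvalue ratio is by definition a strictly positive real at a node, this index is a strictly positive real number. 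If every singular point of $\F^\sharp_{u_0}$ lying on $D$ were nodal, the left-hand side would be a strictly positive real, contradicting $D^2<0$. Hence $D$ carries at least one non-nodal singular point $q$.

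For the second assertion, I would argue by holonomy propagation. At the non-nodal point $q\in D$ the exclusion in property~(\ref{anlocnonoeud}) does not apply, so $F^\sharp$ is holomorphic on a whole neighborhood of $q$; restricting to a transversal $\Sigma_q$ of $\F^\sharp_{Q^\point}$ through a regular point close to $q$ shows that $F^\sharp$ is transversally holomorphic there. Now let $p$ be an arbitrary regular point of $\F^\sharp_{Q^\point}$ with transversal $\Sigma_p$. Following a path inside the leaf through $p$ that reaches a neighborhood of the invariant divisor and then runs along $D\setminus\mr{Sing}(\F^\sharp_{Q^\point})$ up to $\Sigma_q$, the transverse holonomy $h$ of $\F^\sharp_{Q^\point}$ identifies $\Sigma_q$ with $\Sigma_p$ and is holomorphic. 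Writing $h'$ for the corresponding holonomy of ${\F'}^\sharp_{Q^\point}$ and using that $F^\sharp$ respects the fibration $\pi^\sharp$, lifts through the reductions and sends leaves to leaves, $F^\sharp$ intertwines the two holonomies, $F^\sharp|_{\Sigma_p}\circ h=h'\circ F^\sharp|_{\Sigma_q}$, whence
\[F^\sharp|_{\Sigma_p}=h'\circ F^\sharp|_{\Sigma_q}\circ h^{-1}\,,\]
a composition of holomorphic germs. This yields transversal holomorphy of $F^\sharp$ at $p$, and since $p$ is arbitrary, at every regular point of $\F^\sharp_{Q^\point}$.

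The main obstacle is precisely this last propagation step. One must guarantee that every regular point, including those off the divisor and those on dicritical components, can be joined through its leaf to a transversal close to a non-nodal singular point, and that the intervening holonomy is a well-defined holomorphic germ depending holomorphically on the parameter. This rests on the facts that near $\mc E_{u_0}$ the leaves of the reduced foliation accumulate on the invariant components and that the transverse structure is entirely encoded by the holonomy pseudogroup along those components (themselves carrying a non-nodal singularity by the first assertion). Verifying the intertwining $F^\sharp|_{\Sigma_p}\circ h=h'\circ F^\sharp|_{\Sigma_q}$ \emph{holomorphically}, rather than merely topologically, is exactly where the excellence hypothesis enters, through the already-established holomorphy of $F^\sharp$ at non-nodal singularities.
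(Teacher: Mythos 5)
Your first assertion is argued exactly as the paper intends: by the Camacho--Sad index theorem the indices along an invariant component $D$ of $\mc E_{u_0}$ sum to $D\cdot D<0$, while at a nodal point the index of $\F^\sharp_{u_0}$ relative to $D$ is the (strictly positive, real) eigenvalue ratio, so the singular points on $D$ cannot all be nodal. This part is correct and complete.

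The second part uses the right mechanism --- propagation by holonomy of the holomorphy granted at the non-nodal point $q$ by property~(\ref{anlocnonoeud}) --- but as written it has a genuine gap, which you yourself flag as ``the main obstacle'' and then dispose of by appealing to facts that are false in general. First, the scope: as the paper makes explicit when it re-uses this remark in Section~\ref{sec:aut}, the claim concerns regular points lying on an \emph{invariant} component $D$; along a dicritical component the leaves are disks transverse to the component, no holonomy connects distinct points of the component, and the propagation cannot even start (accordingly the paper sets $\mr{Aut}_{\F}^{Q^\point}(D)=\{I_D\}$ for dicritical $D$, cf.\ Definition~\ref{defAutQ}). Second, your claim that ``near $\mc E_{u_0}$ the leaves of the reduced foliation accumulate on the invariant components'' fails near nodal singularities, where nearby leaves accumulate only at the singular point and not along the separatrices; this is precisely why nodal corners are excepted in Definition~\ref{excellent}. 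Third, and most importantly, you miss the observation that closes the gap for the points actually at stake: if $p$ is a regular point of $\F^\sharp_{Q^\point}$ on an invariant component $D$, then, $D$ being invariant, the leaf of $\F^\sharp_{Q^\point}$ through $p$ is exactly $D\setminus\mr{Sing}(\F^\sharp_{u_0})$, which is connected and accumulates at the non-nodal singularity $q$. A path in this single leaf from $p$ to a point $q'$ close to $q$ yields a holomorphic holonomy germ between transversals; $F^\sharp$ is holomorphic in a neighborhood of $q$, hence at $q'$, and your intertwining identity then expresses the transverse component of $F^\sharp$ at $p$ as a composition of holomorphic germs. (Your phrasing, in which the path first ``reaches a neighborhood of the invariant divisor and then runs along $D$'', mixes distinct leaves: a holonomy germ requires a path contained in one leaf, and a leaf off the divisor is never the leaf $D\setminus\mr{Sing}(\F^\sharp_{u_0})$.) For the statement over a whole component the paper instead quotes the theorem of Camacho--Rosas in Section~\ref{sec:aut}; the germ-level leaf-connectivity argument above is what the remark needs, and it is the step your proposal leaves open.
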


\begin{obs}\label{pullbakconjugacy}
If $\mu:P^\point\to Q^\point$ is a holomorphic map germ and $F$ is a \Cex-conjugacy between two equireducible deformations $\F_{Q^\point}$ and $\G_{Q^\point}$ of the same foliation $\F$, then $\mu^\ast F$ is a \Cex-conjugacy between the deformations $\mu^\ast\F_{Q^\point}$ and $\mu^\ast\G_{Q^\point}$. 
\end{obs}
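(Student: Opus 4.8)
The plan is to note that the topological part of the statement is already available, and to concentrate on the two regularity conditions of Definition~\ref{excellent}. By Remark~\ref{pullbackmorphism}(a) the germ $\mu^\ast F$ is already a $\mc C^0$-conjugacy of deformations from $\mu^\ast\F_{Q^\point}$ to $\mu^\ast\G_{Q^\point}$, so what remains is to verify that it is \Cex. First I would describe the equireduction of the pull-back via a fiber product: by the preceding Proposition the equireduction map of $\mu^\ast\F_{Q^\point}$ is $\mu^\ast E_{\F_{Q^\point}}$, and since $\pi^\sharp$ is a submersion the total space is the fiber product $M_{\mu^\ast\F_{Q^\point}}=M_{\F_{Q^\point}}\times_Q P$, fitting in the commutative square relating $E_{\mu^\ast\F_{Q^\point}}$, $E_{\F_{Q^\point}}$, the projection $\tilde\mu:M_{\mu^\ast\F_{Q^\point}}\to M_{\F_{Q^\point}}$ and $\id_{\C^2}\times\mu:\C^2\times P\to\C^2\times Q$. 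The same description holds for $\G$.

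For the lifting condition, I would use that $\mr{pr}_Q\circ F=\mr{pr}_Q$ forces the lift $F^\sharp$ to commute with the projections $\pi^\sharp$ to $Q$, so that $F^\sharp$ induces a map on fiber products, $(\mu^\ast F)^\sharp(m,t):=(F^\sharp(m),t)$, which lands in $M_{\mu^\ast\G_{Q^\point}}=M_{\G_{Q^\point}}\times_Q P$ because $\pi^\sharp_\G(F^\sharp(m))=\pi^\sharp_\F(m)=\mu(t)$. It is a homeomorphism, with inverse induced by $(F^\sharp)^{-1}$. A short diagram chase, using $E_{\G_{Q^\point}}\circ F^\sharp=F\circ E_{\F_{Q^\point}}$ together with the defining formula for $\mu^\ast F$, then yields $E_{\mu^\ast\G_{Q^\point}}\circ(\mu^\ast F)^\sharp=(\mu^\ast F)\circ E_{\mu^\ast\F_{Q^\point}}$, which is condition~(\ref{holsing}); uniqueness of the lift is automatic since the reduction maps are biholomorphic off the divisor.

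For the holomorphy condition~(\ref{anlocnonoeud}), I would work in local coordinates $(z,u)$ near a point $m_0$ of the special fiber of $M_{\F_{Q^\point}}$, with $\pi^\sharp_\F(z,u)=u$; there $F^\sharp(z,u)=(f(z,u),u)$, and the fiber product acquires coordinates $(z,t)$ subject to $u=\mu(t)$, so that $(\mu^\ast F)^\sharp(z,t)=(f(z,\mu(t)),t)$. Because $\mu$ is holomorphic, $(\mu^\ast F)^\sharp$ is holomorphic at $(m_0,t_0)$ as soon as $F^\sharp$ is holomorphic at $m_0$. It then remains to match the exceptional sets: the identification~(\ref{identif}) of the special fibers of both $\mu^\ast\F_{Q^\point}$ and $\F_{Q^\point}$ with the reduction $(M_\F,\mc E_\F)$ of $\F$ carries $\mr{Sing}(\mc E_{t_0})\cup\mr{Sing}((\mu^\ast\F)^\sharp_{t_0})$ onto $\mr{Sing}(\mc E_{u_0})\cup\mr{Sing}(\F^\sharp_{u_0})$ and sends nodal points to nodal points, since both special fibers carry the same reduced foliation $\F^\sharp$. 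Hence the non-nodal points at which holomorphy of $(\mu^\ast F)^\sharp$ is required correspond exactly to the non-nodal points at which excellence of $F$ guarantees holomorphy of $F^\sharp$, and the coordinate computation above concludes.

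The main obstacle I expect is precisely this last transfer: one must be sure that holomorphy of $F^\sharp$ genuinely propagates through the fiber product, and this is where the hypothesis that $\mu$ is \emph{holomorphic} (and not merely continuous) is essential, since the composite $f(z,\mu(t))$ would otherwise fail to be holomorphic. The secondary point demanding care is the bookkeeping that identifies the nodal exceptional points on the two sides through the canonical identifications of the special fibers with the reduction of $\F$.
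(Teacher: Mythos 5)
The paper states this as a bare remark (its ``obs'' environment carries no proof), and your argument is correct and supplies exactly the verification the paper leaves implicit: the fiber-product description of the pulled-back equireduction $\mu^\ast E_{\F_{Q^\point}}$, the induced lift $(\mu^\ast F)^\sharp(m,t)=(F^\sharp(m),t)$, and the propagation of holomorphy through the holomorphic $\mu$, together with the identification of the special fibers (and hence of singular and nodal points) with $(M_\F,\mc E_\F)$ and $\F^\sharp$. Nothing is missing; this is the natural and essentially unique route to the statement.
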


\subsection{Equisingular deformations}\label{Equising}
 Let us  consider  an equireducible foliation $\F_{Q^\point}$,   over a germ of manifold $Q^\point=(Q,u_0)$, of a foliation $\F$ on $(\C^2,0)$. We  keep all 
 previous   notations (\ref{notequiexcep})-(\ref{specialfiberfol}). We will denote by $\mr{Diff}(\C\times Q,(0,u_0))$  the group of germs of    holomorphic automorphisms of $(\C\times Q,(0,u_0))$ fixing the point $(0,u_0)$ and  by 
\begin{equation}\label{diffQ}
 \mr{Diff}_Q(\C\times Q,(0,u_0)) := \{  h\in \mr{Diff}(\C\times Q,(0,u_0))
 \; | \; \mr{pr}_Q\circ h=\mr{pr}_Q \}\,,
\end{equation}
the subgroup of \emph{automorphisms over $Q$}.

Now let us fix a point $o_D$ in each \emph{$\F_{u_0}$-invariant}  component $D$ of   $\mc E_{u_0}$  that is a non-singular point of this foliation and let us   choose  a germ of holomorphic submersion 
\begin{equation*}\label{submtrans}
g_D : (M_{\F_{Q^\point}}, o_D)\to (\C\times Q,(0,u_0))\,,\quad 
g_D(o_D)=(0,u_0)\,,
\end{equation*}
that is a \emph{map over $Q^\point$}, i.e.  $\mr{pr}_Q\circ g_D=\pi^\sharp$, constant on the leaves of $\F^\sharp_{Q^\point}$. We will say that $g_D$ is a \emph{transversal factor}  to $\F_{Q^\point}^\sharp$  at the point $o_D$.
%
%
%
%
Classically the \emph{holonomy} of $\F^\sharp_{Q^\point}$ along $D$ 
\emph{realized} on $g_D$ is the group representation of the  fundamental group of the \emph{punctured component} 
$D^\ast:=D\setminus \mr{Sing}(\F_{Q}^\sharp)$
 \begin{equation}\label{holonomiefam}
  \mc H_{D}^{\F^\sharp_{Q^\point}} : \pi_1(D^\ast, o_D)\to \mr{Diff}_Q(\C\times Q,(0,u_0))
 \end{equation}
 that associates to the class of a  loop $\gamma$ in $D^\ast$, $\gamma(0)=o_D$,  the  automorphism $h_\gamma$ over $Q^\point$ such that $g_D\circ h_\gamma^{-1}$ is the analytic extension (equivalently the extension  as first integral of $\F^\sharp_{Q^\point}$) of $g_D$ along $\gamma$. 
Up to composition by  inner automorphisms of  $\mr{Diff}_Q(\C\times Q,(0,u_0))$, this representation does not depend on the choice of the point  $o_D$ in $D^*$ or that of the transversal factor $g_D$.

For  a germ of holomorphic map $\mu : P^\point\to Q^\point$ we will  identify to $M_\F$ the special fibers of the reductions of $\F_{Q^\point}$ and of $\mu^\ast\F_{Q^\point}$, see (\ref{identif}). The pull-back by $\mu$ of a submersion over $Q^\cdot$,  resp. a first integral over $Q^\point$ of $\F^\sharp_{Q^\point}$, being a submersion over $P^\point$, resp. a first integral over $P^\point$ of $\mu^\ast\F^\sharp_{Q^\point}$, we have:
\begin{itemize}
\item
the pull-back $\mu^\ast g_D$ of a transversal factor $g_D$ to $\F_{Q^\point}^\sharp$, considered as a map over $Q^\point$, 
is a  transversal  factor to  $\mu^\ast\F_{Q^\point}^\sharp$ at the same point   of  the same invariant  component  $D$ of $\mc E_{\F}$, and the holonomy of $\mu^\ast\F^\sharp_{Q^\point}$ represented on it is
\begin{equation}\label{imrechol}
\mc H^{\mu^\ast \F^\sharp_{Q^\point}}_D= \mu^* \circ \mc H^{\F^\sharp_{Q^\point}}_D\,,
\end{equation}
where 
\[\mu^* : 
\mr{Diff}_Q(\C\times Q, (0,u_0))\to\mr{Diff}_P(\C\times P, (0,t_0))\,,\quad h\mapsto \big(\mu^\ast h:(z,t)\mapsto h(z,\mu(t)\big)\,;
\]

%

\item if $H_D$ denotes the \emph{holonomy group}
of $\F^\sharp_{Q^\point}$ along $D$, i.e. the  image of the morphism $\mc H^{\F^\sharp_{Q^\point}}_D$, then $\mu^*( H_D)$
is the holonomy group  of $\mu^\ast \F^\sharp_{Q^\point}$ along $D$.
\end{itemize}


Let us denote by $\mr{Diff}(\C,0)\times \{\mr{id}_{Q}\}\subset  \mr{Diff}_Q(\C\times Q,(0,u_0))$ the subgroup of automorphisms  that do not depend on $u\in Q$.
\begin{defin}\label{SL-def} 
We say that a  deformation $\F_{Q^\point}$ of $\F$ over $Q^\point$   is \emph{equisingular},  if it is equireducible and 
the holonomy representation of the reduced foliation $\F^\sharp_{Q^\point}$ along any invariant component  $D$ of the special fiber $\mc E_{u_0}$  of the exceptional divisor $\mc E_{\F_{Q^\point}}$ 
is conjugated to a morphism with values in $\mr{Diff}(\C,0)\times \{\mr{id}_{Q}\}$: there exists $\psi_D\in \mr{Diff}_Q(\C\times Q,(0,u_0))$ such that 
\begin{equation*}\label{trivholon}
\tau_{\psi_D}\circ \mc H^{\F^\sharp_{Q^\point}}_D 
 : \;\pi_1(D^\ast, o_D)\to \mr{Diff}(\C,0)\times \{\mr{id}_{Q}\}\subset
 \mr{Diff}_Q(\C\times Q,(0,u_0))
\end{equation*}
where $\tau_{\psi_D}$ is the inner automorphism $\phi\mapsto\psi_D\circ \phi\circ\psi_D^{-1}$ of $ \mr{Diff}_Q(\C\times Q,(0,u_0))$.
\end{defin}
\noindent In other words, an  equireducible foliation  $\F_{Q^\point}$ is equisingular if and only if for any invariant component $D$ of $\mc E_{u_0}$, the holonomy representation $ \mc H^{\F^\sharp_{Q^\point}}_D $  is conjugated to the holonomy representation along $D$  of the constant foliation $\F_{Q^\point}^{\mr{ct}\,\sharp}$, i.e.
\begin{equation}\label{conjholcte}
\tau_{\psi_D}\circ \mc H^{\F^\sharp_{Q^\point}}_D =
\mc H_D^{ \F_{Q^\point}^{\mr{ct}\,\sharp}}.
\end{equation}
for an appropriate $\psi_D\in \mr{Diff}_Q(\C\times Q,(0,u_0))$. 
 \begin{prop} The pull-back by a holomorphic map germ  $\mu:P^\point\to Q^\point$ of an equisingular  deformation $\F_{Q^\point}$ over $Q^\point$ is  an equisingular deformation over $P^\point$.
\end{prop}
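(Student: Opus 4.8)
The plan is to reduce everything to the characterization of equisingularity stated in (\ref{conjholcte}) together with the behaviour of holonomy under pull-back recorded in (\ref{imrechol}). First I would observe that equireducibility is automatically inherited: by the Proposition on pull-backs of equireducible deformations, $\mu^\ast\F_{Q^\point}$ is equireducible with equireduction map $\mu^\ast E_{\F_{Q^\point}}$. Hence the special fibers of the two reductions are both identified with $(M_\F,\mc E_\F)$, the invariant components $D$ of the special fiber $\mc E_{u_0}$ match those of the reduction of $\mu^\ast\F_{Q^\point}$, and it makes sense to compare the two holonomies along a common component $D$.

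The key preliminary step is to check that the map $\mu^\ast:\mr{Diff}_Q(\C\times Q,(0,u_0))\to\mr{Diff}_P(\C\times P,(0,t_0))$, $h\mapsto\mu^\ast h$, is a group homomorphism. Writing $h(z,u)=(\eta_h(z,u),u)$, one has $\mu^\ast h(z,t)=(\eta_h(z,\mu(t)),t)$, and a direct substitution yields $\eta_{h_1\circ h_2}(z,u)=\eta_{h_1}(\eta_{h_2}(z,u),u)$, whence $\mu^\ast(h_1\circ h_2)=\mu^\ast h_1\circ\mu^\ast h_2$; since also $\mu^\ast(\mr{id})=\mr{id}$, the map $\mu^\ast$ respects inverses. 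If $h$ does not depend on $u$ then $\mu^\ast h$ does not depend on $t$, so $\mu^\ast$ carries the subgroup $\mr{Diff}(\C,0)\times\{\mr{id}_Q\}$ into $\mr{Diff}(\C,0)\times\{\mr{id}_P\}$. Being a homomorphism, $\mu^\ast$ intertwines inner automorphisms:
\[
\mu^\ast\circ\tau_{\psi_D}=\tau_{\mu^\ast\psi_D}\circ\mu^\ast\,,\qquad \psi_D\in\mr{Diff}_Q(\C\times Q,(0,u_0))\,.
\]

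Finally I would conclude as follows. Since $\F_{Q^\point}$ is equisingular, for each invariant component $D$ there is $\psi_D$ with $\tau_{\psi_D}\circ\mc H^{\F^\sharp_{Q^\point}}_D=\mc H_D^{\F_{Q^\point}^{\mr{ct}\,\sharp}}$ by (\ref{conjholcte}). Applying $\mu^\ast$ to both sides, using the intertwining identity on the left and (\ref{imrechol}) on both holonomies, I obtain $\tau_{\mu^\ast\psi_D}\circ\mc H^{\mu^\ast\F^\sharp_{Q^\point}}_D=\mc H_D^{\mu^\ast(\F_{Q^\point}^{\mr{ct}\,\sharp})}$. Because the pull-back of the constant deformation over $Q^\point$ is the constant deformation over $P^\point$, i.e. $\mu^\ast\F_{Q^\point}^{\mr{ct}}=\F_{P^\point}^{\mr{ct}}$, the right-hand side equals $\mc H_D^{\F_{P^\point}^{\mr{ct}\,\sharp}}$; thus $\tilde\psi_D:=\mu^\ast\psi_D\in\mr{Diff}_P(\C\times P,(0,t_0))$ verifies precisely the characterization (\ref{conjholcte}) of equisingularity for $\mu^\ast\F_{Q^\point}$ (equivalently, the subgroup inclusion proved above shows that $\tau_{\tilde\psi_D}\circ\mc H^{\mu^\ast\F^\sharp_{Q^\point}}_D$ takes values in $\mr{Diff}(\C,0)\times\{\mr{id}_P\}$, which is Definition~\ref{SL-def}). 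There is no deep geometric input here: the whole argument is the formal transport of (\ref{conjholcte}) through the homomorphism $\mu^\ast$. The only point deserving a word of care is that these identities hold up to the auxiliary choices of base-points $o_D$ and transversal factors $g_D$; since equisingularity is a conjugacy-invariant condition, this ambiguity is harmless, and it is the mild technical obstacle of the proof.
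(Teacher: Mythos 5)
Your proposal is correct and follows essentially the same route as the paper: both transport the characterization (\ref{conjholcte}) through the map $\mu^\ast$ using the intertwining identity $\mu^\ast\circ\tau_{\psi_D}=\tau_{\mu^\ast\psi_D}\circ\mu^\ast$ together with (\ref{imrechol}), and both conclude via the fact that the pull-back of the constant deformation is the constant deformation. Your explicit verification that $\mu^\ast$ is a group homomorphism is a detail the paper leaves as ``trivial,'' but the substance of the two arguments is identical.
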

  
\begin{proof} 
Let us suppose equality (\ref{conjholcte}) satisfied, and let us denote by $\kappa_{P\point}: P^\point\to P^\point$ the constant map $t\mapsto t_0$. Since $\kappa_{P^\point}^\ast \mu^\ast \F_{Q^\point}$ is the constant deformation of $\F$ over $P^\point$,  it suffices to prove the equality
\begin{equation}\label{egdesire}
\tau_{\mu^\ast \psi_{D}} \circ \mc H_D^{\mu^\ast \F_{Q^\point}^\sharp} = 
\mc H_D^{\kappa_{P^\point}^\ast \mu^\ast \F_{Q^\point}^\sharp}\,, 
\end{equation}
$\kappa_{P^\point}: P^\point\to P^\point$ being the constant map $t\mapsto t_0$. 
Trivially we have: $\tau_{\mu^\ast \psi_D}\circ \mu^*=\mu^*\circ \tau_{\psi_D}$. Hence, it follows from (\ref{imrechol}) and (\ref{conjholcte}):
\[
\tau_{\mu^\ast \psi_{D}} \circ \mc H_D^{\mu^\ast \F_{Q^\point}^\sharp} =
\tau_{\mu^\ast \psi_{D}} \circ \mu^* \circ \mc H_D^{\F_{Q^\point}^\sharp}=
\mu^* \circ \tau_{\psi_D}
\circ \mc H_D^{\F_{Q^\point}^\sharp}=
\mu^*\circ \mc H_D^{\kappa^\ast_{Q^\point}\F_{Q^\point}^\sharp}=\mc H_D^{\mu^\ast\kappa^\ast_{Q^\point}\F_{Q^\point}^\sharp},
\]
the last equality follows from the fact that the constant deformation $\kappa^\ast_{Q^\point}\F_{Q^\point}$ is equisingular and thus fulfills the corresponding relation (\ref{imrechol}).
Equality (\ref{egdesire}) results from the trivial relation $\kappa_{Q^\point}\circ \mu=\mu\circ \kappa_{P^\point}$ that gives 
$\mu^\ast {\kappa_{Q^\point}^\ast \F_{Q^\point}}
=\kappa_{P^\point}^\ast \mu^\ast \F_{Q^\point}$.
\end{proof}


\subsection{Good trivializing system}\label{subsecMarkFol} 
In all the sequel we will make the hypothesis that the considered foliations $\F$ are \emph{generalized curves}, i.e.  the reduced foliations $\F^\sharp$ have no saddle-node singularities. 
Consequently at each singular point $s$ of $\F^\sharp$ in an invariant component $D$ of $\mc E_\F$, the holonomy around $s$ and the
 \emph{Camacho-Sad index}  $\mr{CS}(\F^\sharp,D,s)$  determine the analytical type of the germ of $\F^\sharp$ at $s$. We will see that this property will  imply the ``$\mc C^{\mr{ex}}$-rigidity'' of $\F^\sharp$ along each component $D$ of $\mc E_\F$, in the meaning that the germ along $D$ of the reduced foliation associated to any equisingular deformation of $\F$, is $\mc C^{\mr{ex}}$-conjugated to that of the constant deformation. \\

Let us consider an equisingular deformation $\F_{Q^\point} $ 
of $\F$. Let us  keep the 
 previous     notations (\ref{notequiexcep})-(\ref{specialfiberfol}) and let us denote by 
 \begin{equation}\label{imcanlifted}
 \iota^\sharp:(M_\F,\mc E_\F)\hookrightarrow (M_{\F_{Q^\point}},\mc E_{u_0})\,,\quad 
 E_{\F_{Q^\point}}\circ \iota^\sharp = \iota\circ E_\F\,,
 \end{equation}
the lifting throught the reduction and equireduction maps of the canonical  immersion 
\begin{equation}\label{immcan}
\iota:(\C^2,0)\hookrightarrow (\C^2\times Q,(0,u_0))\,,\quad (x,y)\mapsto (x,y,u_0)\,.
\end{equation}
We will also denote by $j^\sharp : M_\F\hookrightarrow M_\F\times Q$ the canonical immersion $m\mapsto(m,u_0)$, by $\mr{pr}_Q :\C^2\times Q\to Q$ and $\mr{pr}^\sharp_Q:M_\F\times Q\to Q$ the canonical projections, and we again write $\pi^\sharp:=\mr{pr}_Q\circ E_{\F_{Q^\point}} : (M_{\F_{Q^\point}},\mc E_{u_0})\to Q$.

\begin{teo}\label{psiD} 
If $\F$ is a generalized curve, then we can associate to  each irreducible component $D$ of $\mc E_\F$,  a homeomorphism germ 
\[
\Psi_D : 
(M_{\F_{Q^\point}}, \iota^\sharp(D)) \iso (M_\F\times Q, D\times \{u_0\})\,,
\]
so that:
\begin{enumerate}[(i)]
\item\label{overQ} $\Psi_D$ is a map over $Q^\point$, i.e. $\mr{pr}^\sharp_Q\circ\Psi_D=\pi^\sharp$, and corresponds to the identity map over $u_0$, i.e. 
$\Psi_D\circ \iota^\sharp=j^\sharp$;
\item\label{locholexcept}  $\Psi_D$ is holomorphic at each 
point of $\mr{Sing}(\mc E_{u_0})\cup\mr{Sing}(\F^\sharp_{u_0})$ except perhaps at the singular  points of 
$\mc E_{u_0}$  
that are nodal singularities of $\F^\sharp_{u_0}$; 
\item\label{conjfeuilletages} $\Psi_D$ conjugates the foliation $\F_{Q^\point}^\sharp$  to 
the foliation $\F_{Q^\point}^{\mr{ct}\,\sharp}$ obtained after equireduction of the constant deformation $\F_{Q^\point}^{\mr{ct}}$;
\item\label{collagenoeuddic} the germ of $\Psi_{D}\circ\Psi_{D'}^{-1}$ at the intersection point  $\{s_{DD'}\}=(D\cap D')\times\{u_0\}$ of two irreducible components $D$ and $D'$,  is the identity when either  $s_{DD'}$ is a nodal singular point of $\F^\sharp_{u_0}$ or $s_{DD'}$ is a regular point of $\F^\sharp_{u_0}$.
\end{enumerate}
\end{teo}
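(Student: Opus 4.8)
The plan is to build $\Psi_D$ by combining two ingredients: a semi-local conjugacy over the regular part $D^\ast$ coming from the equisingularity hypothesis, and local trivializations at the singular points coming from the rigidity of generalized curves. First I would treat the regular part. Fix a transversal factor $g_D$ and, using equisingularity in the form~(\ref{conjholcte}), replace it by $\tilde g_D:=\psi_D\circ g_D$, which realizes along $D$ the same holonomy representation as the constant deformation $\F_{Q^\point}^{\mr{ct}\,\sharp}$. Over a neighborhood of $D^\ast$ both $\F^\sharp_{Q^\point}$ and $\F_{Q^\point}^{\mr{ct}\,\sharp}$ are transversely holomorphic foliated bundles (suspensions) whose holonomies now coincide; following leaves and comparing the values of $\tilde g_D$ with the transversal factor of the constant deformation yields a transversely holomorphic, fibre-preserving conjugacy $\Psi_D^{\mr{reg}}$ over $Q^\point$, equal to the identity over $u_0$.

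Next, at each singular point $s$ of $\F^\sharp_{u_0}$ lying on $D$, I would use that $\F$ is a generalized curve: the germ of $\F^\sharp$ at $s$ is analytically determined by its Camacho--Sad index $\mr{CS}(\F^\sharp,D,s)$ and by the holonomy around $s$. Since the deformation is equireducible, so that the indices are locally constant in the parameter, and equisingular, so that the local holonomy is conjugate to the constant one, the family $\F^\sharp_{Q^\point}$ is fibrewise of constant analytic type near $s$, hence holomorphically trivial there. This produces a germ $\Psi_D^{s}$ conjugating $\F^\sharp_{Q^\point}$ to $\F_{Q^\point}^{\mr{ct}\,\sharp}$, over $Q^\point$ and equal to the identity over $u_0$, holomorphic whenever $s$ is non-nodal. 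At a nodal $s$ one only obtains a continuous trivialization, which is exactly the source of the exception in~(\ref{locholexcept}).

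Finally I would glue. On the overlap between the domain of $\Psi_D^{\mr{reg}}$ and that of a $\Psi_D^{s}$, the composite $\Psi_D^{s}\circ(\Psi_D^{\mr{reg}})^{-1}$ is an automorphism of $\F_{Q^\point}^{\mr{ct}\,\sharp}$ preserving every leaf and reducing to the identity over $u_0$; such a map is encoded by an element of the centraliser of the holonomy, and by adjusting the choice of $\tilde g_D$ (respectively of $\Psi_D^{s}$) within this freedom one makes the two conjugacies agree and glues them into a single $\Psi_D$ defined near all of $D$, establishing~(\ref{overQ})--(\ref{conjfeuilletages}). For the matching property~(\ref{collagenoeuddic}), at an intersection point $s_{DD'}$ that is a regular point of $\F^\sharp_{u_0}$ both $\Psi_D$ and $\Psi_{D'}$ coincide with the first-integral conjugacy of a locally trivial fibration, so the transition is the identity; at a nodal $s_{DD'}$ the product structure of the nodal normal form forces the same conclusion for the merely continuous trivializations, once the two branches are chosen compatibly.

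The main obstacle is the middle step together with the gluing: establishing genuine local triviality, and not merely fibrewise analytic equivalence, of the equisingular family at the non-nodal singular points from generalized-curve rigidity, and then reconciling these holomorphic germs with the transversely holomorphic conjugacy over $D^\ast$ so that the holomorphy requirement~(\ref{locholexcept}) and the normalisations in~(\ref{overQ}) are both preserved. The nodal points, where only continuous conjugacy is available and yet~(\ref{collagenoeuddic}) still demands the identity, require separate care.
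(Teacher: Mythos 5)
Your Step~1 (suspension over $D^\ast$ via the holonomy conjugation $\psi_D$) is essentially the paper's own first step, but the rest of the proposal has genuine gaps. First, you never construct $\Psi_D$ for a \emph{dicritical} component: the theorem demands a homeomorphism for \emph{every} irreducible component of $\mc E_\F$, and transversal factors, holonomy representations and suspensions make no sense along a non-invariant $D$. This is not a side case: the ``regular point'' alternative in property~(\ref{collagenoeuddic}) occurs precisely at corners where an invariant component meets a dicritical one. The paper treats dicritical components by a different rigidity statement (the analytic type of $\F^\sharp_{Q^\point}$ along a dicritical divisor depends only on its self-intersection number) and then must actively modify the resulting biholomorphism near each corner, by an explicit interpolation in an annulus, so that its germ there coincides with the germ of the homeomorphism built for the adjacent invariant component.

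Second, your treatment of nodal points misplaces the holomorphy exception and asserts the corner matching instead of proving it. Property~(\ref{locholexcept}) excepts only nodal singularities lying in $\mr{Sing}(\mc E_{u_0})$; at a nodal singularity of $\F^\sharp_{u_0}$ interior to a component, $\Psi_D$ must still be holomorphic, and your ``merely continuous trivialization'' does not deliver this. The paper proves it by a computation in a parametrized linearizing chart: commutation with the local holonomy and invariance of the linear model force the transition with the regular-part conjugacy to be $(x,y,u)\mapsto(x,A(u)y,u)$ with $A$ independent of $x$, hence holomorphically extendable across the node. At a nodal corner, nothing ``forces'' the germs of $\Psi_D$ and $\Psi_{D'}$ to coincide -- they genuinely differ, and the paper must modify them by an interpolation supported in cone-like regions $\{|y|\le C|x|^\alpha\}$ to achieve~(\ref{collagenoeuddic}). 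Finally, the step you yourself flag as the main obstacle (parametrized rigidity at non-nodal singular points plus reconciliation with the regular part) is resolved in the paper by a structurally different route that you should note: instead of building independent local trivializations and gluing -- which runs into the problem of extending the transition automorphism from an annular overlap to a full neighborhood, a problem your centralizer adjustment does not settle, since the transition need not fix leaves and leaf-fixing corrections need not extend -- the paper \emph{extends the regular-part conjugacy itself} across each non-nodal singular point, using the uniformity in the parameter of the Mattei--Moussu estimates together with Riemann's removable singularity theorem. This bypasses the gluing obstruction entirely.
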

\noindent The collection $(\Psi_D)_D$ of these homeomorphisms indexed by the components of $\mc E_\F$ is called \emph{good trivializing system} for $\F_{Q^\point}$.
\begin{proof} 
We will proceed in five steps. \\

\noindent\textit{-Step 1: construction of $\Psi_D$ on a neighborhood $\Omega$ of 
$\iota^\sharp(D\setminus \mr{Sing}(\F^\sharp))$ with $D$ invariant.} 
Let us fix a point  $o_D\in D\setminus \mr{Sing}(\F^\sharp)$ and  a transversal factor to $\F_{Q^\point}^\sharp$ 
\begin{equation*}\label{submintpro}
g:(M_{\F_{Q^\point}}, \iota^\sharp(o_D)) \to (\C\times Q,(0,u_0))\,.
\end{equation*}
Let us also fix a $\mc C^\infty$ submersion 
\begin{equation*}\label{retactD}
\rho:W\to \iota^\sharp(D)
\end{equation*}
defined on a neighborhood $W$ of $\iota^\sharp(D)$ in $M_{\F_{Q^\point}}$, such that:
\begin{enumerate}[(i)]
\item\label{resrho} the restriction of $\rho$ to $ \iota^\sharp(D)$ is the identity map,
\item the restriction $\rho_0$  of $\rho$ to the special fiber $M_{u_0}:=\pi^{\sharp\,-1}(u_0)$ is a submersion,
\item $\rho$ is holomorphic at $\iota^\sharp(o_D)$ and  also at each point $s\in \mr{Sing}(\mc E_{u_0})\cup \mr{Sing}(\F_{u_0}^\sharp)$,
\item\label{invfibers} the fibers $\rho^{-1}(s)$, $s\in  \mr{Sing}(\mc E_{u_0})\cup \mr{Sing}(\F_{u_0}^\sharp)$,  are  invariant by $\F_{Q^\point}^\sharp$.
\end{enumerate}
There is a unique section  $\sigma: (\C\times Q,(0,u_0))\to (M_{\F_{Q^\point}}, \iota^\sharp(o_D))$  of $g$, whose image coincides with the fiber $\rho^{-1}(\iota^\sharp(o_D))$. 
We do  a similar construction for the constant deformation. First, at the point 
$\br o_D:= j^\sharp(o_D)$ we have the following transversal factor 
\[
\br g= \br g_0\times \mr{id}_Q : (M_\F\times Q,(\br o_D,u_0))\to (\C\times Q,(0,u_0))\,,\quad \br g_0 : =\mr{pr}_\C\circ g\circ \iota^\sharp\,,
\]
with $\mr{pr}_{\C}:\C\times Q\to \C$ the first projection. Next, we define the following submersion $\br\rho$ onto $D\times\{u_0\}$
\[
\br\rho :  \iota^{\sharp\,-1}(W)\times 
Q\to D\times\{u_0\}\,,
\quad
(m,u)\mapsto (\iota^{\sharp\,-1} \circ\rho_0\circ\iota^\sharp(m), u_0)\,.
\]
Finally we consider the section $\br\sigma$ of $\br g$ whose image coincides with  $\br\rho^{-1}(\br o_D)$.

Now let us fix an element  $\psi_D\in \mr{Diff}_{Q}(\C\times Q,(0,u_0))$ that  conjugates the holonomy representation  along $\iota^\sharp(D)$
of  $\F^\sharp_{Q^\point}$ realized on $g$, to that of $\F_{Q^\point}^{\mr{ct}\,\sharp}$  realized on $\br g$:
\[
\tau_{\psi_D}\circ \mc H_ D^{\F_{Q^\point}^{\sharp\,}} =\mc H_ D^{\F_{Q^\point}^{\mr{ct}\,\sharp}} \,,\quad \tau_{\psi_D}(\phi):=\psi_D\circ\phi\circ\psi_D^{-1},
\] 
as in Definition~\ref{SL-def} and equation (\ref{conjholcte}).
By classical theory of path lifting in leaves of regular $1$-dimensional foliations, there is a homeomorphism $\Psi:\Omega \to \br\Omega$ where $\Omega$ is an open neighborhood of $\iota^\sharp(D\setminus \mr{Sing}(\F^\sharp))$ in $W\subset M_{\F_{Q^\point}}$ and $\br\Omega$ is an open neighborhood of $(D\setminus \mr{Sing}(\F^\sharp))\times \{u_0\}$ in $M_\F\times Q$, satisfying the following properties:
\begin{itemize}
\item when restricted  to $\iota^\sharp(D\setminus \mr{Sing}(\F^\sharp))$, $\Psi$ coincides with the map 
\begin{equation*}\label{psibemol}
\Psi^\flat
: \iota^{\sharp}(D)\iso D\times \{u_0\}\,,
\quad
p\mapsto (\iota^{\sharp\,-1} (p),u_0)\,,
\end{equation*}
\item $\Psi$ sends the fiber $\rho^{-1}(\iota^\sharp(o_D))$ to the fiber $\br\rho^{-1}(\br o_D)$ and its restriction to $\rho^{-1}(\iota^\sharp(o_D))$ is equal to $\br\sigma\circ\psi_D\circ g$,
\item $\Psi$  conjugates the restriction of  $\F_{Q^\point}^\sharp$ to $\Omega$ to that of $\F_{Q^\point}^{\mr{ct}\,\sharp}$
to $\br\Omega$,
\item $\Psi$ is a lift of $\Psi^\flat$, that is $\br\rho\circ\Psi=\Psi^\flat\circ \rho$. 
\end{itemize}

By construction, $\Psi$ is a map over $Q^\point$, i.e. $\mr{pr}^\sharp_Q\circ \Psi=\pi^\sharp$ and its germ along $\iota^\sharp(D\setminus \mr{Sing}(\F^\sharp))$ is unique.   Moreover, $\rho$ being holomorphic at the singular points, $\Psi$ is also holomorphic on the intersection of $\Omega$ with neigborhoods of these points.\\

\noindent\textit{-Step 2: extension at a non-nodal singular point.}   
The proof of Mattei-Moussu's theorem~ \cite{MatMou} given in \cite[Theorem 5.2.1]{Frank}  shows that the closures  of $\Omega$ and $\br\Omega$ at the non-nodal singular points of $\F^\sharp_{u_0}$ are neighborhoods of these points; in fact, the estimates made in \cite{Frank} are uniform in the parameters, see also \cite{DiawLoray}. Since $\Psi$ constructed in Step~1 is holomorphic near these singularities we conclude that $\Psi$  extends  holomorphically at these points  by classical Riemann's theorem. \\

\noindent\textit{-Step 3: construction of $\Psi_D$ when $D$ is dicritical.} Classically, the holomorphic type of $\F_{Q^\point}^\sharp$ along a dicritical divisor $\iota^\sharp(D)$ only depends on the self-intersection number of $\iota^\sharp(D)$ in the special fiber 
$\pi^{\sharp\,-1}(u_0)$. 
Thus there exists a germ of biholomorphism $\Psi : 
(M_{\F_{Q^\point}}, \iota^\sharp(D)) \iso (M_\F\times Q,D\times \{u_0\})$ over $Q^\point$ that conjugates $\F_{Q^\point}^\sharp$ to $\F_{Q^\point}^{\mr{ct}\,\sharp}$. Up to conjugating by a biholomorphism of $(M_\F\times Q,D\times \{u_0\})$ leaving $\F_{Q^\point}^{\mr{ct}\,\sharp}$ invariant we may also suppose that $\Psi\circ \iota^\sharp=j^\sharp$. It remains to modify $\Psi$ at each point  where $\iota^\sharp(D)$ meets another component $\iota^\sharp(D')$ so that  at this point the germ of $\Psi$ coincides with that of the homeomorphism   constructed in Step 1 for $D'$. This follows from the following remark:

\begin{obs}\label{collagedicr}
Let us consider  two germs of biholomorphisms over $\C^q$ 
\[g^j:\left(\C^2\times \C^q,\,\overline{\mb D}_{1}\times \{0\}\right)\iso\left(\C^2\times \C^q,\,g^j(\overline{\mb D}_{1}\times \{0\})\right)\,,
\]
$j=1,2$,  of the following form: 
\[
g^j (x,y,u)=(g^j_1(x,u), g^j_2(x,y, u), u)\,,
\quad
u=(u_1,\ldots,u_q)\,,
\]
with $g_1^j:\left(\C\times \C^q,\,\overline{\mb D}_{1}\times \{0\}\right)\to \C$, satisfying
\begin{equation}\label{condsaxes}
g^j_1(0,u) = g^j_2(x,0,u)=0
\,,\quad
g^j_1(x, 0)=x\,,\quad g^j_2(x,y, 0)=y\,.
\end{equation}
Here $\overline{\mb D}_1$ denotes the closed unit disk on $\C$.
Then   for suitable real numbers $0<r_1<R_1<1$, 
there exists a  homeomorphism germ    
\[g:(\C^2\times \C^q,\,\overline{\mb D}_{1}\times \{0\})\iso(\C^2\times \C^q,\, g(\overline{\mb D}_{1}\times \{0\})),
\]
\[
g (x,y,u)=(g_1(x,u), g_2(x,y, u), u)\,,
\]
of the same form,  
satisfying also Properties (\ref{condsaxes}), such that 
\[
g(x,y,u)=g^1(x,y,u) \hbox{ if } |x| \leq r_1\,,\qquad
g(x,y,u)=g^2(x,y,u) \hbox{ if } R_1\leq |x|\leq 1\,.
\]
\end{obs}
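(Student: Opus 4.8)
The plan is to glue $g^1$ and $g^2$ by interpolating them with a smooth radial cut-off in the variable $x$, taking advantage of the normalisations (\ref{condsaxes}); the resulting map will in general no longer be holomorphic (since $\chi(|x|)$ is not), which is exactly why the statement only asserts a homeomorphism. The starting observation is that the last two conditions in (\ref{condsaxes}), namely $g^j_1(x,0)=x$ and $g^j_2(x,y,0)=y$, say precisely that each $g^j$ restricts to the identity on the central fibre $\{u=0\}$. Hence, on a neighbourhood of the compact set $\overline{\mb D}_1\times\{0\}$, each $g^j$ is a smooth (real) map which is $\mc C^1$-close to the identity as soon as $u$ is close to $0$; in particular the two correction terms $g^1_i-g^2_i$ vanish identically on $\{u=0\}$.

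I would then fix $0<r_1<R_1<1$ and a smooth function $\chi:[0,\infty)\to[0,1]$ with $\chi\equiv 1$ on $[0,r_1]$ and $\chi\equiv 0$ on $[R_1,\infty)$; since $\chi$ is locally constant near $0$, the function $x\mapsto\chi(|x|)$ is smooth on all of $\C$. Define $g(x,y,u)=(g_1(x,u),g_2(x,y,u),u)$ by setting
\[
g_1(x,u):=g_1^2(x,u)+\chi(|x|)\big(g_1^1(x,u)-g_1^2(x,u)\big),
\]
\[
g_2(x,y,u):=g_2^2(x,y,u)+\chi(|x|)\big(g_2^1(x,y,u)-g_2^2(x,y,u)\big).
\]
By construction $g$ is triangular of the required form and agrees with $g^1$ where $\chi=1$, i.e. for $|x|\le r_1$, and with $g^2$ where $\chi=0$, i.e. for $R_1\le|x|\le 1$. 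The normalisations (\ref{condsaxes}) are preserved: using $\chi(0)=1$ one gets $g_1(0,u)=g^1_1(0,u)=0$; using $g^j_2(x,0,u)=0$ one gets $g_2(x,0,u)=0$; and on $\{u=0\}$ both correction terms vanish, so $g_1(x,0)=x$ and $g_2(x,y,0)=y$.

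The only genuine point to verify is that $g$ is a homeomorphism germ, and this is where the argument must be quantitative. Once $\chi$ has been chosen, $\|\chi'\|_\infty$ is a fixed (possibly large) constant, and the vanishing of $g^1_i-g^2_i$ on $\{u=0\}$ shows that $g$ differs from the identity by a map whose $\mc C^1$-norm on a fixed neighbourhood of $\overline{\mb D}_1\times\{0\}$ tends to $0$ as $u\to0$. The triangular form reduces injectivity to that of the one-variable slice maps $x\mapsto g_1(x,u)$ and $y\mapsto g_2(x,y,u)$: indeed, if $g(x,y,u)=g(x',y',u')$ then first $u=u'$, then $g_1(x,u)=g_1(x',u)$ forces $x=x'$, and then $g_2(x,y,u)=g_2(x,y',u)$ forces $y=y'$. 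Each slice map is a perturbation of the identity of $\C\simeq\R^2$ with $\sup\|D(\cdot)-\mr{Id}\|<1$ for $u$ small (note that the $\chi'$-term appears only in the $x$-slice, through the product rule, and is of size $O(|u|\,\|\chi'\|_\infty)$), hence injective by the estimate $|f(p)-f(q)|\ge(1-\sup\|Df-\mr{Id}\|)\,|p-q|$ on the convex disk; invariance of domain then yields that $g$ is a homeomorphism onto an open neighbourhood of $g(\overline{\mb D}_1\times\{0\})$.

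I expect the homeomorphism property to be the main obstacle, and it is really just a matter of organising the quantifiers correctly: $\|\chi'\|_\infty$ blows up as $R_1-r_1\to0$, so one must first fix the radii $0<r_1<R_1<1$ (this is the meaning of ``suitable'' in the statement) and only afterwards shrink the parameter $u$ to a sufficiently small neighbourhood of $0$. This is legitimate precisely because we are constructing a \emph{germ} along $\overline{\mb D}_1\times\{0\}$.
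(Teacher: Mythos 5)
Your proof is correct, but it takes a genuinely different route from the paper's. The paper glues the two maps piecewise in annuli: it first produces the common first component $G_1$ by invoking a separate gluing result (Proposition~5.13 of \cite{MMS}, after choosing $r_1,R_1,\epsilon$ so that $\sup|g^1_1|<\inf|g^2_1|$ on the relevant regions), and then defines the second component in five annular zones, using continuous collapsing maps $\zeta,\xi$ of annuli onto disks and a continuous family $F_\tau(y,u)$ of \emph{holomorphic} germs with $\partial_y F_\tau(0,0)\neq 0$ interpolating between $g^1_2(0,\cdot,\cdot)$ and $g^2_2(0,\cdot,\cdot)$; injectivity is then built into each piece rather than obtained by estimates. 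You instead take a convex combination $g^2+\chi(|x|)(g^1-g^2)$ with a smooth radial cutoff and prove injectivity quantitatively: since both maps are the identity on $\{u=0\}$, all correction terms and their first derivatives are $O(|u|)$ (with a factor $\|\chi'\|_\infty$ in the $x$-slice), so after fixing $r_1<R_1$ and shrinking $u$ each slice map is a $\mc C^1$-small perturbation of the identity on a convex disk, and invariance of domain finishes the argument. Your quantifier discussion (radii first, then the germ in $u$) is exactly the right point, and your construction even works for \emph{arbitrary} $0<r_1<R_1<1$, whereas the paper must choose them. What the paper's construction buys is that it stays holomorphic in $(y,u)$ for each fixed $x$ and avoids any differentiability estimates; what yours buys is a shorter, self-contained argument with no appeal to an external gluing proposition. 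Since the statement only demands a homeomorphism germ of triangular form satisfying (\ref{condsaxes}) — and the application to dicritical components only uses that triangular maps preserve the vertical fibration — your weaker regularity in the transition annulus is harmless.
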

\begin{dem2}{of the remark}
Let $0<r_1<R_1<1$ and $\epsilon>0$ be  reals numbers such that 
\[\sup_{\tiny\begin{array}{c}|x|\leq r_1\\ |u|\le\epsilon\end{array}}|g_1^1(x,u)| < \inf_{\tiny\begin{array}{c} R_1\leq |x|\leq 1\\ |u|\le\epsilon\end{array}}|g_1^2(x,u)|\,,
\]
and let us choose  $r_2$, $R_2\in\R$ satisfying $r_1<r_2<R_2<R_1$. 
Similarly to \cite[Proposition~5.13]{MMS} one can prove that
there is  a homeomorphism germ over $\C^q$
 \[
 G:(\C\times\C^q,\overline{\mb D}_1\times\{0\})\iso
 (\C\times\C^q,\overline{\mb D}_1\times\{0\})\,,
 \quad
 G(z,u)=(G_1(z,u),u)\,,
 \]
such that: 
$G_1(z,0)=z$, $G_1(z,u)=g_1^1(z,u)$ if $|z|\leq r_1$ and 
$ G_1(z,u)=g_1^2(z,u)$   if $R_2\leq |z|\leq 1$.
Let us fix  continuous maps 
\[
\zeta: \{r_1\leq|z|\leq r_2\}\to \{|z|\leq r_1\}\,,\quad
\xi: \{R_2\leq|z|\leq R_1\}\to \{|z|\leq R_1\}\,,
\]
that induce homeomorphisms when restricted to the interior of these compact annuli,  
such that $\zeta(z)=z$  if $|z|=r_1$,  $\zeta(z)=0$ if $|z|=r_2$ and $\xi(z)=0$ if  $|z|=R_2$, $\xi(z)=z$ if $|z|=R_1$. Let us also  fix
a continuous family  $F_{\tau}(z,u)$, $\tau\in [r_2,R_2]$, of function germs  at $(0,0)\in\C\times\C^{q}$
that are defined and holomorphic on a common domain, such that 
$\partial_zF_\tau(0,0)\neq 0$,
$F_{r_2}(z,u)=g^1_2(0,z,u)$ and $F_{R_2}(z,u)=g^2_2(0,z,u)$. 
Then 
we set 
$g_1(x,u)=G_1(x,u)$ and
 \[g_2(x,y,u)=\left\{\begin{array}{llr}
 g^1_2(x,y,u) & \text{if} &  |x|\leq r_1\,,\\
g^1_2(\zeta(x),y,u) & \text{if} &  r_1\leq |x|\leq r_2\,,\\
F_{|x|}(y,u) & \text{if} & r_2\leq |x|\leq R_2\,,\\
 g^2_2(\xi(x),y,u) & \text{if} &  R_2\leq |x|\leq R_1\,,\\ 
 g^2_2(x,y,u)  &\text{if} & R_1\leq |x|\leq 1\,.
 \end{array}\right.\]
\end{dem2}

\noindent\textit{-Step 4: Extension  at a nodal singular point  $s\notin \mr{Sing}(\mc E_{u_0})$.}  The extension of $\Psi$ will be done ``by linearity'' as follows. Let  $\zeta=(\zeta_1,\cdots,\zeta_q):(Q,u_0)\to(\C^q,0)$ be a chart on $Q^\point$. Since  the holonomy around $s$ is a trivial family, Camacho-Sad index of $\F_{Q^\point}^\sharp$ restricted to the fibers of $\pi^\sharp$ is constant along the singular locus. By linearization (with parameters) there is a local chart
\begin{equation*}\label{linearchart}
\chi=(w_1, w_2, z_1,\ldots,z_q):(M_{\un\F_{Q^\point}},s)\to (\C^2\times \C^q,0)\,,
\quad
z_j=\zeta_j\circ \pi^\sharp\,,
\end{equation*}
such that   $\F_{Q^\point}^\sharp=\chi^{-1}(\mc L)$, where $\mc L$ is the one dimensional  foliation on $\C^{q+2}_{x,y,u_1,\ldots,u_{q}}$, with singular set $\{(0,0)\}\times \C^q$,  given by the linear differential equations system 
\begin{equation}\label{linearsystemdif}
xdy-\alpha y dx=du_1=\cdots=du_{q}=0\,,
\quad 
\alpha\in \R_{>0}\,.
\end{equation}
We may suppose that the $x$-axis corresponds to  $\iota^\sharp(D)$ and that $\rho$ corresponds to the linear projection on the first coordinate $w_1$ in $\C^2$. At the point  $\br s:= (\iota^{\sharp\,-1}(s),u_0)\in M_\F\times Q$, with the local chart
\begin{equation*}\label{linearchartct}
\br\chi=(w_1\circ \iota^\sharp, w_2\circ \iota^\sharp, \zeta_1,\ldots,\zeta_q)
:(M_{\F}\times Q,\br s) \to (\C^2\times \C^q,0)\,,
\end{equation*}
the component $D\times \{u_0\}$ corresponds again to  the $x$-axis,  $\br\rho$ is the linear projection and we have: $\F^{\mr{ct}\,\sharp}_{Q^\point}=\br\chi^{-1}(\mc L)$. Notice that $\br\chi\circ\Psi\circ\chi^{-1}$ is a  holomorphic automorphism  leaving invariant the foliation $\mc L$, defined  on a neigbourhood in $\C^{q+2}$ of a punctured disk $\mc D^\ast=\{0<|x|\leq\varepsilon, y=0,u=0\}$. It has the following expression:
\begin{equation*}\label{exprePsiChi1}
\br\chi\circ\Psi\circ\chi^{-1}(x,y,u)= 
\left( x, \wt\Psi(x,y,u),u\right)\,,
\quad
u=(u_1,\ldots,u_q)\,,
\end{equation*}
\begin{equation*}\label{exprePsichi2}
\wt\Psi(x,0,u)=0\,,
\quad
\wt\Psi(x,y,0)=(x,y,0)\,.
\end{equation*}
On  $\{x\}\times \C^{q+1}$, $x\in \mc D^\ast$,  the holonomy of $\mc L$ along the loop $ \gamma_{x}(t)=(e^{2\pi it}x, 0,\ldots, 0)$, $t\in [0,1]$,  is the linear automorphism $h(x,y,u)=(x,e^{2\pi \alpha i}y,u)$. The commutativity of $\br\chi\circ\Psi\circ\chi^{-1}$ with these holonomy maps, 
\[\wt \Psi(x,e^{2\pi \alpha i}y, u)=e^{2\pi \alpha i}\wt\Psi(x,y,u)\,,\]
 gives 
\[
\wt\Psi(x,y,u)=A(x,u) y\,,\quad A(x,u)\neq 0\,,
\]
where $A$ is a holomorphic map defined  on an open set  of $ \C^{q+1}_{x,u_1,\ldots,u_{q}}$ that contains  the compact set defined by $\varepsilon/2 \leq |x|\leq \varepsilon$, $|u_j|\leq \eta$ for $j=1,\ldots,q$. By the  invariance of $\mc L$ under $\br\chi\circ\Psi\circ\chi^{-1}$, we have  the equality: 
\[
(-\alpha \frac{dx}{x} +\frac{d\wt \Psi}{\wt\Psi})\wedge (-\alpha \frac{dx}{x} +\frac{dy}{y})\wedge du_1\wedge\cdots\wedge  du_q=0\,.
\]
Hence:
\[
\frac{dA}{A}\wedge (-\alpha \frac{dx}{x} +\frac{dy}{y})\wedge du_1\wedge\cdots\wedge  du_q=0\,.
\]
Since the differential form $-\alpha \frac{dx}{x} +\frac{dy}{y}$ in $\C^2$  posseses only constant holomorphic first integrals,  $A$ does not depend on the variable $x$. It  extends trivially to a holomorphic map defined on $\{|x|\leq \varepsilon, |u_j|\leq \eta, j=1,\ldots,q\}$. 
Thus the automorphism   $\br\chi\circ\Psi\circ\chi^{-1}$  extends to a neighborhood of the origin in $\C^{q+2}$, as a holomorphic automorphism $\un\Psi$  leaving  $\mc L$ invariant. We conclude that the desired extension of $\Psi$ is given by $\br\chi^{-1}\circ \un\Psi\circ\chi$.\\

\noindent\textit{-Step 5: Extension  at a nodal singular point  $s\in \mr{Sing}(\mc E_{u_0})$.} %
If  by Step 3  we extend at a such a point $s$  the homeomorphisms   along the components $D$ and $D'$ meeting at $s$ constructed in Step 1,  we obtain two germs at $s$ of biholomorphisms  $\Psi$ and $\Psi'$ that do not  fulfill  the requested property (\ref{collagenoeuddic}). Thanks to the following remark, we  modify them so that they coincide as germs at $s$. 
\begin{obs}\label{collanodal}
Let $g^j : \overline{\mb D}^2_1\times \mb D_{\eta}^q \iso \mc W_j$, $j=1,2$,
be two biholomorphisms leaving invariant the linear foliation $\mc L$ defined by (\ref{linearsystemdif}), such that 
\begin{enumerate}
\item\label{0fixe} $g^j(x,y,0)=(x,y,0)$, 
\item\label{fibrationinv} $g^1(x,y,u)=(x,g^1_2(x,y,u),u)$, with  $g^1_2(x,0,u)=0$,
\item\label{idemautreaxe}  $g^2(x,y,u)=(g^2_1(x,y,u),y,u)$ with $g^2_1(0,y,u)=0$,
\end{enumerate}
where $\mb D_\eta=\{|z|<\eta\}\subset\C$. Then for $\eta>0$ small enough, there are  suitable real numbers $0<C_1<C_2<1<C_2'<C'_1$ such that 
there exists a  homeomorphism germ    
\[g : \overline{\mb D}_1^2\times \mb D_{\eta}^q \iso \overline{\mb D}_1^2\times \mb D_{\eta}^q\,,
\quad
(x,y,u)\mapsto (g_1(x,y,u), g_2(x,y, u), u)
\]
satisfying also Properties (\ref{0fixe})-(\ref{idemautreaxe}) above, 
that is equal to $g_1$ when $|y| \leq C_1\,|x|^\alpha$, to $g_2$ when  $|y|\geq C'_1 |x|^\alpha$ and to the identity map when $C'_2|x|^\alpha<|y|  \leq C_2\,|x|^\alpha$.
\end{obs}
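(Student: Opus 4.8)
The plan is to first rigidify the two germs into explicit linear models and then to interpolate through the identity \emph{along the invariant tubes} of $\mc L$, so that the glued map automatically preserves the foliation (which is needed for the application in Step~5). I would begin with the observation that the hypotheses already force $g^1$ and $g^2$ to be linear in the fibre direction. Since $g^1$ fixes the first coordinate and, preserving $\mc L$, sends each leaf $\{y=cx^\alpha\}$ of a fixed fibre to a leaf of the same fibre, one may write $g^1_2(x,cx^\alpha,u)=F_u(c)x^\alpha$ and expand $F_u(c)=\sum_{k\ge1}f_k(u)c^k$ (the term $k=0$ vanishes because $g^1_2(x,0,u)=0$). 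Substituting $c=yx^{-\alpha}$ gives $g^1_2(x,y,u)=\sum_{k\ge1}f_k(u)\,y^k x^{-\alpha(k-1)}$, and holomorphy of $g^1$ across $\{x=0\}$ together with $\alpha>0$ kills every term with $k\ge2$. Hence $g^1(x,y,u)=(x,a(u)y,u)$ with $a:=f_1$, $a(0)=1$, $a$ non-vanishing; symmetrically $g^2(x,y,u)=(b(u)x,y,u)$ with $b(0)=1$.

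Next I would introduce the invariant transverse coordinate. As $\alpha\in\R_{>0}$, the function $t:=|y|/|x|^\alpha$ is well defined off $\{x=0\}$ and is constant on the leaves of $\mc L$ in each fibre (it equals $|c|$ on $\{y=cx^\alpha\}$); its level sets are the invariant tubes interpolating between the $x$-axis ($t=0$) and the $y$-axis ($t=\infty$). Fixing smooth cut-offs $\beta_1,\beta_2:\R_{\ge0}\to[0,1]$ with $\beta_1\equiv1$ on $[0,C_1]$, $\beta_1\equiv0$ on $[C_2,\infty)$, and $\beta_2\equiv0$ on $[0,C_2']$, $\beta_2\equiv1$ on $[C_1',\infty)$, I would set
\[
g(x,y,u):=\bigl(b(u)^{\beta_2(t)}x,\; a(u)^{\beta_1(t)}y,\;u\bigr),\qquad t=|y|/|x|^\alpha,
\]
where $a(u)^s:=\exp(s\,\mathrm{Log}\,a(u))$ for the branch with $\mathrm{Log}\,a(0)=0$ (likewise for $b$), defined for $|u|\le\eta$, $\eta$ small. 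Because $\beta_i(t)$ is leaf-constant and $a(u),b(u)$ are fibre-constant, $g$ preserves $\mc L$; it equals $g^1$ on $\{t\le C_1\}$, equals $g^2$ on $\{t\ge C_1'\}$, is the identity on the middle tube where both cut-offs vanish, and is the identity at $u=0$ since $a(0)=b(0)=1$. Properties (\ref{0fixe})--(\ref{idemautreaxe}) and the three matching conditions then follow from these pointwise identities.

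The remaining, and only substantial, point is that $g$ is a homeomorphism of $\overline{\mb D}_1^2\times\mb D_\eta^q$ onto itself. Working in a fixed fibre, I would compute the transverse coordinate of the image $(X,Y)=(b^{\beta_2(t)}x,a^{\beta_1(t)}y)$, which is $|Y|/|X|^\alpha=\Theta(t):=t\,|a|^{\beta_1(t)}|b|^{-\alpha\beta_2(t)}$, a function of $t$ alone. Since $\beta_1,\beta_2$ are locally constant outside the compact interval $[C_1,C_1']$ and $\ln|a|,\ln|b|\to0$ uniformly as $\eta\to0$, one obtains
\[
\Theta'(t)=|a|^{\beta_1(t)}|b|^{-\alpha\beta_2(t)}\bigl(1+t(\beta_1'(t)\ln|a|-\alpha\beta_2'(t)\ln|b|)\bigr)>0
\]
for $\eta$ small, so $\Theta$ is an increasing homeomorphism of $[0,\infty)$. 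From $g(x,y,u)=g(x',y',u)$, equality of image transverse coordinates forces $\Theta(t)=\Theta(t')$, hence $t=t'$, whence $\beta_i(t)=\beta_i(t')$ and then $x=x'$, $y=y'$; surjectivity follows by inverting, putting $t=\Theta^{-1}(T)$ for the target value $T$ and solving $x=b^{-\beta_2(t)}X$, $y=a^{-\beta_1(t)}Y$. Finally a short boundary inspection (on $\{|x|=1\}$ one has $t=|y|\le1<C_2'$, so $\beta_2(t)=0$ and the first coordinate keeps modulus $1$; on $\{|y|=1\}$ one has $t\ge1>C_2$, so $\beta_1(t)=0$ and the second coordinate keeps modulus $1$), combined with $C_1,C_2<1$ and $\eta$ small enough that $|a|^{\beta_1}C_2<1$, keeps images inside $\overline{\mb D}_1^2$; invariance of domain then yields $g:\overline{\mb D}_1^2\times\mb D_\eta^q\iso\overline{\mb D}_1^2\times\mb D_\eta^q$.

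The hard part is exactly the self-consistency of this last step: because the cut-offs are evaluated at $t$, which is itself moved by $g$, the two factors cannot be decoupled a priori and bijectivity is not formal. The device that unlocks it is the scalar reduction $t\mapsto\Theta(t)$ together with the smallness of $\eta$ (equivalently $a,b$ close to $1$), which makes $\Theta$ monotone; this plays here the role that \cite[Proposition~5.13]{MMS} plays in the dicritical gluing of Observation~\ref{collagedicr}.
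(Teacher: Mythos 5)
Your construction is correct, and its skeleton is the same as the paper's: both proofs first show that the hypotheses force the linear forms $g^1(x,y,u)=(x,a(u)y,u)$ and $g^2(x,y,u)=(b(u)x,y,u)$, and both then interpolate the multipliers to $1$ as functions of the leaf invariant $t=|y|/|x|^\alpha$, which is exactly what makes the glued map preserve $\mc L$. Where you genuinely diverge is in how the interpolation is realized and proved to be a homeomorphism. The paper builds it only on the transversal $\{x=1\}$, as a radial map $y\mapsto R(|y|,u)e^{i\theta(|y|,u)}$ interpolating $y\mapsto A_1(u)y$ with the identity; since this commutes with the rotation holonomy $y\mapsto e^{2i\pi\alpha}y$, it extends uniquely to a leaf-preserving homeomorphism of the saturated tube $\{|y|<|x|^\alpha\}$, the needed monotonicity being arranged by choosing the constants so that $C_2>\sup_u|A_1(u)|\,C_1$ (similarly on the other side); the two sides are then handled by disjointly supported homeomorphisms $G$, $G'$, and $g=G\circ G'$. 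You instead write one closed global formula $g=\bigl(b(u)^{\beta_2(t)}x,\,a(u)^{\beta_1(t)}y,\,u\bigr)$ and obtain bijectivity from strict monotonicity of the scalar reduction $\Theta(t)=t\,|a|^{\beta_1(t)}|b|^{-\alpha\beta_2(t)}$, bought by shrinking $\eta$ after fixing the cut-offs. Your route is more explicit and avoids the saturation/equivariant-extension argument; the paper's route lets the constants absorb the size of $|A_i(u)|$ instead of invoking smallness of $\eta$ for monotonicity, and it reuses the holonomy-commutation mechanism already present in Steps 4--5 of the proof of Theorem~\ref{psiD}. Both are compatible with the statement's quantifiers, since the conclusion is stable under shrinking $\eta$.

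Two points you should tighten, neither of which is a real gap. First, in your linearity step the series $\sum_{k\ge1}f_k(u)\,y^kx^{-\alpha(k-1)}$ contains multivalued terms whenever $\alpha(k-1)\notin\Z$; the clean argument is that single-valuedness of $g^1_2$ (equivalently the monodromy relation $F_u(e^{2i\pi\alpha}c)=e^{2i\pi\alpha}F_u(c)$) kills those terms, and holomorphy across $\{x=0\}$ kills the remaining ones with $\alpha(k-1)\in\Z_{>0}$ — or simply quote the holonomy-commutation argument of Step 4, as the paper does. Second, invariance of domain is unnecessary at the end: your explicit inverse $x=b^{-\beta_2(t)}X$, $y=a^{-\beta_1(t)}Y$ with $t=\Theta_u^{-1}(|Y|/|X|^\alpha)$, combined with the boundary estimates you already made, shows directly that $g$ maps $\overline{\mb D}_1^2\times\mb D_\eta^q$ bijectively onto itself, and continuity of the inverse follows from the uniform strict monotonicity of $\Theta_u$ (or from compactness of $\overline{\mb D}_1^2\times K$ for compact $K\subset\mb D_\eta^q$).
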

\begin{dem2}{of the remark}
As we have seen in  Step 4, the invariance of a linear foliation by these biholomorphisms implies that $g^1_2(x,y,u)=A_1(u)y$ and $g^2_1(x,y,u)=A_2(u)x$ with $A_1$, $A_2 : \overline{\mb D}_{\eta}^q\to\C^\ast$ holomorphic functions.  
 Let us choose the real numbers $C_j$, $C'_j$, $j=1,2$ so that 
 \[C'_1 >  \sup \{ |A_2(u)|\;;\;u\in \mb D_\eta^q\}\cdot C'_2>
 C_2 >  \sup \{ |A_1(u)|\;;\;u\in \mb D_\eta^q\}\cdot C_1\,.
 \]
 The  continuous functions 
\[
B:\{1\}\times\overline{\mb D}_{1}\times \mb D_{\eta}^{q}\to \{1\}\times\mb \overline D_{1}\times \mb D_{\eta}^q\,,
\quad
B(1,y,u)=(1,R(|y|,u)\,e^{i\theta(|y|,u)},u)\]
defined by the following interpolation
\begin{itemize}
\item  $R(r,u)= |A(u)| r$, if $r\leq C_1$,
\item $R(r,u)= \frac{C_2-C_1|A(u)|}{C_2-C_1}\cdot(r -C_1)+ C_1 |A(u)|$, if $C_1\leq r\leq C_2$,
\item $R(r,u)= r$,   if $C_2<r<1$
\item $\theta(r,u)=\arg z +\arg A(u)$,  if $r\leq C_1$,
\item $\theta(r,u)= \arg z+\frac{-\arg A(u)}{C_2-C_1}\cdot(r -C_1)+ \arg A(u)$, if $C_1\leq r\leq C_2$, 
\item $\theta(r,u)= \arg z$,   if $C_2<r<1$,
\end{itemize}
is a homeomorphism that is equal to $g^1(1,y,u)$  if $|y|<C_1$ and to the identity map   if $|y|>C_2$. 
On each line  $L_{u}=\{1\}\times\mb D_{1}\times\{u\}$ the holonomy map of $\mc L$, which is a rotation,  commutes with the restriction   $B|_{L_{u}}$. Therefore $B$ 
extends in a unique way to a homeomorphism  defined on the open set
\[
\{|y| < |x|^\alpha\}
\subset  \mb D_{1}^2\times \mb D_{\eta}^q
\]
 obtained by saturation of $\{1\}\times\mb D_{1}\times \mb D_{\eta}^{q}$ by  $\mc L$. This homeomorphism  leaves $\mc L$  invariant and fixes each line $\{x\}\times\mb D_1\times\{\wt y\}$.
Thanks to the uniqueness of this   extension it  is equal to $\br\chi\circ\Psi\circ\chi^{-1}$ on $\{ |y|<C_1|x|^\alpha \}$ and by construction it is equal to the identity map on $\{ C_2|x|^\alpha < |y| < |x|^\alpha \}$. 
It extends trivially as the identity map on $\{ C_2|x|^\alpha < |y|  \}$. Finally the obtained extension  is a homeomorphism  $G:\mb D_1^2\times \mb D_{\eta}^q\iso \mb D_1^2\times \mb D_{\eta}^q$ with support in $\{|y|\leq C_2|x|^\alpha\} $, again equal to   $g^1$ on $\{ |y|<C_1|x|^\alpha \}$. 

Performing the same construction along the $y$ axis we end up with a homeomorphism  $G':\mb D_1^2\times \mb D_{\eta}^q\iso \mb D_1^2\times \mb D_{\eta}^q$ with support in $\{|y|\geq C'_2|x|^\alpha\}$,  equal to   $\br\chi\circ\Psi'\circ\chi^{-1}$ on $\{ |y|>C'_1|x|^\alpha \}$. The supports of $G$ and $G'$  being disjoints, the homeomorphism  $g:= G\circ G'=G'\circ G$ fulfills the required properties.
\end{dem2}
This achieves the proof of Theorem~\ref{psiD}.
\end{proof}

\subsection{Deformation functor}\label{subsecDefSpaFunct} Let us consider   the \emph{pointed set} 
\begin{equation*}\label{notDef}
\mr{Def}_{\F}^{Q^\point}:=\left.\{[\F_{Q^\point}]\;:\;\F_{Q^\point}\ \hbox{equisingular deformation of }  \F\}\right/ \approx_{\mc C^\mr{ex}}
\end{equation*}
of all \Cex-conjugacy classes $[\F_{Q^\point}]$ of germs of equisingular deformations $\F_{Q^\point}$ over $Q^\point$ of a fixed foliation $\F$. This set is pointed by the class of the constant deformation.\\

The assignment $Q^\point\mapsto\Def_{\F}^{Q^\point}$  is a  contravariant functor, because according to Remark~\ref{pullbackmorphism},  
 to a germ $\mu:P^\point\to Q^\point$ corresponds the well defined pull-back map
 \[
 \mu^*:\Def_{\F}^{Q^\point}\to\Def_{\F}^{P^\point}\,,\quad [\F_{Q^\point}]\mapsto [\mu^\ast \F_{Q^\point}]\,.
  \]
\begin{teo}\label{famconsigma}
Let  $\phi : (\C^2,0) \iso (\C^2, 0)$ be a homeomorphism  germ that is a \Cex-conjugacy  between two germs of  foliations $\G$ and $\F=\phi(\G)$ which are generalized curves.  Let $Q^\point=(Q,u_0)$ be a germ of manifold. Then there exists a  bijective map 
\[
\phi^\ast  :  {\Def}_{\F}^{Q^\point} \stackrel{\sim}{\longrightarrow}
{\Def}_{\mc {G}}^{Q^\point}
\]
defined by the following property:
\begin{enumerate}
\item[$(\star)$]$\phi^\ast([\F_{Q^\point}])=[\G_{Q^\point}]$ if and only if there exists a germ of homeomorphism over~$Q$
\[ 
\Phi : (\C^2\times Q,(0,u_0))\iso (\C^2\times Q,(0,u_0))\,,\quad \mr{pr}_Q\circ\Phi=\mr{pr}_Q\,,
 \]
that sends the leaves of $\G_{Q^\point}$ on that of $\F_{Q^\point}$, is excellent, and satisfies 
\[\Phi(x,y,u_0)=(\phi(x,y),u_0)\,.\]
\end{enumerate}
Moreover, if $\psi:(\C^2,0)\iso(\C^2,0)$, $\psi(\mc K)=\G$,  is a \Cex-conjugacy between a germ of foliation $\mc K$ and $\G$, then 
\begin{equation}\label{fonctsigmaphi}
(\phi\circ\psi)^\ast=\psi^\ast\circ\phi^\ast:
 {\Def}_{\F}^{Q^\point} \iso
{\Def}_{\mc K}^{Q^\point}\,.
\end{equation}
\end{teo}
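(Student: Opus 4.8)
The plan is to take property $(\star)$ as the definition of $\phi^\ast$ and to organize the proof around three points: (i) for each equisingular $\F_{Q^\point}$ there \emph{exists} a witness $\Phi$ together with an equisingular deformation $\G_{Q^\point}$ of $\G$ satisfying $(\star)$; (ii) the resulting class $[\G_{Q^\point}]\in\Def_\G^{Q^\point}$ depends only on $[\F_{Q^\point}]$, so that $\phi^\ast$ is a well-defined map; (iii) the relation (\ref{fonctsigmaphi}), from which bijectivity follows. Points (ii) and (iii) are formal consequences of the stability of \Cex-homeomorphisms under composition and inversion --- a routine check on the liftings $\Phi^\sharp$ of Definition~\ref{excellent} --- so I expect the entire difficulty to concentrate in the existence statement (i).

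First the formal part. If $\Phi,\Phi'$ both witness $(\star)$ for $\F_{Q^\point}$, producing $\G_{Q^\point}$ and $\G'_{Q^\point}$, then $\Phi'^{-1}\circ\Phi$ is an excellent conjugacy over $Q$ from $\G_{Q^\point}$ to $\G'_{Q^\point}$ whose restriction to the central fibre is $\phi^{-1}\circ\phi=\mr{id}$; hence $[\G_{Q^\point}]=[\G'_{Q^\point}]$, and replacing $\F_{Q^\point}$ by an excellently conjugate deformation changes nothing, giving (ii). For (iii), if $\Phi$ witnesses $\phi^\ast([\F_{Q^\point}])=[\G_{Q^\point}]$ and $\Xi$ witnesses $\psi^\ast([\G_{Q^\point}])=[\mc K_{Q^\point}]$, then $\Phi\circ\Xi:\mc K_{Q^\point}\to\F_{Q^\point}$ is excellent over $Q$ with central restriction $\phi\circ\psi$, which yields (\ref{fonctsigmaphi}). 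Since $\phi^{-1}$ is again a \Cex-conjugacy and $\mr{id}^\ast=\mr{id}$, applying (i) to both $\phi$ and $\phi^{-1}$ and specializing (\ref{fonctsigmaphi}) to $\psi=\phi^{-1}$ (and symmetrically) shows that $\phi^\ast$ and $(\phi^{-1})^\ast$ are mutually inverse, whence $\phi^\ast$ is bijective.

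It remains to prove existence (i), the heart of the matter, where Theorem~\ref{psiD} enters. As $\phi$ is excellent it lifts to $\phi^\sharp:(M_\G,\mc E_\G)\to(M_\F,\mc E_\F)$, holomorphic off the nodal points, conjugating $\G^\sharp$ to $\F^\sharp$ and inducing a bijection $D_\G\mapsto D_\F:=\phi^\sharp(D_\G)$ between divisor components. Fix a good trivializing system $(\Psi_{D_\F})$ for $\F_{Q^\point}$. For each $D_\G$ I consider the germ, near $D_\G\times\{u_0\}$ in the reduced space $M_\G\times Q$ of the constant deformation, of
\[
\Phi^\sharp_{D_\G}:=\Psi_{D_\F}^{-1}\circ(\phi^\sharp\times\mr{id}_Q),
\]
a map over $Q$ conjugating $\G^{\mr{ct}\,\sharp}_{Q^\point}$ to $\F^\sharp_{Q^\point}$ and restricting to $\phi^\sharp$ on the central fibre by Theorem~\ref{psiD}(\ref{overQ}). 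On an edge $\langle D_\G,D'_\G\rangle$ the two pieces differ by the germ $g_{D_\G D'_\G}:=(\phi^\sharp\times\mr{id})^{-1}\circ\bigl(\Psi_{D'_\F}\circ\Psi_{D_\F}^{-1}\bigr)\circ(\phi^\sharp\times\mr{id})$ of an automorphism of $\G^{\mr{ct}\,\sharp}_{Q^\point}$ at the corner; by Theorem~\ref{psiD}(\ref{collagenoeuddic}) it is the identity at regular and nodal corners, while at a non-nodal singular corner both $\Psi_\bullet$ (Theorem~\ref{psiD}(\ref{locholexcept})) and $\phi^\sharp$ are holomorphic, so $g_{D_\G D'_\G}$ is a holomorphic germ preserving the foliation.

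These germs form a holomorphic regluing datum on neighbourhoods of the $D_\G$ in $M_\G\times Q$; the cocycle identity is automatic because they arise from the genuine family $(\Phi^\sharp_{D_\G})$ mapping into the single space $M_{\F_{Q^\point}}$. Regluing produces a complex manifold $\widehat M\to Q$ carrying a holomorphic foliation $\widehat\F^\sharp$ and a divisor of the combinatorial type of $\mc E_\G$ fibrewise; contracting this divisor (Grauert's criterion, fibrewise over $Q$) yields a genuine holomorphic deformation $\G_{Q^\point}$ of $\G$ on $(\C^2\times Q,(0,u_0))$, whose equireduction is $(\widehat M,\widehat\F^\sharp)=(M_{\G_{Q^\point}},\G^\sharp_{Q^\point})$. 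By the very choice of $g_{D_\G D'_\G}$ the pieces $\Phi^\sharp_{D_\G}$ now glue into a global homeomorphism $\Phi^\sharp:M_{\G_{Q^\point}}\to M_{\F_{Q^\point}}$ over $Q$, holomorphic at the non-nodal singular points and restricting to $\phi^\sharp$ on the central fibre; it descends through the contractions to a witness $\Phi$, excellent by Definition~\ref{excellent} and with $\Phi(x,y,u_0)=(\phi(x,y),u_0)$. Finally, since $\Phi^\sharp$ conjugates $\G^\sharp_{Q^\point}$ to $\F^\sharp_{Q^\point}$ compatibly with the fibration and transports transversal factors, the holonomy of $\G^\sharp_{Q^\point}$ along each $D_\G$ is conjugate to that of $\F^\sharp_{Q^\point}$ along $D_\F$, hence, by equisingularity of $\F_{Q^\point}$ and Definition~\ref{SL-def}, to the constant one; so $\G_{Q^\point}$ is equisingular, completing (i). I expect the genuine obstacle to be precisely this analytic step --- verifying that the transported data $g_{D_\G D'_\G}$ glue to a bona fide complex manifold contracting to a \emph{holomorphic} deformation --- rather than the topological bookkeeping.
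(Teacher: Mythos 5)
Your proposal is correct and follows essentially the same route as the paper: fix a good trivializing system $(\Psi_D)$ for $\F_{Q^\point}$ via Theorem~\ref{psiD}, transport it through $\phi^\sharp_{Q^\point}$ to get biholomorphic gluing cocycles near $\mc E_\G$, glue into a manifold over $Q^\point$ carrying a foliation, contract the exceptional divisor to obtain the holomorphic deformation $\G_{Q^\point}$ together with the excellent witness $\Phi$, and dispose of well-definedness, $(\star)$ and bijectivity by the same formal composition/inversion arguments (including $\mr{id}^\ast=\mr{id}$ and specializing (\ref{fonctsigmaphi}) to $\psi=\phi^{-1}$). The only cosmetic difference is the justification of the contraction step, where you invoke Grauert's criterion fibrewise while the paper appeals to the topological characterization of exceptional divisors from \cite{M}.
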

\begin{proof}
Under the hypothesis of the theorem, let us consider a class $\mf c\in \mr{Def}_{\mc {F}}^{Q^\point}$ and an equisingular  deformation 
$\F_{Q^\point}$ of $\F$ in $\mf c$. 
In a first step we will construct an equisingular deformation $\G_{Q^\point}$ of $\G$ and a \Cex-homeomorphism
$\Phi$ satisfying $\Phi(\G_{Q^\point})=\F_{Q^\point}$, such that $\Phi\circ\iota=\iota\circ\phi$, with $\iota:\C^2\hookrightarrow \C^2\times Q$, $\iota(x,y):=(x,y,u_0)$. Then in a second step we will verify that the class $[{\G}_{Q^\point}] \in \mr{Def}_{\G}^{Q^\point}$ does not depend on the choice of the deformation ${\F}_{Q^\point}$ in $\mf c$. Finally in a third step we check that the map $\phi^\ast  $ that associate to each class $\mf c=[\F_{Q^\point}]\in \mr{Def}_{\mc {F}}^{Q^\point}$ the class of the deformation $\G_{Q^\point}$ defined  in the first step, fulfills the property $(\star)$ and the functorial relation.\\

\textit{Step 1. } We again  denote by $\iota^\sharp$ 
the lifting (\ref{imcanlifted}) of $\iota$ 
through the reduction and equireduction maps $E_\F$  and $E_{\F_{Q^\point}}$,   by $j^\sharp:M_\F\hookrightarrow M_\F\times Q$ the lifting of $\iota$ through 
through $E_\F$ and $E_{\F^{\mr{ct}}_{Q^\point}}$,  that is  $j^\sharp(m):=(m,u_0)$, and finally by
\[ 
\phi^\sharp:(M_\G,\mc E_\G)\to(M_\F,\mc E_\F)
\,,\quad
E_\F\circ\phi^\sharp=\phi\circ E_\G, 
\]
the lifting of $\phi$ through $E_\F$ and the reduction map $E_\G:(M_\G,\mc E_\G)\to(\C^2,0)$ of $\G$. The following homeomorphism
\begin{equation*}\label{phiQlift}
\phi^\sharp_{Q^\point} : 
 (M_\G\times Q, \mc E_{\G}\times \{u_0\}) 
\longrightarrow
(M_\F\times Q, \mc E_{\F}\times \{u_0\})\,, \quad
(m,u)\mapsto (\phi^\sharp(m),u)\,,
\end{equation*}
is excellent and sends the reduced constant foliation $\G_{Q^\point}^{\mr{ct}\,\sharp}$ over $Q^\point$ with special fiber $\G^\sharp$, to the constant foliation $\F_{Q^\point}^{\mr{ct}\,\sharp}$.
According to Theorem~\ref{psiD}, let us fix a good trivializing system for $\F_{Q^\point}$
\[
\Psi_D : 
(M_{\F_{Q^\point}}, \iota^\sharp(D)) \iso (M_\F\times Q,D\times \{u_0\})\,,\quad
\Psi_D(\F_{Q^\point}^\sharp)=\F_{Q^\point}^{\mr{ct}\,\sharp}
\,,
\quad
\Psi_D\circ \iota^\sharp=j^\sharp\,,
\]
 indexed by the irreducible components $D$  of $\mc E_\F$.  
At the intersection points
$ \{s_{DD'}\}:= (D\cap D')\times \{u_0\}$, $D\cap D'\neq \emptyset$, 
the \emph{cocycles} 
\begin{equation}\label{cocyclerecolmnt}
\Phi_{DD'}:= (\phi^\sharp_{Q^\point})^{-1}\circ \Psi_{D}\circ\Psi_{D'}^{-1}\circ \phi_{Q\point}^{\sharp} : (M_\G\times Q, s_{DD'}) \iso (M_\G\times Q, s_{DD'})
\end{equation}
are germs of biholomorphisms over $Q^\point$ fulfilling the properties
\begin{equation*}\label{relgluing}
\Phi_{DD'}(\G_{Q^\point}^{\mr{ct}\,\sharp})=\G_{Q^\point}^{\mr{ct}\,\sharp}\,,
\quad
\Phi_{DD'}\circ j^\sharp=j^\sharp\,.
\end{equation*}
Indeed according to (\ref{locholexcept}) and (\ref{collagenoeuddic}) in Theorem \ref{psiD}, if the intersection point $D\cap D'$ is not a nodal singular point of $\F^\sharp$, the germs of $\phi^{\sharp}_{Q^\point}$ at the point $s_{DD'}$ and of  $\Psi_{D'}$ at $\Psi_D^{-1}(s_{DD'})$ are holomorphic; otherwise, at $\Psi_D^{-1}(s_{DD'})$ the germs $\Psi_D$ and  $\Psi_{D'}$ coincide and $\Phi_{DD'}$ is the identity map. 

Let us consider the manifold  germ
\[
( N,{\mc E}'_\G) := 
\left.\sqcup_D (M_\G\times Q, D\times\{u_0\})\right/
(\Phi_{DD'})\,,\quad
\theta : ( N, {\mc E}'_\G)\to Q^\point\,,
\]
obtained by gluing neighborhoods  in $M_\G\times Q$ of the irreducible components $j^\sharp(D)$ using these cocycles, and endowed with the germ of  submersion 
$\theta$  obtained by gluing the germs of the canonical projection $\mr{pr}_Q:(M_\G\times Q, D\times \{u_0\})\to Q$. Since  $\Phi_{DD'}$ are the identity on the special fiber $M_{\G}\times \{u_0\}$,  $j^\sharp$ induces an embedding 
\[ \Delta :(M_\G,\mc E_\G) \hookrightarrow (N, {\mc E}'_{\G})
\]
that is a biholomorphism germ onto  $(\theta^{-1}(u_0),\mc E'_{\G})$. The gluing maps  leaving  invariant the constant foliation $\G^{\mr{ct}\,\sharp}_{Q^\point}$, they define in the ambient space $(N, {\mc E}'_\G)$ a foliation germ   $\G'_{Q^\point}$  tangent to the fibers of $\theta$,  that coincides with $\Delta(\G^\sharp)$ on $\theta^{-1}(u_0)$. Thanks to the relations 
$\Psi_{D'}^{-1}\circ\phi_{Q^\point}^{\sharp}\circ\Phi_{DD'}^{-1}=\Psi_{D}^{-1}\circ\phi_{Q^\point}^{\sharp}$ given by (\ref{cocyclerecolmnt}), the collection of homeomorphisms
\[\Phi_D:=\Psi_D^{-1}\circ\phi_{Q^\point}^\sharp:
(M_\G\times Q,j^\sharp(D))\to(M_{\F_{Q^\point}},\iota^\sharp(D)),\quad \Phi_D(\G_{Q^\point}^{\mr{ct}\,\sharp})=\F_{Q^\point}^\sharp,\]
glue as a homeomorphism over $Q^\point$
\[\Phi' : (N,\mc E'_{\G})\iso (M_{\F_{Q^\point}}, \iota^\sharp(\mc E_{\F}))\,,
\quad
\mr{pr}_Q\circ \Phi'=\theta\,,
\]
that send the leaves of $\G'_{Q^\point}$ to that of $\F^\sharp_{Q^\point}$. As the maps $\phi^\sharp$ and $\Psi_D$, this map is excellent in the meaning that it is  also holomorphic at the  non-nodal points of the corresponding foliation. It satisfies:
\begin{equation}\label{Phi'Delta}
\Phi'\circ \Delta=\iota^\sharp \circ \phi^\sharp\,;
\end{equation}
On the other hand, the preimage by  $\Phi'$ of the exceptional divisor $\mc E_{\F_{Q^\point}}:=E_{\F_{Q^\point}}^{-1}(\{0\}\times Q)$ is an hypersurface $\mc E_Q$ which is also  exceptional  in $N$ (see \cite[p. 306]{M}): there is a holomorphic map  germ 
\[ C:(N,\mc E'_\G)\to (\C^2\times Q,(0,u_0))\quad 
\text{such that}\quad 
\mr{pr}_Q\circ C=\theta \,,\quad
C(\mc E_Q)=\{0\}\times Q\,,
\]
that is a biholomorphism from complementary of $\mc E_Q$ to the complementary of $\{0\}\times Q$. This last property allows to define a germ of holomorphic foliation $\G_{Q^\point}$ on $(\C^2\times Q,(0,u_0))$, that is the direct image  of $\G'_{Q^\point}$ by $C$. 
Up to perform an additional biholomorphism we also require that $\Delta$ contracts  to the embedding $\iota$, i.e. $C\circ \Delta=\iota\circ E_\G$, so that 
\[ 
\G_{Q^\point|\C^2\times \{0\}} = C(\G'_{Q^\point|\theta^{-1}(u_0)})=C(\Delta(\G^\sharp))=\iota(E_{\G}(\G^\sharp))=\iota(\G).
 \]
In other words, $\G_{Q^\point}$ is a deformation of $\G$. By construction this deformation is equisingular and more precisely there is a biholomorphism germ 
\[ 
F: (N,\mc E'_\G)\iso ( M_{\G_{Q^\point}},\mc{ E}'_{u_0})\,,\quad
 \]
such that 
\begin{equation}\label{FDelta}
E_{\G_{Q^\point}}\circ F=C\,,\quad
F({{\G}'_{Q^\point}})=\G_{Q^\point}^\sharp\,,
\quad
F\circ\Delta =k^\sharp\,,
\end{equation}
$k^\sharp$ being the lifting of $\iota$ through the reduction map $E_\G$   and the equireduction map $E_{ \G_{Q^\point}}:(M_{\G_{Q^\point}},\mc{ E}'_{u_0}) \to (\C^2\times Q,(0,u_0))$ of the deformation $\G_{Q^\point}$,
\[\xymatrix{&(N,\mc E'_\G)\ar[dr]^C\ar[dd]_F&\\
(M_\G,\mc E_\G)\ar[ur]^{\Delta}\ar[dr]^{k^\sharp}&&(\C^2\times Q,(0,u_0))\;.\\
&(M_{\G_{Q^\point}}\ar[ur]_{E_{\G_{Q^\point}}},\mc E_{u_0}')&
}
\]
Now let us notice that since $C(\mc E_Q)=\{0\}\times Q$, the homeomorphism  germ $\Phi'$ contracts through $C$ and $E_{\F_{Q^\point}}$to a germ of map
\[ 
\Phi:(\C^2\times Q,(0,u_0))\to (\C^2 \times Q,(0,u_0))\,,\quad E_{\F_{Q^\point}}\circ\Phi'=\Phi\circ C\,, \]
 that by construction is a germ of homeomorphism satisfying:
  \[ 
  \mr{pr}_Q\circ \Phi=\mr{pr}_Q\,,\quad
  \Phi(\G_Q)=\F_Q\,,\quad
  \Phi\circ \iota=\iota\circ \phi\,.
  \]
  To achieve the Step 1, it remains to check that $\Phi$ is excellent. 
Indeed, $\Phi'\circ F^{-1}$ is a 
lifting of $\Phi$,
\[
\Phi'\circ F^{-1}:(M_{ \G_{Q^\point}},\mc E'_{u_0})\to (M_{\F_{Q^\point}},\mc E_{u_0})\,,\quad 
E_{\F_{Q^\point}}\circ (\Phi'\circ F^{-1})=\Phi\circ C\circ F^{-1}=\Phi\circ E_{\G_{Q^\point}}\,.
\]
Since $\Phi'$ is excellent we deduce that $\Phi$ is also  excellent.\\

  \textit{Step 2. } Notice first that up to $\mc C^{\mr{ex}}$-conjugacy the deformation $\G_{Q^\point}$ obtained by this construction does not depend on the choice of the good trivializing system $(\Psi_D)_D$. If $(\br{N},\br{\mc E}_\G')$, 
 $\br{\G}_{Q^\point}' $ and $\br{\G}_{Q^\point}$ are similarly obtained from another
 good trivializing system   $(\br{\Psi}_D)_D$  then the homeomorphisms $\Psi_D\circ\br{\Psi}_D^{-1}:(M_\F\times Q, D\times\{u_0\})\to(M_\F\times Q, D\times\{u_0\})$ glue to an excellent homeomorphism that conjugates $\br{\G}'_{Q^\point}$ and $\G_{Q^\point}'$ and contracts to an excellent conjugacy between the deformations $\br{\G}_{Q^\point}$ and $\G_{Q^\point}$ of $\G$.

Now let us show that $[\G_{Q^\point}]$ does not depend on the choice of the representative $\F_{Q^\point}$ of $\mf c\in\mr{Def}_\F^{Q^\point}$.
Let  ${\br{\mc F}}_{Q^{\point}}$ be another representative of  $\mf c$,
$\br{\G}_{Q^\point}$ a deformation of $\G$ and  $\br{\Phi}:(\C^2\times Q,(0,u_0))\to (\C^2\times Q,(0,u_0))$ a germ of excellent homeomorphism such that 
$\br{\Phi}({\br{\G}}_{Q^\point})={\br{\F}}_{Q^\point}$, $\mr{pr}_Q\circ\br{\Phi}=\mr{pr}_Q$ and  $\br{\Phi}\circ{\iota} ={\iota}\circ \phi$. 
Then $\br{\G}_{Q^\point}$ is $\mc C^\mr{ex}$-conjugated to $\G_{Q^\point}$. Indeed,
if $\xi$ is an \Cex-homeomorphism such that $\xi({\br{\mc F}}_{Q^{\point}})={\mc F}_{Q^{\point}}$ and  $\xi\circ{\iota}=\iota$, then the \Cex-homeomorphism $\Upsilon:=\Phi^{-1}\circ\xi\circ\br{\Phi}$  trivially satisfies $\Upsilon({\br{\mc G}}_{Q^{\point}})={{\mc G}}_{Q^{\point}}$ and $\Upsilon\circ {\iota}=\iota$.
This implies that the map $\phi^*$ is well-defined.\\

\textit{Step 3. } The direct implication of $(\star)$ is clear. To see the converse, we apply the previous argument to the case $\br{\F}_{Q^\point}=\F_{Q^\point}$.
The functorial relation follows directly from $(\star)$ and if $\phi=\mr{id}_{\C^2}$ then $\phi^*$ is the identity map on $\mr{Def}^{Q^\point}_\F$. This implies that $\phi^*$ is bijective and $(\phi^*)^{-1}=(\phi^{-1})^*$.
\end{proof}

We check that for any holomorphic map germ $\mu : P^\point\to Q^\point$ and any  deformation $ \G_{Q^\point}\in \phi^\ast([\F_{Q^\point}])$  we have:
\begin{equation*}
\phi^\ast([ \mu^\ast \F_{Q^\point}  ])=[ \mu^\ast \G_{Q^\point}]\,,
\end{equation*}
i.e. the following diagram is commutative:
\begin{equation}\label{comsigmalambda}
\xymatrix{\mr{Def}^{Q^\point}_\F\ar[r]^{\phi^*}\ar[d]^{\mu^*}&\mr{Def}^{Q^\point}_\G\ar[d]^{\mu^*}\\ \mr{Def}^{P^\point}_\F\ar[r]^{\phi^*}&\mr{Def}_\G^{P^\point}\,.}
\end{equation}

\begin{lema}\label{3.13}
Under the assumptions of Theorem~\ref{famconsigma},
if $\mu:P^\point\to Q^\point$ and $\lambda:R^\point\to P^\point$ are holomorphic maps between germs of manifolds, $\phi:\G\to\F$ and $\psi:\mc K\to\G$ are $\mc C^{\mr{ex}}$-conjugacies 
and if we write 
\[
(\mu, \phi)^\ast :=\phi^\ast \circ\mu^\ast: 
\mr{Def}^{Q^\point}_{\F}\to \mr{Def}^{P\point}_{\G}\,,
\quad
\]
then we have
$(\lambda, \psi)^\ast\circ(\mu, \phi)^\ast=(\mu\circ\lambda, \phi\circ \psi)^\ast$.
\end{lema}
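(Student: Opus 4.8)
The asserted identity is a purely formal consequence of three facts already at our disposal: the contravariant functoriality of the parameter pull-back (Remark~\ref{pullbackmorphism}(b)), the contravariant functoriality of the conjugacy pull-back (relation~(\ref{fonctsigmaphi}) of Theorem~\ref{famconsigma}), and the commutation between these two kinds of pull-back recorded in the square~(\ref{comsigmalambda}). The plan is therefore to unfold the definition $(\mu,\phi)^\ast=\phi^\ast\circ\mu^\ast$ on both sides of the claimed equality and to reorganize the resulting composite of four maps by a short diagram chase. First I would write, straight from the definitions,
\[
(\lambda,\psi)^\ast\circ(\mu,\phi)^\ast
=\psi^\ast\circ\lambda^\ast\circ\phi^\ast\circ\mu^\ast
\colon\;
\mr{Def}^{Q^\point}_{\F}\longrightarrow\mr{Def}^{R^\point}_{\mc K}\,,
\]
keeping track that the intermediate pointed sets traversed are $\mr{Def}^{P^\point}_{\F}$, then $\mr{Def}^{P^\point}_{\G}$, then $\mr{Def}^{R^\point}_{\G}$.

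The only non-formal ingredient is the middle commutation $\lambda^\ast\circ\phi^\ast=\phi^\ast\circ\lambda^\ast$, which is precisely the commutative square~(\ref{comsigmalambda}) applied to the holomorphic map germ $\lambda:R^\point\to P^\point$ and the conjugacy $\phi:\G\to\F$, in the role played there by $\mu$. Substituting this into the composite above yields
\[
\psi^\ast\circ\lambda^\ast\circ\phi^\ast\circ\mu^\ast
=\psi^\ast\circ\phi^\ast\circ\lambda^\ast\circ\mu^\ast
=\big(\psi^\ast\circ\phi^\ast\big)\circ\big(\lambda^\ast\circ\mu^\ast\big)\,.
\]
I would then recognize each grouped factor: by relation~(\ref{fonctsigmaphi}) of Theorem~\ref{famconsigma}, applied to the conjugacies $\psi:\mc K\to\G$ and $\phi:\G\to\F$, one has $\psi^\ast\circ\phi^\ast=(\phi\circ\psi)^\ast$; and by the contravariant functoriality of the parameter pull-back of Remark~\ref{pullbackmorphism}(b), applied to $\lambda:R^\point\to P^\point$ and $\mu:P^\point\to Q^\point$, one has $\lambda^\ast\circ\mu^\ast=(\mu\circ\lambda)^\ast$. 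Hence
\[
(\lambda,\psi)^\ast\circ(\mu,\phi)^\ast
=(\phi\circ\psi)^\ast\circ(\mu\circ\lambda)^\ast
=(\mu\circ\lambda,\,\phi\circ\psi)^\ast\,,
\]
the last equality being once more the definition of the notation $(\cdot,\cdot)^\ast$.

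There is essentially no obstacle here beyond bookkeeping; the substantive content lies entirely in the three cited results. The one point I would check carefully is that these results are invoked with the correct parameter spaces: namely that~(\ref{fonctsigmaphi}) is used over $R^\point$ rather than over the fixed space $Q^\point$ appearing in its statement, and that the square~(\ref{comsigmalambda}) is used for the pair $(\lambda,\phi)$ rather than for $(\mu,\phi)$. Both are legitimate without further argument, since the construction of $\phi^\ast$ in Theorem~\ref{famconsigma} is carried out for an arbitrary germ of manifold as parameter space and is natural in it, so the functorial relation and the commutation square hold verbatim over any such space.
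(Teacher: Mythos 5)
Your proposal is correct and follows essentially the same route as the paper: the paper's proof is precisely the commutativity of a diagram whose constituent squares and triangles are the three identities you invoke, namely relation~(\ref{fonctsigmaphi}), the square~(\ref{comsigmalambda}) applied with $\lambda$ in place of $\mu$, and the functoriality of the parameter pull-back (which the paper tracks via Remark~\ref{pullbakconjugacy} together with Remark~\ref{pullbackmorphism}). Your equational chase is just the unfolded form of that diagram, so there is nothing to add.
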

\begin{proof}
It suffices to check that the following diagram is commutative using  (\ref{fonctsigmaphi}),  diagram~(\ref{comsigmalambda}) and Remark \ref{pullbakconjugacy},
\begin{equation}\label{def-fun}
\xymatrix{
\Def_{\F}^{Q^\point}\ar[d]_{\mu^*}\ar@/_3pc/[dd]_{(\mu\circ\lambda)^*}\ar[rd]^{(\mu, \,\phi)^\ast}\ar@/^4pc/[rrdd]^{(\mu\circ\lambda,\,\phi\circ\psi)^\ast}& &\\ 
\Def_{\F}^{P^\point}\ar[d]_{\lambda^*}\ar[r]^{\phi^\ast}& 
\Def_{\G}^{P^\point}\ar[d]_{\lambda^*}\ar[rd]^{(\lambda,\, \psi)^\ast}& \\
\Def_{\F}^{R^\point}\ar[r]^{\phi^\ast}\ar@/_2pc/[rr]_{(\phi\circ\psi)^\ast}&
\Def_{\G}^{R^\point}\ar[r]^{\psi^\ast}&\Def_{\mc K}^{R^\point}}
\end{equation}
\end{proof}

Let us denote now by 
\begin{itemize}
\item $\fol$ the category  whose objects are the germs of foliations on $(\C^2,0)$  which are \emph{generalized curves} and whose morphisms $\phi : \G\to \F$ are the germs of \emph{\Cex-conjugacies}, $\phi(\G)=\F$;
\item $\mbf{Set}^\point$ the category of \emph{pointed sets} whose objects are the pairs $(A,a)$ formed by  a set  and a point of this set, the morphisms $F : (A,a)\to (B,b)$ being  maps  from $A$ to $B$ such that   $F(a)=b$;
\item $\mbf{Man}^{\point}$  the subcategory of  $\mbf{Set}^\point$, consisting of pairs $(A,a)$ with $A$ endowed with a complex manifold structure, the morphisms  being 
holomorphic pointed sets morphisms
$\mu:P^\point\to Q^\point$.
\end{itemize}

\begin{defin}\label{def411} The \emph{deformation functor}  is the contravariant
functor
\[
\Def :\mbf{Man^\point}\times\fol\to \mbf{Set}^\point\,,
\quad
(Q^\point, \F)\mapsto \Def_{\F}^{Q^\point}
\]
defined by associating to any morphism $(\mu, \phi) :(P^\point, \G)\to (Q^\point, \F), $ the \emph{pull-back map} 
\[
(\mu,\phi)^\ast :   \Def_{\F}^{Q^\point}\to \Def_{\G}^{P^\point}
 \,,\quad
 [\F_{Q^\point}]\mapsto
  \phi^\ast(\mu^\ast([\F_{Q^\point}  ]))= \phi^\ast([ \mu^\ast \F_{Q^\point}  ])\,.
\] 
The fact that $\Def$ is a functor follows from Lemma~\ref{3.13}.
\end{defin}
As  a direct consequence of Theorem \ref{famconsigma},     
if $[\G_{P^\point}]=(\mu, \phi)^\ast([\F_{Q^\point}])$ with $\mu : P^\point:=(P,t_0)\to Q^\point$,  then for $t\in P$ sufficiently close to $t_0$ the foliations $\G_{P^\point}|_{\C^2\times\{t\}}$ and  $\F_{Q^\point}|_{\C^2\times\{\mu(t)\}}$ are \Cex-conjugated.

\color{black}

 \section{Group-graphs of automorphisms and  transversal symmetries}\label{sectAutSymGrGr}

 \subsection{Group-graph of \Cex-automorphisms}\label{sec:aut}  Given a  foliation 
 $\F$ and  a germ of manifold $Q^\point=(Q,u_0)$,  let us consider   the following  sheaf $\un{\mr{Aut}}_{\F}^{Q^\point}$ over  the exceptional divisor $\mc E_{\mc \F}$ of the reduction of $\F$: if  $U$ is an open subset of $\mc E_\F$, then $\underline{\aut}^{Q^\point}_{\F}(U) $ is the group of germs along $U\times \{u_0\}$ of \Cex-homeomorphisms over $Q^\point$
\[\Phi:(M_{\F}\times Q, U\times \{ u_0\})\longrightarrow (M_{\F}\times Q,U\times \{ u_0\})
\]
leaving invariant the constant  family $\F^{\sharp\, \mr{ct}}_{Q^\point}$ with fiber the reduced foliation $\F^\sharp$ and moreover being  the identity map on the special fiber $M_\F\times \{u_0\}$. The same definition works when $U$ is   not open in $\mc E_\F$ and in that case $\underline{\aut}^{Q^\point}_{\F}(U) $ coincides with the inductive limit of $\underline{\aut}^{Q^\point}_{\F}(V) $ for $V$ open subset of $\mc E_\F$ containing $U$, cf. Section~\ref{gg-sh}.
The property  ``excellent'' means here that at  each  point $m$ in an invariant component of $\mc E_\F$ the germ $\Phi_m$ 
of  $\Phi$ is a  holomorphic germ if $m\in \mr{Sing}(\mc E_\F)\cup\mr{Sing}(\F^\sharp)$,  except perhaps if  $m$  is a nodal singularity of $\F^\sharp$ belonging to $\mr{Sing}(\mc E_\F)$, and that $\Phi_m$ is transversely holomorphic if $m$ is a regular point of $\F^\sharp$. 
According to  \cite{CamachoRosas} if $D$ is an invariant component of $\mc E_\F$ and if one saturates by  $\F^{\sharp}$  a neighborhood of $\mr{Sing(\F^\sharp})\cap D$, one obtains  a set that contains all the regular points of $\F^\sharp$ in $D$. Therefore when $U$ contains $D$, the  above transversal holomorphy property  is automatically induced by the holomorphy at the singular points; for this reason we did not need  to require it in  Definition~\ref{excellent} of \Cex-conjugacy.

\begin{defin}\label{defAutQ}
We call \emph{group-graph  of automorphisms over $Q^\point$ of $\F$} and we denote by $\mr{Aut}_{\F}^{Q^\point}$  the following group-graph over the \emph{dual graph $\A_\F$ of $\mc E_{\mc \F}$}:
\begin{enumerate}[(i)]
\item\label{DnD}  $\mr{Aut}_{\F}^{Q^\point}(D)=\un{\mr{Aut}}_{\F}^{Q^\point}(D)$, if $D\in \Ve_{\A_\F}$ is invariant;
\item\label{Ddicr} $\mr{Aut}_{\F}^{Q^\point}(D)=\{I_D\}$, if $D\in \Ve_{\A_\F}$  is  dicritical\,;
\item\label{endnn} $\mr{Aut}_{\F}^{Q^\point}(\msf e)$ is the stalk $\un{\mr{Aut}}_{\F}^{Q^\point}(s)$  of the sheaf $\un{\mr{Aut}}_{\F}^{Q^\point}$  at the point $s$ defined by  $\msf e=\langle D,D'\rangle$,  $D\cap D'=\{s\}$, if  $s$ is neither a regular point nor a nodal singular point  of $\F^\sharp$;
\item\label{trivialedge} $\mr{Aut}_{\F}^{Q^\point}(\msf e)=\{I_{\msf e}\}$, if $\msf e=\langle D,D'\rangle$,  $D\cap D'=\{s\}$ and $s$ is either a regular point or a nodal singular point  of $\F^\sharp$;
\item the restriction map $\rho^{\msf e}_{D} : \mr{Aut}_{\F}^{Q^\point}(D)\to\mr{Aut}_{\F}^{Q^\point}(\msf e)$ is the restriction map of the sheaf  $\un{\mr{Aut}}_{\F}^{Q^\point}$ when $D$ is invariant and $\msf e$ fulfills condition (\ref{endnn}); $\rho^{\msf e}_{D}$ is  the trivial map $\mr{Aut}_{\F}^{Q^\point}(D)\to\{I_{\msf e}\}$ otherwise;
\end{enumerate}
where $I_D$, resp. $I_{\msf e}$, denotes the germ along $D\times\{u_0\}$, resp. at the point $(s,u_0)$, of the identity map  $\mr{id}_{M_\F\times Q}$.
\end{defin}

\begin{obs}\label{suppAut} 
Notice that restricted  to its support, see (\ref{defsuppgrgr}), $\mr{Aut}_{\F}^{Q^\point}$ coincides with the group-graph associated to the sheaf $\un{\mr{Aut}}_{\F}^{Q^\point}$  defined in Section \ref{gg-sh}.
The elements of $\Ve_{\A_{\F}}\cup\Ed_{\A_{\F}}$ not belonging to this support   are exactly the elements given by (\ref{Ddicr}) and (\ref{trivialedge}): the vertices that are dicritical components of $\mc E_\F$,  the edges $\langle D,D'\rangle$ with $D$ or $D'$ dicritical and the edges $\langle D,D'\rangle$ for witch  $\F^\sharp$ has a nodal singularity at the  point $D\cap D'$.  Clearly $\mr{supp}({\mr{Aut}}_{\F}^{Q^\point})$  is a sub-graph of $\mc E_\F$ called \emph{cut-graph of $\F$}.  We denote by 
$\mr{supp}({\mr{Aut}}_{\F}^{Q^\point})=
\bigsqcup_{\alpha\in \mc A} \A_{\F}^\alpha$  its decomposition  into connected components which we call  \emph{cut-components of $\A_\F$}. We have:
\begin{equation}\label{h1decSoDir}
H^1(\A_\F,{\aut}^{Q^\point}_{\F})=
\prod_{\alpha\in\mc A} H^1(\A_{\F}^\alpha,{\aut}^{Q^\point}_{\F})\,.
\end{equation}
This decomposition,  produced by the points (\ref{Ddicr}) and (\ref{trivialedge}) and Remark~\ref{aretes-support} in the above definition, may seem artificial. However the cocycles $(\Psi_{D}\circ\Psi_{D'}^{-1})$ that we will consider are constructed using  good trivializing systems $(\Psi_D)_D$ provided by Theorem~\ref{psiD}. Consequently the property (\ref{collagenoeuddic}) of 
that theorem
guarantees that  $\Psi_{D}\circ\Psi_{D'}^{-1}$ is trivial when $D$ or $D'$ is dicritical or when $\F^\sharp$ has a nodal singularity at $D\cap D'$.
\end{obs}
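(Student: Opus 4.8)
The plan is to obtain (\ref{h1decSoDir}) from two elementary properties of group-graph cohomology as defined in Section~\ref{cohomology}: its invariance under deletion of edges lying outside the support, and its multiplicativity over connected components. Neither property is deep, and the actual content of the statement is simply that the points (\ref{Ddicr}) and (\ref{trivialedge}) of Definition~\ref{defAutQ} force the support to split $\A_\F$ into the mutually independent pieces $\A_\F^\alpha$.

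First I would introduce the subgraph $\A'\subset\A_\F$ having the same vertices as $\A_\F$ but retaining only those edges that lie in $\mr{supp}(\mr{Aut}_\F^{Q^\point})$. Every deleted edge is, by construction, one of those described in (\ref{trivialedge}) (an edge incident to a dicritical component, or one whose intersection point is a regular or nodal point of $\F^\sharp$), hence outside the support. Remark~\ref{aretes-support}, applied to the inclusion $j:\A'\hookrightarrow\A_\F$, then provides the bijection of pointed sets $H^1(\A_\F,\mr{Aut}_\F^{Q^\point})\iso H^1(\A',j^\ast\mr{Aut}_\F^{Q^\point})$ induced by the canonical morphism $\imath_j$.

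Next I would analyse the connected components of $\A'$. Since the support is a subgraph of $\mc E_\F$ (every support edge $\langle D,D'\rangle$ is of type (\ref{endnn}), so both $D$ and $D'$ are invariant and lie in the support), the support-edges of $\A'$ join only support-vertices; consequently $\A'$ is the disjoint union of the cut-components $\A_\F^\alpha$ together with the isolated vertices coming from the dicritical components. From the defining formulas for $Z^1$, $C^0$ and the action $\star_G$, each of these three objects splits as a product indexed by the connected components of $\A'$, whence $H^1(\A',\cdot)=\prod_{C}H^1(C,\cdot)$ as pointed sets. A graph reduced to a single vertex has no edges, so there $Z^1$ is the empty product and $H^1$ is the trivial pointed set $\{1\}$; thus the isolated dicritical vertices (and any single-vertex cut-component) contribute only trivial factors, and discarding them identifies $\prod_{C}H^1(C,\cdot)$ with $\prod_{\alpha\in\mc A}H^1(\A_\F^\alpha,\mr{Aut}_\F^{Q^\point})$, which is (\ref{h1decSoDir}). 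No serious obstacle arises; the only step demanding genuine attention — though it is short — is the verification that the support is a subgraph, since it is precisely what guarantees that the $\A_\F^\alpha$ are the edge-bearing components of $\A'$ and that no cocycle couples two distinct cut-components.
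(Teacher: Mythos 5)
Your proof is correct and follows exactly the route the paper intends: Remark~\ref{aretes-support} to delete the edges made trivial by points (\ref{Ddicr}) and (\ref{trivialedge}) of Definition~\ref{defAutQ}, then the componentwise factorization of $C^0$, $Z^1$ and the action $\star_G$, with isolated vertices contributing trivial factors. Your added verification that the support is a subgraph (support edges are non-nodal singular corners, hence have invariant endpoints with non-trivial groups) is precisely the point the paper leaves implicit, and it is the right thing to check.
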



Now let us consider    a germ of \Cex-homeomorphism $\phi: (\C^2,0)\iso(\C^2,0)$ which  conjugates two   foliations $\G$ and  $\F$, $\phi(\G)=\F$, and the corresponding  \Cex-conjugacy 
\begin{equation*}\label{phiQ}
\phi^\sharp_{Q^\point} :(M_\G\times Q,\mc E_\G\times \{u_0\})
\iso
(M_\F\times Q,\mc E_\F\times \{u_0\}) \,,
\quad
(p,u)\mapsto(\phi^\sharp(p),u)\,,
\end{equation*}
between the contant families  $\G_{Q^\point}^{\mr{ct}\,\sharp}$ and $\F_{Q^\point}^{\mr{ct}\,\sharp}$. 
Let us denote by $\phi_{\ltE}:\mc E_\G\to\mc E_\F$ the restriction of  $\phi^{\sharp}$ to the exceptional divisors. 
If $U\subset \mc E_\G$ is an open set and   $\Phi$ belongs to $\phi_{\ltE}^{-1}\underline{\aut}^{Q^\point}_{\F}(U)=\underline{\aut}^{Q^\point}_{\F}(\phi_{\ltE}(U)) $, then $\phi_{Q^\point}^{\sharp\, -1}\circ\Phi\circ\phi_{Q^\point}^{\sharp}$   belongs to 
$\underline{\aut}^{Q^\point}_{\G}(U)$. 
As described in Section~\ref{gg-sh},
the homeomorphism $\phi_{\ltE}$ induces an isomorphism between the dual graphs of $\mc E_\G$ and $\mc E_\F$
\begin{equation}\label{Aphi}
\A_\phi : \A_\G\to \A_\F\,,\quad
D\mapsto \phi_{\ltE}(D)\,,\quad
\langle D,D'\rangle\mapsto \langle \phi_{\ltE}(D),\phi_{\ltE}(D')\rangle\,. 
\end{equation}
We thus obtain the following isomorphism of group-graphs over $\A_\phi$:
\begin{equation*}\label{transportAut}
\phi^\ast: 
{\aut}^{Q^\point}_{\F}\to {\aut}^{Q^\point}_{\G}\,,
\end{equation*}
\[
{\aut}^{Q^\point}_{\F}(\A_\phi(\star))\ni\Phi
\;\;\mapsto \;\;
\phi_{Q^\point}^{\sharp\, -1}\circ\Phi\circ\phi_{Q^\point}^{\sharp}
\in {\aut}^{Q^\point}_{\G}(\star) \,,
\quad
\star\in \Ve_{\A_\F}\cup\,\Ed_{\A_\F}\,.
\]
On the other hand let $\mu: P^\point\to Q^\point$ be  a holomorphic map  between germs of manifolds. The pull-back being a functor and, by definition, $\F_{Q^\point}^{\mr{ct}\,\sharp}$ being the pull-back by a constant map, it follows:
\[
\mu^*\F_{Q^\point}^{\mr{ct}\,\sharp}=\F_{P^\point}^{\mr{ct}\,\sharp}\quad\hbox{ and } \quad\mu^*\phi_{Q^\point}^\sharp=\phi_{P^\point}^\sharp\,.
\]
Thus we  have 
the equality $\mu^\ast(
\phi_{Q^\point}^{\sharp\, -1}\circ\Phi\circ\phi_{Q^\point}^{\sharp}
)=
\phi_{P^\point}^{\sharp\, -1}\circ\mu^*\Phi\circ\phi_{P^\point}^{\sharp}
$.  We finally obtain the following commutative diagram of group-graph morphisms

\begin{equation}\label{sq2}
\xymatrix{\aut_{\F}^{Q^\point}\ar[d]_{\mu^*}\ar[r]^{\phi^\ast}&\aut_{\G}^{Q^\point}\ar[d]^{\mu^*}\\ \aut_{\F}^{P^\point}\ar[r]^{\phi^\ast}&\;\aut_{\G}^{P^\point}\,.}
\end{equation}

Using the relations $\phi^\ast\circ\psi^\ast=(\psi\circ\phi)^\ast$
 and $(\mu\circ\lambda)^*=\lambda^*\circ\mu^*$ 
we deduce  as in (\ref{def-fun}) that the following assignments 
\[
(Q^\point, \F)\mapsto(\A_\F, \aut_{\F}^{Q^\point})\,,
\]
\begin{equation}\label{moraut}
\big( (\mu,\phi):(P^\point,\G)\to (Q^\point,\F)\big)
\;\mapsto 
\big((\mu, \phi)^\ast := 
\mu^\ast\circ \phi^\ast
:\aut_{\F}^{Q^\point}\to \aut_{\G}^{P^\point}
\big)\,,
\end{equation}
define a contravariant functor with values in the category $\grgr$ of group-graphs.
When restricted to generalized curves this functor is denoted by
\begin{equation}\label{functorAut}
\aut:\mbf{Man^\point}\times\fol\to\grgr\,.
\end{equation}

From now on $\F$ will be a {\sc{generalized curve}}.\\

For  any deformation
$\F_{Q^\point}$ of $\F$ over $Q^\point$, 
let us choose   a good trivializing system $(\Psi_{D})_{D\in\Ve_{\A_\F}}$   meaning that the properties (\ref{overQ})-(\ref{collagenoeuddic}) of Theorem~\ref{psiD} are satisfied. The family $(\Phi_{D,\msf e})_{D\in \msf e}$, defined by 

\begin{equation}\label{relco}
\Phi_{D,\msf e}=\Psi_D\circ\Psi_{D'}^{-1}\,,
\quad
\msf e=\langle D,D'\rangle\,,
\end{equation}
is an  element of $Z^1(\A_\F, \aut_{\F}^{Q^\point})$.

\begin{lema}\label{isonatDefH1}
The cohomology class $C(\F_{Q^\point})\in H^1(\A_\F, \aut_{\F}^{Q^\point})$ of the above cocycle $(\Phi_{D,\msf e})_{D\in \msf e}$ does not depend on the choice of a good trivializing system; moreover it only depends on the $\mc C^{\mr{ex}}$-class $[\F_{Q^\point}]\in \mr{Def}_{\F}^{Q^\point}$. 
\end{lema}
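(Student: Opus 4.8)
The plan is to prove the two assertions in turn, the second reducing to the first once the set-up is arranged correctly. Throughout I write $s=D\cap D'$ for the point attached to an edge $\msf e=\langle D,D'\rangle$, and I record at the outset that by property~(\ref{collagenoeuddic}) of Theorem~\ref{psiD} the cocycle $(\Phi_{D,\msf e})$ is already trivial on every edge and vertex lying outside $\mr{supp}(\aut_\F^{Q^\point})$ (dicritical components, the edges adjacent to them, and the edges sitting at nodal or regular intersection points). Thus, by Remark~\ref{suppAut}, all the computations below only have content on the cut-graph $\mr{supp}(\aut_\F^{Q^\point})$, where the two endpoints of each edge are invariant components.

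First I would establish independence of the good trivializing system. Let $(\Psi_D)_D$ and $(\Psi'_D)_D$ be two good trivializing systems for the same deformation $\F_{Q^\point}$, with cocycles $(\Phi_{D,\msf e})$ and $(\Phi'_{D,\msf e})$. For each invariant component $D$ set
\[
h_D:=\Psi_D\circ(\Psi'_D)^{-1}:(M_\F\times Q,\,D\times\{u_0\})\iso(M_\F\times Q,\,D\times\{u_0\})\,,
\]
and $h_D:=I_D$ for $D$ dicritical. Since $\Psi_D$ and $\Psi'_D$ are both maps over $Q^\point$ restricting to $j^\sharp$ over $u_0$, both conjugate $\F_{Q^\point}^\sharp$ to $\F_{Q^\point}^{\mr{ct}\,\sharp}$, and both are holomorphic (resp. transversely holomorphic) at the non-nodal singular (resp. regular) points, the composite $h_D$ inherits all these properties, so $h_D\in\aut_\F^{Q^\point}(D)$ and $(h_D)_D\in C^0(\A_\F,\aut_\F^{Q^\point})$. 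Taking germs at $s$, a direct computation gives for every edge $\msf e=\langle D,D'\rangle$ in the support
\[
\rho_D^{\msf e}(h_D)^{-1}\,\Phi_{D,\msf e}\,\rho_{D'}^{\msf e}(h_{D'})=\Psi'_D\,\Psi_D^{-1}\,\Psi_D\,\Psi_{D'}^{-1}\,\Psi_{D'}\,(\Psi'_{D'})^{-1}=\Psi'_D(\Psi'_{D'})^{-1}=\Phi'_{D,\msf e}\,,
\]
whereas on the remaining edges both sides are trivial. Hence $(h_D)\star_{\aut_\F^{Q^\point}}(\Phi_{D,\msf e})=(\Phi'_{D,\msf e})$, the two cocycles are cohomologous, and $C(\F_{Q^\point})\in H^1(\A_\F,\aut_\F^{Q^\point})$ is well defined.

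Next I would show that $C$ depends only on the $\mc C^{\mr{ex}}$-class. Let $\Phi:\F_{Q^\point}\to\F'_{Q^\point}$ be an excellent conjugacy (over $Q$, identity on the special fibre) and let $\Phi^\sharp:(M_{\F_{Q^\point}},\mc E_{u_0})\iso(M_{\F'_{Q^\point}},\mc E'_{u_0})$ be its lift through the reduction maps supplied by Definition~\ref{excellent}. From $\Phi\circ\iota=\iota$ one gets $\Phi^\sharp\circ\iota^\sharp=(\iota')^\sharp$, so $\Phi^\sharp$ restricts to the identity on the special fibre, induces the identity of $\A_\F$, carries $\iota^\sharp(D)$ to $(\iota')^\sharp(D)$, conjugates $\F_{Q^\point}^\sharp$ to $(\F'_{Q^\point})^\sharp$, and is excellent. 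Starting from any good trivializing system $(\Psi'_D)_D$ for $\F'_{Q^\point}$, I would verify that $\Psi_D:=\Psi'_D\circ\Phi^\sharp$ is a good trivializing system for $\F_{Q^\point}$: properties (\ref{overQ})--(\ref{conjfeuilletages}) of Theorem~\ref{psiD} are inherited from those of $\Psi'_D$ and $\Phi^\sharp$, while the gluing property (\ref{collagenoeuddic}) holds because $\Psi_D\Psi_{D'}^{-1}=\Psi'_D(\Psi'_{D'})^{-1}$ cancels the $\Phi^\sharp$ factor. Computing the cocycle attached to $(\Psi_D)_D$ then yields $\Phi_{D,\msf e}=\Psi'_D(\Psi'_{D'})^{-1}$, which is exactly the cocycle of $(\Psi'_D)_D$ for $\F'_{Q^\point}$. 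Since, by the first part, $C(\F_{Q^\point})$ does not depend on the chosen system, this gives $C(\F_{Q^\point})=C(\F'_{Q^\point})$.

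I expect the only delicate point to be the repeated verification that the composite germs genuinely belong to the group-graph $\aut_\F^{Q^\point}$: namely that they are excellent in the precise sense of Definition~\ref{excellent} (holomorphy at the non-nodal singular points and transversal holomorphy at the regular points, with the admissible failure at the nodal points of $\mr{Sing}(\mc E_{u_0})$), are maps over $Q^\point$, restrict to the identity on the special fibre, and leave $\F_{Q^\point}^{\mr{ct}\,\sharp}$ invariant. This is dealt with by invoking the defining properties of good trivializing systems together with the excellence of $\Phi^\sharp$, and by the bookkeeping of Remark~\ref{suppAut}: dicritical vertices and the edges outside the support carry trivial groups, so the coboundary relation is automatic there and the $0$-cochain $(h_D)$ is forced to equal the identity precisely where that is required.
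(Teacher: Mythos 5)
Your proof is correct and takes essentially the same route as the paper: independence of the trivializing system via the $0$-cochain $(h_D)=(\Psi_D\circ\Psi_D'^{-1})$ acting on the cocycle, and invariance under $\mc C^{\mr{ex}}$-conjugacy by composing a good trivializing system with the lift $\Phi^\sharp$ of the conjugacy, which leaves the cocycle $\Psi_D\circ\Psi_{D'}^{-1}$ unchanged. Your additional verifications (membership of $h_D$ in $\aut_{\F}^{Q^\point}(D)$ and the bookkeeping outside $\mr{supp}(\aut_\F^{Q^\point})$) simply make explicit what the paper dismisses with ``we easily verify''.
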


\begin{proof}
We check that if $(\Psi_{D})_{D\in \Ve_{\A_\F}}$ and $(\Psi'_{D})_{D\in \Ve_{\A_\F}}$ are two good trivializing systems for $\F_{Q^\point}$, then  the homeomorphisms  $\Psi_D\circ\Psi_D'^{-1}$ belong to $\aut_{\F}^{Q^\point}(D)$ and define a $0$-cocycle whose action on the cocycle $(\Psi_D\circ\Psi_{D'}^{-1})$ gives the cocycle
$(\Psi'_D\circ\Psi{'}_{D'}^{-1})$. Hence $C(\F_{Q^\point})$ is well defined. On the other hand if $\Phi$ is an \Cex-conjugacy between another deformation $\G_{Q^\point}$ of $\F$ over $Q^\point$ and $\F_{Q^\point}$, $\Phi(\G_{Q^\point})=\F_{Q^\point}$, we easily verify that $(\Psi_D\circ\Phi^\sharp)_{D\in \Ve_{\A_\F}}$ is a good trivializing system  for $\G_{Q^\point}$ with the same  associated cocycle.
\end{proof}

\begin{teo}\label{cocycle} Let $Q^\point$ be a germ of manifold and $\F$ a foliation which is a generalized curve.
Then the maps  
\[
C_{\F}^{Q^\point} :\mr{Def}_{\F}^{Q^\point}\to H^1(\A_\F, \aut_{\F}^{Q^\point})^\point\,,
\qquad
[\F_{Q^\point}]\mapsto C(\F_{Q^\point})\,,
\]
are bijective. Moreover they define a natural isomorphism
\[
C: \mr{Def}\iso H^1\circ \mr{Aut}\,,
\]
between the contravariant functor $\mr{Def}:\mbf{Man^\point}\times\fol\to \mbf{Set}^\point$ introduced in Definition~\ref{def411}  and the contravariant functor $(Q^\point,\F) \mapsto H^1(\A_\F, \aut_{\F}^{Q^\point})$ obtained by composing 
 the contravariant functor $\mr{Aut}:\mbf{Man^\point}\times\fol\to\grgr$ with the covariant
cohomological functor 
$H^1:\grgr\to \mbf{Set}^\point$ defined in (\ref{defH1}) (pointed by the class of the identity). 
\end{teo}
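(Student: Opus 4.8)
The plan is to establish first that each $C_\F^{Q^\point}$ is a bijection and then that these bijections assemble into a natural transformation. Well-definedness as a morphism of pointed sets is already available: by Lemma~\ref{isonatDefH1} the class $C(\F_{Q^\point})$ depends only on the \Cex-class $[\F_{Q^\point}]$, and the constant deformation carries the trivial good trivializing system $(\Psi_D)=(\mr{id})$, whose cocycle is $1$, so $C_\F^{Q^\point}$ sends the distinguished point to the distinguished point. It thus remains to prove surjectivity, injectivity, and naturality.

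For surjectivity I would reproduce, in the special case $\phi=\mr{id}_{\C^2}$, the gluing-and-contraction construction of Step~1 of the proof of Theorem~\ref{famconsigma}. Given a cocycle $(\Phi_{D,\msf e})\in Z^1(\A_\F,\aut_\F^{Q^\point})$, form the germ of manifold obtained by gluing neighborhoods of the components $D\times\{u_0\}$ in $M_\F\times Q$ along the biholomorphisms $\Phi_{D,\msf e}$; since these leave $\F_{Q^\point}^{\mr{ct}\,\sharp}$ invariant and fix the special fiber, the constant foliation descends to this glued space, and the exceptional character of the glued divisor lets one contract it onto $\C^2\times Q$, producing an equisingular deformation $\F_{Q^\point}$. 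The gluing charts furnish a tautological good trivializing system whose associated cocycle is exactly $(\Phi_{D,\msf e})$, so $C_\F^{Q^\point}([\F_{Q^\point}])$ is the prescribed class. The triviality of the input cocycle on the dicritical and nodal edges, needed for it to be a genuine cocycle in $\aut_\F^{Q^\point}$, is automatic by Remark~\ref{suppAut}.

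For injectivity, suppose two equisingular deformations $\F^{(1)}_{Q^\point}$ and $\F^{(2)}_{Q^\point}$ satisfy $C(\F^{(1)}_{Q^\point})=C(\F^{(2)}_{Q^\point})$. Choosing good trivializing systems $(\Psi_D^{(1)})$ and $(\Psi_D^{(2)})$ with respective cocycles $a$ and $b$, the relation $b=(g_D)\star_{\aut_\F^{Q^\point}} a$ provides a $0$-cochain $(g_D)\in C^0(\A_\F,\aut_\F^{Q^\point})$. I would then set
\[
h_D := (\Psi_D^{(2)})^{-1} \circ g_D \circ \Psi_D^{(1)} : (M_{\F^{(1)}_{Q^\point}}, \iota^\sharp(D)) \to (M_{\F^{(2)}_{Q^\point}}, \iota^\sharp(D)),
\]
and check that the coboundary relation forces $h_D$ and $h_{D'}$ to coincide near each singular point $s_{DD'}$; hence the $h_D$ glue to a single \Cex-homeomorphism $H$ over $Q^\point$ conjugating $\F^{(1)\,\sharp}_{Q^\point}$ to $\F^{(2)\,\sharp}_{Q^\point}$ and restricting to the identity on the special fiber. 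Contracting $H$ through the two equireduction maps, exactly as in Theorem~\ref{famconsigma}, yields a \Cex-conjugacy of deformations $\F^{(1)}_{Q^\point}\to\F^{(2)}_{Q^\point}$ fixing $\C^2\times\{u_0\}$, whence $[\F^{(1)}_{Q^\point}]=[\F^{(2)}_{Q^\point}]$.

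Finally, for naturality I would verify commutativity of the square attached to a morphism $(\mu,\phi):(P^\point,\G)\to(Q^\point,\F)$ by treating $\mu$ and $\phi$ separately. Pulling back a good trivializing system $(\Psi_D)$ of $\F_{Q^\point}$ by $\mu$ produces a good trivializing system $(\mu^\ast\Psi_D)$ of $\mu^\ast\F_{Q^\point}$ with cocycle $(\mu^\ast\Phi_{D,\msf e})$, which is precisely $H^1(\mu^\ast)$ of the original; and by the construction in Theorem~\ref{famconsigma} the good trivializing system of $\phi^\ast([\F_{Q^\point}])$ has cocycle $(\phi^\sharp_{Q^\point})^{-1}\circ\Psi_D\circ\Psi_{D'}^{-1}\circ\phi^\sharp_{Q^\point}$, i.e. $H^1(\phi^\ast)$ of the original, by the very definition of $\phi^\ast:\aut_\F^{Q^\point}\to\aut_\G^{Q^\point}$. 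Composing these gives $C\circ(\mu,\phi)^\ast = H^1((\mu,\phi)^\ast)\circ C$, and $C$ is a natural isomorphism. The main obstacle is the bijectivity step, and specifically the passage between the level of the reduced foliation, where cocycles and trivializing maps live, and the level of $\C^2\times Q$, where deformations live: this rests on the gluing constructions of Remarks~\ref{collagedicr} and~\ref{collanodal} and on the contraction of the exceptional divisor already carried out in the proof of Theorem~\ref{famconsigma}.
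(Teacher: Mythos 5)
Your proposal follows essentially the same route as the paper's proof: surjectivity by the gluing-and-contraction construction of Theorem~\ref{famconsigma} applied with $\phi=\mr{id}_{\C^2}$, injectivity by turning the $0$-cochain relating two cocycles into homeomorphisms $\Psi'^{-1}_D\circ\Phi_D\circ\Psi_D$ that agree near the intersection points and glue to a \Cex-conjugacy, and naturality by checking the $\mu$- and $\phi$-squares separately via pulled-back good trivializing systems and the identification of $H^1(\phi^\ast)$ of the cocycle with the cocycle (\ref{cocyclerecolmnt}). The argument is correct as it stands.
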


\begin{proof} The maps $C_\F^{Q^\point}$ are well defined thanks to Lemma~\ref{isonatDefH1}. We proceed in three steps:\\

\textit{Step 1: functoriality of $C$. } 
We must prove that, given a germ of holomorphic map $\mu:P^\point\to Q^\point$ and an \Cex-conjugacy $\phi : \G\to \F$ between  generalized curves, the following diagram is commutative:
\[\xymatrix{
&& \Def_{\F}^{Q^\point}\ar'[d]^{\mu^*}[dd]\ar[dll]_{C_{\F}^{Q^\point}}\ar[rrr]^{\phi^\ast}&&&\Def_{\G}^{Q^\point}\ar[dd]^{\mu^*}\ar[dll]_{C_{\G}^{Q^\point}}\\ 
H^1({\A_\F},\aut_{\F}^{Q^\point})\ar[rrr]^{H^1(\phi^\ast)\hphantom{AAA}}\ar[dd]^{H^1(\mu^*)}&&&H^1({\A_\G},\aut_{\G}^{Q^\point})\ar[dd]^<(0.2){H^1(\mu^*)}&&\\ 
&&\Def_{\F}^{P^\point}\ar'[r]^{\phi^\ast}[rrr]\ar[dll]_{C_{\F}^{P^\point}}&&&\Def_{\G}^{P^\point}\ar[dll]_{C_{\G}^{P^\point}}\\
 H^1({\A_\F},\aut_{\F}^{P^\point})\ar[rrr]^{H^1(\phi^\ast)}&&&H^1({\A_\G},\aut_{\G}^{P^\point})&&}
\]
Let us check first the commutativity of the lateral faces:
If $(\Psi_D)_{D\in \Ve_{\A_\F}}$ is a good trivializing system for $\F_{Q^\point}$   then $(\mu^*\Psi_D)_{D\in\Ve_{\A_\F}}$ is also a good trivializing system for $\mu^\ast\F_{Q^\point}$. Consequently we have:
\[C_{\F}^{P^\point}([\mu^*\F_{Q^\point}])=[\mu^*\Psi_D\circ\mu^*\Psi_{D'}^{-1}]=H^1(\mu^*)([\Psi_D\circ\Psi_{D'}^{-1}])=H^1(\mu^*)\circ C_{\F}^{Q^\point}([\F_{Q^\point}]).\]
To check the commutativity of the top face, we notice that by definition $\mf c:=H^1(\phi^\ast)\circ C^{Q^\point}_{\F}([\F_{Q^\point}])$ is the cohomology class in $H^1({\A_\G},\aut_{\G}^{Q^\point})$ of the cocycle $(\phi^{\sharp\,-1}_{Q^\point}\circ\Psi_D\circ\Psi_{D'}^{-1}\circ\phi^{\sharp}_{Q^\point})$. It coincides with the cocycle~(\ref{cocyclerecolmnt})
used in the proof of Theorem \ref{famconsigma} to construct the deformation $\G_{Q^\point}\in\phi^\ast([\F_{Q^\point}])$. Therefore $\mf c=C^{Q^\point}_{\G}([\G_{Q^\point}])$. The same arguments give the commutativity of the lower face. That of the back and front faces of the cube results from the relations (\ref{comsigmalambda}) and (\ref{sq2}) respectively. 
\\

\textit{Step 2: injectivity of $C_{\F}^{Q^\point}$. }
Let  $(\Psi_{D})_{D\in \Ve_{\A_\F}}$ resp. $(\Psi'_{D})_{D\in \Ve_{\A_\F}}$ be  good trivializing systems for two equisingular deformations  $\F_{Q^\point}$, resp. $\F'_{Q^\point}$, inducing  the same cohomology class in $H^1({\A_\F},\aut_{\F}^{Q^\point})$. There exist   $\Phi_D\in \aut_{\F}^{Q^\point}(D)$, $D\in \Ve_{\A_\F}$,
such that the following relation: 
\[\Phi_D\circ\Psi_D\circ\Psi_{D'}^{-1}\circ\Phi_{D'}^{-1}=\Psi{'}_D\circ\Psi{'}_{D'}^{-1}
\]
is satisfied for any pair $(D, D')$ of irreducible components of $\mc E_\F$ such that $\{s_{DD'}\}=D\cap D'$
is neither a nodal singularity or a regular point of $\F^\sharp$. This relation also means that the homeomorphisms $K_D:=\Psi{'}_D^{-1}\circ \Phi_D\circ\Psi_D$ defined on neighborhoods of $D\times\{u_0\}$ coincide on neighborhoods of $(s_{DD'},u_0)$ and induce a \Cex-conjugacy between $\F_{Q^\point}$ and $\F'_{Q^\point}$.\\

\textit{Step 3: surjectivity of $C_{\F}^{Q^\point}$.} Given a cocycle $(\Phi_{D,\ge})\in Z^1(\A_\F,\mr{Aut}^{Q^\point}_{\F})$, the construction of an equisingular deformation $\F_{Q^\point}$ equipped with a good trivializing system satisfying (\ref{relco}), may be done by a gluing process  as in the proof of Theorem~\ref{famconsigma}.
\end{proof}

\subsection{Sheaf of transversal symmetries}
 Let us fix again a foliation $\F$ and a germ of manifold $Q^\point=(Q,u_0)$. For an open set $U\subset \mc E_\F$ we will say that an automorphism $\Phi\in \underline{\aut}^{Q^\point}_{\F}(U) $ \emph{fixes the leaves}, if it leaves invariant the codimension one foliation $\F^\sharp\times Q$. 
We denote by $\underline{\fix}^{Q^\point}_{\F}\subset\underline{\aut}^{Q^\point}_{\F}$,  the subsheaf of normal subgroups consisting of  these automorphisms.
We will describe in an explicit way
the quotient sheaf
\begin{equation*}
\underline{\sym}^{Q^\point}_{\F}=\underline{\aut}^{Q^\point}_{\F}/\underline{\fix}^{Q^\point}_{\F}\,.
\end{equation*}
To do that let us consider the normal subgroup \begin{equation*}
\DiffQz= \{\phi\in\DiffQ\;|\;\phi(z,u_0)\equiv (z,u_0)\},
\end{equation*}
of the group $\DiffQ$ defined in (\ref{diffQ}), 
and for any subgroup 
\[ G\subset\DiffQ
 \]
 let us  adopt the following notations:
\begin{itemize}
\item $C_{Q^\point}(G)$ is the \emph{centralizer of $G$}, i.e. the subgroup of  $\DiffQ$ whose elements commute with any element of $G$;
\item
$C_{Q^\point}^0(G)=C_{Q^\point}(G)\cap \DiffQzero$;
\item in  the 
monogenous case  $G=\langle h\rangle$, we  write $C_{Q^\point}(h)$  and  $C_{Q^\point}^0(h)$  instead of  $C_{Q^\point}(\langle h\rangle)$  and $C_{Q^\point}^0(\langle h\rangle)$.
\end{itemize}
Now let us fix an invariant component $D$ of $\mc E_\F$.
For $m\in D\setminus \mr{Sing}(\F^\sharp)$, 
let us choose a germ of holomorphic submersion 
\[g: (M_\F, m)\longrightarrow (\C,0)
\]
constant on the leaves of $\F^\sharp$. 
Any $\phi\in \underline{\aut}^{Q^\point}_{\F}(m)$
factorizes  through $g\times\mr{id}_Q$, defining an element 
$g_{\ast}(\phi)\in\DiffQzero$
such that
\[g_{\ast}(\phi)\circ (g\times\mr{id}_Q)=(g\times\mr{id}_Q)\circ \phi\,.\]
 The holomorphy of $g_{\ast}(\phi)$ results from  the fact that   $\phi$ is transversely holomorphic by definition.
Clearly 
\begin{equation}\label{g*}
g_{\ast}:\un{\mr{Aut}}_{\F}^{Q^\point}(m)\to\DiffQzero
\end{equation}
 is a surjective group morphism. 
\begin{lema}\label{remfactgermes}
The following sequence
\begin{equation}\label{factgermes}
1\to
 \underline{\fix}^{Q^\point}_{\F}(m)
\to
 \underline{\aut}^{Q^\point}_{\F}(m)
\stackrel{g_{\ast}}{\to} \DiffQzero\to 1
\end{equation}
is exact.
\end{lema}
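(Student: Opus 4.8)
The plan is to exploit that $g_\ast$ has already been shown to be a surjective group morphism with values in $\DiffQzero$, so that the only real content of the exactness assertion is the identification $\ker g_\ast=\underline{\fix}^{Q^\point}_{\F}(m)$: the injectivity of the inclusion $\underline{\fix}^{Q^\point}_{\F}(m)\hookrightarrow\underline{\aut}^{Q^\point}_{\F}(m)$ is automatic, and surjectivity of $g_\ast$ is exactly exactness at the right-hand end. Throughout I would work in the germ at the regular point $m\in D\setminus\mr{Sing}(\F^\sharp)$ and choose local coordinates on $(M_\F,m)$ in which $g$ is the transverse coordinate and the leaves of $\F^\sharp$ are precisely the fibers of $g$. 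Then near $(m,u_0)$ the leaves of the one-dimensional family $\F^{\sharp\,\mr{ct}}_{Q^\point}$ are the fibers of $g\times\mr{id}_Q$, whereas the leaves of the codimension-one foliation $\F^\sharp\times Q$ are the fibers of $\mr{pr}_\C\circ(g\times\mr{id}_Q)=g\circ\mr{pr}_{M_\F}$, i.e. the level sets $\{g=c\}$. In particular $g\times\mr{id}_Q$ is the quotient map by $\F^{\sharp\,\mr{ct}}_{Q^\point}$ and is $(\phi,g_\ast(\phi))$-equivariant by the very definition of $g_\ast$.

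For the inclusion $\ker g_\ast\subset\underline{\fix}^{Q^\point}_{\F}(m)$: if $g_\ast(\phi)=\mr{id}$, the defining relation $g_\ast(\phi)\circ(g\times\mr{id}_Q)=(g\times\mr{id}_Q)\circ\phi$ reads $(g\times\mr{id}_Q)\circ\phi=g\times\mr{id}_Q$; composing with $\mr{pr}_\C$ yields $g\circ\mr{pr}_{M_\F}\circ\phi=g\circ\mr{pr}_{M_\F}$, so $\phi$ preserves every level set $\{g=c\}$, hence fixes each leaf of $\F^\sharp\times Q$ and a fortiori leaves that foliation invariant; thus $\phi\in\underline{\fix}^{Q^\point}_{\F}(m)$.

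For the reverse inclusion $\underline{\fix}^{Q^\point}_{\F}(m)\subset\ker g_\ast$, which is the crux, write $h:=g_\ast(\phi)\in\DiffQzero$; being over $Q$ it has the form $h(z,u)=(h_1(z,u),u)$. Since $g\times\mr{id}_Q$ is the quotient map by $\F^{\sharp\,\mr{ct}}_{Q^\point}$ and is $(\phi,h)$-equivariant, $\phi$ leaving $\F^\sharp\times Q$ invariant is equivalent to $h$ permuting the fibers $\{z=c\}$ of $\mr{pr}_\C$ (the images under $g\times\mr{id}_Q$ of the leaves of $\F^\sharp\times Q$); this forces $h_1(z,u)$ to be independent of $u$, say $h_1(z,u)=h_1(z)$. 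On the other hand $h\in\DiffQzero$, which reflects the normalization $\phi|_{M_\F\times\{u_0\}}=\mr{id}$ built into $\underline{\aut}^{Q^\point}_{\F}(m)$, gives $h_1(z,u_0)=z$. Combining, $h_1(z)=h_1(z,u_0)=z$, so $h=\mr{id}$ and $\phi\in\ker g_\ast$.

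\textbf{Main obstacle.} The only genuinely delicate implication is this last one, and the subtlety is precisely that "leaving the codimension-one foliation $\F^\sharp\times Q$ invariant" a priori means only permuting its leaves, which is strictly weaker than the kernel condition $g_\ast(\phi)=\mr{id}$ of fixing each leaf. The two notions coincide here exactly because of the normalization $\phi|_{M_\F\times\{u_0\}}=\mr{id}$: it kills the transverse monodromy $h_1$ induced on the leaf space of $\F^\sharp\times Q$ and upgrades "permutes the leaves" to "fixes each leaf." A secondary technical point, which I would dispatch by invoking the transverse holomorphy already contained in the excellent condition, is that $g_\ast(\phi)$ indeed lands in $\DiffQzero$ (holomorphic, not merely continuous); this is legitimate because near the regular point $m$ the fibers of $g\times\mr{id}_Q$ coincide with the leaves of $\F^{\sharp\,\mr{ct}}_{Q^\point}$, so the factorization defining $g_\ast(\phi)$ is holomorphic.
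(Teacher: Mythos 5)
Your proposal is correct and follows essentially the same route as the paper: the paper's proof also reduces everything to exactness at the middle term, characterizes membership in $\underline{\fix}^{Q^\point}_{\F}(m)$ by the existence of a factorization of $g_\ast(\phi)$ through $\mr{pr}_\C$ (your "permutes the fibers of $\mr{pr}_\C$", equivalently the first component $\wt\phi(z,u)$ being independent of $u$), and then invokes the normalization $\wt\phi(z,u_0)=z$ to force $g_\ast(\phi)=\mr{id}_{\C\times Q}$. Your identification of the normalization on the special fiber as the key point, and the appeal to transverse holomorphy for $g_\ast(\phi)\in\DiffQzero$, both match the paper's treatment.
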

\begin{proof}
For the exactness at the central term, 
let us first notice that the germ at $(m,u_0)$ of an element
$\phi\in\un{\aut}^{Q^\point}_{\F}(m)$ preserves the codimension one foliation $\F^\sharp\times Q$  if and only if there is a factorization $g_\ast(\phi)^\flat$:
\[
\xymatrix{
(M_\F\times Q,(m,u_0))\ar[r]^{\hphantom{aaa}g\times\mr{id}_Q}\ar[d]_{\phi} \;& (\C\times Q,(0,u_0))\ar[d]^{g_\ast(\phi)}\ar[r]^{\phantom{aaaa}\mr{pr}_\C} & (\C,0)\ar@{.>}[d]^{g_\ast(\phi)^\flat}\\
(M_\F\times Q,(m,u_0))\ar[r]^{\hphantom{aaa}g\times\mr{id}_Q} \;& (\C\times Q,(0,u_0))\ar[r]^{\phantom{aaaa}\mr{pr}_\C} &(\C,0)
}
\]
where $\mr{pr}_\C(z,u)=z$. Since $g_\ast(\phi)(p,u)=(\wt{\phi}(p,u),u)$, $g_\ast(\phi)^{\flat}$ exists if and only if $\wt{\phi}(p,u)$ does not depend on~$u$.
But $\wt\phi(z,u_0)=z$, therefore $g_\ast(\phi)^{\flat}$ exists if and only if $g_\ast(\phi)=\mr{id}_{\C\times Q}$.
\end{proof}

 \begin{lema} If $U\subset D$ is open\footnote{$U$ may not be open in $\mc E_\F$.} and connected and $p\in U$,  then we have the exact sequence:
 \begin{equation}\label{FAS}
 1\to\un{\mr{Fix}}_{\F}^{Q^\point}(U)\to\un{\mr{Aut}}_{\F}^{Q^\point}(U)\to\un{\mr{Sym}}_{\F}^{Q^\point}(p)\,.
 \end{equation}
\end{lema}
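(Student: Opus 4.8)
The plan is to identify the asserted map $\un{\mr{Aut}}_{\F}^{Q^\point}(U)\to\un{\mr{Sym}}_{\F}^{Q^\point}(p)$ as the composition of the restriction to the stalk at $p$ with the quotient projection $\un{\mr{Aut}}_{\F}^{Q^\point}(p)\to\un{\mr{Aut}}_{\F}^{Q^\point}(p)/\un{\mr{Fix}}_{\F}^{Q^\point}(p)=\un{\mr{Sym}}_{\F}^{Q^\point}(p)$. Since $\un{\mr{Fix}}_{\F}^{Q^\point}$ is a subsheaf of $\un{\mr{Aut}}_{\F}^{Q^\point}$, the left arrow is injective and the only nontrivial point is exactness at the middle term, i.e. that the kernel of the composite is exactly $\un{\mr{Fix}}_{\F}^{Q^\point}(U)$. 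One inclusion is immediate: a section $\Phi$ that fixes the leaves over all of $U$ has a germ at $p$ lying in $\un{\mr{Fix}}_{\F}^{Q^\point}(p)$, hence maps to the base point of $\un{\mr{Sym}}_{\F}^{Q^\point}(p)$. The heart of the matter is the converse: if the germ $\Phi_p$ fixes the leaves near $(p,u_0)$, then $\Phi$ fixes the leaves over the whole of $U$.

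To prove this propagation I would argue by a connectedness/identity-principle argument on the regular part $U^\ast:=U\setminus\mr{Sing}(\F^\sharp)$. Since $U$ is a connected open subset of the surface $D\simeq\mb P^1$ and $\mr{Sing}(\F^\sharp)\cap D$ is finite, removing these points keeps $U^\ast$ connected. Consider the set $S$ of points $m\in U^\ast$ at which the germ $\Phi_m$ fixes the leaves; it is open by definition. To see it is closed, fix $m\in\overline S\cap U^\ast$, choose a flow-box chart $V\ni m$ for the regular foliation $\F^\sharp$ together with a transversal factor $g$ on $V$, and represent $\Phi$ by an actual map over $V\times B$ for a small ball $B\subset Q$ around $u_0$. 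By Lemma~\ref{remfactgermes} the condition ``$\Phi$ fixes the leaves'' on $V\times B$ is equivalent to $g_\ast(\Phi)=\mr{id}_{\C\times Q}$, i.e. to the vanishing of the holomorphic map $g_\ast(\Phi)-\mr{id}$ on $g(V)\times B$. As this holomorphic map vanishes on the nonempty open subset $S\cap V$ of the connected domain $g(V)\times B$, the identity theorem forces $g_\ast(\Phi)=\mr{id}$ throughout $V$, so $m\in S$. Since $p$ (or a regular point arbitrarily close to it, when $p\in\mr{Sing}(\F^\sharp)$) lies in $S$, connectedness of $U^\ast$ gives $S=U^\ast$; note the argument is intrinsic because, again by Lemma~\ref{remfactgermes}, the triviality of $g_\ast(\Phi)$ does not depend on the chosen transversal $g$.

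It then remains to upgrade ``$\Phi$ fixes the leaves at every regular point of $U$'' to ``$\Phi$ fixes the leaves over $U$'', which amounts to checking the invariance of the foliation $\F^\sharp\times Q$ near the finitely many points of $\mr{Sing}(\F^\sharp)\cap U$. At such a point the germ of $\Phi$ is holomorphic (it is a non-nodal singularity, the exceptional nodal case being excluded as in Definition~\ref{excellent}), so preserving the holomorphic, possibly singular, foliation $\F^\sharp\times Q$ is a closed holomorphic condition; since it holds on the dense open set $U^\ast$ it extends by continuity to these points, and hence $\Phi\in\un{\mr{Fix}}_{\F}^{Q^\point}(U)$.

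The main obstacle is the closedness step: one must pass from the pointwise, germ-level equivalence of Lemma~\ref{remfactgermes} to a genuine holomorphic object on a fixed polydisc so that the identity theorem applies, and one must ensure that the set $S$ is well defined independently of the local transversals used to test it. Both are resolved by the intrinsic characterization provided by Lemma~\ref{remfactgermes}, after which the rest is a routine open–closed–nonempty argument on the connected set $U^\ast$.
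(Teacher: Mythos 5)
Your proof is correct and follows essentially the same route as the paper's: first settle the regular part $U\setminus\mr{Sing}(\F^\sharp)$ by a flow-box plus connectedness argument (the paper covers it by trivializing open sets, you phrase it as an open--closed argument; both rest on the germ-level criterion of Lemma~\ref{remfactgermes} and the identity theorem), and then extend across the finitely many singular points of $U$ using holomorphy of the germs there. The only difference is that the paper makes your final ``closed holomorphic condition'' precise via the criterion $\phi\in\un{\mr{Fix}}_{\F}^{Q^\point}(p')\Leftrightarrow(\phi^*\omega)\wedge\omega\equiv 0$, where $\omega$ is a holomorphic $1$-form defining $\F^\sharp\times Q$ at $p'$.
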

\begin{proof}
The statement is trivial 
if $U=W\cap D$ and $W\subset M_\F$ is an open subset  trivializing the foliation $\F^\sharp$.  If $U\cap\mr{Sing}(\F^\sharp)=\emptyset$ we cover $U$ by open subsets in $M_\F$ trivializing $\F^\sharp$  and we conclude by  connectedness of $U$. For the last case $p\in\mr{Sing}(\F^\sharp)$ we take a  point $q\in U\setminus \mr{Sing}(\F^\sharp)$ close to $p$ and we note that if the germ of an element $\phi\in\un{\mr{Aut}}_{\F}^{Q^\point}(U)$ at $p$ is in $\un{\mr{Fix}}_{\F}^{Q^\point}(p)$ then the germ of $\phi$ at $q$ also belongs to $\un{\mr{Fix}}_{\F}^{Q^\point}(q)$. 
By applying the exactness of sequence (\ref{FAS}) substituting $U$ and $p$ by $U\setminus\mr{Sing}(\F^\sharp)$ and $q$ respectively, we deduce that $\phi\in\un{\mr{Fix}}_{\F}^{Q^\point}(U\setminus\mr{Sing}(\F^\sharp))$. It remains to see that the germ of $\phi$ at $p'\in U\cap\mr{Sing}(\F^\sharp)$ belongs to $\un{\mr{Fix}}_{\F}^{Q^\point}(p')$.
For this we use the holomorphy of $\phi$ at $p'$ and the following characterization: $\phi\in \underline{\fix}^{Q^\point}_{\F}(p')\Leftrightarrow(\phi^*\omega)\wedge\omega\equiv 0$, where $\omega$ is the germ at $p'$ of  a holomorphic 1-differential form defining the codimension one foliation $\F^\sharp\times Q$.
\end{proof}  
Let us fix an invariant component $D$  of $\mc E_\F$ and let us  denote by  $i_D:D\hookrightarrow \mc E_\F$ the inclusion map. Let us also fix a transverse fibration $\rho:(M_\F\times Q^\point,D)\to D$ 
satisfying properties (\ref{resrho})-(\ref{invfibers})  described in the step 1 of the proof of Theorem~\ref{psiD} and let us consider the subsheaf over $D$
\[\un{\mr{Aut}}_{\F,\rho}^{Q^\point}\subset i_D^{-1}\un{\mr{Aut}}_{\F}^{Q^\point}\]  
of automorphisms preserving the fibration $\rho$.

 \begin{lema}\label{aut-rho}
 If $\F$ is a generalized curve, for any connected open set $U$ of $D$ and any point $m\in U \setminus\mr{Sing}(\F^\sharp)$,
 the following assertions hold:
\begin{enumerate}[(i)]
\item\label{loccstsaf} The sheaf  $\un{\mr{Aut}}_{\F,\rho}^{Q^\point}$ is locally constant over $D\setminus\mr{Sing}(\F^\sharp)$ and the morphism   $g_{*}$ defined in (\ref{g*}) induces an isomorphism  
\[\un{\mr{Aut}}_{\F,\rho}^{Q^\point}(m)\simeq \DiffQzero\,;\]
\item\label{gstariso}  The restriction map $\un{\mr{Aut}}_{\F,\rho}^{Q^\point}(U)\to\un{\mr{Aut}}_{\F,\rho}^{Q^\point}(U\setminus\mr{Sing}(\F^\sharp))$ is an isomorphism and  $g_{*}$ induces an isomorphism $\un{\mr{Aut}}_{\F,\rho}^{Q^\point}(U)\simeq C^0_{Q^\point}(H_U)$, where $H_U$ is  the holonomy group 
\[H_U:= \mc H_D^{\un\F_{Q^\point}^{\mr{ct}\,\sharp}}( \pi_1(U\setminus \mr{Sing}(\F^\sharp), m ))\subset\DiffQzero\,;
\]
\item\label{respamaut} For any $p\in U$,   the natural map $\un{\mr{Aut}}_{\F,\rho}^{Q^\point}(U)\to\un{\mr{Aut}}_{\F,\rho}^{Q^\point}(p)$ is injective.
\end{enumerate}
\end{lema}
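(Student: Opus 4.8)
The plan is to reduce the three assertions to a statement about a locally constant sheaf of transverse germs, working throughout in the local model furnished by $\rho$ together with the transverse submersion $g$ of~(\ref{g*}).

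First I would fix a regular point $m\in D\setminus\mr{Sing}(\F^\sharp)$ and choose coordinates $(w_1,w_2)$ on $(M_\F,m)$ with $D=\{w_2=0\}$, the leaves of $\F^\sharp$ being the levels $\{w_2=\mathrm{const}\}$ and $g=w_2$. By property~(\ref{resrho}) we may also arrange that $\rho$ is the projection $(w_1,w_2,u)\mapsto w_1$ onto $D$. Any $\phi\in i_D^{-1}\un{\mr{Aut}}_{\F}^{Q^\point}(m)$ has the form $\phi(w_1,w_2,u)=(\phi_1,\phi_2,u)$; since $\phi$ is over $Q^\point$ and is the identity on the special fiber it fixes $D\times\{u_0\}$ pointwise, so preservation of $\rho$ means $\rho\circ\phi=\rho$, that is $\phi_1=w_1$. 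Leaf-by-leaf invariance of $\F^{\mr{ct}\,\sharp}_{Q^\point}$ then forces $\phi_2$ to be independent of $w_1$, whence
\[
\phi(w_1,w_2,u)=(w_1,h(w_2,u),u),\qquad h(\cdot,u_0)=\mr{id}.
\]
Thus $\phi$ is completely determined by $g_\ast(\phi)=h\in\DiffQzero$, and conversely every such $h$ gives an $\rho$-preserving automorphism; this proves assertion~(\ref{loccstsaf}), and because the description is uniform over any leaf-trivialising chart it shows that $\un{\mr{Aut}}_{\F,\rho}^{Q^\point}$ is locally constant on $D\setminus\mr{Sing}(\F^\sharp)$ with stalk $\DiffQzero$. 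Note also that $h=\mr{id}$ corresponds to $\phi=\mr{id}$, so $\un{\mr{Aut}}_{\F,\rho}^{Q^\point}$ meets $\un{\mr{Fix}}_{\F}^{Q^\point}$ trivially, consistently with Lemma~\ref{remfactgermes}.

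For assertion~(\ref{gstariso}) I would first establish the restriction isomorphism. Injectivity is clear, a section being determined by its restriction to the dense open subset $U\setminus\mr{Sing}(\F^\sharp)$. For surjectivity one extends a section across each $p\in U\cap\mr{Sing}(\F^\sharp)$: when $p\notin\mr{Sing}(\mc E_\F)$ every element of $\un{\mr{Aut}}_{\F}^{Q^\point}$ is holomorphic at $p$ by the excellence hypothesis (the only admitted non-holomorphy being at nodal corners), and since $\rho$ is holomorphic at $p$ by property~(\ref{invfibers}) the bounded germ extends holomorphically by Riemann's theorem while remaining $\rho$-preserving; the non-nodal corner case is identical. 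Once the restriction iso is in hand, I identify the global sections of the locally constant sheaf on $U\setminus\mr{Sing}(\F^\sharp)$ with the monodromy-invariant elements of the stalk $\DiffQzero$. Transporting $g_\ast(\phi)$ along a loop $\gamma$ changes the transverse coordinate by the holonomy $h_\gamma\in H_U$ of $\F^{\mr{ct}\,\sharp}_{Q^\point}$, hence replaces $g_\ast(\phi)$ by its conjugate $h_\gamma\, g_\ast(\phi)\, h_\gamma^{-1}$; therefore a germ extends to a single-valued section over $U\setminus\mr{Sing}(\F^\sharp)$ exactly when $g_\ast(\phi)$ commutes with every element of $H_U$, i.e. $g_\ast(\phi)\in C^0_{Q^\point}(H_U)$. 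This yields the isomorphism $\un{\mr{Aut}}_{\F,\rho}^{Q^\point}(U)\simeq C^0_{Q^\point}(H_U)$.

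Assertion~(\ref{respamaut}) then follows formally: for a locally constant sheaf on the connected set $U$ the map to any stalk is injective. If $p\in U$ is regular this is immediate from~(\ref{loccstsaf}); if $p\in\mr{Sing}(\F^\sharp)$ I pass to a nearby regular point $q$, noting that holomorphy at $p$ (or transverse continuation along a leaf) makes the germ at $p$ determine the germ at $q$, and then apply injectivity at $q$ together with connectedness of $U\setminus\mr{Sing}(\F^\sharp)$. The \textbf{main obstacle} is the extension step in~(\ref{gstariso}): one must check carefully that $\rho$-preserving sections extend holomorphically across the singular points of $\F^\sharp$ on $D$ while remaining $\rho$-preserving, treating separately the corner points $\mr{Sing}(\mc E_\F)\cap D$ (where the generalized-curve hypothesis and the non-nodal clause of Definition~\ref{excellent} enter) and the non-corner singularities, and that the monodromy of the resulting locally constant sheaf is exactly conjugation by the holonomy $\mc H_D^{\F_{Q^\point}^{\mr{ct}\,\sharp}}$, so that the invariants are precisely $C^0_{Q^\point}(H_U)$.
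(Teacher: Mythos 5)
Your argument for assertion (\ref{gstariso}) has a genuine gap at exactly the point you yourself flag as the main obstacle, and the mechanism you propose to close it does not work. A section of $\un{\mr{Aut}}_{\F,\rho}^{Q^\point}$ over $U\setminus\mr{Sing}(\F^\sharp)$ is a germ along $(U\setminus\mr{Sing}(\F^\sharp))\times\{u_0\}$, i.e.\ it is defined on \emph{some} open neighborhood of that set in $M_\F\times Q$ --- and such a neighborhood may pinch down to nothing as one approaches a singular point $(p,u_0)$. It therefore need not contain any punctured neighborhood of $(p,u_0)$, so Riemann's removable singularity theorem cannot be applied to "the bounded germ": there is no germ at $p$ to speak of, bounded or otherwise, and the excellence clause of Definition~\ref{excellent} (holomorphy at singular points) only constrains automorphisms that are \emph{already} defined there. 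What is missing is precisely the content of the paper's proof, which reduces assertion (\ref{gstariso}) to Step 2 of the proof of Theorem~\ref{psiD}: one first extends the section to the saturation of its domain by the leaves of $\F_{Q^\point}^{\mr{ct}\,\sharp}$ (well defined because the section is given on a neighborhood of a whole punctured disk of $D$ around $p$, so all local monodromy is already accounted for), and then invokes Mattei--Moussu's theorem \cite{MatMou}, in the parameter-uniform form of the estimates of \cite{Frank}, to conclude that this saturation fills up a punctured neighborhood of $(p,u_0)$. Only after this filling step does one have a bounded holomorphic map on a punctured neighborhood, to which Riemann's theorem applies. This is also where the generalized-curve hypothesis (absence of saddle-nodes) actually enters; your sketch never uses it for the extension, which is a sign the argument cannot be complete.

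The rest of the proposal is essentially sound and close to the paper's route. For assertion (\ref{loccstsaf}) there is a minor inaccuracy: $\rho$ is only a $\mc C^\infty$ submersion (holomorphic only near the singular points), so you cannot in general choose \emph{holomorphic} coordinates $(w_1,w_2)$ straightening simultaneously the foliation and $\rho$; but your computation is repaired by the paper's formulation --- the restriction of $g\times\mr{id}_Q$ to each fiber of $\rho$ is a local diffeomorphism onto $(\C\times Q,(0,u_0))$, so a $\rho$-preserving automorphism is determined fiberwise by $g_\ast(\phi)$, with no need for a holomorphic normal form of $\rho$. Your identification of the monodromy of the locally constant sheaf with conjugation by the holonomy, and hence of the invariant sections with the centralizer $C^0_{Q^\point}(H_U)$, is correct (it parallels Remark~\ref{monoholo}), and your proof of assertion (\ref{respamaut}) is the same as the paper's, except that injectivity of restriction to a dense open set should be justified by holomorphy at the singular points plus the identity theorem rather than by continuity alone.
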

\begin{proof}
Assertion (\ref{loccstsaf}) follows from the fact that the restriction of $g\times \mr{id}_Q$ to each fiber of $\rho$ is a local diffeomorphism onto $(\C\times Q,(0,u_0))$.
Assertion (\ref{gstariso}) is a consequence of Mattei-Moussu's Theorem as in step 2 of the proof of Theorem~\ref{psiD}. 
To prove assertion (\ref{respamaut}), let us assume  that the germ of $\phi\in\un{\mr{Aut}}_{\F,\rho}^{Q^\point}(U)$ at $p$ is the identity. If $p\notin\mr{Sing}(\F^\sharp)$ then $\phi_{|U\setminus\mr{Sing}(\F^\sharp)}=\mr{id}$ by assertion~(\ref{loccstsaf})  and $\phi=\mr{id}$ using also assertion~(\ref{gstariso}). If $p\in\mr{Sing}(\F^\sharp)$ then there is $q\notin\mr{Sing}(\F^\sharp)$ close to $p$ such that the germ of $\phi$ at $q$ is the identity; we apply the previous case and we conclude by the holomorphy of the germ of $\phi$ at $p$. 
\end{proof}
 \begin{prop}\label{prop59} If $\F$ is a generalized curve, then
the composition of the group sheaves morphisms 
\begin{equation}\label{comp}
\un{\mr{Aut}}_{\F,\rho}^{Q^\point}\hookrightarrow i_D^{-1}\un{\mr{Aut}}_{\F}^{Q^\point}\to i_D^{-1}\un\sym_{\F}^{Q^\point}
\end{equation}
is an isomorphism.
\end{prop}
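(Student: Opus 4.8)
The plan is to verify that the composition $\Theta\colon\un{\mr{Aut}}_{\F,\rho}^{Q^\point}\hookrightarrow i_D^{-1}\un{\mr{Aut}}_{\F}^{Q^\point}\to i_D^{-1}\un{\mr{Sym}}_{\F}^{Q^\point}$ is an isomorphism of group sheaves over $D$ by checking it stalkwise, a morphism of sheaves being an isomorphism as soon as it induces isomorphisms on every stalk. Since $i_D^{-1}$ is exact, the stalk of $i_D^{-1}\un{\mr{Sym}}_{\F}^{Q^\point}$ at $p\in D$ is the quotient $\un{\mr{Aut}}_{\F}^{Q^\point}(p)/\un{\mr{Fix}}_{\F}^{Q^\point}(p)$ of germ groups, so $\Theta_p$ is injective exactly when $\un{\mr{Aut}}_{\F,\rho}^{Q^\point}(p)\cap\un{\mr{Fix}}_{\F}^{Q^\point}(p)=\{1\}$ and surjective exactly when $\un{\mr{Aut}}_{\F}^{Q^\point}(p)=\un{\mr{Aut}}_{\F,\rho}^{Q^\point}(p)\cdot\un{\mr{Fix}}_{\F}^{Q^\point}(p)$. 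I would first treat the regular points $m\in D\setminus\mr{Sing}(\F^\sharp)$, where $\Theta$ is governed by the transverse morphism $g_\ast$, and then cross the isolated singular points $p\in\mr{Sing}(\F^\sharp)\cap D$.

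At a regular point $m$, Lemma~\ref{remfactgermes} provides the exact sequence $1\to\un{\mr{Fix}}_{\F}^{Q^\point}(m)\to\un{\mr{Aut}}_{\F}^{Q^\point}(m)\xrightarrow{g_\ast}\DiffQzero\to1$, so $g_\ast$ descends to an isomorphism $\overline g_\ast\colon\un{\mr{Sym}}_{\F}^{Q^\point}(m)\iso\DiffQzero$; and Lemma~\ref{aut-rho}(\ref{loccstsaf}) shows that the same $g_\ast$ restricts to an isomorphism $\un{\mr{Aut}}_{\F,\rho}^{Q^\point}(m)\iso\DiffQzero$. By construction $\overline g_\ast\circ\Theta_m=g_\ast|_{\un{\mr{Aut}}_{\F,\rho}^{Q^\point}(m)}$, whence $\Theta_m=\overline g_\ast^{-1}\circ g_\ast$ is an isomorphism. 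Thus $\Theta$ is an isomorphism over the regular locus $D\setminus\mr{Sing}(\F^\sharp)$, and in particular an isomorphism on sections over any open subset of it.

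It remains to cross a singular point $p\in\mr{Sing}(\F^\sharp)\cap D$; I would fix a small disc $U\subset D$ with $U\cap\mr{Sing}(\F^\sharp)=\{p\}$, so that $U\setminus\{p\}=U\setminus\mr{Sing}(\F^\sharp)$ is connected. Injectivity of $\Theta_p$ is then immediate: a germ in $\un{\mr{Aut}}_{\F,\rho}^{Q^\point}(p)\cap\un{\mr{Fix}}_{\F}^{Q^\point}(p)$ restricts over $U\setminus\{p\}$ to the identity by the regular case, hence equals the identity by the restriction isomorphism of Lemma~\ref{aut-rho}(\ref{gstariso}). For surjectivity, given $\phi\in\un{\mr{Aut}}_{\F}^{Q^\point}(p)$ I would lift its class over $U\setminus\{p\}$, through the isomorphism of the previous paragraph, to a unique $\chi\in\un{\mr{Aut}}_{\F,\rho}^{Q^\point}(U\setminus\mr{Sing}(\F^\sharp))$, extend $\chi$ across $p$ to $\un{\mr{Aut}}_{\F,\rho}^{Q^\point}(U)$ by Lemma~\ref{aut-rho}(\ref{gstariso}), and check that $\psi:=\chi\circ\phi^{-1}$ is a germ at $p$ lying in $\un{\mr{Fix}}_{\F}^{Q^\point}$; this exhibits $[\phi]$ as the image of $\chi$. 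For non-nodal $p$ the membership $\psi\in\un{\mr{Fix}}_{\F}^{Q^\point}(p)$ follows as in the proof of the exact sequence~(\ref{FAS}) from the holomorphy of $\psi$ at $p$ and the closed condition $\psi^\ast\omega\wedge\omega\equiv0$ for a $1$-form $\omega$ defining $\F^\sharp\times Q$, while for a nodal corner it follows by continuity, the leaf-fixing property being closed and holding on the dense set $U\setminus\{p\}$.

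Consequently the genuine analytic difficulty is entirely packaged in Lemma~\ref{aut-rho}: assertion~(\ref{loccstsaf}) furnishes the regular-point identification with $\DiffQzero$, and assertion~(\ref{gstariso})—which itself encodes Mattei--Moussu rigidity and the extension of transversally holomorphic automorphisms across singular points, via the uniform estimates recalled in Step~2 of the proof of Theorem~\ref{psiD} together with the $\F^\sharp$-invariance of the fibres of $\rho$ over singular points built into properties~(\ref{resrho})--(\ref{invfibers})—supplies the crossing. Granting those two assertions, Proposition~\ref{prop59} is a formal stalkwise assembly; the step demanding the most care is the surjectivity at nodal corners, where $\chi$ need not be holomorphic and the extension ultimately rests on the linearization and interpolation arguments of Steps~4 and~5 of the proof of Theorem~\ref{psiD}, so that the generalized-curve hypothesis is used throughout.
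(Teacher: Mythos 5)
Your proof follows the paper's own proof quite closely in its skeleton: the stalkwise reduction, the regular-point case via the exact sequence of Lemma~\ref{remfactgermes} combined with Lemma~\ref{aut-rho}(\ref{loccstsaf}), and the crossing of a singular point $p$ by means of the two isomorphisms $\un{\mr{Aut}}_{\F,\rho}^{Q^\point}(U)\iso\un{\mr{Aut}}_{\F,\rho}^{Q^\point}(U\setminus\mr{Sing}(\F^\sharp))$ (Lemma~\ref{aut-rho}(\ref{gstariso})) and $\un{\mr{Aut}}_{\F,\rho}^{Q^\point}(U\setminus\mr{Sing}(\F^\sharp))\iso\un\sym_{\F}^{Q^\point}(U\setminus\mr{Sing}(\F^\sharp))$ are exactly the arrows $b$ and $a$ of the commutative diagram in the paper's proof. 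Your injectivity argument at $p$ is the paper's. The divergence is in the last step of surjectivity: the paper closes the diagram by the injectivity of the arrow $c:\un{\mr{Aut}}_{\F}^{Q^\point}(U)/\un{\mr{Fix}}_{\F}^{Q^\point}(U)\hookrightarrow\un\sym_{\F}^{Q^\point}(U\setminus\mr{Sing}(\F^\sharp))$, which it extracts from the exact sequence~(\ref{FAS}), whereas you verify directly that $\psi=\chi\circ\phi^{-1}$ lies in $\un{\mr{Fix}}_{\F}^{Q^\point}(p)$. In the non-nodal case your verification (holomorphy at $p$ plus the closed condition $(\psi^*\omega)\wedge\omega\equiv 0$) is precisely the mechanism inside the proof of that lemma, so there you have merely inlined the paper's argument.

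At a nodal corner, however, your substitute argument has a genuine gap. You claim leaf-fixing extends across $p$ ``by continuity, the leaf-fixing property being closed and holding on the dense set $U\setminus\{p\}$''. But what you actually know is that $\psi$ fixes the leaves of $\F^\sharp\times Q$ on a neighborhood of $(U\setminus\{p\})\times\{u_0\}$ in $M_\F\times Q$, and at a nodal singularity $x\,dy-\alpha y\,dx=0$, $\alpha\in\R_{>0}\setminus\mb Q$, the saturation of such a neighborhood of the punctured $D$-side is confined to a horn of type $\{|y|\le C|x|^{\alpha}\}$: its complement $\{|y|>C|x|^{\alpha}\}$ has nonempty interior in every neighborhood of the corner, so the set on which leaf-fixing is established is \emph{not} dense near $(p,u_0)$, even though $U\setminus\{p\}$ is dense in $U\subset D$. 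This failure of propagation across a nodal separatrix is exactly the pathology the paper quarantines throughout (the holomorphy exemption at nodal corners in Definition~\ref{excellent}, the trivial edge groups in Definition~\ref{defAutQ}, and the disjoint-horn gluing of Step~5 and Remark~\ref{collanodal} in the proof of Theorem~\ref{psiD}). Nor can you fall back on the $\omega$-criterion at such a point, since the sheaf $\un{\mr{Aut}}_{\F}^{Q^\point}$ does not require holomorphy of its sections at nodal corners. To repair the step you should do what the paper does: derive the needed Fix-membership from the injectivity statement packaged in the exact sequence~(\ref{FAS}), rather than from a density argument.
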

\begin{proof}
We have to see that $\un{\mr{Aut}}_{\F,\rho}^{Q^\point}(p)\to\un\sym_{\F}^{Q^\point}(p)$ is an isomorphism for each $p\in D$. 
The case  $p\in D\setminus\mr{Sing}(\F^\sharp)$ follows from assertion (i) in Lemma~\ref{aut-rho} and the exact sequence (\ref{factgermes}) in Lemma~\ref{remfactgermes}. Next, we fix $p\in\mr{Sing}(\F^\sharp)$ and we take $[\phi_p]\in\un\sym_{\F}^{Q^\point}(p)$. There is a neighborhood $U$ of $p$ in $D$ and $\phi_U\in\un{\mr{Aut}}_{\F}^{Q^\point}(U)$ such that $[\phi_U]\mapsto[\phi_p]$. In the commutative diagram below
\[\xymatrix{ \tilde\phi_p\in\un{\mr{Aut}}_{\F,\rho}^{Q^\point}(p)\ar[r]& \un\sym_{\F}^{Q^\point}(p)\ni[\phi_p] \\ 
\tilde\phi_U\in \un{\mr{Aut}}_{\F,\rho}^{Q^\point}(U)\ar[r]\ar[u]\ar[d]_{b}^{\rotatebox{90}{\ensuremath{\sim}}}& 
\frac{\un{\mr{Aut}}_{\F}^{Q^\point}(U)}{\un{\mr{Fix}}_{\F}^{Q^\point}(U)}\ni[\phi_U]\ar[u]\ar@{^{(}->}[d]^{c} \\ 
 \un{\mr{Aut}}_{\F,\rho}^{Q^\point}(U\setminus\mr{Sing}(\F^\sharp))\ar[r]^{\sim}_{a}& \un\sym_{\F}^{Q^\point}(U\setminus\mr{Sing}(\F^\sharp))
}\]
the arrow $a$ is an isomorphism by the regular case already considered and the arrow $b$ is also an isomorphism by assertion (ii) of Lemma~\ref{aut-rho}.
Hence there is $\tilde\phi_U\in\un{\mr{Aut}}_{\F,\rho}^{Q^\point}(U)$ such that $[\tilde\phi_U]$ and $[\phi_U]$ are sent to the same element in $\un{\sym}_{\F}^{Q^\point}(U\setminus\mr{Sing}(\F^\sharp))$. Using the exact sequence (\ref{FAS}) we deduce that the arrow $c$ is injective and consequently $\tilde\phi_U$ is sent to $[\phi_U]$. By the commutativity of the top square the germ $\tilde{\phi}_p$ of $\tilde\phi_U$ at $p$ projects onto $[\phi_p]$. This shows that the composition~ (\ref{comp}) is surjective at $p$.
The injectivity of the composition~(\ref{comp}) at $p$ follows, as in the proof of assertion~(iii) in Lemma~\ref{aut-rho}, using the holomorphy of $\tilde\phi_U$ and the  injectivity at the regular points, which has already been shown.
\end{proof}

\begin{cor}\label{cor510}
If $\F$ is a generalized curve,  for any connected open set $U$ of $D$, the following assertions hold:
\begin{enumerate}[(i)]
\item\label{symloccst} The sheaf $\un\sym_{\F}^{Q^\point}$ is locally constant on $D\setminus\mr{Sing}(\F^\sharp)$;
\item\label{isosymCent} The morphism $g_{*}$ induces an isomorphism $\un\sym_{\F}^{Q^\point}(U)\simeq C^0_{Q^\point}(H_U)$;
\item\label{FixAutSym} We have the exact sequence:
\[1\to \un{\mr{Fix}}_{\F}^{Q^\point}(U)\to \un{\mr{Aut}}_{\F}^{Q^\point}(U)\to \un{\sym}_{\F}^{Q^\point}(U)\to 1\,.\]
\end{enumerate}
\end{cor}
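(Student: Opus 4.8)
The plan is to deduce all three assertions from Proposition~\ref{prop59}, which identifies $i_D^{-1}\un\sym_{\F}^{Q^\point}$ with the subsheaf $\un{\mr{Aut}}_{\F,\rho}^{Q^\point}$, together with the properties of the latter already collected in Lemma~\ref{aut-rho}. The key observation is that the isomorphism of Proposition~\ref{prop59} is realized by the composition
\[
\un{\mr{Aut}}_{\F,\rho}^{Q^\point}\hookrightarrow i_D^{-1}\un{\mr{Aut}}_{\F}^{Q^\point}\to i_D^{-1}\un\sym_{\F}^{Q^\point},
\]
so that it is compatible with the projection $\un{\mr{Aut}}_{\F}^{Q^\point}\to\un\sym_{\F}^{Q^\point}$; moreover, since a sheaf isomorphism induces bijections on sections over every open set, it yields isomorphisms $\un{\mr{Aut}}_{\F,\rho}^{Q^\point}(U)\iso\un\sym_{\F}^{Q^\point}(U)$ for every connected open $U\subset D$.

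For assertion (\ref{symloccst}) I would simply transport local constancy: by Lemma~\ref{aut-rho}(\ref{loccstsaf}) the sheaf $\un{\mr{Aut}}_{\F,\rho}^{Q^\point}$ is locally constant on $D\setminus\mr{Sing}(\F^\sharp)$, and being isomorphic to $i_D^{-1}\un\sym_{\F}^{Q^\point}$ it follows that $\un\sym_{\F}^{Q^\point}$ is locally constant there as well. For assertion (\ref{isosymCent}) I would combine the isomorphism $\un{\mr{Aut}}_{\F,\rho}^{Q^\point}(U)\iso\un\sym_{\F}^{Q^\point}(U)$ above with the isomorphism $g_{*}:\un{\mr{Aut}}_{\F,\rho}^{Q^\point}(U)\iso C^0_{Q^\point}(H_U)$ of Lemma~\ref{aut-rho}(\ref{gstariso}). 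Here one checks that $g_{*}$ descends to $\un\sym_{\F}^{Q^\point}$: by Lemma~\ref{remfactgermes} the kernel of $g_{*}$ on stalks is exactly $\un{\mr{Fix}}_{\F}^{Q^\point}$, so $g_{*}$ factors through the quotient, and this factorization is compatible with the composition displayed above. The induced map $\un\sym_{\F}^{Q^\point}(U)\to C^0_{Q^\point}(H_U)$ is therefore an isomorphism.

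The only assertion requiring genuine argument is (\ref{FixAutSym}). Left exactness is formal: $\un{\mr{Fix}}_{\F}^{Q^\point}$ is by definition the kernel subsheaf of the projection, so the kernel of $\un{\mr{Aut}}_{\F}^{Q^\point}(U)\to\un\sym_{\F}^{Q^\point}(U)$ is $\un{\mr{Fix}}_{\F}^{Q^\point}(U)$. The delicate point is surjectivity on the right, which does not follow from the presheaf quotient alone: a section of the sheafified quotient over $U$ need not a priori lift to a single automorphism over $U$, especially across the points of $\mr{Sing}(\F^\sharp)$ contained in $U$. This is exactly where Proposition~\ref{prop59} enters: the surjective composition factors as $\un{\mr{Aut}}_{\F,\rho}^{Q^\point}(U)\hookrightarrow\un{\mr{Aut}}_{\F}^{Q^\point}(U)\to\un\sym_{\F}^{Q^\point}(U)$, and since the composite is onto (being an isomorphism on sections over $U$), the projection $\un{\mr{Aut}}_{\F}^{Q^\point}(U)\to\un\sym_{\F}^{Q^\point}(U)$ is onto as well. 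I expect this surjectivity — namely the global lifting of a symmetry class over all of $U$ to a fibration-preserving automorphism — to be the main obstacle, and it is precisely what the introduction of the subsheaf $\un{\mr{Aut}}_{\F,\rho}^{Q^\point}$ in Proposition~\ref{prop59} was designed to overcome.
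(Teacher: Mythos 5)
Your proposal is correct and follows essentially the same route as the paper's proof: assertions (\ref{symloccst}) and (\ref{isosymCent}) are transported from Lemma~\ref{aut-rho}(\ref{loccstsaf})--(\ref{gstariso}) through the isomorphism $\un{\mr{Aut}}_{\F,\rho}^{Q^\point}\simeq i_D^{-1}\un\sym_{\F}^{Q^\point}$ of Proposition~\ref{prop59}, and surjectivity in (\ref{FixAutSym}) is obtained exactly as in the paper, from the fact that the composition $\un{\mr{Aut}}_{\F,\rho}^{Q^\point}(U)\hookrightarrow\un{\mr{Aut}}_{\F}^{Q^\point}(U)\to\un\sym_{\F}^{Q^\point}(U)$ is an isomorphism on sections. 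Your additional remarks (that $g_*$ descends to the quotient via Lemma~\ref{remfactgermes}, and that a stalkwise isomorphism of sheaves is bijective on sections over every open set) merely make explicit what the paper calls ``obvious.''
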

\begin{proof} Assertions (\ref{symloccst}) and (\ref{isosymCent}) are obvious from  the isomorphism $\un{\mr{Aut}}_{\F,\rho}^{Q^\point}\simeq \un\sym_{\F}^{Q^\point}$ and assertions (\ref{loccstsaf}) and (\ref{gstariso}) in Lemma~\ref{aut-rho}.
To check the exactness of the sequence in assertion (\ref{FixAutSym}) it only remains to show the surjectivity of $\un{\mr{Aut}}_{\F}^{Q^\point}(U)\to \un{\sym}_{\F}^{Q^\point}(U)$. This is so because the composition
\[\un{\mr{Aut}}_{\F,\rho}^{Q^\point}(U)\hookrightarrow \un{\mr{Aut}}_{\F}^{Q^\point}(U)\to \un\sym_{\F}^{Q^\point}(U)\]
is an isomorphism thanks to Proposition~\ref{prop59}.
\end{proof}

\subsection{Group-graph of transversal symmetries} 
 Let us again fix a  foliation $\F$ that is a generalized curve.
We consider the normal subgroup-graph
 $\fix^{Q^\point}_{\F}\subset\aut^{Q^\point}_{\F}$  defined by 
 \[\fix^{Q^\point}_{\F}(\star)=\aut^{Q^\point}_{\F}(\star)\cap\un{\fix}^{Q^\point}_{\F}(\star)\,,\
 \quad
 \star\in \Ve_{\A_\F}\cup\Ed_{\A_\F}\,,
\]
where $\un\fix_{\F}^{Q^\point}(\msf e)$ denotes $\un\fix_{\F}^{Q^\point}(D\cap D')$ if $\msf e=\langle D,D'\rangle\in\Ed_{\A_\F}$.
\begin{defin}
The \emph{group-graph of transversal symmetries} is the 
 quotient group-graph $\sym_{\F}^{Q^\point}$  defined by the group-graph exact sequence
\begin{equation}\label{sym}
1\to\fix_{\F}^{Q^\point}\to\aut_{\F}^{Q^\point}\stackrel{\pi_{\F}^{Q^\point}}{\longrightarrow} \mr{Sym}_{\F}^{Q^\point}={\aut}^{Q^\point}_{\F}/{\fix}^{Q^\point}_{\F}\to 1\,.
\end{equation} 
\end{defin}
\noindent For each invariant component $D\in\Ve_{\A_\F}$, using
 the exact sequence (\ref{FixAutSym}) in Corollary~\ref{cor510} with $U=D$,
we have a natural\footnote{
If $A\to A'$ is a morphism of sheaves of groups over $X$ sending a normal subgroup $F$ into $F'$  then for any open subset $U\subset X$ the following diagram is commutative:
\[\xymatrix{A(U)/F(U)\ar[r]\ar[d]&(A/F)(U)\ar[d]\\ A'(U)/F'(U)\ar[r]&(A'/F')(U)}\]} isomorphism:
\begin{equation}\label{symD}
\sym_{\F}^{Q^\point}(D)=\un\aut_{\F}^{Q^\point}(D)/\un\fix_{\F}^{Q^\point}(D)\iso\un\sym_{\F}^{Q^\point}(D)\,,
\end{equation}
when $\F$ is a generalized curve.

We check that if $(\mu,\phi):(P^\point,\G)\to(Q^\point,\F)$  
is a morphism in the category
$\mbf{Man^\point}\times\fol$,
then the morphism $(\mu,\phi)^*$
defined in (\ref{moraut}) sends the group-graph ${\fix}^{Q^\point}_{\F}$ into ${\fix}^{P^\point}_{\G}$ and it factorizes (see Remark~\ref{factgrgr})  as a  morphism of group-graphs over the graph morphism $\A_\phi:\A_\G\to\A_\F$ defined in (\ref{Aphi}), that we also denote by 
\[(\mu,\phi)^* :  {\mr{Sym}}_{\F}^{Q^\point} \to {\mr{Sym}}_{\G}^{P^\point}.\] 
 This allows to define  a contravariant functor from $\mbf{Man^\point}\times\fol$ to $\grgr$
\begin{equation*}\label{functSym}
\mr{Sym}:
(Q^\point,\F)\mapsto (\A_\F,\mr{Sym}^{Q^\point}_{\F})\,,
\quad
(\mu,\phi)\mapsto 
(\mu,\phi)^*\,.
\end{equation*}
The collection $\{\pi_{\F}^{Q^\point}\}$
 of quotient maps (\ref{sym}) defines a natural transformation
\begin{equation*}\label{natTrAutSym}
\aut\to\mr{Sym}\,.
\end{equation*}
By applying the functor $H^1:\grgr\to \mbf{Set}^\point$ to the morphisms $\pi_{\F}^{Q^\point}$
we obtain 
maps
\begin{equation}\label{auttosym}
H^1(\A_\F, {\aut}^{Q^\point}_{\F})^\point \to H^1(\A_\F,{\mr{Sym}}^{Q^\point}_{\F})^\point
\end{equation}
defining a natural transformation 
$H^1\circ \mr{Aut}\to H^1\circ \mr{Sym}\,.$
It follows immediately from Lemma~\ref{extension} below and Proposition~\ref{quoti} applied to the exact sequence (\ref{sym}) that:
\begin{prop}\label{isoH1AutSym}
For any germ of manifold $Q^\point$ and any generalized curve $\F$, the map (\ref{auttosym}) 
is bijective and consequently the natural transformation 
\[H^1\circ \mr{Aut}\to H^1\circ \mr{Sym}\]
is an isomorphism of contravariant functors from  $\mbf{Man^\point}\times\fol$ to $\mbf{Set}^\point$.
\end{prop}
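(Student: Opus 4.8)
The plan is to apply Proposition~\ref{quoti} to the short exact sequence of group-graphs~(\ref{sym}), taking $G'=\fix_\F^{Q^\point}$, $G=\aut_\F^{Q^\point}$, $G''=\mr{Sym}_\F^{Q^\point}$ and $p=\pi_\F^{Q^\point}$. The base graph $\A_\F$ is the dual graph of the exceptional divisor of the reduction of $\F$ and, being obtained by successive blow-ups, is a \emph{tree}, so Proposition~\ref{quoti} is applicable once we check its standing hypothesis: that every restriction map $\rho'{}^{\msf e}_D:\fix_\F^{Q^\point}(D)\to\fix_\F^{Q^\point}(\msf e)$ of the sub-group-graph $\fix_\F^{Q^\point}$ is surjective.

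To verify this surjectivity I would split the edges of $\A_\F$ according to the support of $\aut_\F^{Q^\point}$. For every edge $\msf e=\langle D,D'\rangle$ outside this support --- namely those of type~(\ref{trivialedge}) in Definition~\ref{defAutQ}, where $\msf e$ is adjacent to a dicritical component or where $D\cap D'$ is a regular or a nodal point of $\F^\sharp$ --- the group $\aut_\F^{Q^\point}(\msf e)$ is trivial, hence so is its subgroup $\fix_\F^{Q^\point}(\msf e)$, and the restriction map is onto for trivial reasons. For the remaining edges, of type~(\ref{endnn}), the required surjectivity is exactly the extension statement of Lemma~\ref{extension}: a leaf-fixing excellent automorphism over $Q^\point$ defined near the singular point $s=D\cap D'$ extends, along the invariant component $D$, to an element of $\fix_\F^{Q^\point}(D)$ inducing the prescribed germ at $s$. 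Since $\A_\F$ is a tree one needs this extension along either of the two components meeting at $s$, i.e. for both orientations of $\msf e$; this too is covered by Lemma~\ref{extension}.

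With both hypotheses in hand, Proposition~\ref{quoti} gives that $H^1(\pi_\F^{Q^\point})=H^1(p)$ is a bijection of pointed sets, which is precisely the asserted bijectivity of~(\ref{auttosym}). The functorial statement is then formal: by construction the family $\{\pi_\F^{Q^\point}\}$ is a natural transformation $\aut\to\mr{Sym}$ of contravariant functors $\mbf{Man^\point}\times\fol\to\grgr$ (its compatibility with the morphisms $(\mu,\phi)^*$ being recorded in~(\ref{moraut})), so applying the covariant functor $H^1:\grgr\to\mbf{Set}^\point$ produces the natural transformation $H^1\circ\mr{Aut}\to H^1\circ\mr{Sym}$, and componentwise bijectivity upgrades it to an isomorphism of functors. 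The only genuinely substantial input is Lemma~\ref{extension}; the cohomological descent through Proposition~\ref{quoti} and the naturality check are routine, and one could equally argue cut-component by cut-component using the product decomposition~(\ref{h1decSoDir}).
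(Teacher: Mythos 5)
Your proof is correct and follows essentially the same route as the paper, which deduces the proposition "immediately" from Proposition~\ref{quoti} applied to the exact sequence~(\ref{sym}) together with Lemma~\ref{extension}. The only difference is that you spell out the (implicit) verification that the surjectivity hypothesis also holds trivially on the edges outside the support of $\aut_\F^{Q^\point}$ and that the naturality is formal, which the paper leaves to the reader.
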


\begin{lema}\label{extension}
Assume that $\F$ is a generalized curve. For any edge $\msf e=\langle D,D'\rangle$ of $\A_\F$ with $D$ invariant, the restriction map 
${\fix}^{Q^\point}_{\F}(D)\to{\fix}^{Q^\point}_{\F}({\msf e})$ is surjective. 
\end{lema}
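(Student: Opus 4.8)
The plan is to prove surjectivity by exhibiting, for each germ in $\fix^{Q^\point}_{\F}(\msf e)$, an explicit preimage in $\fix^{Q^\point}_{\F}(D)$ obtained by damping the given germ to the identity as one travels along $D$. First I would dispose of the trivial cases. Writing $\{s\}=D\cap D'$, if $s$ is a regular point or a nodal singularity of $\F^\sharp$, or if $D'$ is dicritical, then Definition~\ref{defAutQ}\,(\ref{Ddicr})--(\ref{trivialedge}) makes $\aut^{Q^\point}_{\F}(\msf e)$, and a fortiori $\fix^{Q^\point}_{\F}(\msf e)$, reduce to the identity, so the restriction onto it is automatically surjective. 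Since $\F$ is a generalized curve, the only case left is that $s$ is a non-nodal, non-saddle-node singular point of $\F^\sharp$ with both $D$ and $D'$ invariant, which is the case I treat.

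The key preliminary step is to make the elements of $\fix$ explicit. From the proof of Lemma~\ref{remfactgermes}, a germ in $\un{\fix}^{Q^\point}_{\F}$ is exactly an automorphism over $Q^\point$, holomorphic at the singular points and transversely holomorphic elsewhere, which is the identity on the special fiber $M_\F\times\{u_0\}$ and satisfies $g_\ast(\Phi)=\mr{id}$; equivalently it is a family $(\Phi_u)_u$ of homeomorphisms, each carrying every point of $M_\F$ along its own $\F^\sharp$-leaf, with $\Phi_{u_0}=\mr{id}$. In particular an element of $\fix$ carries a trivial transverse component, and this is precisely what will free the extension from any holonomy constraint.

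Now take $\Phi\in\fix^{Q^\point}_{\F}(\msf e)=\un{\fix}^{Q^\point}_{\F}(s)$. Fixing a transversal factor $g_D$ and, say, a linearizing chart for $\F^\sharp$ at $s$ (the resonant case being handled identically through the first integral), with $D=\{y=0\}$ and $D'=\{x=0\}$, the leaf-preservation forces $\Phi$ on the regular part of $D$ to read $(x,y,u)\mapsto\big(X(x,u),\,y\,(X(x,u)/x)^{\alpha},\,u\big)$, where $\alpha\in\C$ is the ratio of eigenvalues, $X(x,u_0)=x$, and $X(\cdot,u)$ records the motion induced along $D$ itself. Because $Q^\point$ is a germ, for $u$ close to $u_0$ each $\Phi_u$ is uniformly close to the identity, so $X/x$ stays close to $1$ and the power $(X/x)^{\alpha}$ is single valued. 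I would then substitute for $X$ an interpolation $\wt X(x,u)$ with $\wt X(x,u)=X(x,u)$ for $|x|\le r_1$, $\wt X(x,u)=x$ for $r_2\le|x|\le 1$, and a monotone transition for $r_1\le|x|\le r_2$, exactly as in the gluing constructions of Remarks~\ref{collagedicr} and~\ref{collanodal} and of Steps~3 and~5 in the proof of Theorem~\ref{psiD}. The resulting map $\wt\Phi(x,y,u)=\big(\wt X(x,u),\,y\,(\wt X(x,u)/x)^{\alpha},\,u\big)$ preserves every leaf (a direct check gives $y/x^{\alpha}$ invariant), is transversely holomorphic since it is linear in the transverse variable $y$, is a homeomorphism over $Q^\point$ once $\wt X(\cdot,u)$ is chosen to be one, coincides with $\Phi$ near $s$, and equals the identity before reaching any other singular point of $\F^\sharp$ on $D$; across those points I keep $\wt\Phi=\mr{id}$. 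Patching the copy of $\Phi$ on a polydisc at $s$ with the identity on the rest of a neighbourhood of $D$ through this collar yields a section of $\un{\fix}^{Q^\point}_{\F}$ over a neighbourhood of $D$, i.e. an element of $\fix^{Q^\point}_{\F}(D)$, whose germ at $s$ is $\Phi$.

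The delicate point, and the reason the analogous statement is false for $\aut$ in place of $\fix$, is the following. An extension produced by genuinely following the leaves around $D\setminus\mr{Sing}(\F^\sharp)$ would be forced to commute with the holonomy representation $\mc H_D^{\F_{Q^\point}^{\mr{ct}\,\sharp}}$, and a general element of $\aut^{Q^\point}_{\F}(s)$ need not centralize the full holonomy group even when it centralizes the local one around $s$; this is exactly the obstruction measured by the transverse part. For an element of $\fix$ the transverse part is trivial, so no such equivariance is required and one is free to damp to the identity, sidestepping the obstruction entirely. Consequently the only genuine work left is the verification, in the manner of the cited remarks, that the interpolation $\wt X$ can be arranged so that $\wt\Phi$ is injective and bicontinuous; I expect this bookkeeping, rather than any conceptual difficulty, to be the main obstacle.
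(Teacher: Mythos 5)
Your overall strategy --- damping the leafwise displacement of $\Phi$ to the identity along a collar inside $D$, and the observation that elements of $\fix$ carry no transverse part and hence face no holonomy-equivariance obstruction --- is exactly the idea behind the paper's proof. But the concrete step your construction rests on is false: leaf preservation does \emph{not} force a germ in $\un{\fix}^{Q^\point}_{\F}(s)$ to have the product form $(x,y,u)\mapsto\big(X(x,u),\,y\,(X(x,u)/x)^{\alpha},\,u\big)$. Writing $\Phi_u=\Phi^Z_{\tau(\cdot,u)}$ as the flow of the tangent vector field $Z=x\partial_x+\alpha y\partial_y$ for a leafwise time $\tau(x,y,u)$, nothing prevents $\tau$ from depending on $y$: for instance $\tau=u\,y$ yields the legitimate element $\Phi(x,y,u)=\big(e^{uy}x,\,e^{\alpha uy}y,\,u\big)$ of $\un{\fix}^{Q^\point}_{\F}(s)$, whose first coordinate genuinely depends on $y$. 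Since surjectivity of $\rho_D^{\msf e}$ requires the extension to coincide with $\Phi$ \emph{as a germ at $s$}, interpolating only the one-variable map $X(x,u)$ cannot reproduce such a $\Phi$ near $s$; your $\wt\Phi$ extends only product-type elements. The repair is to interpolate the \emph{time} rather than the coordinate, which is what the paper does: it defines $\tau(x,y,u)=\int\frac{dx}{x}$ along the leaf from each point to its image (well defined and holomorphic near the compact annular set $\{\varepsilon\le|x|\le2\varepsilon,\,|y|\le\varepsilon\}$ because $\Phi$ is close to the identity for $u$ near $u_0$), sets $\xi=\Phi^Z_{\alpha(x)\tau}$ for a smooth cutoff $\alpha$, and takes $\phi=\xi^{-1}\circ\varphi$; this agrees with $\varphi$ near $s$, is the identity on the annulus, and hence extends by the identity along all of $D$, staying leaf-preserving throughout.

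There is a second gap: your reduction to a linearizing chart at $s$ (with the resonant case ``handled identically through the first integral'') is not available in general. Being a generalized curve only excludes saddle-nodes; the singularity at $s$ may be neither holomorphically linearizable nor normalizable (case (\ref{nResnLin}) of Lemma~\ref{modeles-locaux}), in which case no linearizing chart and no nonconstant holomorphic first integral exists. The paper's argument needs no normal form at all: it only uses coordinates in which $\F^\sharp$ is generated by $x\partial_x+yB(x,y)\partial_y$ with $B(0,0)\neq0$ (always possible, both local separatrices being components of the divisor), and the flow-with-damped-time construction above makes sense for this general vector field. So while you correctly identified why the lemma is true for $\fix$ and would fail for $\aut$, the central extension step as you wrote it does not go through and must be replaced by the time-function argument.
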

\begin{proof}
  At the point $\{s\}=D\cap D'$ we take local coordinates $(x,y):(M_\F,s)\to(\C^2,0)$  
such that the foliation $\F^\sharp$ is defined by a vector field
$x\partial_x+yB(x,y)\partial_y$ with $B(0,0)\neq 0$. Let us consider $\Phi\in{\fix}^{Q^\point}_{\F}({\msf e})=\un\fix^{Q^\point}_{\F}(s)$. 

Let $u=(u_1,\ldots,u_q)$ be a centered   coordinate system on $Q^\point$. In the chart $\chi=(x,y,u)$ the foliation $\F_{Q^\point}^{\mr{ct}\,\sharp}$ is given by the vector field $Z=x\partial_x+yB(x,y)\partial_y$ and the foliation $\F^\sharp\times Q$ is defined by  $yB(x,y)dx-xdy=0$. Let us denote by $\varphi=\chi\circ \Phi\circ \chi^{-1}$ the expression of $\Phi$ in this chart. Since $\varphi(x,y,0)=(x,y,0)$ and the points $(x,y,u)$ and $\varphi(x,y,u)$ belong to the same leaf $L_{x,y,u}$ of $\F^\sharp\times Q$ the function $\tau(x,y,t)=\int_{(x,y,t)}^{\varphi(x,y,t)}\frac{dx}{x}\big|_{L_{x,y,u}}=\int_{(x,y,t)}^{\varphi(x,y,t)}\frac{dy}{yB(x,y)}\big|_{L_{x,y,u}}$ is well defined and holomorphic in an open neighborhood $\Omega$ of $C=\{(x,y,u)\,:\, \varepsilon\le|x|\le 2\varepsilon,\ |y|\le\varepsilon,\ |u|\le\delta\}$ for $0<\delta\ll \varepsilon$ small enough, moreover $\tau(x,y,0)=0$. By definition, the flow $\Phi^Z_t$ of $Z$ satisfies $\Phi^Z_{\tau(p)}(p)=\varphi(p)$ for $p\in C$. Let $\alpha:\C\to\R$ be a $\mathcal C^\infty$ function with compact support on $x(\Omega)$, that is equal to $1$ in a neighborhood of $\{\varepsilon\le |x|\le2\varepsilon\}$. The map $p\mapsto \xi(p):=\Phi^Z_{\alpha(x(p))\tau(p)}(p)$ is a $\mathcal C^\infty$ diffeomorphism, because its restriction to $u=0$ is the identity and moreover it is a local diffeomorphism as it can be easily checked by computing its Jacobian matrix. Clearly the map $\phi=\xi^{-1}\circ\varphi$ coincides with $\varphi$ on a neighborhood of $s$, it preserves the codimension $1$ foliation $\F^\sharp\times P$ and $\phi(x,y,u)=(x,y,u)$ for $\varepsilon\le |x|\le2\varepsilon$. Thus, $\Phi$ extends to a neighborhood of $D$ as the identity and defines an element of ${\fix}^{Q^\point}_{\F}(D)$.
\end{proof}

Now we will give an explicit expression of the group-graph $\sym_{\F}^{Q^\point}$ which will depend on the choice of 
the following additional data:

\begin{defin}\label{geometric-system}
A \emph{geometric system} for an invariant component $D$ of $\mc E_\F$ consists in:
\begin{itemize}
\item a point 
$o_D\in D\setminus(\mr{Sing}(\mc E_\F)\cup\mr{Sing}(\F^\sharp))$ and a germ of holomorphic submersion $g:(M_\F,  o_D)\to(\C,0)$
which is constant along the leaves of $\F^\sharp$;
\item a collection $\{U_p\}_{p\in\mr{Sing}(\F^\sharp)\cap D}$ of  connected and simply connected open subsets of $D$ such that $U_p\cap\mr{Sing}(\F^\sharp)=\{p\}$ and $ o_D\in\bigcap_{p\in\mr{Sing}(\F^\sharp)\cap D}U_p$. 
\end{itemize}
For $\ge=\langle D,D'\rangle$ with $D\cap D'=\{p\}$
we denote by 
\begin{equation}\label{hHD}
h_{D,\ge}\in H_D\subset\DiffQ
\end{equation}
the holonomy of $\F_{Q^\point}^{\mathrm{ct}\,\sharp}$ along of a path in $U_p\setminus\{p\}$ of index $1$ with respect to~$p$, which belongs to the holonomy group $H_D$ image of the morphism $\mc H_D^{\F_{Q^\point}^{\mr{ct}\,\sharp}}$ in~(\ref{holonomiefam}).
\end{defin}

\begin{prop}\label{G} Assume that $\F$ is a generalized curve.
If $D\in\Ve_{\A_\F}$ and $\ge=\langle D,D'\rangle\in\Ed_{\A_\F}$, after choosing a geometric system for $D$, the morphism (\ref{g*}) with $m= o_D$ induces isomorphisms
\[G_{D,\ge}:\sym_{\F}^{Q^\point}(\ge)\iso C^0_{Q^\point}(h_{D,\ge})\quad\text{and}\quad G_{D}:\sym_{\F}^{Q^\point}(D)\iso C^0_{Q^\point}(H_D)\,.\]
Under these isomorphisms the restriction map $\sym_{\F}^{Q^\point}(D)\to\sym_{\F}^{Q^\point}(\ge)$ is just the inclusion $C^0_{Q^\point}(H_D)\hookrightarrow C^0_{Q^\point}(h_{D,\ge})$.
\end{prop}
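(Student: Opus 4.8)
The plan is to deduce both isomorphisms and their compatibility from the sheaf-theoretic description obtained in Proposition~\ref{prop59}, Lemma~\ref{aut-rho} and Corollary~\ref{cor510}, reading off the two sides through the single submersion $g$ at the base point $o_D$ of the geometric system. For the vertex the isomorphism (\ref{symD}) identifies $\sym_{\F}^{Q^\point}(D)$ with $\un\sym_{\F}^{Q^\point}(D)$, and Corollary~\ref{cor510}(\ref{isosymCent}) with $U=D$ shows that $g_{*}$ induces $\un\sym_{\F}^{Q^\point}(D)\iso C^0_{Q^\point}(H_D)$, where $H_D$ is precisely the holonomy group of $\F^{\mr{ct}\,\sharp}_{Q^\point}$ along $D$ based at $o_D$. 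Composing gives $G_D$.

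For the edge $\ge=\langle D,D'\rangle$ with $D\cap D'=\{p\}$ (the substantive case being when $p$ is a genuine, i.e. non-nodal and non-regular, singularity of $\F^\sharp$, so that $\ge$ lies in the support) I would first identify $\sym_{\F}^{Q^\point}(\ge)$ with the stalk $\un\sym_{\F}^{Q^\point}(p)$. Indeed, Definition~\ref{defAutQ}(\ref{endnn}) gives $\aut_{\F}^{Q^\point}(\ge)=\un\aut_{\F}^{Q^\point}(p)$ and by construction $\fix_{\F}^{Q^\point}(\ge)=\un\fix_{\F}^{Q^\point}(p)$; since the stalk functor turns the quotient of sheaves of groups into the quotient of stalks, $\sym_{\F}^{Q^\point}(\ge)=\un\aut_{\F}^{Q^\point}(p)/\un\fix_{\F}^{Q^\point}(p)=\un\sym_{\F}^{Q^\point}(p)$. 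Through the sheaf isomorphism of Proposition~\ref{prop59} this equals $\un{\aut}_{\F,\rho}^{Q^\point}(p)$. I would then argue that the restriction $\un{\aut}_{\F,\rho}^{Q^\point}(U_p)\to\un{\aut}_{\F,\rho}^{Q^\point}(p)$ is an isomorphism: injectivity is Lemma~\ref{aut-rho}(\ref{respamaut}), while surjectivity follows because $U_p$ is connected, simply connected and meets $\mr{Sing}(\F^\sharp)$ only in $p$, so that every smaller connected neighbourhood $V$ of $p$ satisfies $\pi_1(V\setminus\{p\})\iso\pi_1(U_p\setminus\{p\})\cong\Z$; by Lemma~\ref{aut-rho}(\ref{gstariso}) the sections are then governed by the same holonomy data over $V$ and over $U_p$, so a germ at $p$ spreads to all of $U_p$. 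Finally Lemma~\ref{aut-rho}(\ref{gstariso}) with $U=U_p$ yields $g_{*}:\un{\aut}_{\F,\rho}^{Q^\point}(U_p)\iso C^0_{Q^\point}(H_{U_p})$, and $\pi_1(U_p\setminus\{p\})$ being generated by a single loop of index $1$ around $p$ we get $H_{U_p}=\langle h_{D,\ge}\rangle$, hence $C^0_{Q^\point}(H_{U_p})=C^0_{Q^\point}(h_{D,\ge})$. Chaining these maps defines $G_{D,\ge}$.

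The compatibility with the restriction map is then formal. The map $\sym_{\F}^{Q^\point}(D)\to\sym_{\F}^{Q^\point}(\ge)$ is induced by the sheaf restriction of $\un\sym_{\F}^{Q^\point}$ from $D$ to the stalk at $p$, and since all the identifications above use the same submersion $g$ at the common point $o_D\in U_p$ and restriction of germs commutes with the formation of $g_{*}$, a transverse symmetry whose $g_{*}$-image centralizes the whole group $H_D$ in particular centralizes the generator $h_{D,\ge}$ and is carried to the very same element of $\DiffQzero$. Thus, under $G_D$ and $G_{D,\ge}$, the restriction map becomes precisely the inclusion $C^0_{Q^\point}(H_D)\hookrightarrow C^0_{Q^\point}(h_{D,\ge})$.

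The step I expect to be the main obstacle is the surjectivity of $\un{\aut}_{\F,\rho}^{Q^\point}(U_p)\to\un{\aut}_{\F,\rho}^{Q^\point}(p)$, that is, showing that a germ of $\rho$-preserving automorphism at the singular point $p$ genuinely extends over the whole chosen neighbourhood $U_p$. This rests on the Mattei--Moussu-type rigidity already invoked in Lemma~\ref{aut-rho}(\ref{gstariso}) together with the invariance of the local fundamental group around $p$ under shrinking; once it is secured, everything else is bookkeeping between stalks, sections and centralizers.
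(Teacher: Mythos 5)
Your proof is correct and follows essentially the same route as the paper: both identify $\sym_{\F}^{Q^\point}(\ge)$ with the stalk $\un\sym_{\F}^{Q^\point}(p)$, pass to sections over $U_p$, and read everything through $g_*$ via Proposition~\ref{prop59}, Lemma~\ref{aut-rho} and Corollary~\ref{cor510}, with $H_{U_p}=\langle h_{D,\ge}\rangle$ yielding $C^0_{Q^\point}(h_{D,\ge})$ and the case $U=D$ yielding $C^0_{Q^\point}(H_D)$. The only difference is presentational: where the paper compresses the stalk-versus-sections identification into a citation of Corollary~\ref{cor510}(\ref{symloccst}), you justify it explicitly (injectivity from Lemma~\ref{aut-rho}(\ref{respamaut}), surjectivity from the stabilization of $\pi_1(V\setminus\{p\})\cong\Z$ under shrinking neighbourhoods of $p$), which is a sound filling-in of the same argument rather than a different one.
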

\begin{proof}
We have:
$\sym_{\F}^{Q^\point}(\ge)=\un\sym_{\F}^{Q^\point}(\ge)\simeq\un\sym_{\F}^{Q^\point}(U_p)$, thanks to assertion (\ref{symloccst}) of Corollary~\ref{cor510}, where $\{p\}=D\cap D'$. By assertion (\ref{isosymCent}) in 
Corollary~\ref{cor510} with $U=U_p$, $g_*$ induces an isomorphism
$\un\sym_{\F}^{Q^\point}(U_p)\simeq C^0_{Q^\point}(h_{D,\ge})$. The second isomorphism follows immediately from (\ref{symD}) and assertion (\ref{isosymCent}) of Corollary~\ref{cor510} with $U=D$.
\end{proof}
\bigskip

\section{Finite type foliations and infinitesimal transversal symmetries}\label{sectFinTypeInfTrSym}

\subsection{Finite type foliations}\label{secft} Given a foliation $\F$ which is a generalized curve,  
we will say that a vertex $D$, resp. an edge $\langle D,D'\rangle$, belonging to a cut-component $\A_\F^\alpha$, $\alpha\in \mc A$, of  $\A_\F$ (see Remark \ref{suppAut}) is \emph{red for} $\F$ if, using the notations in (\ref{hHD}) with $Q=\{u_0\}$,
the holonomy group $H_D$ of $\F^\sharp$ is not finite, resp. the holonomy diffeomorphism $h_{D,\ge}$ (or equivalently $h_{D',\ge}$) is not periodic.
Classically a vertex $D$, resp. an edge $\langle D,D'\rangle$, is red if every holomorphic first integral of $\F^\sharp$ defined in a neighborhood of $D$, resp. $D\cap D'$, is constant.\\

Notice that the \emph{red part} $\msf{R}^\alpha_{\F}$ of $\A^\alpha_{\F}$ is a sub-graph. When it is connected and non-empty,   we consider  the partial order relation $\prec_{\AR^\alpha_{\F}}$ on $\Ve_{\A_{\F}^\alpha}$ defined in Subsection \ref{Subsecfunctpruning}. When
$\AR_{\F}^\alpha=\emptyset$ we will consider the partial order relation $\prec_{\{v\}}$  on $\Ve_{\A_{\F}^\alpha}$ defined by the subgraph $\{v\}$ reduced to some single vertex $v$.

\begin{defin}\label{def-type-fini} We say that $\F$ is \emph{of finite type} if for each $\alpha\in \mc A$ one of the following conditions holds:
\begin{enumerate}[(i)]
\item $\AR^\alpha_{\F}\neq\emptyset $ is connected and for any edge $\msf e=\langle D,D'\rangle\in(\Ed_{\A_{\F}^\alpha}\setminus \Ed_{\AR_{\F}^\alpha})$ with $D' \prec_{\AR^\alpha_{\F}} D$, the holonomy group $H_D$ is generated by the holonomy map $h_{D,\msf e}\,$;
\item $\AR^\alpha_{\F}=\emptyset$ and $\A_{\F}^\alpha$ contains a vertex $v$ such that we have: $H_D=\langle h_{D,\msf e}\rangle$ for any edge $\msf e=\langle D,D'\rangle\in\Ed_{\A_{\F}^\alpha}$ with 
$D'\prec_{\{v\}} D$. 
\end{enumerate}
We will denote by $\mbf{Fol}_\mbf{ft}\subset\fol$ the full subcategory of finite type foliations.
\end{defin}

When $\F$ is of finite type,  for every germ of manifold~$Q^\point$ the subgraph $\AR^\alpha_{\F}$ is  $\sym_{\F}^{Q^\point}$-repulsive  in $\A^\alpha_{\F}$ in the meaning of Section \ref{Subsecfunctpruning}. Indeed for $D\in \Ve_{\A^\alpha_{\F}}$ and $\msf e=\langle D,D'\rangle\in \Ed_{\A_{\F}^\alpha}$,  thanks to Proposition~\ref{G}, we have
 isomorphisms ${\mr{Sym}}^{Q^\point}_{\F}(D)\simeq C^0_{Q^\point}(H_D)$ and ${\mr{Sym}}^{Q^\point}_{\F}(\msf e)\simeq C^0_{Q^\point}(h_{D,\msf e})$. 
As we will see later
the cohomology of $\sym_{\F}^{Q^\point}$  is given by its restriction to  the subgraph 
\[\AR_{\F}:=\bigsqcup_{\alpha\in\mc A}\AR_\F^\alpha\subset\A_\F.\]

\begin{defin}\label{redsym}
We call \emph{restricted group-graph of transversal symmetries} the  group-graph ${\mr{RSym}}^{Q^\point}_{\F}=r_\F^*\sym_\F^{Q^\point}$ 
over $\AR_\F$ defined as the pull-back by the inclusion $r_\F:\AR_\F\hookrightarrow\A_\F$:
\[
{\mr{RSym}}^{Q^\point}_{\F}(\star)={\mr{Sym}}^{Q^\point}_{\F}(\star)\,,\qquad \star\in \Ve_{\AR_{\F}}\cup\Ed_{\AR_{\F}}\,,
\]
\end{defin}

Notice that for  any morphism  $\phi:\G\to\F$ in the category $\fol$, the graph isomorphism $\A_\phi:\A_\G\to\A_\F$ restricts to a graph isomorphism $\AR_\phi:\AR_{\G}\iso \AR_{\F}$.

If $\mu:P^\point \to Q^\point$ is a morphism in $\mbf{Man}^\point$, we consider the left diagram of group-graphs morphisms over the right diagram of graph morphisms: 
\[\xymatrix{\mr{Sym}_\F^{Q^\point}\ar[r]^{(\mu,\phi)^*}\ar[d]_{\imath_{r_\F}}\ar[rd]^F&\mr{Sym}_\G^{P^\point}\ar[d]^{\imath_{r_\G}}\\ \mr{RSym}_\F^{Q^\point}\ar@{-->}[r]^{\bar F}&\mr{RSym}_\G^{P^\point}}\qquad\begin{array}{c}{}\\[10mm] \text{over}\end{array}\qquad\xymatrix{\A_\F^{\vphantom{Q^\point}}&\ar[l]_{\A_\phi}\A_\G^{\vphantom{Q^\point}}\\ \AR_\F^{\vphantom{Q^\point}}\ar@{^{(}->}[u]^{r_\F}&\ar@{..>}[l]_{\AR_\phi}\ar@{^{(}->}[u]_{r_\G}\AR_\G^{\vphantom{Q^\point}}\ar[lu]_f}\]
where $\iota_{r_\F}$ and $\iota_{r_\G}$ denote the canonical morphisms, see Definition \ref{pbgrgr}.
Since $\A_\phi(\AR_\G)\subset \AR_\F$, the morphism $F=\imath_{r_\G}\circ(\mu,\phi)^*$ over $f=\A_\phi\circ r_\G$ factorizes 
through $\imath_{r_\F}$, according to Remark~\ref{factorization}, and defines a morphism of group-graphs $\bar F:\mr{RSym}_\F^{Q^\point}\to\mr{RSym}_\G^{Q^\point}$ over $\AR_\phi$. 
By abuse of notation we will denote $\bar F$ as $(\mu,\phi)^*$.
This allows  to consider the contravariant functor
\[\mr{RSym}: \mbf{Man^\point}\times\fol \to \grgr,\quad (Q^\point, \F)\mapsto (\AR_\F,\mr{RSym}^{Q^\point}_{\F}),\quad(\mu,\phi)\mapsto(\mu,\phi)^*\,.\]
The collection of canonical morphisms $\imath_{r_\F}:\sym_\F^{Q^\point}\to\mr{RSym}_\F^{Q^\point}$ of group-graphs over the graph morphisms $r_\F:\AR_\F\hookrightarrow \A_\F$ defines
a natural transformation
\begin{equation*}\label{NatTrRsym}
R : \mr{Sym}\to \mr{RSym}
\end{equation*}
between contravariant functors from 
$\mbf{Man^\point}\times\fol$ to $\grgr$. 
It induces a natural transformation 
\begin{equation}\label{H1NatTrRsym}
\mc R:=H^1(R) : H^1\circ \mr{Sym}\to H^1\circ \mr{RSym}
\end{equation}
between contravariant functors
from $\mbf{Man^\point}\times\fol$ to $\mbf{Set}^\point$.
By applying (\ref{h1decSoDir}) and Theorem~\ref{pruning}
 to each subtree $\AR_{\F}^\alpha\subset\A_{\F}^\alpha$, $\alpha\in\mc A$,
we directly obtain:

\begin{teo}\label{aut-Raut/fix} For any germ of manifold $Q^\point
$ and any  finite type foliation which is a generalized curve, the map
\[\mc R_{\F}^{Q^\point} : 
H^1(\A_\F,{\mr{Sym}}^{Q^\point}_{\F})^\point\iso H^1(\AR_\F,{\mr{RSym}}^{Q^\point}_{\F})^\point
\]
is bijective and  the natural transformation  $\mc R$ considered in (\ref{H1NatTrRsym})   is an isomorphism of  contravariant functors when restricted to the subcategory $\mbf{Man^\point}\times\mbf{Fol}_\mbf{ft}$.
\end{teo}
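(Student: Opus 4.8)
The plan is to reduce the statement to a componentwise application of Theorem~\ref{pruning} over the cut-components $\A_\F^\alpha$ of $\A_\F$, once I have recorded that the finite type hypothesis is precisely what makes each red part repulsive for $\sym_\F^{Q^\point}$. First I would pin down this repulsiveness. By Proposition~\ref{G}, for an invariant vertex $D$ and an edge $\msf e=\langle D,D'\rangle$ the restriction map $\rho_D^{\msf e}:\sym_\F^{Q^\point}(D)\to\sym_\F^{Q^\point}(\msf e)$ is carried by the isomorphisms $G_D$ and $G_{D,\msf e}$ to the inclusion $C^0_{Q^\point}(H_D)\hookrightarrow C^0_{Q^\point}(h_{D,\msf e})$, which is surjective exactly when $H_D=\langle h_{D,\msf e}\rangle$. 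In case (i) of Definition~\ref{def-type-fini} this equality holds for every edge $\msf e=\langle D,D'\rangle$ of $\A_\F^\alpha$ with $D'\prec_{\AR_\F^\alpha}D$ and $\msf e\notin\Ed_{\AR_\F^\alpha}$; since an edge internal to $\AR_\F^\alpha$ joins two $\prec_{\AR_\F^\alpha}$-incomparable vertices and so imposes no surjectivity requirement, these are exactly the edges covered by the repulsiveness condition, and $\AR_\F^\alpha$ is $\sym_\F^{Q^\point}$-repulsive in $\A_\F^\alpha$. In case (ii) the same computation with the single-vertex subtree $\{v\}$ shows $\{v\}$ is $\sym_\F^{Q^\point}$-repulsive in $\A_\F^\alpha$.

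Next I would decompose both cohomology sets along $\mc A$. Because $\sym_\F^{Q^\point}=\aut_\F^{Q^\point}/\fix_\F^{Q^\point}$ is a quotient, its support is contained in the cut-graph $\mr{supp}(\aut_\F^{Q^\point})$; hence Remark~\ref{aretes-support} applies to delete every edge outside the cut-graph, and, exactly as for (\ref{h1decSoDir}), one obtains
\[H^1(\A_\F,\sym_\F^{Q^\point})=\prod_{\alpha\in\mc A}H^1(\A_\F^\alpha,\sym_\F^{Q^\point}).\]
On the other side the red parts $\AR_\F^\alpha$ are pairwise disjoint subtrees with no connecting edges, so
\[H^1(\AR_\F,\mr{RSym}_\F^{Q^\point})=\prod_{\alpha\in\mc A}H^1(\AR_\F^\alpha,\mr{RSym}_\F^{Q^\point}),\]
where the factors indexed by those $\alpha$ with $\AR_\F^\alpha=\emptyset$ are trivial.

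Then I would conclude with Theorem~\ref{pruning} factor by factor. For each $\alpha$ of type (i), the repulsiveness established above yields a bijection $H^1(\A_\F^\alpha,\sym_\F^{Q^\point})\iso H^1(\AR_\F^\alpha,r^*\sym_\F^{Q^\point})=H^1(\AR_\F^\alpha,\mr{RSym}_\F^{Q^\point})$ induced by the canonical morphism $\imath_{r_\F}$; for each $\alpha$ of type (ii), the same theorem applied with the subtree $\{v\}$ gives $H^1(\A_\F^\alpha,\sym_\F^{Q^\point})\iso H^1(\{v\},\cdot)=1$, matching the trivial factor on the right. The product of these bijections is precisely $\mc R_\F^{Q^\point}=H^1(\imath_{r_\F})$, which is therefore bijective. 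Finally, since $\mc R=H^1(R)$ is obtained by applying the covariant functor $H^1$ to the natural transformation $R:\mr{Sym}\to\mr{RSym}$, cf.~(\ref{H1NatTrRsym}), it is automatically a natural transformation of contravariant functors, and being componentwise bijective over $\mbf{Man^\point}\times\mbf{Fol}_\mbf{ft}$ it is an isomorphism of functors there.

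The only real obstacle is bookkeeping rather than substance: one must match the repulsiveness clause of Theorem~\ref{pruning} with the precise edges singled out in Definition~\ref{def-type-fini} (tracking which endpoint is the one farther from the red part and checking that edges internal to $\AR_\F^\alpha$ contribute no constraint), and one must treat the degenerate case $\AR_\F^\alpha=\emptyset$ so that an empty red part on the right is consistent with pruning $\A_\F^\alpha$ down to a single vertex carrying trivial $H^1$.
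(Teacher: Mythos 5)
Your proposal is correct and follows essentially the same route as the paper, which establishes (just after Definition~\ref{def-type-fini}, via Proposition~\ref{G}) that finite type makes each red part $\AR_\F^\alpha$ repulsive for $\sym_\F^{Q^\point}$, and then invokes the decomposition (\ref{h1decSoDir}) together with Theorem~\ref{pruning} on each cut-component. Your write-up merely makes explicit the bookkeeping the paper leaves implicit (the incomparability of endpoints of edges internal to $\AR_\F^\alpha$, the degenerate case $\AR_\F^\alpha=\emptyset$ handled by pruning to $\{v\}$, and the transfer of the support decomposition from $\aut$ to its quotient $\sym$), all of which is consistent with the paper's argument.
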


We will see in the next section that 
the group-graph ${\mr{RSym}}^{Q^\point}_{\F}$  is abelian, so that
the two  functors  in (\ref{H1NatTrRsym}) 
restricted to $\mbf{Man^\point}\times\mbf{Fol}_\mbf{ft}$ are  isomorphic and  take values in the category $\mbf{Ab}$ of abelian groups, which can be seen as a subcategory of $\mbf{Set}^\point$ by pointing by zero, see Section~\ref{cohomology}. 

\subsection{Sheaf of infinitesimal transversal symmetries}\label{subsecinfsym}
Given a foliation $\F$ let us consider now the following sheaves  $\un {\mc X}_{\F}\subset\un{\mc B}_{\F}$ over $\mc E_\F$  of \emph{tangent} and \emph{basic} holomorphic vector fields of $\F^\sharp$: the stalk $\un{\mc B}_{\F}(m)$ of $\un{\mc B}_{\F}$ at $m\in\mc E_\F$ is the $\C$-vector space of germs at $m$ of holomorphic vector fields in $M_\F$  leaving invariant the foliation $\F^\sharp$ and the divisor $\mc E_\F$; $\un {\mc X}_{\F}(m)$ is the subspace of $\un{\mc B}_{\F}(m)$ consisting of vector fields tangent to $\F^\sharp$.  The quotient sheaf $\un {\mc T}_{\F}:= \un{\mc B}_{\F} / \un {\mc X}_{\F}$ is called \emph{sheaf of infinitesimal transversal symmetries of~$\F^\sharp$}.\\

Similarly, given $Q^\point=(Q,u_0)$ a germ of manifold, we  define  $\un{\mc B}^{Q^\point}_{\F}$ the sheaf over $\mc E_\F$ of $\mc{O}_{Q,u_0}$-modules whose stalks  are the spaces $\un{\mc B}^{Q^\point}_{\F}(m)$ of germs at $(m,u_0)$ of holomorphic  vector fields in $M_\F\times Q$ leaving invariant the constant foliation $\F^{\mr{ct}\,\sharp}_{Q^\point}$ and the divisor $\mc E_\F\times Q$, that are vertical (i.e. tangent to the fibers of the projection $M_\F\times Q\to Q$) and zero on the special fiber $M_\F\times \{u_0\}$;   $\un {\mc X}^{Q^\point}_{\F}\subset\un{\mc B}^{Q^\point}_{\F}$ is the subsheaf  consisting of vector fields which are tangent to $\F^{\mr{ct}\,\sharp}_{Q^\point}$ 
and the quotient sheaf
\[ 
\un {\mc T}^{Q^\point}_{\F}:= \un{\mc B}^{Q^\point}_{\F} / \un {\mc X}^{Q^\point}_{\F}
 \]
is called  the \emph{sheaf of infinitesimal transversal symmetries of $\F^{\mr{ct}\,\sharp}_{Q^\point}$}.
Notice that,  
if as usual we denote by $\mf M_{Q,u_0}$ the maximal ideal of $\mc O_{Q,u_0}$, we have:
\[\un{\mc B}_{\F}^{Q^\point}\otimes_{\mc{O}_{Q,u_0}}(\mc{O}_{Q,u_0}/\mf M_{Q^\point})=\{0\}\neq\un{\mc B}_{\F}\,.\] 

We will give local expressions for the stalks $\un {\mc T}_{\F}(m)$ and  $\un {\mc T}^{Q^\point}_{\F}(m)$ at a point $m$ in an  invariant component $D$ of $\mc E_\F$. 
Let us fix in $M_\F$ a  local chart $z=(z_1, z_2):\Omega\iso \mb D_{r}^{2}$
satisfying
\begin{equation*}
r>1\,,\quad 
z(m)=(0,0)\,,\quad
D=\{z_2=0\}\,,\quad
\mc E_\F=\{z_1^\epsilon z_2=0\}\,,\quad \epsilon\in\{0,1\} \,.
\end{equation*}
We  suppose that $\overline{\Omega}\cap\mr{Sing}(\F^\sharp)$ is either empty or reduced to $\{m\}$. 
We also fix a chart $u : \Omega'\iso\mb D_{\eta}^q$, $\eta>0$,   on $Q^\point$ with $u(u_0)=0$.

Let us denote by  $V_m$, resp. by  $V_m^{Q^\point}$,   the space of germs of vector fields $Z$ in  the submanifold $\{z_1=1\}$ of 
$\Omega$, resp. of $\Omega\times \Omega'$, at the point of coordinates $(1,0)$,  resp.  $(1,0,\ldots,0)$,  
that satisfy: (a) $Z=0$ when $z_2=0$, and (b) $h_{m\ast}(Z)=Z$ where $h_m$ is the classical holonomy map of $\F^\sharp$, resp. of $\F_{Q^\point}^{\mr{ct}\,\sharp}$, along the loop  $z(t)=(e^{2\pi t i}, 0) $,  resp. $z(t)=(e^{2\pi t i}, 0)$, $u(t)=0$, $t\in[0,1]$, realized on the transverse manifold $\{z_1=1\}$.

If $Y$ is a vector field on an open set $U\subset M_\F$ we will consider the \emph{constant vertical extension}  $Y^{\mr{ct}}_{Q^\point}$ on $U\times Q$, i.e. the unique vertical vector field on $U\times Q$  related to $Y$ by the projection $U\times Q\to U$.

\begin{lema}\label{modeles-locaux}
Assume that $\F$ is a generalized curve.
 With the previous notations we have:
 \begin{enumerate}
  \item\label{nResnLin}  if $\F^\sharp$  at $m$  is singular and it is either (a) non-resonant, non-linearizable but formally linearizable or (b) resonant  non-formally linearizable nor normalizable, then:
 \[
  \un {\mc T}_{\F}(m) = \{0\}\,,
\quad 
\un {\mc T}^{Q^\point}_{\F}(m)=\{0\}\,,
\quad
V_m=  \{0\}\,,
\quad
V_m^{Q^\point}= \{0\}
\,;\]
 \item\label{casnIP} if $\F^\sharp$ at $m$ is not as in case (\ref{nResnLin}) and any germ of holomorphic first integral of $\F^\sharp$ at $m$ is constant, then we may choose the coordinates $z_1, z_2$ so that 
  \[
 \un {\mc T}_{\F}(m) = \C\left[Z \right]\,,
\quad 
\un {\mc T}^{Q^\point}_{\F}(m)= \mf M_{Q,u_0}\left[Z^{\mr{ct}}_{Q^\point} \right]\,,
 \]
\[
V_m=  \C\cdot Z|_{\{z_1=1\}}\,,
\quad
V_m^{Q^\point}=  \mf M_{Q,u_0}\cdot Z^{\mr{ct}}_{Q^\point}|_{\{z_1=1\}}\,,
\]
where $Z^{\mr{ct}}_{Q^\point}|_{\{z_1=1\}}$ denotes the restriction of $Z^{\mr{ct}}_{Q^\point}$ to ${\{z_1=1\}}$ and $Z$ is the following vector field on $\Omega$:
\begin{enumerate}
\item\label{caslin}  {$Z= z_2\DD{z_2}$ } when   $\F^\sharp$  is  linearizable at $m$, %
\item\label{normres} $Z= \frac{(z_1^az_2^b)^k}{1+\zeta(z_1^az_2^b)^k}z_2\DD{z_2} $ when    $\F^\sharp$ is  singular resonant normalizable at $m$, and    $z_1, z_2$ is chosen so that   $\F^\sharp$ is   given by $\omega=0$ where
\[\omega:=
bz_1(1+\zeta (z_1^az_2^b)^k)dz_2 + az_2(1+(\zeta-1) (z_1^az_2^b)^k)dz_1,
\]
with $ a, b, k\in \N^\ast$, $(a,b)=1$, $\zeta\in \C$;
\end{enumerate}
 \item\label{ChTransIP}  if  
 $\F^\sharp$ at $m$ has a non-constant first integral $F$,  then by choosing   $F$ minimal and $z_1, z_2$ such that  $F(z_1,z_2)=z_1^a z_2^b$, $a, b\in \N$, $b\neq 0$, $(a,b)=1$, 
 we have:
 \[
 \un {\mc T}_{\F}(m) = \C\{F\}\left[ z_2\DD{z_2} \right]\,,
\quad 
\un {\mc T}^{Q^\point}_{\F}(m)= \mf M_{Q,u_0}\C\{F,u\}\left[ z_2\DD{z_2} \right]\,,
 \]
 \[\hbox{ and: }\quad
 V_m= \C\{z_2^b\} \cdot  z_2\DD{z_2}\,,
 \quad
 V_m^{Q^\point}=  \mf M_{Q,u_0}\C\{ z_2^b,u\}\cdot  \left. z_2\DD{z_2}\right|_{\{z_1=1\}}\,.
 \]
 \end{enumerate}
\end{lema}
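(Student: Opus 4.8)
The plan is to reduce everything to the holonomy $h_m$ on a transversal and to explicit normal forms for the reduced germ $\F^\sharp$ at $m$. Since $\F$ is a generalized curve, $m$ (when singular) is a reduced non-saddle-node singularity, and the analytic classification of such germs (cf. \cite{MatMou} and \cite[Ch.~5]{Frank}) places it in exactly one of the listed cases, according to whether the linear part of $h_m$ (the holonomy along $z(t)=(e^{2\pi ti},0)$) is irrational or a root of unity and whether $h_m$ is analytically linearizable, resp. normalizable. The strategy has three steps: (A) compute $V_m$ directly from $h_m$; (B) prove that restriction to the transversal $\{z_1=1\}$ gives an injection $\un{\mc T}_\F(m)\hookrightarrow V_m$; (C) establish surjectivity by exhibiting explicit basic fields. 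Then in the linearizable, resonant-normalizable and first-integral cases one gets an isomorphism $\un{\mc T}_\F(m)\simeq V_m$, whereas in the non-linearizable resp. non-normalizable case the vanishing of $V_m$ will force $\un{\mc T}_\F(m)=0$.

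For step (A) I would write a transverse field as $\psi(z_2)\partial_{z_2}$ with $\psi(0)=0$ and impose $h_{m\ast}(\psi\partial_{z_2})=\psi\partial_{z_2}$. If the linear part of $h_m$ is $z_2\mapsto\mu z_2$ this forces, on the coefficient of $z_2^n$, the relation $\mu^{\,1-n}=1$ together with the full (possibly non-linear) invariance. When $\mu$ is irrational and $h_m$ is non-linearizable, the linear term is excluded (a nonzero linear term would embed and linearize $h_m$) and no higher term survives, so $V_m=0$; when $\mu$ is a root of unity but $h_m$ is not normalizable, the same analysis again yields $V_m=0$. In the linearizable and resonant-normalizable cases only $n=1$ survives, giving $V_m=\C\cdot Z|_{\{z_1=1\}}$; when $z_1^az_2^b$ is a genuine first integral, $h_m$ is the rotation of order $b$, the surviving exponents are $n\equiv 1\ (\mr{mod}\ b)$, and since $F=z_2^b$ on $\{z_1=1\}$ one obtains $V_m=\C\{z_2^b\}\,z_2\partial_{z_2}$.

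For step (B), an element of $\un{\mc T}_\F(m)$ is represented by a basic field $Y$; preservation of $\mc E_\F=\{z_1^\epsilon z_2=0\}$ makes the $z_2$-component of $Y$ divisible by $z_2$, so its transverse part lies in $V_m$, and this part is $h_m$-invariant because $Y$ is single-valued around the loop. If the transverse part vanishes on $\{z_1=1\}$, then $Y$ is tangent to $\F^\sharp$ on the regular part of $D$; since the tangency condition $Y\wedge X=0$ (with $X$ spanning $\F^\sharp$) is analytic and holds on a dense open set near $m$, it holds at $m$, i.e. $Y\in\un{\mc X}_\F(m)$ and $[Y]=0$. For step (C), in the linearizable, resonant-normalizable and first-integral cases the explicit fields $Z$ (resp. $\phi(F)z_2\partial_{z_2}$) of the statement are checked to be basic by a direct computation of $L_Z\omega\wedge\omega$ in the given normal form, and their restrictions to $\{z_1=1\}$ generate $V_m$; the one-dimensionality in the resonant-normalizable case reflects that the resonant monomial $z_1^az_2^b$ is not constant on leaves, so no non-constant function of it may multiply $Z$, while the $\C\{F\}$-freedom in the first-integral case reflects that $F$ is a first integral. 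The parametrized statements then follow: since $\F^{\mr{ct}\,\sharp}_{Q^\point}=\F^\sharp\times Q$ is a product, a vertical basic field is a holomorphic $Q$-family of basic fields, so $\un{\mc T}^{Q^\point}_\F(m)=\un{\mc T}_\F(m)\,\widehat\otimes_\C\mc O_{Q,u_0}$ intersected with the fields vanishing on $\{u=u_0\}$, which replaces the scalars $\C$ by the ideal $\mf M_{Q,u_0}$ (resp. $\C\{F\}$ by $\mf M_{Q,u_0}\C\{F,u\}$); the holonomy being independent of $u$, $V_m^{Q^\point}$ is computed identically.

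The main obstacle is the rigidity underlying the first case: that a non-linearizable, resp. non-normalizable, reduced germ admits no nonzero holonomy-invariant transverse field. Controlling the linear term requires the fact that a holomorphic vector field invariant under $h_m$ with nonzero linear coefficient would linearize, resp. normalize, $h_m$, contradicting the hypothesis, while the higher-order terms are governed by the resonance arithmetic above; making this precise is exactly where the analytic classification of reduced singularities of generalized curves is invoked. This delicate point is then transported, via the injectivity of step (B), from $V_m$ to $\un{\mc T}_\F(m)$ and its $Q^\point$-analogue.
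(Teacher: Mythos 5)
Your strategy---identify transverse symmetries with $h_m$-invariant vector fields on the transversal $\{z_1=1\}$, prove injectivity of the restriction $\un{\mc T}_{\F}(m)\to V_m$ by saturating the transversal and analytically continuing the tangency condition $Y\wedge X=0$, then get surjectivity from the explicit basic fields---is in substance the paper's route: the paper compresses your steps (B)--(C) into a citation of the classical fact that $\un{\mc T}_{\F}(m)$ is a free rank-one module over the ring $\mc O_{\F^\sharp,m}$ of holomorphic first integrals (cf.\ \cite[\S5.1.2]{MS}) together with the verification that $Z$ is basic, so your version is simply more self-contained. One step of yours needs genuine repair, though. In the resonant-normalizable case the claim ``only $n=1$ survives'' is false: with $\mu$ a primitive $d$-th root of unity the linear-part arithmetic $\mu^{n-1}=1$ leaves \emph{all} exponents $n\equiv 1\pmod d$, and indeed the correct generator $Z|_{\{z_1=1\}}=\frac{z_2^{bk}}{1+\zeta z_2^{bk}}\,z_2\DD{z_2}$ has order $bk+1$ at $z_2=0$, not $1$ (no invariant field with nonzero linear term can exist here, by your own remark that such a field would linearize $h_m$, which is non-periodic). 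The cut-down from the infinite-dimensional space allowed by the resonance arithmetic to $\dim_\C V_m=1$ is exactly the centralizer rigidity of non-periodic resonant germs---a flow-embedding argument of \'Ecalle--Liverpool type, which the paper isolates as Lemma~\ref{cent} citing \cite{Liverpool}---and this is what you must invoke, not the arithmetic on Taylor coefficients. Finally, for $V_m^{Q^\point}$ in the resonant case the paper argues differently from your parameter-slicing: it observes that the flow $\Phi_t$ of $Z^{\mr{ct}}_{Q^\point}|_{\{z_1=1\}}$ has coefficients polynomial in $t$, so commutation with the single element $h_m^{\circ q}=\Phi_{2i\pi q}$ forces commutation with the whole flow, whence $X\in\mf M_{Q,u_0}\cdot Z^{\mr{ct}}_{Q^\point}|_{\{z_1=1\}}$. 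Your slicing argument also works, provided you justify holomorphy in $u$ of the coefficient in $X_u=c(u)\,Z|_{\{z_1=1\}}$ (e.g.\ by evaluating at a fixed $z_2\neq 0$) and apply the unparametrized rigidity slice-wise; this buys a slightly more elementary treatment of the parametrized statements, at the price of the pointwise-in-$u$ care that the paper's flow argument avoids. With the resonant step corrected as above, the proposal is sound.
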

\begin{proof}  
Classically 
$\un{\mc T}_{\F}(m)$ and $V_m$ are  zero   except when $\F^\sharp$ is either regular, or linearizable or resonant normalizable. In these last cases 
$\un{\mc T}_{\F}(m)$  is a  free module of rank one   over the ring $\mc O_{\F^\sharp,m}\subset\mc O_{M_{\F,m}}$ of germs of  holomorphic first integrals (perhaps constant) of $\F^{\sharp}$. We deduce the expression of $\un{\mc T}_{\F}(m)$ after checking that the vector fields $Z$ in (\ref{casnIP})  and $z_2\DD{z_2}$ in (\ref{ChTransIP}) are basic and 
$\mc O_{\F^\sharp,m}=\C$, resp. $\mc O_{\F^\sharp,m}=\C\{F\}$,  in case (\ref{casnIP}), resp. (\ref{ChTransIP}),  cf. \cite[\S5.1.2]{MS}. The expressions of $\un{\mc T}^{Q^\point}_{\F}(m)$ are versions with parameters of these results.

In the cases (\ref{caslin}) and (\ref{ChTransIP}) the holonomy map $h_m$ is linear and  $V_m$ and $V_m^{Q^\point}$ is obtained by a direct computation. In order to obtain $V_m^{Q^\point}$ in case (\ref{normres}) one first notices that the flow $\Phi_t(z_2,u)=(\phi(z_2,t), u)$ of $Z^{\mr{ct}}_{Q^\point}|_{\{z_1=1\}}$  satisfies  $\phi(z_2,t)\in \C[t]{\{z_2\}}$; therefore any biholomorphism germ that commutes with a single element of this flow also  commutes with all the other elements. Since $h_m^{\circ q}=\Phi_{2i\pi q}$, the flow of any element  $X\in V_m^{Q^\point}$ commutes with that of $Z^{\mr{ct}}_{Q^\point}|_{\{z_1=1\}}$.  It follows that 
$X\in \mf{M}_{Q,u_0} Z^{\mr{ct}}_{Q^\point}|_{\{z_1=1\}}$.
\end{proof}

In order to describe $\un{\mc T}_{\F}^{Q^\point}(U)$ for any open set $U\subset D$,  we fix a geometric system as  in Definition~\ref{geometric-system}.
 
For any  $X\in\un {\mc B}_{\F}( o_D)$ there is a holomorphic  vector field $g_\ast(X)$ on $(\C,0)$ such that $X(0)=0$ and $g_\ast(X)\circ g=Dg(X)$. Moreover, $X\mapsto g_\ast(X)$ is  $\C$-linear. 
Let us adopt the following notations:
\begin{itemize}
\item $\mc V(H)$ is the vector space of holomorphic germs of vector fields on $(\C,0)$ vanishing at $0$ and invariant under the action of the subgroup $H\subset\mr{Diff}(\C,0)$;
\item $\mathcal V^0_{Q^\point}$ is the vector space of holomorphic germs of vector fields on $(\C\times Q,(0,u_0))$ which are vertical with respect to $\C\times Q\to \C$ and  vanish along $(\{0\}\times Q)\cup(\C\times\{u_0\})$;
\item If $G\subset\mr{Diff}_Q(\C\times Q,(0,u_0))$ is a subgroup, then $\mathcal V_{Q^\point}^0(G)$ denotes the subspace of $\mathcal V^0_{Q^\point}$ consisting of vector fields invariant by $G$.
\end{itemize}

Similarly if $X\in\un {\mc B}^{Q^\point}_{\F}( o_D)$ there is a (unique)  germ of vector field, again denoted by $g_{\ast}(X)$, such that $g_\ast (X)\circ (g\times\mr{id}_{Q^\point})=D(g\times\mr{id}_{Q^\point})(X)$. According to the model~(\ref{ChTransIP}) with $a=0$ and $b=1$ in Lemma \ref{modeles-locaux}   we have the following exact sequence:
\begin{equation}\label{flatoD}
   0\to \un {\mc X}^{Q^\point}_{\F}( o_D)\to \un {\mc B}^{Q^\point}_{\F}( o_D)\stackrel{g_\ast}{\to} 
 \mathcal V^0_{Q^\point}\to 0\,.
\end{equation}
This proves that  the sheaf $ \un {\mc T}_{\F}$ is locally constant on $ D \setminus \mr{Sing}(\F^\sharp)$.

\begin{obs}\label{analytic-continuation}
Let $X$ be a section of  $\un\T_{\F}$ over a connected open subset $V$ of $\mc E_\F$. If the germ of $X$ at some point $p$ of $V$ is zero, then $X=0$.
Indeed if $p$ is a regular point, by local triviality, the section is zero along the whole regular part of $D$. The vanishing at the remaining singularities follows by analytic continuation.
If $p$ is a singular point, then the germ of $X$ at a  regular point close to $p$ is zero and we conclude as before. The same property holds for $\T_{\F}^{Q^\point}$.
\end{obs}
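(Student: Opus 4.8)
The statement is an identity principle for sections of the quotient sheaf $\un\T_\F=\un{\mc B}_\F/\un{\mc X}_\F$, so the natural strategy is the standard open--closed argument on the connected set $V$. I would set $\Sigma:=\{q\in V:\ X_q=0\ \text{in}\ \un\T_\F(q)\}$. By hypothesis $p\in\Sigma$, so $\Sigma\neq\emptyset$; and $\Sigma$ is open by the very definition of a vanishing germ, since $X_q=0$ means that $X$ already restricts to the zero section on a whole neighbourhood of $q$, whence every nearby germ vanishes as well. The entire content is therefore the closedness of $\Sigma$ in $V$, and this is exactly where analytic continuation enters.

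To prove closedness I would fix $q_0\in V$ lying in the closure of $\Sigma$. Because $\un\T_\F$ is a quotient sheaf, the germ $X_{q_0}$ lifts to a germ of a basic holomorphic vector field, represented by an actual field $B\in\un{\mc B}_\F(N)$ on a connected open neighbourhood $N$ of $q_0$ in $M_\F$, chosen small enough that the image of $B$ in $\un\T_\F$ coincides with $X$ on $N\cap\mc E_\F$. Vanishing of $X$ at a point $q$ means precisely that $B$ is tangent to $\F^\sharp$ near $q$, i.e. $i_B\omega=0$ for a local holomorphic $1$-form $\omega$ defining $\F^\sharp$; such a form exists at every point of $N$ because $\F$ is a generalized curve, so $\F^\sharp$ has only reduced, hence holomorphically defined, singularities, and $M_\F$ is smooth even at the normal crossings of $\mc E_\F$. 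Since $q_0\in\overline\Sigma$ and $\Sigma$ is open, $\Sigma$ meets $N\cap\mc E_\F$; picking $q\in\Sigma\cap N$, the holomorphic functions furnishing the coefficients of $i_B\omega$ vanish on a nonempty open subset of the connected set $N$, hence vanish identically on $N$ by the identity theorem for holomorphic functions. Thus $B$ is tangent to $\F^\sharp$ throughout $N$, so $B_{q_0}\in\un{\mc X}_\F(q_0)$ and $X_{q_0}=0$, i.e. $q_0\in\Sigma$. Connectedness of $V$ then forces $\Sigma=V$, that is $X=0$.

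For the two cases named in the statement this specializes as expected: when $p$ is a regular point of $\F^\sharp$ the sheaf $\un\T_\F$ is locally constant by the exactness of~(\ref{flatoD}), so the closedness at regular points is immediate and the section propagates along the connected regular part; genuine analytic continuation is needed only across the singular points of $\F^\sharp$ and the corners of $\mc E_\F$, which the identity-theorem step treats uniformly. The parametric statement for $\T_\F^{Q^\point}$ is proved verbatim: a germ of $X$ now lifts to a vertical field $B\in\un{\mc B}_\F^{Q^\point}(N)$ holomorphic in $(z,u)$, the condition of tangency to $\F^{\mr{ct}\,\sharp}_{Q^\point}$ is again the vanishing of holomorphic functions, and the identity theorem in the several variables $(z,u)$ applies without change.

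The main obstacle I anticipate is not topological but bookkeeping: making the equivalence ``$X_q=0$'' $\Longleftrightarrow$ ``$B$ tangent to $\F^\sharp$ near $q$'' precise through the quotient $\un{\mc B}_\F\to\un\T_\F$, and verifying that the tangency locus is a genuine analytic subset of the two-dimensional neighbourhood $N$ even across the singularities of $\F^\sharp$ and the crossings of $\mc E_\F$. Once this holomorphic defining datum is in place, the identity theorem does all the work and the open--closed dichotomy on the connected $V$ finishes the proof.
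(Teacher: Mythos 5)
Your proof is correct, but it is organized genuinely differently from the paper's. The paper's remark argues in two regimes: at a regular point it invokes the local triviality of $\un{\mc T}_{\F}$ on $D\setminus\mr{Sing}(\F^\sharp)$ (the locally constant structure coming from the exact sequence~(\ref{flatoD})) to propagate vanishing along the whole regular part, and then extends across the singular points by analytic continuation; a singular starting point is reduced to the regular case by passing to a nearby regular point. You instead run a single open--closed argument on the connected set $V$: openness is formal for sections of any sheaf, and closedness is obtained by lifting the germ at a limit point $q_0$ through the stalkwise surjection $\un{\mc B}_{\F}\to\un{\mc T}_{\F}$ and applying the identity theorem to the tangency function $i_B\omega$ of the lift. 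This is more uniform and more elementary: it never uses local constancy of the sheaf (which the paper establishes only away from $\mr{Sing}(\F^\sharp)$), it treats regular points, reduced singularities and corners of $\mc E_\F$ identically, and it carries over verbatim to $\un{\mc T}_{\F}^{Q^\point}$, as you note. Two points you flag as bookkeeping can in fact be dispatched at once: since $B$ represents only a germ at the single point $q_0$, you may choose $N$ to be a connected neighbourhood inside one chart carrying a single holomorphic defining form $\omega$ of $\F^\sharp$ (this needs no generalized-curve hypothesis --- any singular holomorphic foliation on a surface is locally defined by a holomorphic $1$-form with isolated zeros), so the identity theorem applies directly and your gluing-up-to-units discussion becomes unnecessary; and the shrinking of $N$ so that $[B]=X$ on $N\cap\mc E_\F$ is exactly what equality of germs in the quotient sheaf licenses, so that step is sound as written. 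What the paper's route buys in exchange is the explicit local-system picture on the regular part (whose monodromy is the holonomy, cf.\ Remark~\ref{monoholo}), which it needs elsewhere anyway; your route buys a self-contained identity principle with no structural input beyond the definition of $\un{\mc T}_{\F}$ as a quotient.
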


\begin{obs}\label{monoholo}
The monodromy of  $\un {\mc T}_{\F}^{Q^\point}$ restricted to $D\setminus\mr{Sing}(\F^\sharp)$ corresponds to the holonomy of the foliation $\F_{Q^\point}^{\mr{ct}\,\sharp}$ in the following sense: if $Z'$ is the extension of $Z\in   \un {\mc T}^{Q^\point}_{\F}( o_D)$ (as germ of a locally constant sheaf) along a loop $\gamma$ in $D^\ast$ with origin $ o_D$, then $g_\ast(Z')=h_{\gamma\,\ast}(g_\ast(Z))$, where $h_\gamma=\mc H_{D}^{\F^{\mr{ct}\,\sharp}_{Q^\point}}(\dot\gamma)$, see~(\ref{holonomiefam}). Indeed we have: $g'=g \circ h_\gamma^{-1}$ and on the other hand,  since the expression $g_\ast(Z)$ remains constant when we  perform along $\gamma$ the analytic extension of $g$ and the extension of $Z$ as section of a locally constant sheaf, we also  have $g'_\ast (Z')=g_\ast(Z)$, where 
$g'$ is the analytic extension of $g$ along~$\gamma$. 
\end{obs}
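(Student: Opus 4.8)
The plan is to deduce the identity from two ingredients: the identification of the stalk of $\un{\mc T}_{\F}^{Q^\point}$ with the fixed transverse model $\mc V^0_{Q^\point}$ furnished by the exact sequence~(\ref{flatoD}), and the functoriality of the pushforward $g_\ast$. First I would use~(\ref{flatoD}) to recall that over each regular point of $D$ the map $g_\ast$ is an isomorphism onto $\mc V^0_{Q^\point}$; this local triviality is exactly what makes $\un{\mc T}_{\F}^{Q^\point}$ locally constant on $D^\ast$. Consequently the extension $Z'$ of $Z$ as a section of this locally constant sheaf is nothing but the analytic continuation of the germ $Z$ along $\gamma$ inside the leaves of $\F^{\mr{ct}\,\sharp}_{Q^\point}$, carried out simultaneously with the analytic continuation $g'$ of the transversal factor $g$ along $\gamma$.

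The heart of the argument is the invariance relation $g'_\ast(Z')=g_\ast(Z)$, and this is the step I expect to require the most care. The point is that the transverse vector field attached to a basic field is intrinsic: the defining equation $g_\ast(Z)\circ g=Dg(Z)$ is transported, under joint continuation along $\gamma$, into $g'_\ast(Z')\circ g'=Dg'(Z')$, and since the continuation takes place inside the leaves---on which $g$, hence $g'$, is constant---the projected expression $g_\ast(Z)$ does not change. To make this rigorous one covers $\gamma$ by a chain of flow-boxes trivializing $\F^{\mr{ct}\,\sharp}_{Q^\point}$, observes that on the overlaps the locally constant transition of $\un{\mc T}_{\F}^{Q^\point}$ corresponds under $g_\ast$ to the change between the two local first integrals, and checks that the composite of these transitions around $\gamma$ is precisely the holonomy $h_\gamma=\mc H_{D}^{\F^{\mr{ct}\,\sharp}_{Q^\point}}(\dot\gamma)$ recorded in~(\ref{holonomiefam}).

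With this in hand the conclusion is formal. The holonomy is defined (see~(\ref{holonomiefam})) precisely so that the continuation $g'$ of $g$ along $\gamma$ satisfies $g=h_\gamma\circ g'$. Hence, using $(\psi\circ\phi)_\ast=\psi_\ast\circ\phi_\ast$ together with the invariance relation of the previous step,
\[
g_\ast(Z')=(h_\gamma\circ g')_\ast(Z')=h_{\gamma\,\ast}\big(g'_\ast(Z')\big)=h_{\gamma\,\ast}\big(g_\ast(Z)\big),
\]
which is the asserted formula. This exhibits the monodromy of $\un{\mc T}_{\F}^{Q^\point}$ along $\gamma$ as the pushforward action of the holonomy $h_\gamma$; specialising $Q^\point$ to a point gives the corresponding statement for $\un{\mc T}_{\F}$.
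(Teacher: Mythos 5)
Your proposal is correct and follows essentially the same route as the paper's own two-line argument: the holonomy relation between $g$ and its continuation $g'$ (your $g=h_\gamma\circ g'$ is the type-correct reading of the paper's $g'=g\circ h_\gamma^{-1}$, where $h_\gamma$ acts on the target of $g$), the invariance $g'_\ast(Z')=g_\ast(Z)$ under joint continuation along $\gamma$, and the functoriality of the pushforward. Your flow-box covering merely makes explicit the paper's assertion that ``the expression $g_\ast(Z)$ remains constant'' along the continuation, so no substantive difference remains.
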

 \begin{prop} Assume that $ o_D\in U\subset D$. The following sequence is exact:
 \begin{equation}\label{FlatU}
  0\to \un {\mc X}^{Q^\point}_{\F}(U)\to 
  \un {\mc B}^{Q^\point}_{\F}(U)\stackrel{g_{U\ast}}{\to} 
 \mathcal V^0_{Q^\point}(H_U)\to 0\,,
 \end{equation}
where $g_{U\ast}$ is the composition of the morphism $g_\ast$ in (\ref{flatoD}) with the natural map $ \un {\mc B}^{Q^\point}_{\F}(U)\to \un {\mc B}^{Q^\point}_{\F}( o_D)$ and  $H_U:= \mc H^{\F^{\mr{ct}\,\sharp}_{Q^\point}}_D(\pi_1(U\setminus\mr{Sing}(\F^\sharp), o_D))$. 
 \end{prop}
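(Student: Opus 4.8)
The plan is to follow the same pattern as the proof of assertion~(\ref{gstariso}) of Lemma~\ref{aut-rho} together with Corollary~\ref{cor510}, replacing the group-theoretic statements by their infinitesimal (linear) counterparts. The two facts that drive the argument are the pointwise exact sequence (\ref{flatoD}) at the regular base point $o_D$ and Remark~\ref{monoholo} relating the monodromy of $\un{\mc T}^{Q^\point}_{\F}$ to the holonomy of $\F_{Q^\point}^{\mr{ct}\,\sharp}$.

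First I would establish exactness at the two left-hand terms. The injectivity of $\un{\mc X}^{Q^\point}_{\F}(U)\hookrightarrow\un{\mc B}^{Q^\point}_{\F}(U)$ is immediate, being the inclusion of a subsheaf. For exactness at $\un{\mc B}^{Q^\point}_{\F}(U)$, note that $g_{U\ast}$ factors through the germ at $o_D$, so $g_{U\ast}(X)=0$ forces the germ of $X$ at $o_D$ to lie in $\un{\mc X}^{Q^\point}_{\F}(o_D)$ by the pointwise exactness of (\ref{flatoD}); equivalently the class of $X$ in $\un{\mc T}^{Q^\point}_{\F}(o_D)$ vanishes. Since $\un{\mc T}^{Q^\point}_{\F}$ is locally constant on $D\setminus\mr{Sing}(\F^\sharp)$ and a section vanishing at a single point vanishes identically (Remark~\ref{analytic-continuation}), the class of $X$ in $\un{\mc T}^{Q^\point}_{\F}(U)$ is zero, that is $X\in\un{\mc X}^{Q^\point}_{\F}(U)$.

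Next I would check that the image of $g_{U\ast}$ is contained in the $H_U$-invariants. A section $X$ over $U$ restricts to a single-valued section of the locally constant sheaf over $U\setminus\mr{Sing}(\F^\sharp)$, so its analytic continuation along any loop $\gamma$ with base point $o_D$ returns to itself. By Remark~\ref{monoholo} this continuation acts on $g_\ast(X)$ through $h_{\gamma\ast}$, hence $g_{U\ast}(X)=h_{\gamma\ast}(g_{U\ast}(X))$ for every class $\gamma\in\pi_1(U\setminus\mr{Sing}(\F^\sharp),o_D)$, which is precisely $H_U$-invariance; thus the image lands in $\mathcal V^0_{Q^\point}(H_U)$.

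The substantial part is surjectivity, and the extension across singularities is the main obstacle. Given $v\in\mathcal V^0_{Q^\point}(H_U)$, I would first lift it through the surjection $g_\ast$ of (\ref{flatoD}) to a germ $X_{o_D}\in\un{\mc B}^{Q^\point}_{\F}(o_D)$, and then extend $X_{o_D}$ as a section of the locally constant sheaf along $U\setminus\mr{Sing}(\F^\sharp)$; the $H_U$-invariance of $v$ together with Remark~\ref{monoholo} guarantees that this extension has trivial monodromy and so is single-valued, defining a section over $U\setminus\mr{Sing}(\F^\sharp)$. It remains to extend across each singular point $p\in U\cap\mr{Sing}(\F^\sharp)$: here I would invoke the local description of $\un{\mc B}^{Q^\point}_{\F}(p)$ provided by Lemma~\ref{modeles-locaux} in cases (\ref{casnIP}) and (\ref{ChTransIP}), together with a Mattei--Moussu/Riemann-extension argument as in Step~2 of the proof of Theorem~\ref{psiD}, to see that the restriction $\un{\mc B}^{Q^\point}_{\F}(U)\to\un{\mc B}^{Q^\point}_{\F}(U\setminus\mr{Sing}(\F^\sharp))$ is an isomorphism. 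Since a basic vector field is transversely determined by its image under $g_\ast$ and the holonomy around $p$ lies in $H_U$, the germ obtained by continuation matches the invariant model at $p$ and extends holomorphically, yielding $X\in\un{\mc B}^{Q^\point}_{\F}(U)$ with $g_{U\ast}(X)=v$ and completing the exactness on the right.
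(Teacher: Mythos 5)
Your overall architecture matches the paper's: containment of the image in $\mc V^0_{Q^\point}(H_U)$ and the regular case via Remark~\ref{monoholo}, then reduction of everything to an extension statement across the singular points handled with the local models of Lemma~\ref{modeles-locaux}. But the step on which your surjectivity argument rests is false as stated: the restriction map $\un{\mc B}^{Q^\point}_{\F}(U)\to\un{\mc B}^{Q^\point}_{\F}(U\setminus\mr{Sing}(\F^\sharp))$ is \emph{not} an isomorphism. It is injective but not surjective: in the linear model at a singular point $m$, with coordinates $(z_1,z_2)$, $D=\{z_2=0\}$ and tangent generator $Z=z_1\DD{z_1}+\alpha z_2\DD{z_2}$, the vector field $\frac{u_1}{z_1}\,Z = u_1\DD{z_1}+\alpha u_1\frac{z_2}{z_1}\DD{z_2}$ is vertical, tangent to $\F_{Q^\point}^{\mr{ct}\,\sharp}$ (hence basic), zero on the special fiber, and holomorphic on a neighborhood of $(U\setminus\{m\})\times\{u_0\}$, yet the coefficient $z_2/z_1$ is unbounded near the origin (take $z_1=t^2$, $z_2=t$), so it admits no holomorphic extension across $m$. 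The correct statement — and what the paper actually proves — is the isomorphism at the level of the quotient sheaf, $\un{\mc T}^{Q^\point}_{\F}(U)\to\un{\mc T}^{Q^\point}_{\F}(U\setminus\mr{Sing}(\F^\sharp))$. Note that your continuation step already secretly lives there: $\un{\mc B}^{Q^\point}_{\F}$ is not locally constant on $D\setminus\mr{Sing}(\F^\sharp)$, only $\un{\mc T}^{Q^\point}_{\F}$ is, so prolonging the germ $X_{o_D}$ ``as a section of the locally constant sheaf'' only makes sense for its class modulo $\un{\mc X}^{Q^\point}_{\F}$.

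The second inaccuracy is the mechanism you invoke at the singularities. A Mattei--Moussu/Riemann-extension argument is how the paper extends \emph{automorphisms} (Step 2 of Theorem~\ref{psiD}, Lemma~\ref{aut-rho}); for vector fields the paper's argument is purely algebraic and also supplies exactly the honest lift in $\un{\mc B}^{Q^\point}_{\F}(U)$ that your (false) $\un{\mc B}$-level isomorphism was meant to produce. Namely, reducing to a disk $U=\Omega$ containing a single singular point $m$, the restriction of $g\times\mr{id}_{Q^\point}$ to the transversal $\{z_1=1\}$ identifies the space $V_m^{Q^\point}$ of Lemma~\ref{modeles-locaux} with $\mc V^0_{Q^\point}(H_U)$, and the explicit formulas of that lemma show that every element of $V_m^{Q^\point}$ is the restriction of a basic vector field defined on \emph{all} of $\Omega$ (a multiple of $Z^{\mr{ct}}_{Q^\point}$, resp. of $z_2\DD{z_2}$, by a function of the first integral and $u$). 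Your closing sentence (``matches the invariant model at $p$ and extends'') gestures at precisely this fix, but as written the surjectivity of $g_{U\ast}$ rests on a wrong statement, so this repair is genuinely needed.
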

If $\ge=\langle D,D'\rangle$ and $\{p\}=D\cap D'$,
by applying this proposition to $U=U_p$, and  to $U=D$ we obtain isomorphisms
\begin{equation}\label{g*T}
G^\T_{D,\ge}:\T_{\F}^{Q^\point}(\ge)\iso \mc V^0_{Q^\point}(h_{D,\ge})\quad\text{and}\quad G^\T_D:\T_{\F}^{Q^\point}(D)\iso\mc V^0_{Q^\point}(H_D)\,.
\end{equation}
Under these isomorphisms the restriction map $\T_{\F}^{Q^\point}(D)\to\T_{\F}^{Q^\point}(\ge)$ corresponds to the inclusion $\mc V^0_{Q^\point}(H_D)\hookrightarrow\mc V^0_{Q^\point}(h_{D,\ge})$.

\begin{proof} The fact that  $g_{U\ast}$ takes values in  
 $\mathcal V^0_{Q^\point}(H_U)$ results from  Remark~\ref{monoholo} which also gives the exactness of the sequence when $U$ does not meet $\mr{Sing}(\F^\sharp)$.  It remains  to see that the restriction map
 \begin{equation*}\label{restrmap}
  \un {\mc T}^{Q^\point}_{\F}(U)\to\un {\mc T}^{Q^\point}_{\F}(U\setminus\mr{Sing}(\F^\sharp))\,,
  \quad
  Z\mapsto Z|_{U\setminus\mr{Sing}(\F^\sharp)}
 \end{equation*}
 is an isomorphism.   We may suppose that $U$ is a   disk such that $U\cap \mr{Sing}(\F^\sharp)=\{m\}$. Thus,  the map $g_{U\ast}$ in (\ref{FlatU}) induces an isomorphism 
 \[   
  \un {\mc T}^{Q^\point}_{\F}(U\setminus\mr{Sing}(\F^\sharp))\iso  \mathcal V^0_{Q^\point}(H_U)\,.
  \]
  We may also suppose that $U$ is  the domain $\Omega$  of a chart $(z_1,z_2)$ as in   Lemma~\ref{modeles-locaux}. The restriction of $g\times \mr{id}_{Q^\point}$ to $\{z_1=1\}\subset M_\F\times Q^\point$ induces a linear isomorphism  from $V_m^Q$ to $\mc V^0_Q(H_U)$. We conclude by noting that,  according to  Lemma \ref{modeles-locaux},  any  element of  $V_m^Q$ extends to a  vector field in  $\un {\mc B}^{Q^\point}_{\F}(U)$.
 \end{proof}
 In the same way we prove the exactness of the following sequence:
 \begin{equation}\label{FlatUT}
  0\to \un {\mc X}_{\F}(U)\to 
  \un {\mc B}_{\F}(U)\stackrel{g_{U\ast}}{\to} 
 \mathcal V(H_U)\to 0\,.
 \end{equation}
 
 \subsection{Group-graph of infinitesimal transversal symmetries}\label{S53}
 A \Cex-conjugacy does not induce a map between the sheaves of basic holomorphic vector fields, but it  will do for the sheaves of transverse infinitesimal symmetries. For this reason we do not consider the quotient of the group-graphs associated to $\un{\mc B}_{\F}^{Q^\point}$ and $\un{\mc X}_{\F}^{Q^\point}$ but  a group-graph $\T_{\F}^{Q^\point}$  associated to the sheaf~$\un\T_{\F}^{Q^\point}$. As in the case of the group-graph of automorphisms (see Definition~\ref{defAutQ}) we set:

 \begin{defin}\label{defInfSymTrans}
The \emph{vector space-graph over $\A_\F$ of infinitesimal transversal symmetries} of $\F$, resp. {of $\F^{\mr{ct}}_{Q^\point}$},  denoted by $\mc T_{\F}$, resp. $\mc T_{\F}^{Q^\point}$, is defined, for $\star\in\Ve_{\A_\F}\cup\Ed_{\A_\F}$, by:
\begin{enumerate}
\item\label{trivdicn} $\mc T_{\F}(\star)=\{0\}$ and $\mc T_{\F}^{Q^\point}(\star)=\{0\}$ if $\star\in \Ve_{\A_\F}$ is a dicritical component of $\mc E_\F$ or $\star=\langle D,D'\rangle\in \Ed_{\A_\F}$ and the foliation $\F^\sharp$ has a nodal singularity at the point $D\cap D'$;
\item\label{dic} $\mc T_{\F}(D)=\un{\mc T}_{\F}(D)$
and $\mc T_{\F}^{Q^\point}(D)=
\un{\mc T}_{\F}^{Q^\point}(D)$
if $D\in\Ve_{\A_\F}$ is invariant;
\item\label{nod} $\mc T_{\F}(\langle D,D'\rangle)=\un{\mc T}_{\F}(D\cap D')$
and $\mc T_{\F}^{Q^\point}(\langle D,D'\rangle)=
\un{\mc T}_{\F}^{Q^\point}(D\cap D')$
if $\langle D,D'\rangle\in\Ed_{\A_\F}$ and $D\cap D'$ is not a nodal singularitiy of $\F^\sharp$;
\item  the restriction map $\mc T_{\F}^{Q^\point}(D)\to \mc T_{\F}^{Q^\point}(\msf e)$ is the trivial map $\mc T_{\F}^{Q^\point}(D)\to \{0\}$ in case (\ref{trivdicn}) and it is the restriction map  of  sheaves in cases (\ref{dic}) and (\ref{nod}).
\end{enumerate}
\end{defin}
\noindent The support of $\T_\F^{Q^\point}$ is contained in the cut-graph of $\F$ which is the support of $\aut_\F^{Q^\point}$, see Remark~\ref{suppAut}.

The pull-back by a holomorphic map germ $\mu:P^\point\to Q^\point$ of a vertical vector field $X$ is also a vertical vector field and its flow is the pull-back of the flow of $X$.
 Thus, the pull-back operation defines sheaf morphisms  from the sheaves $ \un {\mc B}^{Q^\point}_{\F}$,
 $ \un {\mc X}^{Q^\point}_{\F}$ and $ \un {\mc T}^{Q^\point}_{\F}$ respectively  to the sheaves $ \un {\mc B}^{P^\point}_{\F}$, $ \un {\mc X}^{P^\point}_{\F}$ and $ \un {\mc T}^{P^\point}_{\F}$, inducing  a morphism of vector space-graphs 
\[
\mu^\ast :  \mc T_{\F}^{Q^\point}\to\mc T_{\F}^{P^\point}\,.
\]

On the other hand, let $\phi$ be an \Cex-conjugacy between $\G$ and a   foliation $\F$, $\phi(\G)=\F$. 
Since the germs of homeomorphisms $\phi^\sharp :(M_\G,\mc E_\G)\iso(M_\F,\mc E_\F)$ and 
\[
\phi_Q^\sharp:=\phi^\sharp\times\mr{id}_Q :(M_\G\times Q,\mc E_\G \times\{u_0\})\iso(M_\F,\mc E_\F\times\{u_0\})
 \]
are holomorphic at the singular points and transversely holomorphic elsewhere, we can define the inverse image morphisms of  sheaves
 over $\mc E_\G$
 \[
  \un\phi^*:\phi_{\ltE}^{-1}\un\T_{\F}\to\un\T_{\G}\,\quad\hbox{and}\quad 
  \un{\phi}_Q^\ast:\phi_{\ltE}^{-1}\un\T_{\F}^{Q^\point}\to\un\T_{\G}^{Q^\point}\,,
  \]
  where $\phi_\ltE:\mc E_\G\to\mc E_\F$ 
  is the restriction of $\phi^\sharp$ to the exceptional divisors, as in Section~\ref{sec:aut}.
Indeed, let us fix  $m\in\mc E_\G$ and $[Z]\in\un\T_{\F}(\phi_{\ltE}(m))$, which is the class of $ Z\in\un{\mc B}_{\F}(\phi_{\ltE}(m))$. If $m\in \mr{Sing}(\G^\sharp)\cup\mr{Sing}(\mc E_\G)$ 
 then $\phi^\sharp$ is holomorphic at $m$ and we define
 $\un\phi^*([Z])$ as the class of the usual inverse image $(\phi^\sharp)^*(Z)\in\un{\mc B}_\G(m)$.
Otherwise, there is  a homeomorphism germ $\xi$ at $\phi^\sharp(m)$ fixing the leaves of $\F^\sharp$ such that $\xi\circ\phi^\sharp$ is holomorphic and we define 
$\un\phi^*([Z])$ as the class of $(\xi\circ\phi^\sharp)^{\ast}(Z)$, which does not depend on the choice of~$\xi$. We can similarly define the sheaf morphism~$\un\phi_Q^\ast$.

We will denote in the same way by 
\begin{equation}\label{phisastT}
\phi^{\ast}: \mc T_{\F}\iso \mc T_{\G}
\quad
\hbox{ and }
\quad
\phi^{\ast} : \mc T_{\F}^{Q^\point}\iso \mc T_{\G}^{Q^\point}
\end{equation}
the vector space-graph morphisms over $\A_{\phi}:\A_\G\to\A_\F$ defined in  (\ref{Aphi}), which are associated  to the sheaf morphisms $\un\phi^*$ and $\un\phi^*_Q$, see Section \ref{gg-sh}.

We can check that the second morphism $\phi^*$ satisfies the relations   $\mu^*\circ\phi^*=\phi^*\circ\mu^*$,  allowing us to define the following contravariant functors (denoted by the same letter)
\[\T:\fol\to\VSG\,,\qquad \F\mapsto\T_{\F}\,,\qquad\phi\mapsto\phi^*\,,\]
\begin{equation*}\label{functorT}
\mc T : \mbf{Man^\point}\times\fol \to\VSG\,,
\qquad
(Q^\point,\F)\mapsto \mc T_{\F}^{Q^\point}\,,
\quad
(\mu,\phi)\mapsto (\mu,\phi)^*:=\phi^\ast \circ\mu^\ast\,,
\end{equation*}
where $\VSG$ denotes the category of  of $\C$-vector space-graphs and linear maps.\\

As we did for the group-graph of transversal symmetries we consider the restriction of infinitesimal transversal symmetries vector space-graphs to the red subgraph $\AR_\F\subset\A_\F$:

\begin{defin}\label{redT}
We call \emph{restricted group-graph of infinitesimal transversal symmetries of $\F$, resp. $\F_{Q^\point}^{\mr{ct}}$}, the  group-graph 
$\mr{R\T}_\F=r_\F^*\T_\F$, resp.
${\mr{R\T}}^{Q^\point}_{\F}=r_\F^*\T_\F^{Q^\point}$,
over $\AR_\F$ defined as the pull-back by the inclusion $r_\F:\AR_\F\hookrightarrow\A_\F$:
\[
\mr{R\T}_\F(\star)=\T_\F(\star),\quad \text{resp. }
{\mr{R\T}}^{Q^\point}_{\F}(\star)={\T}^{Q^\point}_{\F}(\star)\,,\qquad \star\in \Ve_{\AR_{\F}}\cup\Ed_{\AR_{\F}}\,.
\]
We denote by $\mr{R}\T:\fol\to\VSG$, resp. $\mr{R}\mc T: \mbf{Man^\point}\times\fol \to \VSG$, the functors $\F\mapsto \mr{R}\T_{\F}$, resp.  $(Q^\point, \F)\mapsto {\mr{R}\mc T}^{Q^\point}_{\F}$. 
\end{defin}

\begin{obs}\label{H1TRT} 
 As for transversal symmetries, the 
 collections of canonical morphisms $\imath_{r_\F}:\T_\F\to\mr{R}\T_\F$ and $\imath_{r_\F}:\T_\F^{Q^\point}\to\mr{R}\T_\F^{Q^\point}$ of vector space-graphs over the graph morphisms $r_\F:\AR_\F\hookrightarrow\A_\F$ define
 natural transformations, again denoted by
\begin{equation*}
R : \mc{T}\to \mr{R}\mc{T}\,, 
\quad
\hbox{and also}
\quad 
\mc R:=H^1(R) : H^1\circ \mc{T}\to H^1\circ \mr{R}\mc{T}\,.
\end{equation*}
If $\F$ is of finite type, thanks to the exact sequence  (\ref{FlatUT}), in each cut-component $\A_\F^\alpha$ of $\A_\F$ the red part $\AR_\F^\alpha$ is  repulsive  for the group-graph $\mc{T}_{\F}$ restricted to $\A_\F^\alpha$, see Section~\ref{Subsecfunctpruning}.
By applying again (\ref{h1decSoDir}) and Theorem~\ref{pruning} we directly obtain that the natural maps 
\begin{equation}\label{isoRTRTT}
\mc R_{\F} : 
H^1(\A_\F,{\mc{T}}_{\F})^\point\iso H^1(\AR_\F,\mr{R}{\mc{T}}_{\F})^\point\,.
\end{equation}
are bijective, thus   $\mc R$ is an isomorphism of contravariant functors.
 In the same  way,  using the exact sequence~(\ref{FlatU})
 we obtain a  natural isomorphism
\begin{equation*}
\mc R_{\F}^{Q^\point} : 
H^1(\A_\F,{\mc{T}}^{Q^\point}_{\F})^\point\iso H^1(\AR_\F,\mr{R}{\mc{T}}^{Q^\point}_{\F})^\point\,.
\end{equation*}
\end{obs}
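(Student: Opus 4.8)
The plan is to reduce everything to Theorem~\ref{pruning}, applied separately on each cut-component, the only substantive point being to check that the red subgraph $\AR_\F^\alpha$ is repulsive (in the sense of Section~\ref{Subsecfunctpruning}) for the restriction of $\mc T_\F$, resp. $\mc T_\F^{Q^\point}$, to $\A_\F^\alpha$.

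First I would record the product decomposition. Because $\mathrm{supp}(\mc T_\F)$ is contained in the cut-graph $\bigsqcup_\alpha \A_\F^\alpha=\mathrm{supp}(\aut_\F^{Q^\point})$ (the remark after Definition~\ref{redT} together with Remark~\ref{suppAut}), the argument giving (\ref{h1decSoDir}), combined with Remark~\ref{aretes-support}, yields
\[
H^1(\A_\F,\mc T_\F)=\prod_{\alpha\in\mc A}H^1(\A_\F^\alpha,\mc T_\F),\qquad H^1(\AR_\F,\mr{R}\mc T_\F)=\prod_{\alpha\in\mc A}H^1(\AR_\F^\alpha,\mc T_\F),
\]
and $\mc R_\F$ respects these products (and similarly in the parametric case). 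Hence it suffices to treat each $\alpha$ separately, noting that each $\A_\F^\alpha$ is a subtree of the tree $\A_\F$ and each $\AR_\F^\alpha$ a subtree of $\A_\F^\alpha$.

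The heart of the matter is repulsiveness. Fix $\alpha$ and an edge $\ge=\langle D,D'\rangle\in\Ed_{\A_\F^\alpha}$ with $D'\prec_{\AR_\F^\alpha}D$, so that $D$ is the endpoint farther from $\AR_\F^\alpha$; I must produce surjectivity of $\rho_D^{\ge}:\mc T_\F(D)\to\mc T_\F(\ge)$. Writing $\{p\}=D\cap D'$ and applying the exact sequence (\ref{FlatUT}) with $U=D$ and with $U=U_p$ — the non-parametric analogue of the isomorphisms (\ref{g*T}) — the morphism $g_\ast$ identifies this restriction with the inclusion $\mc V(H_D)\hookrightarrow\mc V(h_{D,\ge})$. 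The finite-type hypothesis is exactly what makes this inclusion surjective: Definition~\ref{def-type-fini}~(i) (or (ii) with the order $\prec_{\{v\}}$ when $\AR_\F^\alpha=\emptyset$) asserts $H_D=\langle h_{D,\ge}\rangle$ for such outward edges, and a germ of vector field is invariant under the cyclic group $\langle h_{D,\ge}\rangle$ if and only if it is invariant under the generator $h_{D,\ge}$; thus $\mc V(H_D)=\mc V(h_{D,\ge})$ and the inclusion is an isomorphism. This proves $\AR_\F^\alpha$ is repulsive for $\mc T_\F|_{\A_\F^\alpha}$; the parametric case is verbatim with (\ref{FlatUT}) replaced by (\ref{FlatU}) and $\mc V$ by $\mc V^0_{Q^\point}$, using (\ref{g*T}).

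With repulsiveness established, Theorem~\ref{pruning} applied to $\AR_\F^\alpha\subseteq\A_\F^\alpha$ shows that $H^1(\imath_r)$ is bijective on each component (in case $\AR_\F^\alpha=\emptyset$ one uses the single-vertex subtree $\{v\}$ to get $H^1(\A_\F^\alpha,\mc T_\F)=H^1(\{v\},\cdot)=1=H^1(\emptyset,\cdot)$, so $\mc R_\F$ is trivially bijective there). As $\mc T_\F$ is a $\mbf{Vec}$-graph, these are $\C$-isomorphisms, so $\mc R_\F$ and $\mc R_\F^{Q^\point}$ are bijective. For naturality I would observe that $\A_\phi$ restricts to an isomorphism $\AR_\phi:\AR_\G\iso\AR_\F$, whence the canonical morphisms $\imath_{r_\F}$ commute with the functorial maps $(\mu,\phi)^\ast$ through the factorization of Remark~\ref{factorization} — exactly as for $\mr{RSym}$; applying $H^1$ turns $R$ into a natural transformation $\mc R$, and componentwise bijectivity promotes it to an isomorphism of contravariant functors. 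The main obstacle is the repulsiveness step, i.e. aligning the orientation $\prec$ with the outward edges of Definition~\ref{def-type-fini} and invoking the monogenicity of $H_D$ there; the rest is assembly of already-proved facts.
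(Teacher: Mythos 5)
Your proposal is correct and follows essentially the same route as the paper: decompose $H^1$ over the cut-components via (\ref{h1decSoDir}) and Remark~\ref{aretes-support}, identify the restriction $\rho_D^{\ge}$ with the inclusion $\mc V(H_D)\hookrightarrow\mc V(h_{D,\ge})$ (resp.\ $\mc V^0_{Q^\point}$) through the exact sequences (\ref{FlatUT}) and (\ref{FlatU}), use Definition~\ref{def-type-fini} to get $\mc V(H_D)=\mc V(h_{D,\ge})$ and hence repulsiveness of $\AR_\F^\alpha$, then conclude by Theorem~\ref{pruning}, with naturality exactly as for $\mr{RSym}$. Your explicit treatment of the case $\AR_\F^\alpha=\emptyset$ via the single-vertex subtree $\{v\}$ and the observation that invariance under $\langle h_{D,\ge}\rangle$ reduces to invariance under its generator merely make explicit what the paper leaves implicit.
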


\begin{lema}\label{multext}
Assume that $\F$ is a generalized curve.
Let  us again denote  by $Z^{\mr{ct}}_{Q^\point}$  the constant vertical extension of a vector field $Z$ on an open set of $M_\F$, defined just before Lemma~\ref{modeles-locaux}.
The extension of scalars sheaf morphism\footnote{We highlight that $\un{\mr{Ext}}_{\F}^{Q^\point}$ is not an isomorphism of sheaves.} 
\[\un{\mr{Ext}}_{\F}^{Q^\point}
: \un{\mc T}_{\F}
\otimes_\C \mf{M}_{Q^\point}\to \un{\mc T}_{\F}^{Q^\point}\,,
\quad
[Z]\otimes a\mapsto [aZ^{\mr{ct}}_{Q^\point}]\,,
\]
 define an isomorphism of vector space-graphs 
\[\mr{Ext}_{\F}^{Q^\point} : 
\mr{R}\mc T_{\F}\otimes_\C \mf{M}_{Q^\point}\iso\mr{R} \mc T_{\F}^{Q^\point}
\]
which induces a natural isomorphism $\mr{Ext}$ between the contravariant functors $(Q^\point,\F)\mapsto \mr{R}\T_{\F}\otimes_\C\mf M_{Q^\point}$ and $(Q^\point,\F)\mapsto \mr{R}\T_{\F}^{Q^\point}$,  from $\mbf{Man^\point}\times\mbf{Fol}$ to   $\VSG$.
\end{lema}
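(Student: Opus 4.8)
The plan is to prove bijectivity of $\mr{Ext}_{\F}^{Q^\point}$ vertex by vertex and edge by edge on $\AR_\F$. Since $\mr{Ext}_{\F}^{Q^\point}$ is induced by the sheaf morphism $\un{\mr{Ext}}_{\F}^{Q^\point}$, it is automatically a morphism of vector space-graphs (it commutes with the restriction maps, cf. Section~\ref{gg-sh}), and $\otimes_\C\mf M_{Q^\point}$ is functorial by Section~\ref{GGTensorproduct}; hence it suffices to check that each component map is a $\C$-linear isomorphism. On the $\star\in\Ve_{\AR_\F}\cup\Ed_{\AR_\F}$ outside the support of $\mr{R}\mc T_\F$ both sides vanish, so I only need to treat $\star$ in the support. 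By Remark~\ref{suppAut} this support lies in the cut-graph, and by construction it lies in the red part $\AR_\F$; at such a point every holomorphic first integral of $\F^\sharp$ is constant, so Lemma~\ref{modeles-locaux} applies in case~(\ref{casnIP}), and in particular all the relevant stalks are one-dimensional over $\C$.

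First I would isolate the following elementary fact. Let $H\subset\mr{Diff}(\C,0)$ be such that $\mc V(H)=\C\,Z$ is one-dimensional; then
\[\mc V(H)\otimes_\C\mf M_{Q^\point}\longrightarrow\mc V^0_{Q^\point}(H),\qquad Z\otimes a\longmapsto a\,Z^{\mr{ct}}_{Q^\point},\]
is an isomorphism. To see this, take an element of $\mc V^0_{Q^\point}(H)$, written $v(z,u)\partial_z$ in the transverse variable $z$. Because $\F^{\mr{ct}\,\sharp}_{Q^\point}$ has constant holonomy and $v(0,u)=0$, invariance under $H$ means that for each fixed $u$ the germ $v(\cdot,u)\partial_z$ lies in $\mc V(H)=\C\,Z$; writing $Z=\phi(z)\partial_z$ this forces $v(z,u)=c(u)\phi(z)$ with $c$ holomorphic (evaluate at a point where $\phi\neq0$), and the vanishing of $v$ on $\C\times\{u_0\}$ gives $c(u_0)=0$, i.e. $c\in\mf M_{Q^\point}$. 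Thus the map is onto; injectivity is clear since $Z$ is a $\C$-basis of the source and $Z^{\mr{ct}}_{Q^\point}$ generates freely over $\mf M_{Q^\point}$.

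With this in hand I would conclude as follows. For an edge $\ge=\langle D,D'\rangle$ in the support, with $D\cap D'=\{p\}$, the identities $\mc T_\F(\ge)=\un{\mc T}_\F(p)=\C[Z]$ and $\mc T_\F^{Q^\point}(\ge)=\un{\mc T}_\F^{Q^\point}(p)=\mf M_{Q,u_0}[Z^{\mr{ct}}_{Q^\point}]$ of Lemma~\ref{modeles-locaux}(\ref{casnIP}) show at once that $[Z]\otimes a\mapsto a[Z^{\mr{ct}}_{Q^\point}]$ is bijective. For a vertex $D$ in the support, the exact sequence~(\ref{FlatUT}) and the isomorphism~(\ref{g*T}) (both with $U=D$) identify $\mc T_\F(D)\iso\mc V(H_D)$ and $\mc T_\F^{Q^\point}(D)\iso\mc V^0_{Q^\point}(H_D)$, transporting $\mr{Ext}_\F^{Q^\point}$ to the map of the elementary fact for $H=H_D$; since $D$ is red, $H_D$ is infinite and $\mc V(H_D)$ is one-dimensional (it embeds into the one-dimensional stalk above), so the fact applies. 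This proves that $\mr{Ext}_{\F}^{Q^\point}$ is an isomorphism of vector space-graphs. Naturality in $(Q^\point,\F)$ then reduces to two routine compatibilities of $Z\otimes a\mapsto[a\,Z^{\mr{ct}}_{Q^\point}]$: that $\mu^\ast(a\,Z^{\mr{ct}}_{Q^\point})=(a\circ\mu)\,Z^{\mr{ct}}_{P^\point}$ for $\mu:P^\point\to Q^\point$, and that the morphisms $\phi^\ast$ of~(\ref{phisastT}) are $\C$-linear and commute with the constant vertical extension; combined with the relation $\mu^\ast\circ\phi^\ast=\phi^\ast\circ\mu^\ast$ these yield the required commuting squares.

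The main obstacle is precisely the one-dimensionality used in the elementary fact, and this is exactly where the restriction to the red graph is indispensable. Off the red part, at a component carrying a non-constant first integral $F$, Lemma~\ref{modeles-locaux}(\ref{ChTransIP}) gives the infinite-dimensional stalk $\C\{F\}[z_2\partial_{z_2}]$, and the algebraic tensor product $\C\{F\}\otimes_\C\mf M_{Q^\point}$ consists only of finite sums $\sum f_i(F)\,a_i(u)$, which is strictly smaller than $\mf M_{Q,u_0}\C\{F,u\}$; hence $\mr{Ext}$ would fail to be surjective there. This is the reason for the footnote to the statement asserting that $\un{\mr{Ext}}_{\F}^{Q^\point}$ is not a sheaf isomorphism, and it explains why the result is formulated only for the restricted group-graphs $\mr{R}\mc T_\F$ and $\mr{R}\mc T_\F^{Q^\point}$.
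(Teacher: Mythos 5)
Your proposal is correct and follows essentially the same route as the paper: you transport both sides through $g_*$ (via the isomorphisms (\ref{g*T}) and the exact sequences (\ref{FlatU})--(\ref{FlatUT})) to the spaces $\mc V(H)\otimes_\C\mf M_{Q^\point}$ and $\mc V^0_{Q^\point}(H)$, and your ``elementary fact'' — proved by the fixed-$u$ slice argument, using that the holonomy of the constant family is parameter-independent and that $\dim_\C\mc V(H)\le 1$ on the red graph — is exactly the paper's assertion that the bottom arrows of its two diagrams are isomorphisms, a point the paper leaves as ``directly results from the definitions.'' One caution: your parenthetical justification that $\mc V(H_D)$ ``embeds into the one-dimensional stalk above'' is not valid in general (a red vertex $D$ may contain only points whose stalks $\un{\mc T}_{\F}(m)$ are infinite-dimensional, since redness of $D$ concerns first integrals on a full neighborhood of $D$, not local ones); the correct reason, supplied later in the paper's proof of Theorem~\ref{exp}, is that the infinite group $H_D$ contains a non-periodic element $h_0$ and classically $\dim_\C\mc V(h_0)\le 1$, whence $\dim_\C\mc V(H_D)\le\dim_\C\mc V(h_0)\le 1$.
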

\noindent In this way we obtain a natural isomorphism
\begin{equation}\label{H1M}
H^1(\mr{Ext}^{-1})\,:\,H^1(\AR_\F,\mr{R}\mc T_{\F}^{Q^\point})^\point \iso  H^1(\AR_\F,\mr{R}\mc T_{\F}\otimes_\C \mf M_{Q^\point})^\point
\end{equation}
between functors from $\mbf{Man^\point}\times\mbf{Fol}$ to   $\mbf{Vec}$, as subcategory of pointed sets.

\begin{proof}
Consider an invariant component $D$ of $\mc E_\F$, an edge $\ge=\langle D,D'\rangle$ and the point $\{p\}:=D\cap D'$. Assume that $D$ and $\ge$ are red for $\F$.
We can then use the isomorphisms~(\ref{g*T}), the exact sequence~(\ref{FlatUT}) 
with $U=U_p$ as in Definition~\ref{geometric-system},
and cases (\ref{nResnLin}) and (\ref{casnIP}) in Lemma \ref{modeles-locaux}. With the notations used in this lemma and these sequences, we have the following commutative diagrams whose vertical arrows are isomorphisms:
\[
\xymatrix{
\mc T_{\F}(D)\otimes_\C \mf M_{Q,u_0}\ar[d]_{\wr}^{\dot g_{D\ast}\otimes_\C{\mr{id}_{\mf M_{Q,u_0}}}}\ar[rr]^<(.35){\un{\mr{Ext}}_{\F}^{Q^\point}(D)} && \mc T_{\F}^{Q^\point}(D)\ar[d]_{G_{D}^\T}^{\wr}
\\
\mc V(H_D)\otimes_\C \mf M_{Q,u_0}\ar[rr]^<(.35){\mr{Ext}(D)}&&\mc V_{Q^\point}^0(H_D)
}
\quad
\xymatrix{
\mc T_{\F}( \ge)\otimes_\C \mf M_{Q,u_0}\ar[d]_{\wr}^{\dot g_{U\ast}\otimes_\C{\mr{id}_{\mf M_{Q,u_0}}}}\ar[r]^<(.2){\un{\mr{Ext}}_{\F}^{Q^\point}(p)} & \mc T_{\F}^{Q^\point}( \ge)\ar[d]_{G^\T_{D,\ge}}^{\wr}
\\
\mc V(h_{D,\ge})\otimes_\C \mf M_{Q,u_0}\ar[r]^<(.2){\mr{Ext}(\ge)}&\mc V_{Q^\point}^0(h_{D,\ge})
}
\]
where  $\mr{Ext}(D)$ and $\mr{Ext}(\ge)$ are   the maps $Z\otimes_\C a \mapsto aZ^{\mr{ct}}_{Q^\point}$. 
To prove that the top horizontal arrows are isomorphisms it suffices 
to prove this property for the bottom arrows. 
Since the holonomy of the constant deformation ``does not depend on the parameter''  this fact directly results from the definitions of $\mc V_{Q^\point}^0(H_D)
$ and $\mc V^0_{Q^\point}(h_{D,\ge})$ and $\dim_\C \mc V(h_{D,\ge}),\dim_\C\mc V(H_D)\le 1$. 

Finally, this collection of isomorphisms induces the  isomorphism of functors $\mr{Ext}$ since 
$\mu^*\phi^*([a Z^{\mr{ct}}_{Q^\point}])=(\mu^*a)(\phi^*([Z^{\mr{ct}}_{Q^\point}]))$ for any morphism $(\mu,\phi)$
in the category $\mbf{Man^\point}\times\mbf{Fol}$.
\end{proof}

\begin{prop}\label{RemquchTF} Assume that $\F$ is a generalized curve.
The vector space-graphs $\mr{R}\T_{\F}$ and $\mr{R}\T_{\F}^{Q^\point}$ over $\AR_\F$ are regular (see 
Definition~\ref{regular}). 
Moreover, in each red subgraph $\AR^\alpha_\F\subset\A^\alpha_\F$
 the complementary of its support  is a subgraph. 
\end{prop}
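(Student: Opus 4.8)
The plan is to prove the two assertions of Proposition~\ref{RemquchTF} separately, treating $\mr{R}\T_\F$ and $\mr{R}\T_\F^{Q^\point}$ by the same argument since both are built from the local stalk computations of Lemma~\ref{modeles-locaux}. First I would unwind the definition of regularity (Definition~\ref{regular}): I must show that whenever a vertex $D$ and an incident edge $\ge=\langle D,D'\rangle$ both lie in the support of the restricted graph, the restriction morphism $\rho_D^{\ge}$ is an isomorphism. By Definition~\ref{redT}, the vertices and edges of $\AR_\F$ are precisely the red ones, so $D$ red means (by the definition in Subsection~\ref{secft}) that $H_D$ is infinite, and $\ge$ red means that the generating holonomy $h_{D,\ge}$ is not periodic.

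The heart of the argument is the pair of isomorphisms~(\ref{g*T}), namely $G^\T_D:\T_\F^{Q^\point}(D)\iso \mc V^0_{Q^\point}(H_D)$ and $G^\T_{D,\ge}:\T_\F^{Q^\point}(\ge)\iso \mc V^0_{Q^\point}(h_{D,\ge})$, under which the restriction map $\rho_D^{\ge}$ becomes the inclusion $\mc V^0_{Q^\point}(H_D)\hookrightarrow \mc V^0_{Q^\point}(h_{D,\ge})$. So regularity reduces to showing that this inclusion is an equality when both $D$ and $\ge$ are red. Here I would invoke the dimension count at the end of the proof of Lemma~\ref{multext}: in the red (non-periodic / infinite-holonomy) cases, Lemma~\ref{modeles-locaux} gives that $\mc V(H_D)$ and $\mc V(h_{D,\ge})$ are each either $\{0\}$ or one-dimensional, and more precisely they agree as subspaces of the germs of transverse vector fields. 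Indeed, since $H_D$ is generated (in the relevant finite-type situation, via Definition~\ref{def-type-fini}) by the single map $h_{D,\ge}$ along the geodesic direction, a vector field invariant under $h_{D,\ge}$ is automatically invariant under all of $H_D$, giving $\mc V^0_{Q^\point}(h_{D,\ge})=\mc V^0_{Q^\point}(H_D)$; thus $\rho_D^{\ge}$ is an isomorphism. The parameter-free statement for $\mr{R}\T_\F$ follows identically using $\mc V(\cdot)$ in place of $\mc V^0_{Q^\point}(\cdot)$.

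For the second assertion, I would show that within each red subgraph $\AR_\F^\alpha$ the complement of $\mr{supp}(\mr{R}\T_\F)$ is itself a subgraph, i.e. it is closed under the incidence relation that would force a missing edge between two present vertices. The support is determined by the stalk dichotomy in Lemma~\ref{modeles-locaux}: a red vertex $D$ lies outside the support exactly when $\F^\sharp$ is, at each singular point on $D$, in the rigid cases (\ref{nResnLin}) so that $\un\T_\F(D)=\{0\}$. What I must rule out is an edge $\ge=\langle D,D'\rangle$ being in the support while one of its endpoints is not; by regularity just established, if the restriction morphism $\rho_D^{\ge}$ is an isomorphism and $\ge\in\mr{supp}$, then $D\in\mr{supp}$ as well, and symmetrically for $D'$. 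Hence no edge of the support can have an endpoint outside the support within $\AR_\F^\alpha$, which is exactly the statement that the complement of the support is a subgraph (it contains both endpoints of any edge it contains).

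The main obstacle I anticipate is verifying carefully that in the red finite-type situation the centralizer/invariant-field equality $\mc V^0_{Q^\point}(h_{D,\ge})=\mc V^0_{Q^\point}(H_D)$ genuinely holds: this relies on the finite-type hypothesis encoded in Definition~\ref{def-type-fini}, which guarantees that along the repulsive ordering $\prec_{\AR_\F^\alpha}$ the holonomy group $H_D$ is monogenic, generated precisely by $h_{D,\ge}$. Away from that generating edge one has a priori only the inclusion $\mc V^0_{Q^\point}(H_D)\subset\mc V^0_{Q^\point}(h_{D,\ge})$, so I would need to confirm that the relevant edge $\ge$ realizing the support coincides with (or forces invariance under) the generator, using the one-dimensionality from Lemma~\ref{modeles-locaux}\,(\ref{casnIP}) and (\ref{ChTransIP}) to conclude that a single non-periodic invariance condition already cuts the space down to its $H_D$-invariant part. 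Once this rigidity is in hand, both assertions are formal consequences of~(\ref{g*T}) and the support description.
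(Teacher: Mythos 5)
Your proof of the regularity assertion rests on a step that is not available: to get $\mc V^0_{Q^\point}(h_{D,\ge})=\mc V^0_{Q^\point}(H_D)$ you invoke Definition~\ref{def-type-fini} to make $H_D$ monogenic, generated by $h_{D,\ge}$. But Proposition~\ref{RemquchTF} assumes only that $\F$ is a generalized curve; finite type is not a hypothesis (and cannot be, since the proposition is used in the proof of Theorem~\ref{characterization}, where finite type is precisely what is being characterized — your route would be circular there). Moreover, even under finite type, monogenicity is asserted only for edges $\msf e\in\Ed_{\A^\alpha_\F}\setminus\Ed_{\AR^\alpha_\F}$ oriented away from the red part, never for the red edges $\msf e\in\Ed_{\AR_\F}$ you are considering. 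The correct mechanism, which you gesture at in your last paragraph but do not carry out, is the paper's: on $\AR_\F$ case~(3) of Lemma~\ref{modeles-locaux} cannot occur (red elements admit no nonconstant first integral), so each space is $0$ or $\C$; the restrictions are injective (Remark~\ref{analytic-continuation}, or equivalently the inclusion in~(\ref{g*T}) together with $\dim_\C\mc V(h_{D,\ge})\le 1$ for $h_{D,\ge}$ non-periodic); and an injection between two nonzero spaces of dimension one is an isomorphism. Note also that you conflate ``red'' with ``in the support'': regularity only demands an isomorphism when \emph{both} $D$ and $\ge$ lie in $\mr{supp}$, and on $\AR_\F$ one genuinely has $\T_\F(D)=0$ with $\T_\F(\ge)\simeq\C$. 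Finally, ``the same argument'' does not transfer verbatim to $\mr{R}\T_\F^{Q^\point}$: its spaces are isomorphic to $\mf M_{Q,u_0}$, which is infinite-dimensional, so ``injective implies bijective'' fails; the paper instead deduces the parametrized case from the isomorphism $\mr{R}\T_\F^{Q^\point}\simeq\mr{R}\T_\F\otimes_\C\mf M_{Q,u_0}$ of Lemma~\ref{multext}.

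For the ``moreover'' assertion you prove the wrong statement. You rule out an edge in the support having an endpoint outside the support — that is, you argue that the \emph{support} is a subgraph. But this configuration is possible and in fact essential: $\T_\F(D)=0$ with $\T_\F(\ge)\simeq\C$ is exactly the ``active edge'' situation on which the dimension count of Theorem~\ref{teobasegeom} (hence Theorem~\ref{defuniv}) depends, and the paper's own proof explicitly allows it. What must be shown is that the \emph{complement} of the support is a subgraph, i.e. that $\T_\F(\ge)=0$ forces $\T_\F(D)=\T_\F(D')=0$ for both endpoints; this is immediate from the injectivity of the restriction maps (Remark~\ref{analytic-continuation}). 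Your appeal to ``regularity just established'' cannot repair this: regularity's hypothesis requires both the vertex and the edge to lie in the support, so it can never be used to transfer support membership from an edge to a vertex.
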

\begin{proof}
By Lemma~\ref{modeles-locaux} for each $\star\in\Ve_{\AR_\F}\cup\Ed_{\AR_\F}$, either both $\mr{R}\T_{\F}(\star)$ and $\mr{R}\T_{\F}^{Q^\point}(\star)$ are  zero, or $\mr{R}\T_{\F}(\star)$ is isomorphic to $\C$ and $\mr{R}\T_{\F}^{Q^\point}(\star)$ is isomorphic to  the maximal ideal $\mf M_{Q,u_0}$ of $\mc O_{Q,u_0}$.
Assume that $D\in \Ve_{\AR_\F}$ is invariant and $\msf e=\langle D,D'\rangle\in \Ed_{\AR_\F}$ does not correspond  to a nodal singular point at $D\cap D'$. By Remark~\ref{analytic-continuation}  either the restriction map
$\mr{R}\T_{\F}(D)\to \mr{R}\T_{\F}(\msf e)$ is an isomorphism or
$\mr{R}\T_{\F}(D)=0$ and  $\mr{R}\T_{\F}(\msf e )\simeq\C$,
the situation  $\mr{R}\T_{\F}(D)\neq 0$ and $\mr{R}\T_{\F}(\msf e)=0$ being impossible.
According to Lemma~\ref{multext}, we deduce that $\mr{R}\T_{\F}^{Q^\point}\simeq\mr{R}\T_{\F}\otimes_\C\mf M_{Q,u_0}$ is also regular.
\end{proof}

\subsection{Exponential group-graph morphism} 
The flows of basic vector fields of $\F_{Q^\point}^{\mr{ct}\,\sharp}$  leave invariant the foliation $\F_{Q^\point}^{\mr{ct}\,\sharp}$. As in  \cite[Lemma 9.1]{MMS} we  see that the exponential maps $\un{\mc B}_{\F}^{Q^\point}(m)\to  \un{\mr{Aut}}_{\F}^{Q^\point}(m)$, $Z\mapsto \exp(Z)[1]$, $m\in \mc E_\F$,
send $\un{\mc X}_{\F}^{Q^\point}(m)$ in $\un{\mr{Fix}}_{\F}^{Q^\point}(m)$ and 
factorize into maps $\exp_m:  \un{\mc T}_{\F}^{Q^\point}(m)\to  \un{\mr{Sym}}_{\F}^{Q^\point}(m)$, thus defining a morphism of sheaves of sets 
\begin{equation*}\label{ExpSheaves}
\un{\mr{Exp}}^{Q^\point}_{\F}:  \un{\mc T}_{\F}^{Q^\point}\to  \un{\mr{Sym}}_{\F}^{Q^\point}\,.
\end{equation*}
Using the isomorphism~(\ref{symD}) it induces
maps 
\[\mr{Exp}^{Q^\point}_{\F}(\star) : {\mc{T}}^{Q^\point}_{\F}(\star)\to\un{\mr{Sym}}_{\F}^{Q^\point}(\star)\simeq {\mr{Sym}}^{Q^\point}_{\F}(\star)\,,\quad\star\in \Ve_{\A_\F}\cup\Ed_{\A_\F}\,.\] 
In general  these maps are not group morphisms but this will be the case when the $\mc O_{Q, u_0}$-module
 ${\mc T}_{\F}^{Q^\point}(\star)$ is free of rank one or null, cf. \cite[\S9]{MMS}. Therefore to define an exponential group-graph morphism we must restrict the group-graph of infinitesimal symmetries of $\F$ or $\F_{Q^\point}^{\mr{ct}\,\sharp}$ to the
group-graph $\mr{R}\T_\F^{Q^\point}$ over the
 sub-graph $\msf R_{\F}$ of~$\A_\F$.

Using the definitions of the isomorphisms $G_\star^\T$ in (\ref{g*T}) and the definitions of the isomorphisms $G_\star$ in
Proposition~\ref{G} with the same geometric system, cf.  Definition~\ref{geometric-system},  we have the following commutative diagrams 
\begin{equation}\label{diagramsexp}
\xymatrix{
{\mc{T}}^{Q^\point}_{\F}(\msf e)\ar[r]_{\sim}^{G^\T_{D,\ge}}\ar[d]^{\mr{Exp}^{Q^\point}_{\F}(\msf e)}&\mathcal V^0_{Q^\point}(h_{D,\msf e})\ar[d]^{\exp}\\
{\mr{Sym}}^{Q^\point}_{\F}(\ge)\ar[r]_{\sim}^{G_{D,\ge}} & C^0_{Q^\point}(h_{D,\msf e})}
\qquad\quad
\xymatrix{
{\mc{T}}^{Q^\point}_{\F}(D)\ar[r]_{\sim}^{G^\T_D}\ar[d]^{\mr{Exp}^{Q^\point}_{\F}(D)}&\mathcal V_{Q^\point}^0(H_D)\ar[d]^{\exp}\\ 
{\mr{Sym}}^{Q^\point}_{\F}(D)\ar[r]_{\sim}^{\hspace{2mm}G_D} & C_{Q^\point}^0(H_D)
}
\end{equation}
where $\msf e=\langle D,D'\rangle$ and 
$h_{D,\msf e}$ is the holonomy map defined in (\ref{hHD}).
Indeed, when the direct image $g_*(Z)$ of a basic vector field $Z$ is defined, its flow is also the direct image of the flow of $Z$ by $g$.

\begin{teo}\label{exp} Given a  foliation which is a generalized curve and a germ of manifold~$Q^\point$, the
 morphisms  $\mr{Exp}^{Q^\point}_{\F}(\star)$ induce a  group-graph isomorphism over~$\AR_\F$
\[
\mr{Exp}^{Q^\point}_{\F}: \mr{R}\mc T^{Q^\point}_{\F} \iso \mr{RSym}^{Q^\point}_{\F}\,.
\]
The collection $\{\mr{Exp}_{\F}^{Q^\point}\}$
 defines an isomorphism of  contravariant functors
\[
\mr{Exp}: \mr{R}\mc T \iso \mr{RSym}\,,
\]
from $\mbf{Man^\point}\times\fol$ to the category of abelian group-graphs,
the functor $\mr{R}\T$ taking values in the subcategory of $\C$-vector space group-graphs.
\end{teo}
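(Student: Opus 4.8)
The plan is to prove the statement componentwise over $\AR_\F$, then check compatibility with the restriction morphisms, and finally verify naturality in $(Q^\point,\F)$. First I would fix $\star\in\Ve_{\AR_\F}\cup\Ed_{\AR_\F}$ and invoke the commutative squares~(\ref{diagramsexp}), whose horizontal arrows $G^\T_\star$ and $G_\star$ are isomorphisms by~(\ref{g*T}) and Proposition~\ref{G}. This reduces the assertion that $\mr{Exp}^{Q^\point}_\F(\star)$ is a group isomorphism to the same assertion for the time-one flow map $\exp:\mc V^0_{Q^\point}(H)\to C^0_{Q^\point}(H)$, with $H=H_D$ when $\star=D$ and $H=\langle h_{D,\ge}\rangle$ when $\star=\ge$.

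The decisive feature of the red subgraph is that, by Proposition~\ref{RemquchTF} together with cases~(\ref{nResnLin}) and~(\ref{casnIP}) of Lemma~\ref{modeles-locaux}, each nonzero space $\mc V^0_{Q^\point}(H)$ equals $\mf M_{Q,u_0}\cdot W$ for a single vertical field $W$ (the image under $g_\ast$ of the generator $Z^{\mr{ct}}_{Q^\point}$), with the scalars $a\in\mf M_{Q,u_0}$ depending on the parameter alone. As $W$ is tangent to the fibers over $Q$ it annihilates every function of $u$, so
\[
[a\,W,\,b\,W]=a\,W(b)\,W-b\,W(a)\,W=0
\]
for all such $a,b$; all these fields commute and hence $\exp$ carries addition to composition, i.e. it is a group morphism. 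Injectivity holds because the flow of a nonzero $a\,W$ is nontrivial (its coefficient vanishes at $u_0$ but not identically), and surjectivity onto the centralizer is precisely the parametrized statement of \cite[\S9]{MMS}: redness of $\star$ forces $H_D$ non-finite, resp. $h_{D,\ge}$ non-periodic, and then the centralizer inside $\DiffQz$ is exactly the image of the flow of the infinitesimal generator. Thus every $\mr{Exp}^{Q^\point}_\F(\star)$ is a group isomorphism; in particular $\mr{RSym}^{Q^\point}_\F$ inherits the abelian structure of $\mr{R}\mc T^{Q^\point}_\F$.

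Next I would check that $\{\mr{Exp}^{Q^\point}_\F(\star)\}_\star$ is a morphism of group-graphs over $\AR_\F$. Building all the squares~(\ref{diagramsexp}) from one common geometric system (Definition~\ref{geometric-system}), the restriction map $\mr{R}\mc T^{Q^\point}_\F(D)\to\mr{R}\mc T^{Q^\point}_\F(\ge)$ corresponds through the $G^\T_\star$ to the inclusion $\mc V^0_{Q^\point}(H_D)\hookrightarrow\mc V^0_{Q^\point}(h_{D,\ge})$, while through the $G_\star$ the restriction in $\mr{RSym}$ corresponds to the inclusion $C^0_{Q^\point}(H_D)\hookrightarrow C^0_{Q^\point}(h_{D,\ge})$ by Proposition~\ref{G}. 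Since $\exp$ visibly commutes with these inclusions (it is the same flow read on nested invariance subspaces), the exponential squares commute with the restriction morphisms and $\mr{Exp}^{Q^\point}_\F$ is a group-graph isomorphism $\mr{R}\mc T^{Q^\point}_\F\iso\mr{RSym}^{Q^\point}_\F$.

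Finally, for naturality I would verify compatibility with the two types of morphisms separately. For a holomorphic germ $\mu:P^\point\to Q^\point$, the pull-back of a vertical field and of its flow coincide, giving $\mr{Exp}^{P^\point}_\F\circ\mu^\ast=\mu^\ast\circ\mr{Exp}^{Q^\point}_\F$. For a \Cex-conjugacy $\phi:\G\to\F$, the map $\phi^\ast$ on $\mr{R}\mc T$ is the inverse image of vector fields~(\ref{phisastT}) and $\phi^\ast$ on $\mr{RSym}$ is conjugation by $\phi^\sharp_Q$ (corrected, where needed, by a leaf-preserving homeomorphism); both transport the flow of a field to the flow of its transport, and as $\exp$ commutes with conjugation we obtain $\mr{Exp}^{Q^\point}_\G\circ\phi^\ast=\phi^\ast\circ\mr{Exp}^{Q^\point}_\F$ over the graph isomorphism $\AR_\phi$. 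These compatibilities make $\{\mr{Exp}^{Q^\point}_\F\}$ a natural transformation $\mr{R}\mc T\to\mr{RSym}$, which is an isomorphism since every component is. I expect the main obstacle to be the surjectivity of $\exp$ onto the centralizer, i.e. showing that every parametric transverse symmetry is the time-one flow of an infinitesimal one; this is exactly where non-periodicity and non-finiteness of the holonomy along the red part are indispensable, and where the parametrized Mattei--Moussu arguments of \cite[\S9]{MMS} do the real work.
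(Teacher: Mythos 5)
Your skeleton matches the paper's: reduce via the commutative squares~(\ref{diagramsexp}) to the assertion that $\exp:\mc V^0_{Q^\point}(H)\to C^0_{Q^\point}(H)$ is an isomorphism, treat edges and vertices, then check naturality. But the decisive analytic step is missing, and you cannot discharge it the way you propose. You write that ``surjectivity onto the centralizer is precisely the parametrized statement of \cite[\S 9]{MMS}''; in fact the paper invokes \cite[\S 9]{MMS} only for the much weaker point that $\mr{Exp}^{Q^\point}_{\F}(\star)$ is a group \emph{morphism} when the module $\T^{Q^\point}_{\F}(\star)$ is free of rank one or null. The surjectivity itself is proved in this paper as Lemma~\ref{cent}: for a non-periodic $h\in\mr{Diff}(\C,0)$, every $\phi\in C^0_{Q^\point}(h)$ is a flow. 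Its proof is genuinely delicate. One passes to a formal linearizing or normalizing coordinate $w$, shows that the multiplier $\nu(t)$ (resp.\ the time $\tau(t)$) depends holomorphically on the parameter, and — crucially — handles the case where $w$ is \emph{divergent}: there $\mc V^0_{Q^\point}(h)=0$, so one must prove the parametric centralizer is trivial, which uses holomorphy (the multiplier takes values in a discrete subset of $\mb S^1$, hence is constant) and, in the resonant case, the \'Ecalle–Liverpool theorem \cite{Liverpool} to force $\tau(t)\in\mb Q$ and hence $\tau\equiv 0$. None of this is a routine ``parametrized Mattei–Moussu'' argument, and your proposal contains no substitute for it.

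A second concrete omission is the vertex case with $\T_{\F}(D)=0$. Your phrase ``the centralizer is exactly the image of the flow of the infinitesimal generator'' presupposes a generator; when $\mc V^0_{Q^\point}(H_D)=0$ the required statement is that $C^0_{Q^\point}(H_D)$ is \emph{trivial}, and the paper proves this by contradiction: a nontrivial $(f(z,u),u)\in C^0_{Q^\point}(H_D)$, restricted along a curve $\lambda$ in the parameter space and expanded as $z+t^na(z)\bmod t^{n+1}$, yields $a(h(z))=h'(z)a(z)$ for all $h\in H_D$, i.e.\ a nonzero element of $\mc V(H_D)$ — contradicting $\mc V^0_{Q^\point}(H_D)\simeq\mc V(H_D)\otimes_\C\mf M_{Q,u_0}=0$. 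Relatedly, your reduction of the vertex case to the edge case needs a non-periodic element $h_0\in H_D$, whose existence requires the short commutator/abelian dichotomy argument given in the paper (an infinite group all of whose elements are periodic cannot occur here), and, when $\T_{\F}(D)\neq 0$, the sandwich $C^0_{Q^\point}(h_0)=\exp(\mc V^0_{Q^\point}(h_0))=\exp(\mc V^0_{Q^\point}(H_D))\subset C^0_{Q^\point}(H_D)\subset C^0_{Q^\point}(h_0)$ resting on $\dim_\C\mc V(h_0)\le 1$. Your remaining points — commutativity of the fields $a\,W$ making $\exp$ additive, compatibility with the restriction maps via a common geometric system, and naturality in $(\mu,\phi)$ — are correct and agree with the paper, but as it stands the proposal defers the entire content of Lemma~\ref{cent} and the $\T_{\F}(D)=0$ case to a citation that does not contain them.
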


In order to prove this theorem we will need an auxiliary result.

\begin{lema}\label{cent}
If $h\in\mr{Diff}(\C,0)$ is non-periodic then the exponential map induces a group isomorphism 
\[
\exp:\mc V_{Q^\point}^0(h)\stackrel{\sim}{\to}C_{Q^\point}^0(h).\]
\end{lema}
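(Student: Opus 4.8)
Plan. The strategy is to reduce both $\mc V^0_{Q^\point}(h)$ and $C^0_{Q^\point}(h)$ to a single $u$-independent generator and its flow. Since $h$ is non-periodic it admits no non-constant meromorphic first integral; hence if $a_1\DD{z}$ and $a_2\DD{z}$ are two germs of vector fields on $(\C,0)$ vanishing at $0$ and invariant by $h$ (i.e. $a_i(h(z))=a_i(z)h'(z)$), then $a_1/a_2$ is an $h$-invariant meromorphic germ, so it is constant. Thus the space $\mc V(h)$ of $h$-invariant vector fields vanishing at $0$ has dimension $\le 1$. I would split into the cases $\mc V(h)=0$ and $\mc V(h)=\C Z_0$, writing $Z_0=a_0(z)\DD{z}$ for a generator of the latter and $\phi_s$ for its flow, $\frac{d}{ds}\phi_s=a_0\circ\phi_s$, $\phi_0=\mr{id}$.

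First I would describe $\mc V^0_{Q^\point}(h)$. A vertical field $Z=a(z,u)\DD{z}$ lies in $\mc V^0_{Q^\point}(h)$ exactly when $a(\cdot,u)\DD{z}\in\mc V(h)$ for every $u$ and $a(z,u_0)\equiv 0$. When $\mc V(h)=\C Z_0$ this forces $a(z,u)=\lambda(u)a_0(z)$, with $\lambda(u)=a(z_1,u)/a_0(z_1)$ holomorphic (for any $z_1$ with $a_0(z_1)\neq 0$) and $\lambda(u_0)=0$; hence $\mc V^0_{Q^\point}(h)=\mf M_{Q,u_0}\cdot Z_0^{\mr{ct}}$, where $Z_0^{\mr{ct}}$ denotes the constant vertical extension. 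Integrating $\lambda(u)a_0(z)\DD{z}$ (which fixes $u$) gives the explicit time-one map $\exp(\lambda Z_0^{\mr{ct}})(z,u)=(\phi_{\lambda(u)}(z),u)$. Well-definedness is then immediate from this formula: the map is an automorphism over $Q$, it is the identity on $\C\times\{u_0\}$ since $\lambda(u_0)=0$, and it commutes with $h\times\mr{id}_Q$ because $\phi_s$ commutes with $h$ (being the flow of an $h$-invariant field). The homomorphism property $\exp(\lambda Z_0^{\mr{ct}})\circ\exp(\mu Z_0^{\mr{ct}})=\exp((\lambda+\mu)Z_0^{\mr{ct}})$ follows from $\phi_{\lambda(u)}\circ\phi_{\mu(u)}=\phi_{\lambda(u)+\mu(u)}$, and injectivity holds because $\phi_s\neq\mr{id}$ for all small $s\neq 0$ (as $Z_0\neq 0$); if $\lambda\not\equiv 0$ the open mapping theorem makes $\lambda$ attain such values, whence $\exp(\lambda Z_0^{\mr{ct}})\neq\mr{id}$.

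The remaining and principal point is surjectivity, which rests on the classical description of the centralizer of a non-periodic germ. For such $h$ the centralizer $C(h)\subset\mr{Diff}(\C,0)$ is abelian, and the part of it reached by continuous paths from the identity is exactly the flow $\{\phi_s\}_{s\in\C}$ (and is $\{\mr{id}\}$ when $\mc V(h)=0$): in the non-resonant case $\phi\mapsto\phi'(0)$ is injective on $C(h)$ with image a subgroup of $\C^\ast$ of empty interior, in the resonant normalizable case $C(h)$ is the flow up to a finite group, and in the non-normalizable case the identity is isolated among path-connected elements. Given $\phi\in C^0_{Q^\point}(h)$, writing $\phi(z,u)=(\Phi(z,u),u)$ the fibre maps $\phi_u:=\Phi(\cdot,u)$ form a holomorphic path in $C(h)$ with $\phi_{u_0}=\mr{id}$; hence $\phi_u\in\{\phi_s\}_{s\in\C}$, and reading $\lambda(u)$ off a Taylor coefficient of $\phi_u$ (the principal logarithm of $\phi_u'(0)$ divided by $a_0'(0)$ when that is non-zero, and the first non-trivial coefficient otherwise) exhibits $\lambda$ as a holomorphic germ with $\lambda(u_0)=0$ and $\phi=\exp(\lambda Z_0^{\mr{ct}})$. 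When $\mc V(h)=0$ the same path argument together with the empty-interior property of the multiplier group forces $\phi_u\equiv\mr{id}$, so $C^0_{Q^\point}(h)=\{\mr{id}\}=\exp(\mc V^0_{Q^\point}(h))$. The hard part will be exactly this identification of the identity-component of $C(h)$ with the flow, carried out uniformly enough in $u$ to extract the holomorphic parameter $\lambda(u)$; once the generator $Z_0$ is fixed, everything else is formal.
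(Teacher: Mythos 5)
Your proposal is correct in substance and, once unwound, follows essentially the same route as the paper: bound $\dim_\C\mc V(h)\le 1$, observe that the fibre maps $\phi_u$ of any $\phi\in C^0_{Q^\point}(h)$ form a holomorphic family in $C(h)$ through the identity, extract the flow parameter holomorphically from a Taylor coefficient, and use discreteness to kill the divergent cases. The organizational difference is that you black-box the crux as the ``classical description'' of $C(h)$, whereas the paper proves the parametrized statement directly in a formal normalizing coordinate $w$ (with $w\circ h=\lambda w$, resp.\ $w\circ h=\ell^r\circ\exp sX$): it writes $w\circ\phi_u=\nu(u)w$, resp.\ $\ell^{r_u}\circ\exp(\tau(u)X)$, gets holomorphy of $\nu$, $\tau$ from the coefficients of $w\circ\phi_u\in\C\{w,u\}$, and disposes of divergent $w$ by two precise inputs: a holomorphic map into $\mb S^1$ (empty interior) is constant, and the \'Ecalle--Liverpool theorem forcing $\tau(u)\in\mb Q$, hence $\tau\equiv 0$. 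Your assertion that ``in the non-normalizable case the identity is isolated among path-connected elements'' is exactly this last theorem; it is not elementary and should be cited, as the paper cites Liverpool. Two smaller points to repair: (i) your claim that in the non-resonant case the multiplier map on $C(h)$ has image of empty interior in $\C^\ast$ is false when $h$ is linearizable (there $C(h)\cong\C^\ast$ via the multiplier); it holds precisely in the non-linearizable case, which is fortunately the only case where you invoke it, since for linearizable $h$ the centralizer coincides with the flow of $Z_0$; (ii) in the resonant normalizable case one must first show that the multiplier $\phi_u'(0)$, a $p$-th root of unity depending holomorphically on $u$ and equal to $1$ at $u_0$, is identically $1$ before reading $\tau(u)$ off the first nontrivial coefficient --- this is how the paper eliminates the finite factor $\langle\ell\rangle$, a step your sketch glides over. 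With these classical inputs made explicit, your argument matches the paper's proof.
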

\begin{proof}
If $h$ is formally linearizable then there is a formal coordinate
  $w$ such that $w\circ h=\lambda w$ with $\lambda\in\C^*$. If $\phi\in C^0_{Q^\point}(h)$ then $w\circ\phi_t=\nu(t)w$ with $\nu\in\mc O_{Q,u_0}^*$ and $\nu(0)=1$. Indeed, $\tilde\phi(w,t):=w\circ\phi_t=\sum_{i\geq 1}\phi_i(t)w^i$ belongs to $\C\{w,t\}$ and $\tilde\phi(\lambda w,t)=\lambda\tilde\phi(w,t)$ implies that  $\phi_i(t)\equiv 0$ for $i>1$ and $\nu(t)=\phi_1(t)\neq 0$ is holomorphic.
There is $\xi\in\mf M_{Q,u_0}$ such that  $\nu(t)=\exp(\xi(t))$. If $w$ is convergent then $\phi_t=\exp(\xi(t)w\partial_w)$. If $w(z)$ is divergent then $|\lambda|=1$ and $C^0_{Q^\point}(h)$ is the set of $\phi(z,t)=(\phi_t(z),t)$ such that $\phi_t(z)=w^{-1}\circ\nu(t)w(z)$ is convergent. 
If $w$ is divergent then $\mc V_{Q^\point}^0(h)=0$ and $\nu(t)$ takes values in a discrete subset of the unit circle $\mb S^1\subset\C$. We conclude that $\nu\equiv 1$ by holomorphy.

If $h$ is resonant there is a formal coordinate $w$ such that $w\circ h=\ell^r\circ \exp s X$ with $X=\frac{w^{p+1}}{1+\lambda w^p}\partial_w$ for some integer $p\geq 1$. If $\phi\in C^0_{Q^\point}(h)$ then
$w\circ\phi_t=\ell^{r_t}\circ\exp \tau(t)X$ with $r_t\in\Z$. 
Since $\tilde\phi(w,t)=w\circ\phi_t=\sum_{i\geq 1}\phi_i(t)w^i\in\C\{w,t\}$ and $\exp \tau X(w)=w+\tau w^{p+1}+\cdots$ we conclude that the holomorphic function $\phi_1(t)=e^{\frac{2i\pi r_t}{p}}$ is identically equal to $1$ and the function $t\mapsto\tau(t)$ is holomorphic and vanishes at $t=0$. If $w$ is convergent then $\phi_t=\exp(\tau(t)X)$ with $\tau\in\mf M_{Q,u_0}$. If $w$ is divergent then $\mc V^0_{Q^\point}(h)=0$ and $C^0_{Q^\point}(h)$ is the set of $\phi(z,t)=(\phi_t(z),t)$ such that $w^{-1}\circ\exp(\tau(t)X)\circ w$ is convergent. This implies that $\tau(t)\in \mb Q$ by the \'Ecalle-Liverpool's Theorem \cite{Liverpool} and consequently $\tau\equiv 0$. 
\end{proof}

\begin{dem2}{of Theorem~\ref{exp}}
It suffices to see that for $D\in\Ve_{\AR_{\F}}$ 
and $\ge\in\Ed_{\AR_{\F}}$ the right vertical arrows in the diagrams (\ref{diagramsexp}) are isomorphisms.
For the diagram in the left this follows from Lemma~\ref{cent} by taking $h=h_{D,\ge}$. 

It only remains to examine the diagram in the right of (\ref{diagramsexp}) with $D$ red.
Since $H_D$ is infinite there is a non-periodic element $h_0\in H_D$. Indeed, when $H_D$ is non-abelian  it contains a non-trivial commutator, which is tangent to the identity, hence non-periodic. When $H_D$ is abelian, if all its elements were periodic then $H_D$ would be finite.
We must prove that the exponential map $\exp:\mc V^0_{Q^\point}(H_D)\to C^0_{Q^\point}(H_D)$ is an isomorphism. By Lemma~\ref{cent} the bottom horizontal map in the following diagram is an isomorphism:
\[\xymatrix{\mc V^0_{Q^\point}(H_D)\ar[r]^{\mr{exp}}\ar@{^{(}->}[d]&C^0_{Q^\point}(H_D)\ar@{^{(}->}[d]\\ \mc V^0_{Q^\point}(h_0)\ar[r]^{\sim}_{\mr{exp}}&{\,}C^0_{Q^\point}(h_0)\,.}\]
This shows that the top horizontal map is injective. To prove the surjectivity we distinguish two cases:
\begin{enumerate}[(a)]
\item $\T_{\F}(D)=0$. In this case $\mc V_{Q^\point}^0(H_D)=0$.
By contradiction, we must see that if $C_{Q^\point}^0(H_D)\neq\{\mr{id}_{\C\times Q}\}$ then $\mc V^0_{Q^\point}(H_D)\neq \{ 0\}$. 
If $(f(z,u),u)\in C_{Q^\point}^0(H_D)\setminus\{\mr{id}_{\C\times Q}\}$ there is a holomorphic germ $\lambda:(\C,0)\to (Q,u_0)$ and $n\in\N^*$ such that
$z\not\equiv g(z,t):= f(z,\lambda(t))=z+t^na(z)\mod t^{n+1}$  with $a(z)\not\equiv 0$. For every $h\in H_D$ we have: $g(h(z),t)=h(g(z,t))$. Working modulo $t^{n+1}$ we deduce that $h(z)+t^na(h(z))=h(z)+h'(z)t^na(z)$, i.e. $a(h(z))=h'(z)a(z)$. This means that $0\not\equiv a(z)\partial_z\in\cap_{h\in H_D} \mc V(h)=\mc V(H_D)\neq 0$ and consequently $\mc V^0_{Q^\point}(H_D)\simeq\mc V(H_D)\otimes_\C\mf M_{Q,u_0}\neq 0$.
\item $\T_{\F}(D)\neq0$. In this case,  $0\neq\T_{\F}(D)\simeq\mc V(H_D)\subset\mc V(h_0)$ and since $h_0$ is non-periodic classically, we have: $\dim_\C\mc V(h_0)\le 1$. Consequently $\mc V(H_D)=\mc V(h_0)$ has dimension $1$ and
$\mc V_{Q^\point}^0(H_D)=\mc V(H_D)\otimes_\C\mf M_{Q,u_0}=\mc V(h_0)\otimes_\C\mf M_{Q,u_0}=\mc V_{Q^\point}^0(h_0)$. Using Lemma~\ref{cent} we have:
\[
C_{Q^\point}^0(h_0)=\exp(\mc V_{Q^\point} ^0(h_0))=\exp(\mc V_{Q^\point} ^0(H_D))\subset C_{Q^\point}^0(H_D)\subset C_{Q^\point}^0(h_0)\,.\]
Hence $\exp(\mc V^0_{Q^\point}(H_D))=C_{Q^\point}^0(H_D)$.
\end{enumerate}
We let the reader check that if 
$(\mu,\phi):(P^\point,\G)\to (Q^\point,\F)$ is a morphism in the category $\mbf{Man^\point}\times\fol$, then
the following diagram of group-graph morphisms is commutative:
\[\xymatrix{\mr{R}\T_\F^{Q^\point}\ar[r]^{(\mu,\phi)^*}\ar[d]_{\mr{Exp}_\F^{Q^\point}}&\mr{R}\T_\G^{P^\point}\ar[d]^{\mr{Exp}_\G^{P^\point}}\\
\mr{R}\sym_\F^{Q^\point}\ar[r]^{(\mu,\phi)^*} &\mr{R}\sym_\G^{P^\point}\,.}\]
\end{dem2}

\subsection{Characterization of finite type foliations}\label{subsecCharFT}
In this section we prove that, under a technical hypothesis, a  foliation $\F$ is of finite type if and only if the cohomology vector space $H^1(\A_\F,\T_{\F})$ is of finite dimension, which justifies the name that we have adopted.
\begin{teo}\label{characterization}
Let $\F$ be a  foliation which is a generalized curve.
If there is no cut-component $\A^\alpha_{\F}$ of $\A_\F$  entirely green, then $\F$ is of finite type if and only if 
 $\dim _{\C}H^1(\A_\F,\T_{\F})<\infty$.
\end{teo}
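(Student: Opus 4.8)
The plan is to reduce the statement to a claim about each cut-component of $\A_\F$ separately, deduce the easy implication from results already established, and prove the converse by producing an explicit infinite-dimensional quotient of $H^1$ obtained by \emph{telescoping} a cocycle along a geodesic. First I would record two reductions. Since $\A_\F$ is a finite tree there are finitely many cut-components $\A_\F^\alpha$, and the decomposition (\ref{h1decSoDir}) (valid for $\T_\F$, whose support lies in the cut-graph) together with Remark~\ref{aretes-support} gives $\dim_\C H^1(\A_\F,\T_\F)=\sum_\alpha\dim_\C H^1(\A_\F^\alpha,\T_\F)$, so the total dimension is finite iff each summand is. Moreover the hypothesis that no cut-component is entirely green means $\AR_\F^\alpha\neq\emptyset$ for every $\alpha$, so condition (ii) of Definition~\ref{def-type-fini} never occurs: $\F$ is of finite type iff for each $\alpha$ the red part $\AR_\F^\alpha$ is connected and the monogeny condition (i) holds.

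For the direct implication I would invoke Observation~\ref{H1TRT}: when $\F$ is of finite type the subtree $\AR_\F^\alpha$ is repulsive for $\T_\F$, so $\mc R_\F$ is an isomorphism $H^1(\A_\F,\T_\F)\iso H^1(\AR_\F,\mr R\T_\F)$. By Proposition~\ref{RemquchTF} the graph $\mr R\T_\F$ is regular, and by Lemma~\ref{modeles-locaux} each of its stalks has dimension $\le1$; Theorem~\ref{teobasegeom} then yields $\dim_\C H^1(\AR_\F,\mr R\T_\F)=(a-p)<\infty$. (This direction does not even use the hypothesis.)

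For the converse I argue contrapositively. If $\F$ is not of finite type then, under our hypothesis, some $\A_\F^\alpha$ satisfies either (a) $\AR_\F^\alpha$ is disconnected, or (b) there is a green vertex $D$ whose inward edge $\msf e$ (the first edge of the geodesic from $D$ to $\AR_\F^\alpha$) has $H_D\neq\langle h_{D,\msf e}\rangle$. The engine in both cases is this: along a path $v_0,\msf e_1,v_1,\dots,\msf e_s,v_s$ in $\A_\F^\alpha$ with green interior vertices, the sheaf $\un\T_\F$ is locally constant on the regular part and extends holomorphically across the reduced singularities (Remarks~\ref{analytic-continuation} and~\ref{monoholo}); transporting along the path identifies all the stalks $\T_\F(\msf e_i)$ and $\T_\F(v_i)$ with invariant subspaces $\mc V(\cdot)$ of one space $\mf X$ of germs of vector fields on a transversal, the restriction maps becoming the inclusions $\mc V(H_{v_i})\hookrightarrow\mc V(h_{\msf e_i})$. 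Summing the components of a cocycle over the edges of the path defines a linear functional $\Sigma:Z^1(\A_\F^\alpha,\T_\F)\to\mf X$; since every green vertex has finite (hence periodic) holonomy, all edges it touches are green and carry infinite-dimensional stalks, and a direct telescoping shows $\Sigma\circ\partial^0$ lands in $\mc V(H_{v_0})+\mc V(H_{v_s})$. Hence $\Sigma$ descends to a surjection $H^1(\A_\F^\alpha,\T_\F)\twoheadrightarrow\mathrm{im}(\Sigma)/(\mc V(H_{v_0})+\mc V(H_{v_s}))$.

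It then suffices to see the target is infinite-dimensional. In case (a) I take $v_0,v_s$ in distinct components of $\AR_\F^\alpha$, so both $\mc V(H_{v_0}),\mc V(H_{v_s})$ are finite-dimensional by Lemma~\ref{modeles-locaux}, while $\mathrm{im}(\Sigma)$ contains the infinite-dimensional $\mc V(h_{\msf e_s})$. In case (b) I take the geodesic from $D$ to $\AR_\F^\alpha$, so $v_0\in\AR_\F^\alpha$ has finite stalk and $v_s=D$; one quotients $\mathrm{im}(\Sigma)\supseteq\mc V(h_{D,\msf e})$ by $\mc V(H_{v_0})+\mc V(H_D)$, and the decisive point is that $\mc V(H_D)\subsetneq\mc V(h_{D,\msf e})$ has infinite codimension: finite subgroups of $\mr{Diff}(\C,0)$ are cyclic, and with $\langle h_{D,\msf e}\rangle\subsetneq H_D$ in a common linearizing coordinate $\mc V(H_D)$ and $\mc V(h_{D,\msf e})$ are spanned respectively by $\{w^{1+k\lvert H_D\rvert}\partial_w\}$ and $\{w^{1+k\,\mathrm{ord}(h_{D,\msf e})}\partial_w\}$, whose quotient is infinite-dimensional. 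The main obstacle I anticipate is the coherent construction of $\Sigma$: one must identify the transversal stalks along the path consistently through the locally constant structure and the holomorphic continuation at the reduced singularities, so that the restriction maps literally become inclusions of the $\mc V(\cdot)$'s and the green interior terms cancel, leaving only the two endpoint contributions; verifying this cancellation carefully, rather than the final dimension count, is where the real work lies.
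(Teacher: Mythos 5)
Your proposal is correct and follows essentially the same route as the paper: the finite-type direction is the paper's chain Remark~\ref{H1TRT}--Proposition~\ref{RemquchTF}--Theorem~\ref{teobasegeom}, while your telescoping functional $\Sigma$, with coboundaries collapsing to the two endpoint stalks and the quotient $\mathrm{im}(\Sigma)/(\mc V(H_{v_0})+\mc V(H_{v_s}))$, is exactly the mechanism of the paper's Lemma~\ref{39mai} (there implemented via the summation map $\beta$ and the identifications~(\ref{identif2})), your cases (a) and (b) matching the geodesic types (3)--(4) and (1)--(2), and your cyclic-holonomy computation $\C\{z^{n_D}\}\subset\C\{z^{n_{D,\msf e}}\}$ of infinite codimension matching Lemma~\ref{codiminfiny}. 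The only cosmetic difference is that you transport all stalks to a single transversal via the locally constant structure instead of first shrinking the geodesic so that the interior restriction maps become isomorphisms, which spares the paper's orientation case analysis but changes nothing essential.
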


Before proving the theorem we need to state some auxiliary results.

\begin{obs}\label{sousgraphedim}
If $\K$, $\K'$   are subgraphs of $\A_{\F}$, 
then we have: \[\mr{dim}_\C H^1(\K', \mc T_{\F} )\leq \mr{dim}_\C H^1(\K, \mc T_{\F}) \hbox{ as soon as  } \K'\subset \K\,.\]
\end{obs}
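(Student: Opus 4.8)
The plan is to produce a surjective $\C$-linear map $H^1(\K,\mc T_\F)\twoheadrightarrow H^1(\K',\mc T_\F)$, from which the dimension inequality is immediate. The natural candidate is the map $H^1(\imath_j)$ induced by the canonical morphism $\imath_j:\mc T_\F\to j^\ast\mc T_\F$ over the inclusion graph morphism $j:\K'\hookrightarrow\K$, as in Definition~\ref{pbgrgr}. Since $\mc T_\F$ is a $\mbf{Vec}$-graph, the cohomology functor $H^1$ takes values in $\mbf{Vec}$ and $H^1(\imath_j)$ is $\C$-linear (cf. the discussion following~(\ref{defH1})); moreover $(j^\ast\mc T_\F)_\star=(\mc T_\F)_{j(\star)}=(\mc T_\F)_\star$ for $\star\in\Ve_{\K'}\cup\Ed_{\K'}$, so that $H^1(\K',j^\ast\mc T_\F)$ is exactly the space denoted $H^1(\K',\mc T_\F)$ in the statement.

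It remains to prove surjectivity of $H^1(\imath_j)$, and first I would work at the level of cocycles. Fixing an orientation of the edges of $\K$ and restricting it to $\K'$, one identifies $Z^1(\K,\mc T_\F)\cong\prod_{e\in\Ed_\K}(\mc T_\F)_e$ and $Z^1(\K',\mc T_\F)\cong\prod_{e\in\Ed_{\K'}}(\mc T_\F)_e$, as in Remark~\ref{abelian-case}. According to the description of the induced map on cochains given just before~(\ref{H1phi}) (here all $\phi_\star=\mr{id}$ and $j(\star)=\star$), the morphism $\imath_j$ induces on cocycles the projection
\[
p:Z^1(\K,\mc T_\F)\to Z^1(\K',\mc T_\F)\,,\qquad (g_{v,e})_{v\in e\in\Ed_\K}\mapsto (g_{v,e})_{v\in e\in\Ed_{\K'}}\,,
\]
forgetting the coordinates indexed by the edges of $\K$ not lying in $\K'$ (this is exactly the restriction form appearing in Theorem~\ref{pruning}). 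This map $p$ is surjective: given a cocycle on $\K'$, extending it by $0$ on every edge of $\Ed_\K\setminus\Ed_{\K'}$ yields a family still satisfying the cocycle relations $g_{v,e}+g_{v',e}=0$ (trivially on the added edges), hence a cocycle on $\K$ mapping onto the given one.

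Finally I would conclude by a diagram chase. The quotient maps $q_\K:Z^1(\K,\mc T_\F)\to H^1(\K,\mc T_\F)$ and $q_{\K'}:Z^1(\K',\mc T_\F)\to H^1(\K',\mc T_\F)$ fit into the commuting square $q_{\K'}\circ p=H^1(\imath_j)\circ q_\K$. As $q_{\K'}$ and $p$ are both surjective, the composite $q_{\K'}\circ p$ is surjective, hence so is $H^1(\imath_j)\circ q_\K$, which forces $H^1(\imath_j)$ itself to be surjective; a surjective $\C$-linear map then gives $\dim_\C H^1(\K',\mc T_\F)\le\dim_\C H^1(\K,\mc T_\F)$. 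There is no serious obstacle here: the only points requiring care are the bookkeeping of orientations in the identification of $Z^1$ with a product and the verification that the extension-by-zero of a cocycle is again a cocycle, both of which are routine in the abelian setting of Remark~\ref{abelian-case}.
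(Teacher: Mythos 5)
Your argument is correct and is precisely the justification the paper leaves implicit (the statement is a remark given without proof): since $Z^1$ over a graph is just a product over edges with no compatibility conditions between distinct edges, restriction of cocycles to $\K'$ is surjective by extension by zero, and the commuting square with the quotient maps makes $H^1(\imath_j):H^1(\K,\mc T_{\F})\to H^1(\K',\mc T_{\F})$ a surjective $\C$-linear map, yielding the dimension inequality. This is the same restriction map as in Theorem~\ref{pruning}, so your route matches the paper's intended one.
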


\begin{lema} \label{codiminfiny}
If an edge $\ge\in\Ed_{\A_{\F}}$  is  green and $D\in \ge$ then the following properties are equivalent:
\begin{enumerate}
\item the holonomy group $H_D$ is generated by $h_{D,\ge}$;
\item the restriction morphism $\rho_{D}^{\ge}:\T_{\F}(D)\to\T_{\F}(\ge)$ is surjective;
\item the image of $\rho_{D}^{\ge}$ has finite codimension in $\mc T_{\F}(\ge)$;
\end{enumerate}
where $h_{D,\ge}$ are defined using a given geometric system, cf. Definition~\ref{geometric-system}.
\end{lema}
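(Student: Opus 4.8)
The plan is to transport the statement to the transverse model via the identification of infinitesimal transversal symmetries with invariant vector fields on $(\C,0)$. Choosing a geometric system at $p$, where $\{p\}=D\cap D'$, and using the non-parametric exact sequence~(\ref{FlatUT}) (the $Q=\{u_0\}$ version of the isomorphisms~(\ref{g*T})), I would identify $\T_\F(\ge)\cong\mc V(h_{D,\ge})$ and $\T_\F(D)\cong\mc V(H_D)$ so that the restriction morphism $\rho^\ge_D$ becomes the inclusion $\mc V(H_D)\hookrightarrow\mc V(h_{D,\ge})$ arising from $\langle h_{D,\ge}\rangle\subseteq H_D$. Writing $h:=h_{D,\ge}$, which is periodic precisely because $\ge$ is green, the three conditions read: (1) $H_D=\langle h\rangle$; (2) $\mc V(H_D)=\mc V(h)$; (3) $\mc V(H_D)$ has finite codimension in $\mc V(h)$. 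Then (1)$\Rightarrow$(2) is immediate, since invariance under $\langle h\rangle$ is the same as invariance under $h$, and (2)$\Rightarrow$(3) is trivial.

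The substance is (3)$\Rightarrow$(1), which I would establish through the dichotomy ``$H_D$ finite / infinite''. First I record two model computations. If $h$ is periodic of order $n$, linearizing it to $w\mapsto\eta w$ with $\eta$ a primitive $n$-th root of unity, a direct comparison of Taylor coefficients gives $\mc V(h)=\C\{w^n\}\cdot w\partial_w$, with basis $\{w^{1+mn}\partial_w\}_{m\ge 0}$; in particular $\mc V(h)$ is infinite-dimensional. On the other hand, if $g\in\mr{Diff}(\C,0)$ is non-periodic then classically $\dim_\C\mc V(g)\le 1$, exactly as already used in the proof of Theorem~\ref{exp}.

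Assume (3) and, for contradiction, that $H_D$ is infinite. As in the proof of Theorem~\ref{exp}, an infinite $H_D$ contains a non-periodic element $h_0$ (a non-trivial commutator when $H_D$ is non-abelian, an element of infinite order otherwise). Then $\mc V(H_D)\subseteq\mc V(h_0)$ has dimension $\le 1$, while $\mc V(h)$ is infinite-dimensional, so $\mc V(H_D)$ has infinite codimension in $\mc V(h)$, contradicting (3). Hence $H_D$ is finite, therefore a finite subgroup of $\mr{Diff}(\C,0)$, hence linearizable and cyclic: write $H_D=\langle g\rangle$ with $|H_D|=N$ and $g(w)=\eta w$, $\eta$ a primitive $N$-th root of unity, in a coordinate simultaneously linearizing the whole group. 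Then $h=g^k$ acts by $w\mapsto\eta^k w$, with $\eta^k$ of order $n=N/\gcd(N,k)$, and the two models give $\mc V(H_D)=\C\{w^N\}w\partial_w\subseteq\C\{w^n\}w\partial_w=\mc V(h)$. Comparing monomial bases, this inclusion has codimension $0$ when $n=N$ and infinite codimension when $n<N$; since (3) forces finite codimension, $n=N$, i.e.\ $\gcd(N,k)=1$, i.e.\ $\langle h\rangle=H_D$, which is (1).

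The main obstacle is the infinite case: one must guarantee that an infinite holonomy group containing the periodic element $h$ nevertheless contains a non-periodic germ, so that the bound $\dim_\C\mc V(h_0)\le 1$ applies and collapses $\mc V(H_D)$ to a finite-dimensional subspace of the infinite-dimensional $\mc V(h)$. This is precisely the elementary group-theoretic observation already exploited in Theorem~\ref{exp}, so no genuinely new input is required, and the finite-case codimension count reduces to a routine exponents-modulo-$N$ comparison.
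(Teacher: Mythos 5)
Your proof is correct and follows essentially the same route as the paper: both transport the statement to the transversal model, identifying $\rho^{\ge}_D$ with the inclusion $\mc V(H_D)\hookrightarrow\mc V(h_{D,\ge})$, split according to whether $H_D$ is finite or infinite (the paper phrases this as $D$ green or red, disposing of the red case by noting all three assertions fail since $\dim_\C\T_{\F}(D)\le 1$ while $\dim_\C\T_{\F}(\ge)=\infty$), and conclude in the finite cyclic case by the monomial codimension count $\C\{z^{n_{D,\ge}}\}/\C\{z^{n_D}\}$. The only difference is cosmetic: the paper cites \cite[Proposition 6.4]{MMS} for that count, whereas you carry out the linearization and Taylor-coefficient computation directly.
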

An immediate consequence of this lemma is the following:
\begin{cor}\label{codiminfiny2} If there is no cut-component  of $\A_\F$  entirely green, 
then $\F$ is of finite type if and only if in each cut-component $\A_{\F}^\alpha$ of $\A_\F$, $\alpha\in\mc A$, the red part $\AR_\F^\alpha$ is connected and repulsive for the group-graph $\T_{\F}$. 
\end{cor}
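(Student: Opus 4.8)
The plan is to translate, cut-component by cut-component, both sides of the asserted equivalence into a single condition on the green edges, and then to invoke the equivalence (1)$\Leftrightarrow$(2) of Lemma~\ref{codiminfiny}. Throughout I fix a cut-component $\A_\F^\alpha$ and use that, by the standing hypothesis, no such component is entirely green, i.e. $\AR_\F^\alpha\neq\emptyset$.

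First I would unwind the definition of finite type. Since $\AR_\F^\alpha\neq\emptyset$, case (ii) of Definition~\ref{def-type-fini} cannot hold for $\alpha$, so $\F$ is of finite type if and only if case (i) holds for every $\alpha$: namely $\AR_\F^\alpha$ is connected and, for every edge $\msf e=\langle D,D'\rangle\in\Ed_{\A_\F^\alpha}\setminus\Ed_{\AR_\F^\alpha}$ with $D'\prec_{\AR_\F^\alpha}D$, the holonomy group $H_D$ is generated by $h_{D,\msf e}$. Note that this already produces the connectedness clause of the corollary.

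Next I would identify the edges governed by the repulsiveness condition with those appearing above. A red edge forces both of its endpoints to be red (as $h_{D,\msf e}$ non-periodic makes $H_D$ and $H_{D'}$ infinite), hence lies in $\AR_\F^\alpha$; equivalently, every edge of $\A_\F^\alpha$ outside $\AR_\F^\alpha$ is green. Assuming $\AR_\F^\alpha$ connected, so that $\prec_{\AR_\F^\alpha}$ is defined as in Section~\ref{Subsecfunctpruning}, I would check that the edges $e=\langle v,v'\rangle$ with $v\prec_{\AR_\F^\alpha}v'$ are precisely the green edges with at least one endpoint outside $\AR_\F^\alpha$, the farther endpoint $v'$ being unambiguous: removing such an edge disconnects the tree $\A_\F^\alpha$ into two pieces, $\AR_\F^\alpha$ lies in one of them, and the geodesic to $\AR_\F^\alpha$ from the endpoint in the other piece passes through $e$. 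Writing $v'=D$ (farther) and $v=D'$ (closer), these are exactly the edges $\msf e=\langle D,D'\rangle\notin\Ed_{\AR_\F^\alpha}$ with $D'\prec_{\AR_\F^\alpha}D$ of case (i); the green edges with both endpoints in $\AR_\F^\alpha$ are $\prec_{\AR_\F^\alpha}$-incomparable and are thus ignored by both conditions.

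Finally, on each such green edge $\msf e$ with farther endpoint $D$, repulsiveness demands that $\rho_D^{\msf e}:\T_\F(D)\to\T_\F(\msf e)$ be surjective, whereas case (i) demands $H_D=\langle h_{D,\msf e}\rangle$; these are equivalent by (1)$\Leftrightarrow$(2) of Lemma~\ref{codiminfiny}. Combining the three steps gives: $\F$ is of finite type if and only if for every $\alpha$ the subgraph $\AR_\F^\alpha$ is connected and $\rho_D^{\msf e}$ is surjective for all the relevant edges, that is, if and only if each $\AR_\F^\alpha$ is connected and repulsive for $\T_\F$. I expect the only delicate point to be the combinatorial bookkeeping of the middle step --- matching the two edge sets and correctly pairing the farther endpoint with $D$ --- after which the passage through Lemma~\ref{codiminfiny} is immediate.
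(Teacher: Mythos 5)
Your proof is correct and follows exactly the route the paper intends: the paper states this corollary as an immediate consequence of Lemma~\ref{codiminfiny}, and your argument simply spells out the implicit bookkeeping (case (ii) of Definition~\ref{def-type-fini} excluded by the no-green-component hypothesis, edges outside $\Ed_{\AR_\F^\alpha}$ being precisely the green edges with $\prec_{\AR_\F^\alpha}$-comparable endpoints, and the equivalence (1)$\Leftrightarrow$(2) of the lemma applied at the farther endpoint). No gaps; the middle combinatorial step you flag as delicate is handled correctly, since a red edge has red endpoints and thus lies in the red subgraph, while edges with both endpoints in $\AR_\F^\alpha$ have incomparable endpoints and are vacuously excluded from both conditions.
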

To lighten the text, in this proof we will denote  by $\mc T$ the vector space-graph $\mc T_{\F}$ and by  $\T_\star$ the vector space $\T_{\F}(\star)$.
\begin{dem2}{of Lemma~\ref{codiminfiny}} 
If $D$ is not green then $\dim_\C\T_D\in\{0,1\}$, $\dim_\C\T_{\ge}=\infty$, $h_{D,\ge}$ is periodic and $H_D$ is infinite. Thus, none of the three assertions hold. If $D$ is green then there is a transverse factor $z:(M_\F,o_D)\to(\C,0)$ at a regular point $ o_D\in D$ such that $H_D=\langle z\mapsto e^{2i\pi/n_D}z\rangle$ and $h_{D,\ge}(z)=e^{2i\pi/n_{D,\ge}}z$ where $n_D,n_{D,\ge}\in\Z$. 
The proof of \cite[Proposition 6.4]{MMS} shows that $\T_\ge/\rho^\ge_{D}(\T_D)\simeq
\C\{z^{n_{D,\ge}}\}/\C\{z^{n_D}\}$ is either zero (when $n_D=n_{D,\ge}$) or it has infinite codimension (when $n_D\neq n_{D,\ge}$).
\end{dem2}

Let us highlight that by Remark~\ref{analytic-continuation} 
the restriction maps $\rho^{\ge}_D: \mc T_D\to\mc T_\ge$, with $\ge=\langle D,D'\rangle\in \Ed_{\A_{\F}}$,  of the group graph $\mc T_{\F}$ are always  injective. We now provide "orientations" to  the edges $\ge$ of $\A_{\F}$ in the following way: 
\begin{enumerate}[(i)]
\item\label{cas1i} $\stackrel{D}{\circ} \; \stackrel{\ge}{\rightarrow} \;\stackrel{D'}{\circ}$ means that $\rho^\ge_D$  is not  bijective  and 
$\rho^{\ge}_{D'}$ is bijective,
\item $\stackrel{D}{\circ}  \;\stackrel{\ge}{\leftarrow} \; \stackrel{D'}{\circ}$ means that $\rho^\ge_D$ bijective  and
$\rho^\ge_{D'}$ is not bijective,
\item
$\stackrel{D}{\circ} \;\leftrightarrow \; \stackrel{D'}{\circ}$ means that both $\rho^\ge_D$  and
$\rho^\ge_{D'}$ are bijective,
\item\label{cas4i}
$\stackrel{D}{\circ} \;\bdf \; \stackrel{D'}{\circ}$ means that both $\rho^\ge_D$  and
$\rho^\ge_{D'}$  are not bijective.
\end{enumerate}

\begin{lema}\label{39mai}
In a cut-component $\A^\alpha_\F$ of $\A_\F$, let 
$\K$ be  a geodesic of  one of following types:
\begin{enumerate}
\item\label{type1} \; $\stackrel{D_0}{\bullet}  \;\stackrel{\ge_0}{\ftrait} \;  \stackrel{D_1}{\star} \;
\;\stackrel{\ge_1}{\ftrait}\cdots \stackrel{\ge_{n-1}}{\ftrait}
\;\; \stackrel{D_n}{\star}\; \stackrel{\,\ge_n}{\longleftarrow} \;
\stackrel{D_{n+1}}{\star}$, with $n \geq 1\,$;
\item \; $\stackrel{D_0}{\bullet} \;\;\stackrel{\ge_0}{\bdf}\;\;\stackrel{D_1}{\star}$, the edge $\ge_0$ being necessarily green;
\item \; $\stackrel{D_0}{\bullet} \; \stackrel{\ge_0}{\ftrait}\; \stackrel{D_1}{\star} \;
\;\stackrel{\ge_1}{\ftrait}\cdots \stackrel{\ge_{n-1}}{\ftrait}
\;\stackrel{D_n}{\star}\;
\stackrel{\ge_n}{\ftrait}\; \;\stackrel{D_{n+1}}{\bullet}$, with $n \geq 1\, ;$
\item\;  $\stackrel{D_0}{\bullet} \;\; \stackrel{\ge_0}{\ftrait} \;\; \stackrel{D_1}{\bullet}$, the edge $\ge_0$ being green;
\end{enumerate}
where the green vertices are denoted by $\star$, the red vertices by $\bullet$ and $\;\stackrel{D}{\circ}  \;\stackrel{\ge}{\ftrait} \;\stackrel{D'}{\circ}\;$ denotes any "orientation" (\ref{cas1i})-(\ref{cas4i}).
Then the dimension of $H^1(\K, \mc T)$ is infinite.
\end{lema}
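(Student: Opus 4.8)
The plan is to compute $H^1(\K,\mc T)$ directly from the cochain complex of Remark~\ref{abelian-case}. Since $\K$ is a path, orienting each edge $\ge_i=\langle D_i,D_{i+1}\rangle$ from $D_i$ to $D_{i+1}$ gives $Z^1(\K,\mc T)\cong\bigoplus_{i=0}^n\mc T(\ge_i)$, and the coboundary $\partial^0\colon\bigoplus_j\mc T(D_j)\to\bigoplus_i\mc T(\ge_i)$ reads $(\partial^0 w)_{\ge_i}=\rho^{\ge_i}_{D_{i+1}}(w_{D_{i+1}})-\rho^{\ge_i}_{D_i}(w_{D_i})$, so that $H^1(\K,\mc T)=\operatorname{coker}\partial^0$. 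Writing $a_i:=\rho^{\ge_i}_{D_i}$ and $b_{i+1}:=\rho^{\ge_i}_{D_{i+1}}$, dualizing identifies $H^1(\K,\mc T)^\ast$ with the space of tuples $(\ell_i)_i$, $\ell_i\in\mc T(\ge_i)^\ast$, subject to $\ell_0\circ a_0=0$, $\ell_n\circ b_{n+1}=0$ and the matching relations $\ell_{j-1}\circ b_j=\ell_j\circ a_j$ on $\mc T(D_j)$ for $1\le j\le n$. It then suffices to prove that this space of functionals is infinite dimensional. Throughout I will use that every restriction map $\rho^\ge_D$ is injective (Remark~\ref{analytic-continuation}), that by Lemma~\ref{modeles-locaux} a red vertex or edge carries a space of dimension $\le 1$ whereas a green one carries an infinite dimensional space, and that by Lemma~\ref{codiminfiny} whenever $\rho^\ge_D$ is not bijective its image has infinite codimension.

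First I would dispose of the single-edge types (2) and (4), where $\K=\{D_0,\ge_0,D_1\}$ and $H^1(\K,\mc T)=\mc T(\ge_0)/(\operatorname{im}a_0+\operatorname{im}b_1)$. In type (4) both endpoints are red, so $\operatorname{im}a_0+\operatorname{im}b_1$ is finite dimensional inside the infinite dimensional $\mc T(\ge_0)$; in type (2) the orientation $\bdf$ forces both $a_0$ and $b_1$ to be non-bijective, hence with images of infinite codimension, and $\operatorname{im}a_0$ is moreover finite dimensional. In either case the quotient is infinite dimensional.

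For the chain types (1) and (3) I would run the dual count, propagating a functional from the end of the path toward $D_0$. In type (1) the leaf edge $\ge_n$ is oriented so that $b_{n+1}=\rho^{\ge_n}_{D_{n+1}}$ is non-bijective; the boundary condition $\ell_n\circ b_{n+1}=0$ therefore only constrains $\ell_n$ to annihilate a subspace of infinite codimension, leaving an infinite dimensional family of admissible $\ell_n$. Each such $\ell_n$ then propagates: the relation $\ell_{j-1}\circ b_j=\ell_j\circ a_j$ prescribes $\ell_{j-1}$ on the subspace $\operatorname{im}b_j\subset\mc T(\ge_{j-1})$, and since $b_j$ is injective such a prescription always extends to some $\ell_{j-1}$. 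The only genuine restriction met along the way is the terminal condition $\ell_0\circ a_0=0$, which, because $D_0$ is red, cuts down by a finite dimensional amount. Thus the solution space surjects, up to finite codimension, onto the infinite dimensional space of admissible $\ell_n$, and is infinite dimensional. In type (3) the same sweep applies with both caps $D_0$ and $D_{n+1}$ red: whatever the (arbitrary) orientations of the interior edges, the interior matching relations remain solvable by the extension property of injective maps, and the two red caps impose only finitely many linear conditions on the infinite dimensional family of cochains supported on a green interior edge; hence $H^1(\K,\mc T)$ is again infinite dimensional.

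The main obstacle is the bookkeeping in the propagation for types (1) and (3): I must check that the successive extensions of functionals through the injective maps $b_j$ (equivalently, the solvability of the matching relations for every interior orientation pattern $\ftrait$) can be performed compatibly with both boundary conditions $\ell_0\circ a_0=0$ and $\ell_n\circ b_{n+1}=0$, and that distinct choices at the infinite-codimensional end yield linearly independent tuples. The delicate point is the compatibility at the red cap $D_0$, where the prescribed value of $\ell_0$ on $\operatorname{im}b_1$ must be reconcilable with its vanishing on the finite dimensional $\operatorname{im}a_0$; I expect to handle this by using the remaining freedom in the interior functionals, or, equivalently, by arranging an induction on the length $n$ that peels off one bijective edge at a time, the finite dimensionality of the red stalks guaranteeing that only finitely many degrees of freedom are ever lost.
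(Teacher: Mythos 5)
Your proposal is correct in substance, but it takes a genuinely different route from the paper. The paper first invokes Remark~\ref{sousgraphedim} (monotonicity of $\dim H^1$ under passing to subgraphs) to replace $\K$ by a sub-geodesic on which every interior restriction $\rho^{\ge_{j-1}}_{D_j}$ is bijective; all the spaces $\T_{\F}(D_j)$, $\T_{\F}(\ge_j)$ are then nested inside $\T_{\F}(\ge_n)$ via the universal injectivity of Remark~\ref{analytic-continuation}, and the cokernel computation collapses to an explicit exact diagram giving a surjection of $H^1(\K,\T_{\F})$ onto $\T_{\F}(\ge_n)/\bigl(\T_{\F}(D_0)+\T_{\F}(D_{n+1})\bigr)$, which is infinite dimensional because $\dim\T_{\F}(D_0)<\infty$ (red cap) while $\operatorname{im}\rho^{\ge_n}_{D_{n+1}}$ has infinite codimension by Lemma~\ref{codiminfiny}. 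You instead compute the dual of $\operatorname{coker}\partial^0$ and propagate functionals along the chain; this avoids the reduction step entirely and treats all interior orientation patterns (including $\bdf$) uniformly, using only injectivity of the $b_j$. What the paper's route buys is a very short diagram argument with no extension-of-functionals machinery; what yours buys is independence from Remark~\ref{sousgraphedim} and a single sweep covering the four types. Your dualization is legitimate: if $H^1(\K,\T_{\F})$ were finite dimensional, so would its dual be.

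The delicate point you flag does close, and by exactly the mechanism you anticipate, so you should write it out. Compatibility at the red cap asks that the prescription $\ell_0\circ b_1=\ell_1\circ a_1$ on $\operatorname{im}b_1$ be consistent with $\ell_0|_{\operatorname{im}a_0}=0$, i.e.\ that $\ell_1$ vanish on $a_1\bigl(b_1^{-1}(\operatorname{im}a_0)\bigr)$. Since $D_0$ is red, $\operatorname{im}a_0$ is finite dimensional, and since $b_1$ is injective, $b_1^{-1}(\operatorname{im}a_0)$ and hence $a_1\bigl(b_1^{-1}(\operatorname{im}a_0)\bigr)$ are finite dimensional. Propagating this condition up the chain, at each step the part of the constraint lying in $\operatorname{im}b_{j+1}$ transfers to finitely many linear conditions on $\ell_{j+1}$ while the rest is absorbed by the freedom in the extension; one therefore ends with only finitely many linear conditions on $\ell_n$ beyond $\ell_n\circ b_{n+1}=0$, and these cannot exhaust the infinite dimensional space of admissible $\ell_n$ (in type (1) because $\operatorname{im}b_{n+1}$ has infinite codimension in $\T_{\F}(\ge_n)$ by Lemma~\ref{codiminfiny}; in type (3) because $\operatorname{im}b_{n+1}$ is finite dimensional, $D_{n+1}$ being red). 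One small fact you use implicitly and should record: every edge adjacent to a green vertex is green, since $h_{D,\ge}$ lies in the finite group $H_D$ and is therefore periodic; this is what guarantees $\dim_\C\T_{\F}(\ge_j)=\infty$ for $\ge_0,\ldots,\ge_n$ in types (1) and (3), on which your final count rests.
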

\begin{proof}
First consider case (1). Thanks to Remark \ref{sousgraphedim}, even if we restrict to a smaller geodesic, we can suppose that all arrows $\ge_0,\ldots,\ge_{n-1}$ are either simple arrows directed to $D_n$, i.e. $\star_{D_{j-1}}\; \stackrel{\ge_{j-1}}{\longrightarrow}\;\star_{D_{j}}\,$ or double arrows 
$\star_{D_{j-1}}\stackrel{\ge_{j-1}}{\longleftrightarrow}\,{\star}_{D_{j}}\,$; therefore all the restriction maps $\rho_{D_j}^{\ge_{j-1}} :\mc T_{D_j}\to\mc T_{\ge_{j-1}}$, $j=0,\ldots, n$, are  isomorphisms. Every map $\rho_{D_j}^{\ge_{j}}$ being injective we can identify all the spaces $\mc T_{\ge_{j}}$, $j=0,\ldots n$, and $\mc T_{D_j}$, $j=0,\ldots, n+1$ with subspaces of $\mc T_{\ge_{n}}$. 
With these identifications we have:
\begin{equation}\label{identif2}
\mc T_{D_0}\subseteq\mc T_{\ge_0}=\mc T_{D_1}
\subseteq \cdots\subseteq
\mc T_{D_{n}}=\mc T_{\ge_n}\supsetneq\mc T_{D_{n+1}}\,.
\end{equation}
Since $D_{n+1}\in\ge_n$ are green, from Lemma \ref{codiminfiny} it follows that
\begin{equation}\label{codim-infini}
\mr{dim}_\C( \mc T_{\ge_n}/\mc T_{D_{n+1}})=\infty\,.
\end{equation}
With the identifications (\ref{identif2}) the coboundary morphism for the subgraph $\K$ can be written as 
\[\partial_\K^0: C^0(\K,\mc T)= \prod_{j=0}^{n+1}\mc T_{D_j}\longrightarrow Z^1(\K,\mc T)\iso\prod_{j=0}^n \mc T_{\ge_{j}}\,,\]
\[\partial_\K^0((X_j)_{j=0,\ldots,n+1})=(X_{j}-X_{j-1})_{j=1,\ldots,n+1}\,.
\]
The surjective linear map  \[\beta: \prod_{j=0}^n \mc T_{\ge_{j}}\to \mc T_{\ge_n}\,,\quad (X_{j})_{j=0,\ldots,n}\mapsto \sum_{j=0}^n X_j\,\]
induces the following diagram whose  rows and columns are all  exact:
\[
\xymatrix{
\prod_{j=0}^{n+1}\mc T_{D_j} \ar[r]^{\phantom{aaa}\partial^0_{\K}\phantom{aaa}}\ar[d]_{\alpha}^{\phantom{\alpha}} & \prod_{j=0}^n \mc T_{\ge_{j}}\ar[r]^{\phantom{aaa}}\ar[d]^{\beta}_{\phantom{\beta}} & H^{1}(\K, \mc T)\ar[r]\ar[d]^{\wt\beta}_{\phantom{\wt\beta}} &0\\
 \mc T_{D_{0}}\times\mc T_{D_{n+1}}\ar[r]^{\sigma}\ar[d]&\mc T_{\ge_{n}}\ar[r]\ar[d]&
 \mc T_{\ge_n}/( \mc T_{D_{0}}+\mc T_{D_{n+1}})
 \ar[r]\ar[d]&0\\
 0&0&0
}
\]
with $\alpha((X_j)_{j=0,\ldots,n+1}):=(X_0,X_{n+1})$ and $\sigma(X_0,X_{n+1}):=X_{n+1}-X_0$.
Since the dimension of $\mc T_{D_0}$ is finite and the codimension of $\mc T_{D_{n+1}}$ in $\mc T_{\ge_{n}}$ is infinite according to (\ref{codim-infini}),
we deduce that  the dimension of $\mc T_{\ge_n}/( \mc T_{D_{0}}+\mc T_{D_{n+1}})$ is infinite and consequently $\dim_\C\,H^{1}(\K, \mc T)=+\infty$. 

Case (2) can be treated as case (1).
In case (3), if $\K$ does not contain a subgraph of type (1) nor (2), even by renumbering, then the configuration must be
\[ \stackrel{D_0}{\bullet} \; \stackrel{\ge_0}{\longrightarrow}\; \stackrel{D_1}{\star} \;
\;\stackrel{\ge_1}{\longleftrightarrow}\cdots \stackrel{\ge_{n-1}}{\longleftrightarrow}
\;\stackrel{D_n}{\star}\;
\stackrel{\ge_n}{\longleftarrow}\; \;\stackrel{D_{n+1}}{\bullet}\]
and we can make again the identifications (\ref{identif2}). The spaces $ \mc T_{D_{0}}$ and $\mc T_{D_{n+1}}$ having both finite dimension,  we obtain the conclusion. Case (4) is trivial because $\T_{D_0}$ and $\T_{D_1}$ have finite dimension and $\dim_\C\T_{\ge_0}=\infty$.
\end{proof}

\begin{proof}[Proof of Theorem~\ref{characterization}] 
We will use the characterization of finite type foliations given in Corollary~\ref{codiminfiny2}.
Notice that the red part 
$\AR^{\alpha}_{\F}$ of a cut-component $\A^\alpha_{\F}$  is not repulsive with respect to $\T_{\F}$ if and only if it contains a geodesic of type (1) or (2) because the configuration  $\bullet\longleftarrow \star$ cannot occur.
On the other hand,
$\AR^\alpha_{\F}$ is not connected if and only if it contains a geodesic of type (3) or (4). It follows from Lemma~\ref{39mai}  that if $\F$ is not of finite type then a cut-component $\A^\alpha_{\F}$ contains a geodesic $\K$ with $\dim H^1(\K,\T_{\F})=\infty$ and consequently $\dim_\C H^1(\A_\F,\T_{\F})\geq \dim_\C H^1(\A^\alpha_\F,\T_{\F})=\infty$, cf. Remark~\ref{sousgraphedim}.

Conversely, if $\F$ has finite type, from Remark~\ref{H1TRT}, Proposition~\ref{RemquchTF} and Theorem~\ref{teobasegeom} we deduce that $H^1(\A_\F,\T_{\F})$ has finite dimension. 
\end{proof}

\bigskip

\section{$\mc C^{\mr{ex}}$-universal deformations}\label{secCexUnivDef}

\subsection{$\mc C^{\mr{ex}}$-universality}  We will show the existence of a \Cex-universal deformation for finite type foliations through the representability of the corresponding  deformation functor. 

\begin{defin}\label{defdefuniv} Let 
 $\F_{Q^\point}$ be an equisingular deformation over a germ of manifold 
$Q^\point:=(Q,u_0)$,  
of a   foliation $\F$. We say that $\F_{Q^\point}$ is a  \emph{$\mc C^{\mr{ex}}$-universal deformation of $\F$} if for any germ of manifold $P^\point=(P,t_0)$ and any equisingular deformation $\G_{P^\point}$ of $\F$ over $P^\point$, there exists a unique germ of holomorphic map 
$\lambda : P^{\point}\to Q^{\point}$ such that the deformations $\G_{P^\point}$ and
 $\lambda^\ast \F_{Q^\point}$  of $\F$ are $\mc C^{\mr{ex}}$-conjugated.
\end{defin} 
\begin{obs}\label{rempreliminaires}
Notice that if $\mu :Q{'}^\point\to Q^\point$ is a germ of biholomorphism, the $\mc C^\mr{ex}$-universality of $\F_{Q^\point}$ and of $\mu^\ast\F_{Q^\point}$ are clearly equivalent. 
On the other hand, it directly results from the definition that the  $\mc C^{\mr{ex}}$-universality of $\F_{Q^\point}$ only depends on its class $\mf f_{Q^\point}:=[\F_{Q^\point}]\in  \mr{Def}_{\F}^{Q^\point}$. We will  then say that \emph{$\mf f_{Q^\point}$ is $\mc C^{\mr{ex}}$-universal}. 
\end{obs}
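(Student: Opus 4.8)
The plan is to prove the two assertions of the remark directly from Definition~\ref{defdefuniv}, using only the contravariant functoriality of the pull-back recorded in Remark~\ref{pullbackmorphism}(b), namely $(\lambda\circ\mu)^\ast\F_{Q^\point}=\mu^\ast\lambda^\ast\F_{Q^\point}$, together with the fact (Remark~\ref{pullbakconjugacy}) that the pull-back of a $\mc C^{\mr{ex}}$-conjugacy between equireducible deformations is again a $\mc C^{\mr{ex}}$-conjugacy. The whole argument is formal: the universality condition of Definition~\ref{defdefuniv} is phrased entirely in terms of $\mc C^{\mr{ex}}$-conjugacy classes and of the composition law for the factorizing map $\lambda$, so both claims reduce to elementary bookkeeping with the equivalence relation $\approx_{\mc C^{\mr{ex}}}$.

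For the first assertion, let $\mu:Q{'}^\point\to Q^\point$ be a germ of biholomorphism and suppose $\F_{Q^\point}$ is $\mc C^{\mr{ex}}$-universal. Given any equisingular deformation $\G_{P^\point}$ of $\F$, I first produce a factorizing map into $Q{'}^\point$: applying universality of $\F_{Q^\point}$ yields a unique $\lambda:P^\point\to Q^\point$ with $\G_{P^\point}\approx_{\mc C^{\mr{ex}}}\lambda^\ast\F_{Q^\point}$, and setting $\lambda':=\mu^{-1}\circ\lambda:P^\point\to Q{'}^\point$ the functoriality relation gives $(\lambda')^\ast\mu^\ast\F_{Q^\point}=(\mu\circ\lambda')^\ast\F_{Q^\point}=\lambda^\ast\F_{Q^\point}\approx_{\mc C^{\mr{ex}}}\G_{P^\point}$. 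For uniqueness, if $\lambda_1',\lambda_2':P^\point\to Q{'}^\point$ both factorize $\G_{P^\point}$ through $\mu^\ast\F_{Q^\point}$, then $(\mu\circ\lambda_i')^\ast\F_{Q^\point}\approx_{\mc C^{\mr{ex}}}\G_{P^\point}$ forces $\mu\circ\lambda_1'=\mu\circ\lambda_2'$ by the uniqueness clause for $\F_{Q^\point}$, whence $\lambda_1'=\lambda_2'$ because $\mu$ is invertible. The converse implication is obtained by replacing $\mu$ with $\mu^{-1}$.

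For the second assertion, suppose $\F_{Q^\point}$ and $\F'_{Q^\point}$ are equisingular deformations of $\F$ over the same $Q^\point$ with $[\F_{Q^\point}]=[\F'_{Q^\point}]$, i.e. there is a $\mc C^{\mr{ex}}$-conjugacy $\F_{Q^\point}\approx_{\mc C^{\mr{ex}}}\F'_{Q^\point}$. By Remark~\ref{pullbakconjugacy}, pulling this conjugacy back by an arbitrary $\lambda:P^\point\to Q^\point$ gives $\lambda^\ast\F_{Q^\point}\approx_{\mc C^{\mr{ex}}}\lambda^\ast\F'_{Q^\point}$. Since $\approx_{\mc C^{\mr{ex}}}$ is an equivalence relation, for any target $\G_{P^\point}$ one has $\lambda^\ast\F_{Q^\point}\approx_{\mc C^{\mr{ex}}}\G_{P^\point}$ if and only if $\lambda^\ast\F'_{Q^\point}\approx_{\mc C^{\mr{ex}}}\G_{P^\point}$. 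Hence the existence-and-uniqueness condition of Definition~\ref{defdefuniv} holds for $\F_{Q^\point}$ with a given $\lambda$ exactly when it holds for $\F'_{Q^\point}$ with the same $\lambda$, so the two deformations are simultaneously $\mc C^{\mr{ex}}$-universal; this legitimizes speaking of the $\mc C^{\mr{ex}}$-universality of the class $\mf f_{Q^\point}$. There is no genuine obstacle here: the only points requiring attention are the direction of composition in the contravariant identity $(\lambda\circ\mu)^\ast=\mu^\ast\lambda^\ast$ and the verification that both the existence and the uniqueness halves of the universal property transfer, not existence alone.
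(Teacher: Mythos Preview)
Your proof is correct and follows exactly the approach the paper has in mind: the paper itself gives no argument beyond the words ``clearly equivalent'' and ``directly results from the definition,'' and your proposal simply spells out this formal verification using Remarks~\ref{pullbackmorphism}(b) and~\ref{pullbakconjugacy}. There is nothing to add.
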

Let us consider  the maps 
\begin{equation*}\label{lambdaPF}
\Lambda_{{\mf f}_{Q^\point}}^{P^\point} : \mc O(P^\point, Q^\point)\to \mr{Def}_{\F}^{P^\point}\,,
\quad
\lambda\mapsto [\lambda^\ast \F_{Q^\point}]\,,
\end{equation*}
where $ \mc O(P^\point, Q^\point)$ always denotes the set of holomorphic map germs  $P^\point\to Q$ sending $t_0$ to $u_0$. 
By definition we have:
\begin{equation*}\label{cnsuniversality}
{\mf f}_{Q^\point} \hbox{ is $\mc C^{\mr{ex}}$-universal }\Longleftrightarrow 
\hbox{ for any $P^\point$ the map } \Lambda_{{\mf f}_{Q^\point}}^{P^\point} \hbox{ is bijective. }
\end{equation*}
One easily checks that  $(\Lambda_{{\mf f}_{Q^\point}}^{P^\point})_{P^\point}$ defines a natural transformation
\[
\Lambda_{{\mf f}_{Q^\point}} : \mr{F}_{Q^\point}: \iso \mr{Def}_{\F}
\] 
where $\mr F_{Q^\point}$, $\mr{Def}_{\F} : \mbf{Man^\point}\to \mbf{Set}^\point$ are the following contravariant  functors:
\[
\mr{F}_{Q^\point}(P^\point):=\mc O(P^\point, Q^\point),
\quad
\mr{F}_{Q^\point}(\lambda)=\cdot\, \circ\lambda\,,
\quad
\mr{Def_{\F}}(P^\point):= \mr{Def}_{\F}^{P^\point},
\quad \mr{Def}_{\F}(\lambda) := \lambda^\ast\,,
\]
where the first set is pointed by the constant map $\kappa_{u_0}:P^\point\to Q^\point$ and the second one is pointed by the class of the constant deformation $\F_{Q^\point}^\mr{ct}$, see Section~\ref{subsecDefSpaFunct}.
Thus  ${\mf f}_{Q^\point}$ is $\mc C^{\mr{ex}}$-universal if and only if $\Lambda_{\mf f_{Q^\point}}$ is an isomorphism of functors. Classically  $Q^\point$ being fixed,  any isomorphism of functors
\[
\Lambda : \mr F_{Q^\point}\iso \mr{Def_{\F}}
\,,\qquad
\Lambda=(\Lambda^{P^\point}: \mc O(P^\point, Q^\point)\iso \mr{Def}_{\F}^{P^\point})_{P^\point}
\]
  is of this type: 
  \[
  \Lambda=\Lambda_{{\mf f}_{Q^\point}}\quad \hbox{ with }\quad {\mf f}_{Q^\point}:=\Lambda^{Q^\point}(\mr{id}_{Q^\point})\,.
  \]
 It is  Yoneda's Lemma which may be summarized in the  diagrams below whose commutativity results from the functoriality of $\Lambda$:
\[
\xymatrix{
\mc O(Q^\point,Q^\point)\ar[r]^{\Lambda^{Q^\point}}\ar[d]_{\cdot\,\circ\lambda}& 
\mr{Def}_{\F}({Q^\point}) \ar[d]^{\lambda^\ast}\\
\mc O(P^\point,Q^\point)\ar[r]^{\Lambda^{P^\point}} & \mr{Def_{\F}}(P^\point)
}
\qquad\qquad
\xymatrix{
 \mr{id}_{Q^\point}\ar@{|->}[r]^{}\ar@{|->}[d]&{\mf f}_{Q^\point}\ar@{|->}[d]\\ 
\lambda\ar@{|->}[r]^{} & \Lambda^{P^\point}(\lambda)=\lambda^\ast{\mf f}_{Q^\point}\hspace{4,5em}
}
\]
Finally, to find a germ of manifold $Q^\point$ and a $\mc C^{\mr {ex}}$-universal deformation $\F_{Q^\point}$ is equivalent to \emph{represent the functor} $\mr{Def}_{\F}$, i.e. to find a germ of manifold $Q^\point$ and an isomorphism of functors 
$\mr{Def_{\F}}\iso \mr{F}_{Q^\point}\,$:
\begin{equation}\label{univid}
\left(\mf f_{Q^\point}\in \mr{Def}_{\F}^{Q^\point} \hbox{ is $\mc C^{\mr{ex}}$-universal }\right)\Longleftrightarrow 
\left(\exists \;\xi^{Q^\point}:\mr{Def_{\F}}\iso\, \mr{F}_{Q^\point} \,,\;\; 
{ \xi}^{Q^\point}(\mf f_{Q^\point})=\mr{id}_{Q^\point}\right)\,.
\end{equation}
As we will also  need later the naturality of $\xi^{Q^\point}$  relative to the foliation $\F\in \fol$,  we will prove a slightly stronger result.
\\

If $\phi:\G\to\F$ is a $\mc C^{\mr {ex}}$-conjugacy between two foliations $\G$ and $\F$, we will denote by
\begin{equation}\label{H1phi*}
[\phi^*]:=H^1(\phi^*):H^1(\A_\F,\T_\F)\iso H^1(\A_\G,\T_\G),
\end{equation} 
the morphism induced by the vector space-graph isomorphism $\phi^\ast : \mc T_{\F}\iso \mc T_{\G}$ defined in~(\ref{phisastT}).  
We define  the  contravariant \emph{factorizing functor} 
$\mr{Fac}: \mbf{Man^\point}\times\fol
\to
\mbf{Set}^\point
$  as 
\begin{equation*}\label{FonctFac}
\mr{Fac}(Q^\point,\F):=\mc{O}(Q^\point, H^1(\A_{\F}, \mc T_{\F})^\point)\,,
\end{equation*}
this set being 
pointed by the zero map,
and if $(\mu,\phi):(P^\point , \G)  \to(Q^\point,\F)$, then $\mr{Fac}(\mu,\phi):=\mr{Fac}_\phi ^{\mu}$ is the following linear map:
 \begin{equation}\label{FonctFacMorph}
 \mr{Fac}_\phi ^{\mu} : \mc{O}(Q^\point, H^1(\A_{\F}, \mc T_{\F})^\point)
\to
 \mc{O}(P^\point, H^1(\A_{\G}, \mc T_{\G})^\point)\,,
\quad
\lambda\mapsto [\phi^\ast]\circ \lambda\circ\mu\,,
 \end{equation}
where $H^1(\A_{\F}, \mc T_{\F})^\point$ is  the vector space $H^1(\A_{\F}, \mc T_{\F})$ pointed by the origin.

\begin{teo}\label{teounivtop} 
For any
 finite type foliation $\F$  which is a generalized curve and
for any germ of manifold $Q^\point$ 
there is  a bijection 
\begin{equation*}
\xi_{\F}^{Q^\point} : \mr{Def}_{\F}^{Q^\point}\iso \mc{O}(Q^\point, H^1(\A_{\F}, \mc T_{\F})^\point)
\end{equation*}
such that the collection $\{\xi_{\F}^{Q^\point}\}_{(Q^\point,\,\F)}$  defines an isomorphism of contravariant functors
\begin{equation}\label{isoDefFac}
\xi : \mr{Def}\iso\mr{Fac}\,, 
\end{equation}
when both functors are restricted to the subcategory  
$\mbf{Man^\point}\times\foltf$ of the category $\mbf{Man^\point}\times\fol$, see Definition~\ref{def-type-fini}.
\end{teo}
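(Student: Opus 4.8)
The plan is to construct $\xi_{\F}^{Q^\point}$ as the composite of the chain of natural isomorphisms produced in the previous sections, exactly along the scheme announced in (\ref{introiso}). For a finite type generalized curve $\F$ and a germ of manifold $Q^\point$, Theorem~\ref{cocycle}, Proposition~\ref{isoH1AutSym} and Theorem~\ref{aut-Raut/fix} give successively
\[
\mr{Def}_{\F}^{Q^\point}
\overset{C_{\F}^{Q^\point}}{\iso}
H^1(\A_\F,\aut_{\F}^{Q^\point})
\iso
H^1(\A_\F,\sym_{\F}^{Q^\point})
\iso
H^1(\AR_\F,\mr{RSym}_{\F}^{Q^\point}),
\]
and Theorem~\ref{exp} identifies the last term with $H^1(\AR_\F,\mr{R}\mc T_{\F}^{Q^\point})$ via $H^1(\mr{Exp}_{\F}^{Q^\point})^{-1}$.

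Then I would exploit the multiplicative structure of the infinitesimal symmetries. Lemma~\ref{multext} together with (\ref{H1M}) and the tensor-product isomorphism (\ref{tensorprodfunctIso}) yield
\[
H^1(\AR_\F,\mr{R}\mc T_{\F}^{Q^\point})
\iso
H^1(\AR_\F,\mr{R}\mc T_{\F})\otimes_\C\mf M_{Q^\point},
\]
and the pruning isomorphism $\mc R_\F$ of Remark~\ref{H1TRT} (see (\ref{isoRTRTT})) allows me to pass back from $\AR_\F$ to $\A_\F$, producing $H^1(\A_\F,\mc T_{\F})\otimes_\C\mf M_{Q^\point}$.

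It remains to identify this last space with $\mr{Fac}(Q^\point,\F)=\mc O(Q^\point,H^1(\A_\F,\mc T_{\F})^\point)$, and here the finite type hypothesis is decisive: by Theorem~\ref{characterization}, or directly by Proposition~\ref{RemquchTF} and Theorem~\ref{teobasegeom}, the vector space $V:=H^1(\A_\F,\mc T_{\F})$ is finite dimensional. For a finite dimensional $V$ the choice of a basis $(e_i)$ gives $\mc O(Q^\point,V^\point)\cong(\mf M_{Q^\point})^{\dim V}\cong V\otimes_\C\mf M_{Q^\point}$, a germ $\lambda=\sum_i\lambda_i e_i$ corresponding to $\sum_i e_i\otimes\lambda_i$; I would check that this isomorphism is independent of the basis, hence canonical and linear in $V$, and that it intertwines precomposition by a map germ $\mu:P^\point\to Q^\point$ with the algebra pullback $\mu^*:\mf M_{Q^\point}\to\mf M_{P^\point}$. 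Composing all the arrows above defines the announced bijection $\xi_{\F}^{Q^\point}$; finite dimensionality is precisely what prevents the discrepancy between $V\otimes_\C\mf M_{Q^\point}$ and $\mc O(Q^\point,V^\point)$ that would occur for infinite dimensional $V$.

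For the functoriality (\ref{isoDefFac}), each step above is, by the result invoked, a component of a natural isomorphism of contravariant functors on $\mbf{Man^\point}\times\foltf$, and (\ref{tensorprodfunctIso}) is natural in the pair $(T,W)=(\mr{R}\mc T_{\F},\mf M_{Q^\point})$; therefore the composite is automatically a natural transformation $\xi:\mr{Def}\iso\mr{Fac}$. The one compatibility I would verify by hand is that, under the final identification, the tensor map $[\phi^*]\otimes\mu^*$ on $V\otimes_\C\mf M_{Q^\point}$ corresponds to the assignment $\lambda\mapsto[\phi^*]\circ\lambda\circ\mu$ of (\ref{FonctFacMorph}), where $[\phi^*]=H^1(\phi^*)$ is the isomorphism (\ref{H1phi*}). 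I expect this bookkeeping to be the main, though essentially routine, difficulty: since every intermediate functor is defined only up to the graph isomorphisms $\A_\phi$ and $\AR_\phi$ attached to a \Cex-conjugacy $\phi$, the real work lies in carrying these identifications coherently through the entire chain, so as to confirm that $\xi$ respects the morphisms $(\mu,\phi)$ and sends base points to base points.
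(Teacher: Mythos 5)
Your proposal is correct and follows essentially the same route as the paper's proof: the paper defines $\xi_{\F}^{Q^\point}$ as exactly this composition of the natural isomorphisms of Theorem~\ref{cocycle}, Proposition~\ref{isoH1AutSym}, Theorem~\ref{aut-Raut/fix}, Theorem~\ref{exp}, Lemma~\ref{multext} together with (\ref{H1M}), (\ref{tensorprodfunctIso}) and (\ref{isoRTRTT}), ending with the usual identification $\mf c\otimes a\mapsto(t\mapsto a(t)\,\mf c)$ between $H^1(\A_\F,\mc T_\F)\otimes_\C\mf M_{Q^\point}$ and $\mc O(Q^\point,H^1(\A_\F,\mc T_\F)^\point)$. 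Your extra remarks on finite dimensionality of $H^1(\A_\F,\mc T_\F)$ and on carrying the graph isomorphisms $\A_\phi$, $\AR_\phi$ through the chain are just more explicit versions of points the paper leaves implicit, not a different argument.
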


\begin{proof}
We successively apply Theorem~\ref{cocycle}, Proposition \ref{isoH1AutSym}, 
Theorem \ref{aut-Raut/fix}, Theorem~\ref{exp}, Lemma \ref{multext}, natural isomorphisms (\ref{H1M}),  (\ref{tensorprodfunctIso}) and (\ref{isoRTRTT}). 
We obtain for any $(Q^\point, \F)\in \mbf{Man^\point}\times\mbf{Fol}_\mbf{ft}$, the following isomorphisms: 
\begin{align}
\mr{Def}_{\F}^{Q^\point}
\stackrel{\mr{Th.} \,\ref{cocycle}}{\iso}&
H^1(\A_{\F},\mr{Aut}_{\F}^{Q^\point})^{\point}
\stackrel{\mr{Prop.} \,\ref{isoH1AutSym}}{\iso}
H^1(\A_{\F},\sym_{\F}^{Q^\point})^{\point}
\stackrel{\mr{Th.}\, \ref{aut-Raut/fix}}{\iso}H^1(\AR_{\F},\mr{R}\sym_{\F}^{Q^\point})^{\point}\nonumber\\ 
\stackrel{\mr{Th.}\, \ref{exp}}{\iso}&
H^1(\AR_{\F},\mr{R}\mc T_{\F}^{Q^\point})^{\point}
\stackrel{(\ref{H1M})
}{\iso}
H^1(\AR_{\F},\mr{R}\mc T_{\F}\otimes_\C \mf{M}_{Q^\point})^{\point}
\stackrel{(\ref{tensorprodfunctIso})}{\iso} H^1(\AR_{\F},\mr{R}\mc T_{\F})^\point\otimes_\C \mf{M}_{Q^\point}&
\nonumber\\
 \stackrel{(\ref{isoRTRTT})}{\iso}&
H^1(\A_{\F},\mc T_{\F})^\point\otimes_\C \mf{M}_{Q^\point}
{\iso}
\mc O(Q^\point, H^1(\A_{\F},\mc T_{\F})^\point)\,,
\label{defxi}
\end{align}
the last natural isomorphism being as usual $(\mf c\otimes a)\mapsto(t\mapsto a(t)\mf c)$. 
Each of them defines in fact a  natural transformation between contravariant functors from $\mbf{Man^\point}\times\mbf{Fol}_\mbf{ft}$ to   $\mbf{Set}^\point$.
The functor isomorphism $\xi$ is defined as the composition of all the isomorphisms in~(\ref{defxi}).
\end{proof}

\begin{teo}\label{defuniv}
For any  foliation of finite type (which is a generalized curve) there exists a $\mc C^{\mr{ex}}$-universal deformation $\F_{Q^\point}$, with base
\[Q^\point= H^1(\A_{\F}, \mc T_{\F})^\point\,,\]
such that for any equisingular deformation $\F_{P^\point}$ of $\F$, we have that $\lambda:=\xi_{\F}^{P^\point}([\F_{P^\point}])$ satisfies $[\lambda^*\F_{Q^\point}]=[\F_{P^\point}]$.
Moreover, $H^1(\A_{\F},\T_{\F})$ is a $\C$-vector space of dimension
the rank of $H_1(\AR_{\F}/(\AR_\F\setminus\mr{supp}(\mr{R}\T_\F)))$.
\end{teo}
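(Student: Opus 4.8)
The first assertion is the representability statement packaged through Yoneda's lemma, exactly as prepared in the discussion culminating in~(\ref{univid}). My plan is to apply Theorem~\ref{teounivtop} with $Q^\point:=H^1(\A_\F,\T_\F)^\point$, which is a genuine germ of manifold because the dimension computed below is finite (equivalently, by the finite-type hypothesis via Theorem~\ref{characterization}). For this choice one has $\mr{Fac}(-,\F)=\mc O(-,H^1(\A_\F,\T_\F)^\point)=\mr F_{Q^\point}$, so Theorem~\ref{teounivtop} furnishes an isomorphism of functors $\xi_\F:\mr{Def}_\F\iso\mr F_{Q^\point}$. Setting $\mf f_{Q^\point}:=(\xi_\F^{Q^\point})^{-1}(\mr{id}_{Q^\point})$ and invoking the equivalence~(\ref{univid}), the class $\mf f_{Q^\point}$ is $\mc C^{\mr{ex}}$-universal, and any representative $\F_{Q^\point}$ is then a universal deformation with the announced base. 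The factorization identity $[\lambda^*\F_{Q^\point}]=[\F_{P^\point}]$ for $\lambda:=\xi_\F^{P^\point}([\F_{P^\point}])$ is just the Yoneda correspondence $\Lambda_{\mf f_{Q^\point}}=(\xi_\F)^{-1}$ read on the pair $(P^\point,[\F_{P^\point}])$.

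For the dimension formula I would first transport the computation to the red subgraph: by the natural isomorphism~(\ref{isoRTRTT}) of Remark~\ref{H1TRT} one has $\dim_\C H^1(\A_\F,\T_\F)=\dim_\C H^1(\AR_\F,\mr{R}\T_\F)$. By Proposition~\ref{RemquchTF} the vector-space-graph $\mr{R}\T_\F$ over $\AR_\F$ is regular, and by Lemma~\ref{modeles-locaux} each of its nonzero stalks is $1$-dimensional; hence Theorem~\ref{teobasegeom} applies with $d=1$, componentwise over the trees $\AR_\F^\alpha$ using~(\ref{h1decSoDir}), and yields $\dim_\C H^1(\AR_\F,\mr{R}\T_\F)=a-p$, where $a$ is the number of active edges of $\mr{R}\T_\F$ and $p$ the number of its active support-components not reduced to a single edge.

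It remains to identify $a-p$ with $\mr{rank}\,H_1(\AR_\F/(\AR_\F\setminus\mr{supp}(\mr{R}\T_\F)))$, and this topological bookkeeping is the real content. Writing $K:=\AR_\F$ and $K_0:=\AR_\F\setminus\mr{supp}(\mr{R}\T_\F)$, the second assertion of Proposition~\ref{RemquchTF} guarantees that $K_0$ is a subgraph, and both $K$ and $K_0$ are forests, so $H_1(K)=H_1(K_0)=0$. I would compute the first Betti number of the quotient graph directly from $b_1=E-V+b_0$: its vertices are the collapse point $*$ together with the support vertices, its edges are the support edges, and its components are $*$ (which absorbs $K_0$ and everything joined to it by an active edge) together with the support-components carrying no active edge. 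The key counting step is that every support-component carrying a support vertex becomes, after adjoining the $K_0$-endpoints of its active edges, a subtree of $K$; this forces its internal edges to number one less than its support vertices, so such a component contributes (its number of active edges)$-1$ to $E-V$, a single-edge active component contributes $+1$, and a support-component with no active edge contributes $-1$. Summing these against the component count makes the non-active components cancel and leaves exactly $b_1=a-p$. Equivalently, via $H_1(K/K_0)\cong H_1(K,K_0)\cong\ker(H_0(K_0)\to H_0(K))$, the rank equals (number of components of $K_0$) minus (number of tree-components of $K$ meeting $K_0$), which the same tree analysis identifies with $a-p$. I expect the delicate point to be purely combinatorial: separating support-components into single-edge, active, and non-active types, and checking that inside each tree distinct active edges reach distinct $K_0$-vertices so that the edge–vertex count is exact.
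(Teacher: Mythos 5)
Your proposal is correct and takes essentially the same route as the paper: the universality and factorization statements come from Theorem~\ref{teounivtop} and the Yoneda equivalence~(\ref{univid}) with $\F_{Q^\point}\in(\xi_\F^{Q^\point})^{-1}(\mr{id}_{Q^\point})$, and the dimension is obtained via~(\ref{isoRTRTT}), Proposition~\ref{RemquchTF} and Theorem~\ref{teobasegeom} with $d=1$, exactly as in the paper's proof. The only difference is that the paper compresses the final identification $a-p=\mr{rk}\,H_1(\AR_\F/(\AR_\F\setminus\mr{supp}(\mr{R}\T_\F)))$ into the words ``noting that,'' whereas you actually verify this combinatorial identity --- correctly, including the essential tree-based point that distinct active edges of a given support component reach distinct $K_0$-vertices, which is what makes your edge--vertex count exact.
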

\noindent Here $\AR_{\F}/(\AR_\F\setminus\mr{supp}(\mr{R}\T_\F))$ denotes the graph obtained by contracting to a single vertex the complementary of the support of $\mr{R}\T_\F$, which is a subgraph of $\AR_\F$ according to Proposition~\ref{RemquchTF}.

\begin{proof} By (\ref{univid})  with $Q^\point=H^1(\A_\F,\mc T_{\F})^\point$ we can choose for $\F_{Q^\point}$ any element in $(\xi^{Q^\point}_{\F})^{-1}(\mr{id}_{Q^\point})$. 
To obtain the description of $H^1(\A_{\F},\mc T_{\F})$  we use the isomorphism $H^1(\A_\F,\T_\F)\simeq H^1(\AR_\F,\mr{R}\T_\F)$ given by  (\ref{isoRTRTT}) and Proposition~\ref{RemquchTF}. We then apply Theorem~\ref{teobasegeom}
 to each connected component of $\AR_\F$, taking $d=1$ and noting that $a-p=\mr{rk}\,H_1(\AR_{\F}/(\AR_\F\setminus\mr{supp}(\mr{R}\T_\F)))$.
\end{proof}

\subsection{Kodaira-Spencer map}\label{KSSection} This map assigns to each equisingular deformation its associated  ``infinitesimal deformation''. We will define for any  germ of manifold $Q^\point=(Q,u_0)$ and any foliation $\F\in\fol$, a group-graph morphism
\begin{equation*}\label{KSGrGr}
\Theta^{Q^\point}_{\F}
\;:\;
\aut_{\F}^{Q^\point}\to \mc T_{\F}\otimes_\C(\mf M_{Q,u_0}/\mf M_{Q,u_0}^2)\,
\end{equation*}
so that this collection is a natural transformation $\Theta$ between the functor $\mr{Aut}$ considered in (\ref{functorAut})  and the functor $(Q^\point,\F)\mapsto  \mc T_{\F}\otimes_\C(\mf M_{Q,u_0}/\mf M_{Q,u_0}^2)$. The definition of $\Theta^{Q^\point}_{\F,\msf e}$ for
$\msf e:=\langle D,D'\rangle\in \Ed_{\A_\F}$ is based on the following fact: let $(u_1,\ldots,u_q)$  be a centered coordinate system on $Q^\point$ and let us denote by $\mr{pr}_{M_\F}$ the canonical projection $M_\F\times Q\to M_\F$;  if a germ of biholomorphism $\Phi$ at the  point  $(s,u_0)\in M_\F\times Q$, with $\{s\}:=D\cap D'$, leaves invariant the constant deformation $\F^{\mr{ct}\,\sharp}_{Q^\point}$,  then $\frac{\partial \,\mr{pr}_{M_\F}\circ\,\Phi}{\partial u_k}\big|_{u=u_0}$, $k=1,\ldots,q$, are germs of  vector fields in $M_\F$ at $s$, basic for the foliation $\F^\sharp$. We denote by $\left[\frac{\partial  \,\mr{pr}_{M_\F}\circ\,\Phi}{\partial u_k}\big|_{u=u_0}\right]$ its class in $\mc T_{\F}(\msf e)$ 
and, when $s$ is not a nodal singularity of $\F^\sharp$, we set:
\[
\Theta_{\F, \;\msf e}^{Q^\point}\,:\,
\aut_{\F}^{Q^\point}(\msf e)\to \mc T_{\F}(\msf e)\otimes_\C(\mf M_{Q,u_0}/\mf M_{Q,u_0}^2)\,,
\]
\begin{equation}\label{DDuksAute}
\Theta_{\F,\; \msf e}^{Q^\point}(\Phi):= \sum_{k=1}^q \left[\left.\frac{\partial \,\mr{pr}_{M_\F}\circ \Phi}{\partial u_k}\right|_{u=u_0}\right]\otimes\;\dot u_k\,.
\end{equation}
The definition of $\Theta_{\F,\,D}^{Q^\point}$ for $D\in \Ed_{\A_\F}$ invariant is less direct because the homeomorphisms $\Phi\in \aut_{\F}^{Q^\point}(D)$ are not holomorphic a priori. We will fix the germ of a submersion $g:(M_\F,o_D)\to(\C,0)$  at a regular point $o_D\in D$ constant along the leaves of $\F^\sharp$ and
we will use the composition of group morphisms
\[ 
{\aut}_{\F}^{Q^\point}(D)\to\sym_{\F}^{Q^\point}(D)\stackrel{G_D}{\to} C^0_{Q^\point}(H_D),\quad \Phi\mapsto g_*\Phi,\] 
 cf. Proposition~\ref{G}, and the isomorphism 
\[\dot g_{D*} : \mc T_{\F}(D)\iso \mc V(H_D)\] given
by the exact sequence (\ref{FlatUT}) with $U=D$.  
 One easily checks that  if $h(z)$ is a germ of biholomorphism of $(\C,0)$ and  $(\phi(z,u),u)$ is a germ of biholomorphism of $(\C\times Q, (0,u_0))$ over $Q$  satisfying $\phi(z,u_0)=z$ and $\phi(h(z),u)=h(\phi(z,u))$, then $\frac{\partial \phi}{\partial u_k}\big|_{u=u_0}$, $k=1,\ldots,q$, are vector field germs on $(\C,0)$ invariant by  $h$. We set: 
\[
\Theta_{\F, \;D}^{Q^\point}\,:\,
\aut_{\F}^{Q^\point}(D)\to \mc T_{\F}(D)\otimes_\C(\mf M_{Q,u_0}/\mf M_{Q,u_0}^2)\,,
\]
\begin{equation}\label{DDuksAutD}
\Theta_{\F, \;D}^{Q^\point}(\Phi):= \sum_{k=1}^q\;
\dot g_{D*}^{-1}\left(\left.\frac{\partial\, \mr{pr}_\C\circ g_*\Phi}
 {\partial u_k}\right|_{u=u_0}\right)\otimes\;\dot u_k\,,
\end{equation}
where $\mr{pr}_\C$ again denotes the canonical projection $\C\times Q\to \C$.
One can check 
that definitions (\ref{DDuksAute}) and (\ref{DDuksAutD}) do not depend on the choice of the germ of first integral submersion $g$ at some regular point $o_D\in D$ nor on that of the coordinate system on~ $Q^\point$. To see that these group morphisms define a group-graph morphism we need to show that for $\Phi\in \aut_{\F}^{Q^\point}(D)$,  the germ at $\{s\}=D\cap D'$ of   
$ \dot g_{D\ast}^{-1}\left(\frac{\partial\, \mr{pr}_\C\circ g_*\Phi}
 {\partial u_k}\big|_{u=u_0}\right)$ is equal to the class in $\un{\mc T}_{\F}(s)$ of the germ at $s$ of  $\frac{\partial  \,\mr{pr}_{M_\F}\circ\,\Phi}{\partial u_k}\big|_{u=u_0}$, $k=1,\ldots,q$. 
Thanks to Remark~\ref{analytic-continuation} it suffices to check this equality at a regular point $s'\in D$ close to $s$. We may suppose that  $o_D=s'$. Using the map $g_\ast$ in the exact sequence (\ref{flatoD}), the commutativity of the operations  of partial derivatives at $s'$ and direct image by the first integral~$g$:
 \[
g_\ast\left( \left.\frac{\partial \,\mr{pr}_{M_\F}\circ \Phi}{\partial u_k}\right|_{u=u_0}\right)
=
\left.\frac{\partial\, \mr{pr}_\C\circ g_*\Phi}
 {\partial u_k}\right|_{u=u_0}\,,
 \]
 gives the required equality.\\
 
It is easy to check that the collection $\{\Theta_\F^{Q^\point}\}$ defines a natural transformation of functors $\Theta:\aut\to\T\otimes_{\C}\mf M/\mf M^2$.
 Now we apply the cohomological functor to $\Theta$ and we use the natural  identification  between  $\mf M_{Q,u_0}/\mf M_{Q,u_0}^2$ and the cotangent vector space $T^{\ast}_{u_0}Q$ of $Q$ at $u_0$. We obtain natural  maps
 \begin{align}
 &\mr{Def}_{\F}^{Q^\point}\iso
 H^1(\A_{\F}, \aut_{\F}^{Q^\point})\stackrel{H^1(\Theta)}{\longrightarrow} H^1(\A_{\F},\mc T_{\F}\otimes_\C(\mf M_{Q,u_0}/\mf M_{Q,u_0}^2))\iso\nonumber
\\
  &H^1(\A_{\F},\mc T_{\F})\otimes_\C(\mf M_{Q,u_0}/\mf M_{Q,u_0}^2)\
 \iso  H^1(\A_{\F},\mc T_{\F})\otimes_\C T^{\ast}_{u_0}Q
 = L(T_{u_0}Q, H^1(\A_{\F},\mc T_{\F}))\,,
 \label{defKS}
 \end{align}
 where $L(E,E')$ denotes the space of 
 $\C$-linear maps from the $\C$-vector space $E$ to the $\C$-vector space $E'$. 
 We call \emph{Kodaira-Spencer map for $(Q^\point,\F)$} the composition (\ref{defKS}) of these maps: 
\[
\mr{KS}^{Q^\point}_{\F}\;:\; \mr{Def}_{\F}^{Q^\point}\to  L(T_{u_0}Q, H^1(\A_{\F},\mc T_{\F}))\,.
\]
We will also write
\[\mr{KS}^{Q^\point}_{\F}([\F_{Q^\point}]) =:
\left.\frac{\partial[\F_{Q^\point}]}{\partial u}\right|_{u=u_0}\,.
 \]
Consider now the contravariant functor $\mr{DFac}:\mbf{Man^\point}\times\fol\to \mbf{Set}^\point$ defined by 
 \begin{equation*}\label{defDFac}
\mr{DFac}(Q^\point, \F):= L(T_{u_0}Q, H^1(\A_{\F},\mc T_{\F}))\,,
\quad
\mr{DFac}(\mu,\phi) :=\mr{DFac}_{\phi}^{\mu}\,,
\end{equation*}
with $\mr{DFac}_{\phi}^{\mu}:    L(T_{u_0}Q, H^1(\A_{\F},\mc T_{\F}))\to L(T_{t_0}P, H^1(\A_{\G},\mc T_{\G}))
$ defined by 
\[
\mr{DFac}_{\phi}^{\mu}(\ell):= [\phi^{\ast}]\circ\ell\circ D_{t_0} \mu\
\]
if $(\mu,\phi):(P^\point,\G)\to(Q^\point,\F)$ is a morphism in the category $\mbf{Man^\point}\times\fol$.

Since $D_{t_0}([\phi^{\ast}]\circ \lambda\circ \mu) = [\phi^\ast]\circ D_{u_0}\lambda\circ D_{t_0}\mu$ the derivation maps
\[
D^{Q^\point}_{\F} : \mc O(Q^\point,H^1(\A_{\F},\mc T_{\F}))\to  L(T_{u_0}Q, H^1(\A_{\F},\mc T_{\F}))\,,
\quad
\lambda\mapsto D_{u_0}\lambda
\]
constitute a natural transformation 
\begin{equation}\label{Dfunctor}
D:\mr{Fac}\to\mr{DFac}
\end{equation}
according to (\ref{FonctFacMorph}).
One can check the following:
\begin{prop} For any morphism $(\mu,\phi):(P^\point , \G)  \to(Q^\point,\F)$ in $\mbf{Man^\point}\times\fol$ and any deformation $[\F_{Q^\point}]\in\mr{Def}_{\F}^{Q^\point}$, 
we have the following commutative diagram:
\[\xymatrix{T_{t_0}\ar[d]_{D_{t_0}\mu}P\ar[rrrr]^{\frac{\partial((\mu,\phi)^*[\F_{Q^\point}])}{\partial t}\big|_{t=t_0}}&&&&H^1(\A_{\G},\T_{\G})\\
T_{u_0}Q\ar[rrrr]^{\frac{\partial([\F_{Q^\point}])}{\partial u}\big|_{u=u_0}}&&&&H^1(\A_{\F},\T_{\F})\ar[u]_{[\phi^*]}}
\]
in other words,  the collection $\{\mr{KS}_{\F}^{Q^\point}\}_{(Q^\point,\F)}$ defines a natural transformation 
\[\mr{KS}:\mr{Def}\to \mr{DFac}
\]
between contravariant functors from $\mbf{Man^\point}\times\fol$ to $\mbf{Set}^{\point}$.
\end{prop}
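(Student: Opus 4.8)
The plan is to exhibit $\mr{KS}$ as a composite of natural transformations, so that its naturality becomes automatic and the displayed square is merely a reformulation of it. Recall that $\mr{KS}^{Q^\point}_\F$ is by definition the chain~(\ref{defKS}): the isomorphism $C^{Q^\point}_\F:\mr{Def}^{Q^\point}_\F\iso H^1(\A_\F,\aut^{Q^\point}_\F)$ of Theorem~\ref{cocycle}, followed by $H^1(\Theta^{Q^\point}_\F)$, by the tensor-product isomorphism~(\ref{tensorprodfunctIso}), and finally by the canonical identifications $\mf M_{Q,u_0}/\mf M_{Q,u_0}^2\cong T^*_{u_0}Q$ and $H^1(\A_\F,\T_\F)\otimes_\C T^*_{u_0}Q=L(T_{u_0}Q,H^1(\A_\F,\T_\F))$. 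The first arrow is a natural isomorphism by Theorem~\ref{cocycle}, the third by the statement of Subsection~\ref{GGTensorproduct}, and the last two are standard natural identifications. Since $H^1:\grgr\to\mbf{Set}^\point$ is a functor, it carries natural transformations to natural transformations, so $H^1(\Theta)$ is natural as soon as $\Theta$ is. Composing, the only point requiring verification is that $\Theta:\aut\to\T\otimes_\C(\mf M/\mf M^2)$ is a natural transformation, i.e. that for every morphism $(\mu,\phi):(P^\point,\G)\to(Q^\point,\F)$ the square relating $\Theta^{Q^\point}_\F$, $\Theta^{P^\point}_\G$, $(\mu,\phi)^*=\mu^*\circ\phi^*$ and $\phi^*\otimes\mu^*$ commutes; as both $\mu^*$ and $\phi^*$ are functorial I would treat them in turn.

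For the parameter change $\mu^*$, write $(u_k)$ and $(t_j)$ for centered coordinates on $Q^\point$ and $P^\point$. Applying the chain rule to $\mr{pr}_{M_\F}\circ\mu^*\Phi=\mr{pr}_{M_\F}\circ\Phi(\cdot,\mu(\cdot))$ gives
\[
\frac{\partial\,\mr{pr}_{M_\F}\!\circ\mu^*\Phi}{\partial t_j}\bigg|_{t_0}
=\sum_k \frac{\partial\,\mr{pr}_{M_\F}\!\circ\Phi}{\partial u_k}\bigg|_{u_0}\,
\frac{\partial \mu_k}{\partial t_j}\bigg|_{t_0},
\]
which matches exactly the cotangent pullback $\mu^*:\mf M_{Q,u_0}/\mf M_{Q,u_0}^2\to\mf M_{P,t_0}/\mf M_{P,t_0}^2$, $\dot u_k\mapsto\sum_j(\partial\mu_k/\partial t_j)(t_0)\,\dot t_j$, acting on the second tensor factor of~(\ref{DDuksAute}). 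The same computation applies to the vertex formula~(\ref{DDuksAutD}), once one notes, as in the definition of $\Theta^{Q^\point}_{\F,D}$, that the operations $g_\ast$ (direct image by a first integral) and $\partial/\partial t_j$ commute.

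For the \Cex-conjugacy $\phi^*$ we have $\phi^*\Phi=(\phi^\sharp_{Q^\point})^{-1}\circ\Phi\circ\phi^\sharp_{Q^\point}$ with $\phi^\sharp_{Q^\point}=\phi^\sharp\times\mr{id}_Q$ independent of the parameter $u$, so $\partial/\partial u_k$ commutes with the conjugation and sends the class of $\partial(\mr{pr}_{M_\G}\circ\phi^*\Phi)/\partial u_k|_{u_0}$ to the image under $\un\phi^*$ of the class of $\partial(\mr{pr}_{M_\F}\circ\Phi)/\partial u_k|_{u_0}$; this is precisely the map $\phi^*:\T_\F\to\T_\G$ of~(\ref{phisastT}) on the first tensor factor. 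At an edge $\msf e$ the identity is immediate, since $\phi^\sharp$ is holomorphic there. The main obstacle is the vertex case, where $\phi^\sharp$ is only transversely holomorphic at the regular point $o_D$ used to define $\Theta^{Q^\point}_{\F,D}$ through $g_\ast$ and $\dot g_{D\ast}$, cf.~(\ref{flatoD}); here I would pass to a nearby singular point, where $\phi^\sharp$ is holomorphic and the equality holds, and then propagate it along $D$ using Remark~\ref{analytic-continuation} together with the commutation of partial derivatives with direct image already invoked above. This establishes that $\Theta$ is a natural transformation.

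Feeding the naturality of $\Theta$ back into the composite~(\ref{defKS}) shows that $\mr{KS}$ is a natural transformation $\mr{Def}\to\mr{DFac}$, that is, for every $(\mu,\phi)$ and every $[\F_{Q^\point}]$,
\[
\mr{KS}^{P^\point}_\G\big((\mu,\phi)^*[\F_{Q^\point}]\big)
=[\phi^*]\circ\mr{KS}^{Q^\point}_\F([\F_{Q^\point}])\circ D_{t_0}\mu .
\]
By the definition $\mr{DFac}^\mu_\phi(\ell)=[\phi^*]\circ\ell\circ D_{t_0}\mu$, the right-hand side is exactly $\mr{DFac}(\mu,\phi)$ applied to $\mr{KS}^{Q^\point}_\F([\F_{Q^\point}])$, which is the commutativity of the displayed square. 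Hence $\{\mr{KS}^{Q^\point}_\F\}_{(Q^\point,\F)}$ defines a natural transformation of contravariant functors from $\mbf{Man^\point}\times\fol$ to $\mbf{Set}^\point$.
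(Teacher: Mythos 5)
Your proposal is correct and takes essentially the same route as the paper: the paper states this proposition without proof (``One can check the following''), relying precisely on the decomposition you use --- $\mr{KS}$ as the composite (\ref{defKS}) of the natural isomorphism of Theorem~\ref{cocycle}, $H^1(\Theta)$, the tensor-product isomorphism (\ref{tensorprodfunctIso}) and the canonical identifications --- so that everything reduces to the naturality of $\Theta$, which the paper asserts is ``easy to check''. Your verification of that point (the chain rule for $\mu^*$ applied to (\ref{DDuksAute}) and (\ref{DDuksAutD}) via the commutation of $g_\ast$ with parameter derivatives, and for $\phi^*$ the holomorphy at edges plus propagation from singular points along $D$ via Remark~\ref{analytic-continuation}) uses exactly the ingredients the paper set up for this purpose and correctly supplies the details it leaves to the reader.
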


\subsection{Criteria for universality} Let us suppose now that the foliation $\F$ has finite type. Using the representation of the deformation functor,  the Kodaira-Spencer transformation becomes the usual derivation:
\begin{prop}\label{KSD}
 Restricted to the subcategory $\mbf{Man^\point}\times\mbf{Fol}_\mbf{ft}$ the natural transformation $\mr{KS}$ is equal to the composition of the  natural transformation derivative (\ref{Dfunctor}) with the natural isomorphism $\xi:\mr{Def}\iso\mr{Fac}$ defined in~(\ref{isoDefFac})
\[
\mr{KS}=D\circ \xi
\]
\end{prop}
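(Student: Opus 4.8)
The plan is to prove the equality of the two natural transformations componentwise. Since $\mr{KS}$, $D$ and $\xi$ are all natural transformations (the first by the preceding Proposition, the last two by their construction), it suffices to fix a finite type generalized curve $\F$ and a germ of manifold $Q^\point=(Q,u_0)$ and to check that $\mr{KS}_{\F}^{Q^\point}=D_{\F}^{Q^\point}\circ\xi_{\F}^{Q^\point}$ as maps $\mr{Def}_{\F}^{Q^\point}\to L(T_{u_0}Q,H^1(\A_\F,\T_\F))$. Both sides begin with the same identification $\mr{Def}_{\F}^{Q^\point}\iso H^1(\A_\F,\aut_{\F}^{Q^\point})$ of Theorem~\ref{cocycle}, so I would fix an equisingular deformation $\F_{Q^\point}$, a good trivializing system $(\Psi_D)_D$, and the associated cocycle $(\Phi_{D,\ge})\in Z^1(\A_\F,\aut_\F^{Q^\point})$ with $\Phi_{D,\ge}=\Psi_D\circ\Psi_{D'}^{-1}$, and compare the images of its class under the two remaining composites.

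First I would interpret the map $D_{\F}^{Q^\point}$ concretely. Under the identifications $\mc O(Q^\point,V^\point)\cong V\otimes_\C\mf M_{Q^\point}$ (via $\mf c\otimes a\mapsto(t\mapsto a(t)\mf c)$, the last arrow of (\ref{defxi})) and $L(T_{u_0}Q,V)\cong V\otimes_\C T^\ast_{u_0}Q=V\otimes_\C(\mf M_{Q,u_0}/\mf M_{Q,u_0}^2)$, with $V=H^1(\A_\F,\T_\F)$, the derivation $\lambda\mapsto D_{u_0}\lambda$ becomes $\mr{id}_V\otimes(\mf M_{Q,u_0}\twoheadrightarrow\mf M_{Q,u_0}/\mf M_{Q,u_0}^2)$, i.e. reduction modulo $\mf M_{Q,u_0}^2$ in the scalar factor. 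Hence, after the common first isomorphism and the natural isomorphisms (\ref{H1M}), (\ref{tensorprodfunctIso}), (\ref{isoRTRTT}) identifying the targets with $H^1(\A_\F,\T_\F)\otimes_\C(\mf M_{Q,u_0}/\mf M_{Q,u_0}^2)$, I must show that the group-graph level map
\[
\aut_{\F}^{Q^\point}\to\sym_{\F}^{Q^\point}\xrightarrow{\mr{Exp}^{-1}}\mr{R}\T_{\F}^{Q^\point}\xrightarrow{\mr{Ext}^{-1}}\mr{R}\T_{\F}\otimes_\C\mf M_{Q^\point}\twoheadrightarrow\mr{R}\T_{\F}\otimes_\C(\mf M_{Q,u_0}/\mf M_{Q,u_0}^2)
\]
(the $D\circ\xi$ route, read on $\AR_\F$, using Theorem~\ref{exp} and Lemma~\ref{multext}) coincides with the restriction to $\AR_\F$ of the Kodaira-Spencer morphism $\Theta_{\F}^{Q^\point}:\aut_{\F}^{Q^\point}\to\T_{\F}\otimes_\C(\mf M_{Q,u_0}/\mf M_{Q,u_0}^2)$ of (\ref{DDuksAute}) and (\ref{DDuksAutD}). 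Because all restriction morphisms $\imath_{r_\F}$ are natural and induce isomorphisms on $H^1$ after pruning (Theorems~\ref{pruning} and~\ref{aut-Raut/fix}), it is enough to verify this equality of morphisms cell by cell, on each invariant $D\in\Ve_{\AR_\F}$ and each $\ge=\langle D,D'\rangle\in\Ed_{\AR_\F}$ lying in $\mr{supp}(\mr{R}\T_\F)$, working with a single fixed geometric system (Definition~\ref{geometric-system}), on which both $\Theta$ and $\mr{Exp}$ are expressed through the transversal factor $g$.

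The heart of the proof is then a first-order computation showing that the tangent map of the exponential at $0$ is the identity. On an edge $\ge$, I would use the isomorphisms $G_{D,\ge}:\sym_{\F}^{Q^\point}(\ge)\iso C^0_{Q^\point}(h_{D,\ge})$ and $G^\T_{D,\ge}:\T_{\F}^{Q^\point}(\ge)\iso\mc V^0_{Q^\point}(h_{D,\ge})$ together with the commutative square (\ref{diagramsexp}) relating $\mr{Exp}_\F^{Q^\point}$ to the genuine time-one exponential $\exp$. Given $\Phi\in\aut_{\F}^{Q^\point}(\ge)$, write $\varphi=\mr{pr}_\C\circ g_\ast\Phi$, so that $\varphi_{u_0}=\mr{id}$ and, by the compatibility $g_\ast(\partial_{u_k}(\mr{pr}_{M_\F}\circ\Phi)|_{u_0})=\partial_{u_k}\varphi|_{u_0}$ used in the construction of $\Theta$, the $\Theta$-route yields $\sum_k(\partial_{u_k}\varphi|_{u_0})\otimes\dot u_k$. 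On the other route, $\mr{Exp}^{-1}$ produces the vertical field $X\in\mc V^0_{Q^\point}(h_{D,\ge})$ with $\exp(X)=\varphi$; since $X$ vanishes on the special fibre, $X_u=\sum_k u_k X^{(k)}+O(\mf M_{Q,u_0}^2)$ with $X^{(k)}\in\mc V(h_{D,\ge})$, and because $X$ lies in $\mf M_{Q,u_0}\cdot\mc V(h_{D,\ge})$ all higher exponential terms fall in $\mf M_{Q,u_0}^2$, whence $\varphi_u=\mr{id}+\sum_k u_k X^{(k)}\bmod\mf M_{Q,u_0}^2$ and $\partial_{u_k}\varphi|_{u_0}=X^{(k)}=\partial_{u_k}X|_{u_0}$. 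Thus both routes deliver $\sum_k X^{(k)}\otimes\dot u_k$, i.e. the class of $X$ modulo $\mf M_{Q,u_0}^2$. The same argument on a vertex $D$, using $G_D$, $G^\T_D$ and the right-hand square of (\ref{diagramsexp}), gives the vertex case; Remark~\ref{analytic-continuation} guarantees compatibility of the two descriptions across $\mr{Sing}(\F^\sharp)$. Assembling these cellwise equalities and undoing the identifications proves $\mr{KS}=D\circ\xi$ on $\mbf{Man^\point}\times\foltf$.

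I expect the main obstacle to be purely organisational: keeping the numerous identifications ($g_\ast$ and its compatibility with $\partial_{u_k}$, the centralizer isomorphisms $G_\star$ and $G^\T_\star$, the scalar-extension isomorphism $\mr{Ext}$, and the tensor and pruning isomorphisms) coherent and aligned to a single geometric system, so that the clean first-order identity $\exp(X)\equiv\mr{id}+X\pmod{\mf M_{Q,u_0}^2}$ can actually be read off at the level of $H^1$. The analytic content is minimal once this bookkeeping is in place.
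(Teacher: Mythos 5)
Your proposal is correct and takes essentially the same route as the paper: both arguments reduce, through the chain of identifications in (\ref{defxi}) and (\ref{defKS}) read against a fixed geometric system, to the first-order identity $\exp(a(u)X)\equiv \mr{id}+a(u)X \pmod{\mf M_{Q,u_0}^2}$, i.e.\ that the derivative of the exponential at the origin is the identity. The paper simply runs the computation in the opposite direction, evaluating $\mr{KS}^{Q^\point}_{\F}\circ(\xi^{Q^\point}_{\F})^{-1}$ on a generator $[X_{D,\msf e}]\otimes a(u)$ via $\exp(a(u)X_{D,\msf e})[1]=\exp(X_{D,\msf e})[a(u)]$ and $\partial_{u_k}\big|_{u=u_0}$, whereas you verify the same identity cellwise at the group-graph level before passing to $H^1$; the mathematical content is identical.
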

\begin{proof}
Let us fix $(Q^\point,\F)\in \mbf{Man^\point}\times\mbf{Fol}_\mbf{ft}$.
Since $\F$ is assumed to be of finite type, $\xi$ is an isomorphism of functors and it suffices to see that, after the identifications 
\[\mc O(Q^\point,H^1(\A_\F,\T_\F))\simeq H^1(\A_\F,\T_\F)\otimes_\C\mf M_{Q,u_0}\] and 
\[L(T_{u_0}Q,H^1(\A_F,\T_\F))\simeq H^1(\A_\F,\T_\F)\otimes_\C\mf M_{Q,u_0}/\mf M^2_{Q,u_0},\] 
 the following map
\[\mr{KS}^{Q^\point}_{\F}\circ(\xi_{\F}^{Q^\point})^{-1}:H^1(\A_{\F},\T_{\F})\otimes_\C\mf M_{Q,u_0}\to H^1(\A_{\F},\T_{\F})\otimes_\C\mf M_{Q,u_0}/\mf M_{Q,u_0}^2\]
coincides with the tensor product of the identity map of $H^1(\A_{\F},\T_{\F})$ and the quotient map $\mf M_{Q,u_0}\to\mf M_{Q,u_0}/\mf M_{Q,u_0}^2$, $a\mapsto \dot a$.
By following the functor morphisms in (\ref{defxi}) and (\ref{defKS}) and formula (\ref{DDuksAutD}) we obtain that
\begin{align*}
(\mr{KS}^{Q^\point}_{\F}\circ(\xi^{Q^\point}_{\F})^{-1})([X_{D,\msf{e}}]\otimes a(u))&=\sum_k\left[\frac{\partial}{\partial u_k}\Big|_{u=u_0}\exp(a(u)X_{D,\msf{e}})[1]\right]\otimes \dot{u}_k\\
&=\sum_k\left[\frac{\partial}{\partial u_k}\Big|_{u=u_0}\exp(X_{D,\msf{e}})[a(u)]\right]\otimes \dot u_k\\
&=\sum_k\left[\frac{\partial a}{\partial u_k}(u_0)X_{D,\msf{e}}\right]\otimes \dot u_k\\
&=[X_{D,\msf e}]\otimes\sum_k \frac{\partial a}{\partial u_k}(u_0)\dot u_k=[X_{D,\msf e}]\otimes\dot a\,.
\end{align*}

\end{proof}
This interpretation of $\mr{KS}$ provides an infinitesimal criterium of universality.
\begin{teo}\label{unicityuniv} 
Let $\F$ be a  finite type foliation which is a generalized curve.
For any equisingular  deformation $\F_{P^\point}$  of $\F$ over a germ of manifold $P^\point$, the following properties are equivalent:
\begin{enumerate}
\item\label{Funiv}  $\F_{P^\point}$ is $\mc C^{\mr{ex}}$-universal,
\item\label{exixtsfaciso} there is a biholomorphism germ $\mu:R^\point\iso P^\point$ such that $\mu^\ast\F_{P^\point}$ is $\mc C^{\mr{ex}}$-universal,
\item\label{anyfaciso} for any biholomorphism germ $\mu:R^\point\iso P^\point$ the deformation  $\mu^\ast\F_{P^\point}$ is $\mc C^{\mr{ex}}$-universal,
\item\label{lambdabij} the map  $\xi^{P^\point}_{\F}([\F_{P^\point}]): P^\point\to H^1(\A_{\F},\mc T_{\F})^\point$ is a biholomorphism germ,
\item\label{DDFiso} the Kodaira-Spencer map $\left.\frac{\partial [\F_{P^\point}]}{\partial t }\right|_{t=t_0} $ is an isomorphism. 
\end{enumerate}
\end{teo}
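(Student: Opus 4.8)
The plan is to prove the chain of equivalences by establishing a cycle of implications, relying heavily on the representation $\xi$ of the deformation functor (Theorem~\ref{teounivtop}) and the identification $\mr{KS}=D\circ\xi$ (Proposition~\ref{KSD}). The logical backbone I would set up is $(\ref{Funiv})\Leftrightarrow(\ref{lambdabij})$, then $(\ref{lambdabij})\Leftrightarrow(\ref{DDFiso})$, and finally the equivalences $(\ref{Funiv})\Leftrightarrow(\ref{exixtsfaciso})\Leftrightarrow(\ref{anyfaciso})$ handled together through the stability of universality under biholomorphic reparametrization of the base.

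\textbf{Equivalence $(\ref{Funiv})\Leftrightarrow(\ref{lambdabij})$.} First I would recall the characterization of universality via representability encoded in~(\ref{univid}): with $Q^\point:=H^1(\A_\F,\mc T_\F)^\point$, the class $\mf f_{P^\point}:=[\F_{P^\point}]$ is $\mc C^{\mr{ex}}$-universal exactly when the natural transformation $\Lambda_{\mf f_{P^\point}}:\mr{F}_{P^\point}\to\mr{Def}_\F$ is an isomorphism of functors. Set $\lambda:=\xi_\F^{P^\point}([\F_{P^\point}])\in\mc O(P^\point,Q^\point)$. Since $\xi$ is a natural isomorphism between $\mr{Def}$ and $\mr{Fac}$, by Yoneda's Lemma the universality of $[\F_{P^\point}]$ is equivalent to $\lambda$ being a germ of biholomorphism $P^\point\iso Q^\point$: indeed $\xi$ transports the Yoneda-representability of $\mr{Def}_\F$ into that of $\mr{Fac}(-,\F)=\mc O(-,Q^\point)$, whose universal object is precisely the identity, and the map classifying $[\F_{P^\point}]$ becomes an isomorphism of the representing objects iff $\lambda$ is invertible. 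This is the precise content of assertion~(\ref{lambdabij}), so the equivalence follows formally once the functorial dictionary is spelled out.

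\textbf{Equivalence $(\ref{lambdabij})\Leftrightarrow(\ref{DDFiso})$.} Here I would invoke Proposition~\ref{KSD}, which gives $\mr{KS}^{P^\point}_\F=D^{P^\point}_\F\circ\xi_\F^{P^\point}$. Thus the Kodaira-Spencer map of $\F_{P^\point}$ is the derivative at $t_0$ of $\lambda=\xi_\F^{P^\point}([\F_{P^\point}])$, namely $\left.\frac{\partial[\F_{P^\point}]}{\partial t}\right|_{t=t_0}=D_{t_0}\lambda\in L(T_{t_0}P,H^1(\A_\F,\mc T_\F))$. A germ of holomorphic map $\lambda:P^\point\to Q^\point$ between equidimensional germs is a biholomorphism iff its differential at the base point is an isomorphism (inverse function theorem); when the dimensions differ, invertibility of $D_{t_0}\lambda$ still forces equidimensionality and local invertibility. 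Since $\dim_\C Q=\dim_\C H^1(\A_\F,\mc T_\F)$ by Theorem~\ref{defuniv}, the condition that $D_{t_0}\lambda$ be an isomorphism is exactly assertion~(\ref{DDFiso}), giving the equivalence.

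\textbf{Equivalences with $(\ref{exixtsfaciso})$ and $(\ref{anyfaciso})$.} These follow from Remark~\ref{rempreliminaires}, which records that for a biholomorphism germ $\mu:R^\point\iso P^\point$ the universality of $\F_{P^\point}$ and of $\mu^\ast\F_{P^\point}$ are equivalent. Concretely $(\ref{anyfaciso})\Rightarrow(\ref{exixtsfaciso})$ is trivial (take $\mu=\mr{id}$, or any biholomorphism), $(\ref{exixtsfaciso})\Rightarrow(\ref{Funiv})$ applies the remark to the inverse of the witnessing $\mu$, and $(\ref{Funiv})\Rightarrow(\ref{anyfaciso})$ applies it directly. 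I would close the argument by noting that the whole circle $(\ref{Funiv})\Rightarrow(\ref{anyfaciso})\Rightarrow(\ref{exixtsfaciso})\Rightarrow(\ref{Funiv})$ together with $(\ref{Funiv})\Leftrightarrow(\ref{lambdabij})\Leftrightarrow(\ref{DDFiso})$ yields all five equivalences. The only genuinely substantive input is the representability result $\xi:\mr{Def}\iso\mr{Fac}$ established in Theorem~\ref{teounivtop} and its compatibility with derivation in Proposition~\ref{KSD}; granting those, the main obstacle is purely bookkeeping—carefully matching the Yoneda correspondence in~(\ref{univid}) with the concrete map $\lambda=\xi_\F^{P^\point}([\F_{P^\point}])$ so that ``$\lambda$ biholomorphic'' is literally ``$\Lambda_{\mf f_{P^\point}}$ is an isomorphism,'' and confirming that the target germ carries the expected dimension so that bijectivity of the differential suffices for local invertibility.
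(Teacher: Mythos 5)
Your proposal is correct and follows essentially the same route as the paper: the equivalence of (\ref{Funiv})--(\ref{anyfaciso}) from Remark~\ref{rempreliminaires}, the equivalence $(\ref{Funiv})\Leftrightarrow(\ref{lambdabij})$ via the representability statement (\ref{univid}) and the isomorphism $\xi$ of Theorem~\ref{teounivtop}, and $(\ref{lambdabij})\Leftrightarrow(\ref{DDFiso})$ via Proposition~\ref{KSD} together with the inverse function theorem. The only cosmetic difference is that you invoke Yoneda abstractly where the paper spells out the mutual factorization $\lambda\circ\mu=\mr{id}_{Q^\point}$, $\mu\circ\lambda=\mr{id}_{P^\point}$ (the final-object argument its own footnote alludes to), which is the same idea.
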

\begin{proof} The equivalence of the first three assertions  follows  directly from the definition of  $\mc C^{\mr{ex}}$-universality. 

The proof of  
$(\ref{Funiv})\Longrightarrow(\ref{lambdabij})$ is classical\footnote{In fact, in the category whose objects are the classes of equisingular deformations of $\F$ and the morphisms are pull-backs, a class of an equisingular deformation is $\mc C^{\mr{ex}}$-universal if and only if it is a final object. It is well-known that the final objects are canonically isomorphic, i.e. by a unique isomorphism.}: after setting  $Q^\point:=H^1(\A_{\F},\mc T_{\F})$ one considers the class $\mf f_{Q^\point}\in \mr{Def}^{Q^\point}_{\F}$ such that 
$\xi^{Q^\point}_{\F}(\mf f_{Q^\point})=\mr{id}_{Q^\point}$, which is $\mc C^{\mr{ex}}$-universal, according to (\ref{univid}). 
Therefore, the map $\lambda:=\xi_\F^{P^\point}([\F_{P^\point}]):P^\point\to Q^\point$ satisfies $\mf f_{P^\point}:=[\F_{P^\point}]=\lambda^\ast\mf f_{Q^\point}
$.  On the other hand, since  $\mf f_{P^\point}$  is assumed to be   $\mc C^{\mr{ex}}$-universal, there is  $\mu :Q^\point\to P^\point$ such that $\mf f_{Q^\point}=\mu^\ast\mf f_{P^\point}$. The  uniqueness of factorizations and  the relations $\mu^\ast\lambda^\ast \mf f_{Q^\point}=\mf f_{Q^\point}$, $\lambda^\ast\mu^\ast \mf f_{P^\point}=  \mf f_{P^\point}$, give  $\lambda\circ\mu=\mr{id}_{Q^\point}$ and $\mu\circ\lambda=\mr{id}_{P^\point}\,$. 

The implication 
$(\ref{lambdabij})\Rightarrow(\ref{Funiv})
$ is a consequence of
Theorem~\ref{defuniv} and Remark~\ref{rempreliminaires}.

\noindent According to Proposition~\ref{KSD}, $\left.\frac{\partial [\F_{P^\point}]}{\partial t }\right|_{t=t_0}$ is the derivative of the map $\xi^{P^\point}_{\F}([\F_{P^\point}])$, thus the equivalence  $(\ref{lambdabij}) \Longleftrightarrow (\ref{DDFiso})$ is trivial.
\end{proof}

\begin{cor}\label{functFuniv} Let $\phi$ be an \Cex-conjugacy between two foliations  $\F,\G\in\fol$ of finite type, $\phi(\G)=\F$. Then  $\mf f_{Q^\point}\in \mr{Def}_{\F}^{Q^\point}$ is $\mc C^{\mr{ex}}$-universal  if and only if   $\mf g_{Q^\point} := \phi^*(\mf f_{Q^\point})\in \mr{Def}_{\G}^{Q^\point}$ is universal.
\end{cor}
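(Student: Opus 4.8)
The plan is to reduce the statement to the bijectivity criterion for universality recorded in Subsection~\ref{secCexUnivDef} and to exploit the functoriality of the pull-back by $\phi$. Recall that a class $\mf h_{Q^\point}\in\mr{Def}_\F^{Q^\point}$ is $\mc C^{\mr{ex}}$-universal if and only if for every germ of manifold $P^\point$ the map
\[
\Lambda_{\mf h_{Q^\point}}^{P^\point}:\mc O(P^\point,Q^\point)\to\mr{Def}_\F^{P^\point},\qquad \lambda\mapsto[\lambda^\ast\F_{Q^\point}],
\]
is bijective (here $\F_{Q^\point}$ denotes any representative of $\mf h_{Q^\point}$). So it will suffice to compare the maps $\Lambda_{\mf f_{Q^\point}}^{P^\point}$ and $\Lambda_{\mf g_{Q^\point}}^{P^\point}$ for all $P^\point$.

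First I would express $\Lambda_{\mf g_{Q^\point}}^{P^\point}$ in terms of $\Lambda_{\mf f_{Q^\point}}^{P^\point}$. By Theorem~\ref{famconsigma} the pull-back $\phi^\ast:\mr{Def}_\F^{R^\point}\to\mr{Def}_\G^{R^\point}$ is a bijection for every germ of manifold $R^\point$, and the commutative square~(\ref{comsigmalambda}) gives $\lambda^\ast\circ\phi^\ast=\phi^\ast\circ\lambda^\ast$ for any $\lambda:P^\point\to Q^\point$. Applying this to $\mf f_{Q^\point}$ and using $\mf g_{Q^\point}=\phi^\ast(\mf f_{Q^\point})$ yields
\[
\Lambda_{\mf g_{Q^\point}}^{P^\point}(\lambda)=\lambda^\ast\mf g_{Q^\point}=\lambda^\ast\phi^\ast(\mf f_{Q^\point})=\phi^\ast(\lambda^\ast\mf f_{Q^\point})=\phi^\ast\bigl(\Lambda_{\mf f_{Q^\point}}^{P^\point}(\lambda)\bigr),
\]
that is, $\Lambda_{\mf g_{Q^\point}}^{P^\point}=\phi^\ast\circ\Lambda_{\mf f_{Q^\point}}^{P^\point}$ as maps $\mc O(P^\point,Q^\point)\to\mr{Def}_\G^{P^\point}$. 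Since $\phi^\ast:\mr{Def}_\F^{P^\point}\to\mr{Def}_\G^{P^\point}$ is bijective, the composite $\Lambda_{\mf g_{Q^\point}}^{P^\point}$ is bijective if and only if $\Lambda_{\mf f_{Q^\point}}^{P^\point}$ is. As this equivalence holds for every $P^\point$, the universality criterion recalled above shows that $\mf g_{Q^\point}$ is $\mc C^{\mr{ex}}$-universal exactly when $\mf f_{Q^\point}$ is, which is the assertion.

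There is no genuine obstacle here: once the functoriality encoded in~(\ref{comsigmalambda}) and the bijectivity of $\phi^\ast$ from Theorem~\ref{famconsigma} are in hand, the argument is purely formal, and it does not even require the finite-type hypothesis. The only point demanding a little care is to keep the base $Q^\point$ fixed while $P^\point$ varies, so that $\phi^\ast$ is applied on the correct parameter spaces on both sides; this is precisely what~(\ref{comsigmalambda}) records. Alternatively, under the finite-type assumption one could argue through the infinitesimal criterion Theorem~\ref{unicityuniv}(\ref{DDFiso}): the naturality of the Kodaira--Spencer transformation, specialized to the morphism $(\mr{id}_{Q^\point},\phi)$, gives $\mr{KS}_\G^{Q^\point}(\mf g_{Q^\point})=[\phi^\ast]\circ\mr{KS}_\F^{Q^\point}(\mf f_{Q^\point})$ with $[\phi^\ast]$ a linear isomorphism by~(\ref{H1phi*}), whence one Kodaira--Spencer map is an isomorphism if and only if the other is.
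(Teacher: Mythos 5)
Your proof is correct, and it takes a genuinely different --- and more elementary --- route than the paper's. The paper proves Corollary~\ref{functFuniv} through the representability machinery: using criterion~(\ref{lambdabij}) of Theorem~\ref{unicityuniv} it first reduces to the canonical base $Q^\point=H^1(\A_\F,\T_\F)^\point$ with $\mf f_{Q^\point}=(\xi_\F^{Q^\point})^{-1}(\mr{id}_{Q^\point})$, then sets $P^\point:=H^1(\A_\G,\T_\G)^\point$ and $\lambda:=[\phi^\ast]^{-1}:P^\point\to Q^\point$, and exploits the naturality of $\xi$ (Theorem~\ref{teounivtop}) together with the computation $\mr{Fac}_\phi^\lambda(\mr{id}_{Q^\point})=\mr{id}_{P^\point}$ to conclude $\xi_\G^{P^\point}(\lambda^\ast\mf g_{Q^\point})=\mr{id}_{P^\point}$, whence universality of $\lambda^\ast\mf g_{Q^\point}$ and hence of $\mf g_{Q^\point}$. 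You bypass all of this: starting from the characterization of universality as bijectivity of the Yoneda-type maps $\Lambda^{P^\point}$, the identity $\Lambda_{\mf g_{Q^\point}}^{P^\point}=\phi^\ast\circ\Lambda_{\mf f_{Q^\point}}^{P^\point}$ --- which is exactly what diagram~(\ref{comsigmalambda}) records --- combined with the bijectivity of $\phi^\ast$ from Theorem~\ref{famconsigma} transfers bijectivity in both directions at once, for every $P^\point$. Your observation that this needs no finite-type hypothesis is accurate: the formal argument works for arbitrary generalized curves in $\fol$, whereas finite type enters the paper's proof only because $\xi$ is an isomorphism over $\mbf{Man^\point}\times\foltf$; what the paper's route buys in exchange is explicit information --- it exhibits $[\phi^\ast]^{-1}$ as the unique map identifying the two universal parameter spaces, which your abstract argument does not produce. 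Your secondary sketch via Kodaira--Spencer naturality and criterion~(\ref{DDFiso}) of Theorem~\ref{unicityuniv} is also sound (and, unlike your main argument, does genuinely use the finite-type hypothesis).
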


\begin{proof}
Let us suppose $\mf f_{Q^\point}$ $\mc C^{\mr ex}$-universal. 
According to Theorem~\ref{unicityuniv}, 
$\mf g_{Q^\point}$   is $\mc C^{\mr{ex}}$-universal as soon as  $\lambda^\ast\mf g_{Q^\point}=\phi^*(\lambda^\ast\mf f_{Q^\point})$ is $\mc C^{\mr{ex}}$-universal for some biholomorphism germ $\lambda :P^\point\to Q^\point$. Therefore  
 we may suppose  that  $Q^\point:=H^1(\A_{\F},\mc T_{\F})^\point$ and $\mf f_{Q^\point} =(\xi^{Q^\point}_{\F})^{-1}(\mr{id}_{Q^\point})$. Then we set:
\[
P^\point:=H^1(\A_{\G},\mc {T}_{\G})^\point\,,
\quad
\lambda:=[\phi^\ast]^{-1} : P^\point\to Q^\point\,.
\]
Since $\xi$ is a natural transformation we have the following commutative diagram:
\[
\xymatrix{
\mr{Def}_{\F}^{Q^\point}
\ar[d]^{(\lambda,\phi)^*}\ar[rrr]^{{\xi_{\F}^{Q^\point}}}&&&
\mc O\left({Q^\point},\;{Q^\point}\right)
\ar[d]^{\mr{Fac}_{\phi}^{\lambda}}
\\
\mr{Def}_{\G}^{P^\point}
\ar[rrr]^{\xi_{\G}^{P^\point}}&&&
\mc O\left({P^\point},\;{P^\point}\right)
}
\]
We check that $\mr{Fac}_{\phi}^\lambda(\mr{id}_{Q^\point})= \mr{id}_{P^\point}$, hence $\xi^{P^\point}_{\G}(\lambda^\ast \mf g_{Q^\point})=\xi^{P^\point}_{\G}( (\lambda,\phi)^*(\mf f_{Q^\point}))=\mr{id}_{P^\point}$. Thanks  to criterion (\ref{lambdabij}) in Theorem \ref{unicityuniv}, $\lambda^\ast \mf g_{Q^\point}$ is $\mc C^{\mr{ex}}$-universal. 
\end{proof}

\bibliographystyle{plain}

\end{document}